\newtheorem{thm}{Theorem}[section]
\newtheorem{lemma}[thm]{Lemma}
\newtheorem{rem}[thm]{Remark}
\newtheorem{defn}[thm]{Definition}
\newtheorem{claim}[thm]{Claim}
\newtheorem{prop}[thm]{Proposition}
\newcommand{\QQ}{\mathcal{Q}}
\newcommand{\Z}{\mathbb Z}
\newcommand{\N}{\mathbb N}
\newcommand{\R}{\mathbb R}
\newcommand{\E}{\mathbb E}
\renewcommand{\S}{\mathbf S}
\renewcommand{\P}{\mathbb P}
\renewcommand{\1}{\mathbf 1}
\newcommand{\mm}{\mathfrak{m}}
\newcommand{\F}{\mathcal{F}}
\newcommand{\FF}{\mathscr F}
\newcommand{\SM}{\mathsf{Q}}
\newcommand{\SMM}{\widetilde{\SM}}
\newcommand{\type}[1]{\text{Type }#1}
\newcommand{\m}{\mathfrak{m}}
\newcommand{\n}{\mathfrak{n}}
\newcommand{\q}{\mathfrak q}
\renewcommand{\r}{\mathfrak r}
\newcommand{\p}{\mathfrak p}
\newcommand{\Mq}{\widetilde{\mathfrak{q}}}
\newcommand{\Mp}{\widetilde{\mathfrak{p}}}
\newcommand{\Ham}{\mathcal{H}}
\newcommand{\ske}{\mathfrak s}
\newcommand{\PM}{\mathcal{M}}
\newcommand{\Qm}{\mathcal Q}
\newcommand{\Bc}{\mathcal B}
\newcommand{\MQm}{\widetilde{\Qm}}
\newcommand{\BM}{\mathfrak{Q}}
\newcommand{\BMM}{\widetilde{\BM}}
\newcommand{\tr}{\mathbf t}
\newcommand{\vroot}{v_r}
\newcommand{\vertex}{\mathfrak{v}}
\newcommand{\Lalg}{\subset}
\newcommand{\MLalg}{\prec}
\newcommand{\Dec}{\phi}
\newcommand{\MDec}{\widetilde{\Dec}}
\newcommand{\Msigma}{\widetilde{\sigma}}
\newcommand\encircle[1]{%
  \begin{tikzpicture}
    \node[draw,circle,inner sep=0.6pt] {#1};
  \end{tikzpicture}}
\newcommand{\glue}{\, \encircle{\scalebox{.3}{g} } \,}
\def\cro#1{\llbracket#1\rrbracket}
\DeclareMathOperator\supp{supp}
\DeclareMathOperator{\Leb}{Leb}
\g@addto@macro{\endabstract}{\@setabstract}
\newcommand{\authorfootnotes}{\renewcommand\thefootnote{\@fnsymbol\c@footnote}}
\let \epsilon \varepsilon
\numberwithin{equation}{section}
\title{Characterisation of Markov properties on planar maps}
\author{Pablo Araya$^\dagger$}
\address{$^\dagger$ Universidad de Chile,  Centro de Modelamiento Matemático (AFB170001), UMI-CNRS 2807, Beauchef 851, Santiago, Chile.}
\author{Luis Fredes$^*$}
\address{$^*$Univ. Bordeaux, CNRS, Bordeaux INP, IMB, UMR 5251, F-33400 Talence, France.}
\author{Avelio Sepúlveda$^\dagger$}
\begin{document}
	\maketitle
	\begin{abstract} We revisit, in a self contained way, the Markov property on planar maps and decorated planar maps from three perspectives. First, we characterize the laws on these planar maps that satisfy both the Markov property and rerooting invariance, showing that they are Boltzmann-type maps. Second, we provide a comprehensive characterization of random submaps, that we call stopping maps, satisfying the Markov property, demonstrating that they are not restricted to those obtained through a peeling procedure. Third, we introduce decorated metric planar maps in which edges are replaced by copies of random length intervals $[0,w_e]$, and the decorations are given by continuous functions on the edges. We define a probability measure on them that is the analogue of the Boltzmann map and show that it satisfies the Markov property even for sets that halt exploration mid-edge.
	\end{abstract}

 \section{Introduction} 
Random planar maps have captured the interest of the probabilistic and combinatorial communities in the 21st century. They originally appeared in physics as the natural candidates of discretisation of random  two-dimensional metric spaces. One of the key properties that enables the study of random planar maps is the Markov property, usually presented via the so-called peeling procedure. This naturally generates two different objects to be understood. The first one are which probability laws on planar map satisfy a Markov property. The second one is that given a law that satisfies the (weak) Markov property which are all the random submaps that induce a (strong) Markovian decomposition. In this paper we answer both of those questions.

We study the Markov property of three different types of quadrangulations with a boundary\footnote{Quadrangulations with a boundary are planar maps where all the faces, except for the root face, are surrounded by 4 semi-edges}: undecorated, decorated and metric ones. Undecorated quadrangulations are the simplest object we study. These maps have been extensively studied from both probabilistic and combinatorial perspectives. Explicit formulae for their enumeration are known \cite{tutte} and can be obtained through explicit bijections \cite{Schaeffer,BG}. Moreover their local limit \cite{krikun,CM} and their scaling limit, the Brownian map and disk, \cite{LeGall,Miermont,BetM,CLG,BGR19}, among others, are known. Furthermore, many of their geometric properties are also known \cite{LeGall2,LeGall3,GN,CC19,LGR1,LGR2}. Many of these important results strongly used Markovian decompositions of these maps, commonly refereed to in the community as their peeling procedure \cite{Cu}. In this work, we focus on Boltzmann maps, as they exhibit the strongest version of the Markov property

Decorated quadrangulations are pairs of $(\q, \Dec)$ where $\q$ is a quadrangulation and $\Dec$ is a function from the faces of $\q$ to $\R^n$. The most prominent example is that of quadrangulations (or triangulations) decorated by the Ising model. This model is much less understood than the undecorated one, although one can still explicitily compute partition functions \cite{BM,BM2} and obtain local limit results \cite{AMS,CT}. Furthermore, no scaling limit is known. Again, one of the main tools for studying these maps and proving their scaling limits is the peeling procedure, which, in a way, is a restatement of their Markov property. 

Finally, we also consider (decorated) metric planar maps. Metric maps are metric spaces obtained from a given planar map as follows: each edge $e$ is given a length $w_e$ and then replaced by a copy of the interval $[0,w_e]$. Decorations in this setting are continuous functions from the resulting metric space to $\R^n$. To our knowledge, this type of maps have only appeared with no decorations in the study of first passage percolation on uniformly chosen planar maps \cite{CLG2}. The main difficulty on working with the Markov property in this setting is that the peeling procedure is not discrete anymore, given that the peeling can stop mid-edge.

Our interest in metric maps is motivated by recent progress in the study of the geometry of the Gaussian free field (GFF), fueled by Lupu's introduction of the metric graph GFF and its Markov property \cite{Lupu}. In particular, this framework has led to a deeper understanding of level set percolation of the GFF \cite{LW,ALS2}, the study of scaling limits of one-sided level set and excursions in the two-dimensional context \cite{AGS} and the interpretation of the natural coupling between Ising model and FK-Ising \cite{LW2}. In particular, we expect that they will be key to understand the behaviour of the evolution of one-sided level set of GFF decorated map, which would be the analogue of the recently found relationship between the O(2)-decorated map and the CLE$_4$ on top of a critical Liouville enviromment \cite{AHPS, Kam}.

\subsection{Results}
A planar map is a rooted graph embedded in the sphere. In this work, and just for the sake of simplicity, we only work on planar quadrangulations with a boundary, that is, planar maps in which all faces are surrounded by 4 semi-edges \footnote{An edge is composed of two semi-edges, one per side, and a face can see both semi-edges of an edge.}, except possibly the root face. We say that $\q$ is a quadrangulation (with a boundary and) with holes if it is a planar map with a boundary which has marked faces which need not be squares called holes, and all non-marked interior faces are surrounded by $4$ semi-edges.

We can construct a quadrangulation $\BM$ with boundary from a quadrangulation with holes $\q$ as follows. For each hole $h$ of $\q$ we take a quadrangulation with boundary $\BM^\q_h$ with boundary size equal to the length of the hole. Then, we glue to each hole its respective quadrangulation to obtain $\BM$ (for a precise definition, see Section \ref{sec:3} and Figure \ref{fig:gluing}). We call this operation $\q\glue (\BM^\q_h)_h$. When there exists $\BM^\q_h$ such that $\BM= \q\glue (\BM^\q_h)_h$ we say that $\q$ is contained in $\BM$, or $\q\Lalg \BM$.

Let us first focus on the case where the quadrangulation has no decoration. In this case, we study a collection of measures $(\P^\ell)_{\ell \in \N^*}$ such that $\P^{\ell}$ is supported on quadrangulations with boundary of semi-perimeter $\ell$. In this case the Markov property is defined as follows.

\begin{defn}[Markov Property]\label{d.MP_Undecorated}
   $(\P^\ell)_{\ell\in \N^*}$ satisfies the Markov property if for any deterministic quadrangulation with holes $\q$, $\BM\sim \P^{\ell}$ conditioned on the event $\{\q\Lalg \BM\}$ decomposes as follows
\begin{align*}
\BM= \q\glue (\BM^\q_h)_h,
\end{align*}
where the collection $\BM^\q_h \sim \P^{\text{Per}(h)/2}$ is independent and independent of $\q$, and the operator $\glue$ denotes gluing\footnotemark[\value{footnote}] $\BM^\q_h$ inside each hole $h$, for all holes.  See Figure \ref{fig:holes}.\footnote{See Section \ref{models} for the precise definitions and \Cref{fig:gluing} for an intuitive scheme.}
\end{defn}
\begin{figure}[h!]
    \centering
    \includegraphics[scale=0.8]{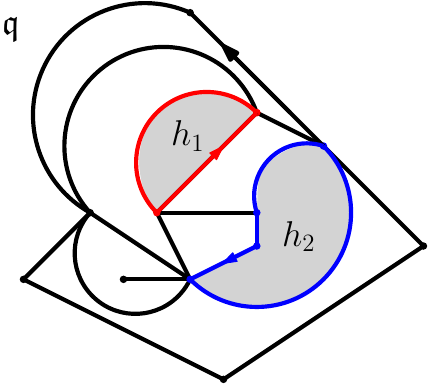}
    \caption{A quadrangulation with a boundary (defined by the root edge in black) and holes $h_1$ and $h_2$ (in grey) with special directed edges.}
    \label{fig:holes}
\end{figure}

The first result of this paper completely characterizes the space of rerooting invariant (see \ref{Invariance-root} in Section \ref{models} for a precise definition) Markovian measures on planar quadrangulations with boundary. A more precise version of the following theorem appears in Theorem \ref{t.main_undecorated}.

\begin{thm}\label{t.main_characterisation_no_decoration}
    Let $(\P^\ell)_{\ell\in \N^*}$ be a sequence of probability measures on quadrangulations with boundary that satisfy the Markov property and are invariant under rerooting. Then there exists $0\leq q\leq q_c=1/12$ such that $\P^\ell$ is the Boltzmann probability measure with weight $q$. More precisely, for any quadrangulation with boundary $\q$ of semi-perimeter $\ell$,
    \begin{align*}
    \P^\ell(\q) \propto q^{|F(\q)|},
    \end{align*}
    where $|F(\q)|$ is the number of internal faces of $\q$.
\end{thm}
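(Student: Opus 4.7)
I will reduce the theorem to a characterisation of the peeling kernel associated to $(\P^\ell)$: use the Markov property of Definition~\ref{d.MP_Undecorated} to write $\P^\ell(\BM=\q)$ as a product of peel probabilities $p(\ell,t)$, use rerooting invariance to show that these factors depend only on the current boundary semi-perimeter $\ell$ and the combinatorial type $t$ of the peel, and then identify the resulting system with Tutte's recursion for quadrangulations with a boundary.

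\textbf{From Markov to a peeling product.} Given any quadrangulation $\q$ of semi-perimeter $\ell$ with $n$ internal faces, fix a peeling order $f_1,\dots,f_n$: at step $i$, a face $f_i$ incident to the root edge of some hole of $\q_{i-1}:=\{f_1,\dots,f_{i-1}\}$ is revealed. The nested stopping submaps $\q_0\subset\q_1\subset\dots\subset\q_n=\q$ plug into Definition~\ref{d.MP_Undecorated} iteratively to give
\[
\P^\ell(\BM=\q)=\prod_{i=1}^n \alpha_i,\qquad \alpha_i:=\P^{\ell_{i-1}}\bigl(f_i\text{ is the face incident to the root}\bigr),
\]
where $\ell_{i-1}$ is the semi-perimeter of the hole peeled at step $i$. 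Invoking rerooting invariance of $\P^{\ell_{i-1}}$, the factor $\alpha_i$ depends only on the pair $(\ell_{i-1},t_i)$, where $t_i$ ranges over the finite list of combinatorial peel types describing how the non-root vertices of $f_i$ sit relative to the current hole boundary. Writing $\alpha_i=p(\ell_{i-1},t_i)$, the identity $\P^\ell(\BM=\q)=\prod_i p(\ell_{i-1},t_i)$ is automatically invariant under the choice of peeling order.

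\textbf{Tutte's equation and conclusion.} Summing the identity above over all $\q$ extending a fixed first peel type, and invoking the Markov decomposition of the children holes produced by that peel, one obtains a functional equation for $(p(\ell,t))_{\ell,t}$ that is exactly Tutte's recursion for the boundary partition function of Boltzmann quadrangulations. Setting $Z_\ell:=1/\P^\ell(\q_{\min}^\ell)$ for a minimal-face reference quadrangulation turns this into an explicit system for $(Z_\ell)_{\ell\ge 1}$, and a standard analysis shows that its positive, summable solutions form the one-parameter family indexed by $q\in[0,q_c]$ with $q_c=1/12$ (the radius of convergence of $\sum_\q q^{|F(\q)|}$). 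The corresponding measures are $\P^\ell(\q)\propto q^{|F(\q)|}$, as claimed.

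\textbf{Main obstacle.} The crux is deriving the Tutte recursion from abstract Markov plus rerooting input and invoking the uniqueness of its positive solutions. A priori the normalisations $\sum_t p(\ell,t)=1$ at each fixed $\ell$ leave infinitely many degrees of freedom; what pins the family down to the single parameter $q$ is the cross-$\ell$ consistency induced by integrating peel probabilities against the full Markov decomposition of the children holes, together with normalisability of the resulting partition functions, which enforces $q\leq q_c$.
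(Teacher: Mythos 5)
Your first step — rewriting $\P^\ell(\BM=\q)$ as a product of peel probabilities $p(\ell_{i-1},t_i)$ over a peeling order, and invoking rerooting invariance so that the factors depend only on the current semi-perimeter and the combinatorial peel type — is correct and coincides with the paper's \eqref{Decomposition type 1}, \eqref{Decomposition type 2} and \eqref{Decomposition}. The independence of the product over the choice of peeling order is indeed the key structural consequence of Markov plus rerooting.

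The proposal has a genuine gap exactly where the theorem's content lies, namely the passage from the consistency constraints on $(p(\ell,t))$ to the one-parameter Boltzmann family. You assert that summing over extensions of a fixed first peel yields ``Tutte's recursion'' and that ``a standard analysis'' shows the positive summable solutions are parametrised by a single $q$. This does not survive scrutiny: what the normalisation and consistency constraints give is a relation among the unknowns $p(\ell,\text{Type }1)$ and $p(\ell,\text{Type }2,\ell_1,\ell_2)$ jointly for all $\ell$, \emph{not} a recursion in the single numbers $Z_\ell$ with a pre-existing weight $q$. A priori $p(\ell,\text{Type }1)$ could vary with $\ell$ in a way inconsistent with any fixed $q$, and nothing in your write-up rules this out — you even acknowledge this in your ``main obstacle'' paragraph, but then leave it unresolved. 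The paper's proof of this point is concrete: introduce $\overline{\q}$, the map obtained by gluing one new square at the root edge so that $|F(\overline{\q})|=|F(\q)|+1$ and the semi-perimeter is preserved, and compute the ratio $q(\ell,\q)=\P^\ell(\overline{\q})/\P^\ell(\q)$ in two ways. First, peeling through the new face shows $q(\ell,\q)=\P^\ell(\type{1})\,\P^{\ell+1}(\type{2,0,\ell})$, so the ratio is independent of $\q$; second, comparing a map with $1$ face to one with $2$ faces, the identity $q(\ell)=\bigl(q(\ell+1)/q(\ell)\bigr)^{|F(\q)|}\cdot C(\ell)$ forces $q(\ell+1)=q(\ell)$. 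This explicit two-sided computation is what pins down the parameter; your proposal replaces it by an appeal to an external uniqueness result that is neither stated precisely nor proved.

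There is a secondary gap: before you can even define $Z_\ell:=1/\P^\ell(\q_{\min}^\ell)$, you need to know that all minimal (tree) quadrangulations of semi-perimeter $\ell$ have the same mass, and more generally that $\P^\ell$ is uniform on $\Qm^{\ell,f}$ for each $f$. Your peel-product formula does not immediately give this, since two maps with the same $(\ell,f)$ can have different sequences of peel types. The paper handles this by a separate induction on the number of faces: trees all have the same mass because one can always peel the first encountered leaf, and the inductive step uses the existence of a Type-1 peel at some boundary edge (plus rerooting) to compare two $(n+1)$-face maps via their $n$-face descendants. This induction is a necessary ingredient that the proposal omits.
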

This result may appear similar to Theorem 2 of  \cite{budzinski2021local}. However, in \cite{budzinski2021local} the do not work with the Markov property but with what they call weakly Markovian. In our context, this does not correspond to a Markov property and is more closely related to what we denote in Section \ref{sec:3} uniformly distributed on $\Qm^{\ell,f}$, the quadrangulations with semi-perimeter $\ell$ and $f$ faces. We pass through this property to prove Theorem \ref{t.main_characterisation_no_decoration}, however we always work with maps having the Markov property, so our techniques of proof are different from \cite{budzinski2021local}.

As mentioned, the Markov property of planar maps has traditionally been presented via the so-called peeling procedure. In this paper, we use an alternative approach based on what we call stopping maps. To define them, we introduce filtrations indexed by quadrangulations with holes $\q$, i.e., a collection of $\sigma$-algebras $(\F_\q)_{\q}$ such that $\q_1\subset \q_2$ implies $\F_{\q_1} \subset \F_{\q_2}$. 

We say that a Boltzmann map $\BM$ is an $\F$-Boltzmann map with parameter $q$ if 
\begin{itemize}
    \item For any quadrangulation with holes $\q$, the event $\{\q\subset \BM\}$ is $\F_{\q}$-measurable;
    \item Conditionally on $\F_\q$ and $\{\q\subset \BM\}$, the collection $(\BM^\q_h)_h$ is independent, and each $\BM^\q_h$ is a Boltzmann map with parameter $q$ and perimeter given by the boundary size of $h$.
\end{itemize}

We say that a random quadrangulation with holes $\SM$ is an $\F$-stopping map for an $\F$-Boltzmann map $\BM$ if almost surely $\SM\subset \BM$ and for any $\q$, the event $\{\SM \subset \q\}$ is $\F_\q$-measurable. We use this setting to characterize random maps with holes that induce a Markovian decomposition.

\begin{thm}\label{t.main_strong_Markov_no_decoration}
    Let $(\F_\q)_{\q}$ be a filtration, $\BM$ be an $\F$-Boltzmann map with parameter $q$, and $\SM$ an $\F$-stopping map. Then there exists a collection of maps $(\BM^\SM_h)_{h}$ indexed by the holes of $\SM$ which satisfies:
    \begin{enumerate}
        \item $\BM = \SM \glue (\BM^\SM_h)_h$;
        \item Conditionally on $\SM$, the maps $(\BM^\SM_h)_{h}$ are independent and distributed as Boltzmann maps with parameter $q$ and perimeter equal to the boundary size of $h$.
    \end{enumerate}
    Furthermore, assume we have a coupling $(\BM, \SM)$ that satisfies (1) and (2). Then there exists a filtration $(\F_\q)_\q$ such that $\BM$ is an $\F$-Boltzmann map and $\SM$ is an $\F$-stopping map.
\end{thm}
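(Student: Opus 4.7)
The plan is to treat the two implications separately. The forward direction follows the classical pattern of passing from the weak to the strong Markov property, and the converse relies on an explicit filtration built from the joint law of $(\BM,\SM)$.

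For the forward direction, I will first observe that the set of quadrangulations with holes is countable and $\{\SM\subset \q'\}\in \F_{\q'}$ for every $\q'$ by the definition of an $\F$-stopping map; it follows that
\[
\{\SM=\q\}\ =\ \{\SM\subset \q\}\setminus \bigcup_{\q'\subsetneq \q}\{\SM\subset \q'\}\ \in\ \F_\q.
\]
On this event $\q\subset \BM$ holds, so the $\F$-Boltzmann property yields, conditionally on $\F_\q$, independent Boltzmann fillings $(\BM^\q_h)_h$ of the holes of $\q$. Setting $\BM^\SM_h:=\BM^\q_h$ on $\{\SM=\q\}$ and summing over the partition $(\{\SM=\q\})_\q$ via the tower property produces the collection $(\BM^\SM_h)_h$ satisfying (1) and (2).

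For the converse, I will assume implicitly that $\BM$ is Boltzmann (as the conclusion requires it) and take
\[
\F_\q\ :=\ \sigma\!\left(\{\q'\subset \BM\},\ \SM\cdot \1_{\{\SM\subset \q'\}}\ :\ \q'\subset \q\right).
\]
Monotonicity and the $\F_\q$-measurability of $\{\q\subset \BM\}$ and $\{\SM\subset \q\}$ are then immediate. The bulk of the work lies in checking the conditional Boltzmann structure, which splits into two cases. If $\SM\subset \q$, then $\F_\q$ reveals $\SM=s$ for some $s\subset \q$; assumption (2) gives independent Boltzmann fillings $(\BM^s_{h'})_{h'}$ of the holes of $s$, and applying the weak Markov property of Definition~\ref{d.MP_Undecorated} inside each $\BM^s_{h'}$ to the sub-quadrangulation $\q\cap h'$ transfers this to independent Boltzmann fillings of the holes of $\q$.

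The delicate case is $\SM\not\subset \q$: on $\{\q\subset \BM\}$, the information in $\F_\q$ reduces to the indicator $\1_{\{\SM\not\subset \q\}}$, and a priori this global event could bias the fillings of the holes of $\q$. To resolve this I will run a short calculation from (1)--(2) and the Boltzmann marginal of $\BM$ yielding
\[
\P(\SM=s\mid \BM=m)\ =\ \alpha(s)\,\1_{\{s\subset m\}}
\]
for some $\alpha(s)$ depending only on $s$. Summing over $s\subset \q$ shows that $\P(\SM\subset \q\mid \BM=m)$ is constant on $\{\q\subset m\}$, hence $\{\SM\subset \q\}$ is conditionally independent of $\BM$ given $\{\q\subset \BM\}$. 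Conditioning further on $\{\SM\not\subset \q\}$ therefore preserves the Boltzmann law of $\BM$ on $\{\q\subset \BM\}$, and a final application of Definition~\ref{d.MP_Undecorated} delivers the independent Boltzmann fillings. This conditional-independence argument is the main technical obstacle of the proof.
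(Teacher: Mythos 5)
Your proposal is correct, and the overall structure (tower property over the countable partition $\{\SM=\q\}$ for the forward direction; an explicit filtration of the form $\sigma(\{\q'\subset\BM\},\ \SM\,\1_{\{\SM\subset\q'\}} : \q'\subset\q)$ for the converse) is essentially that of the paper, whose forward half is Theorem~\ref{t.strong_Markov} and whose converse is Theorem~\ref{t.local_implies_stopping} with the filtration \eqref{e.def_filt}. Your $\SM\,\1_{\{\SM\subset\q'\}}$ encodes the same information as the paper's $\SM^{\wedge\p}\1_{\{\p\subset\BM\}}$.

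Where you genuinely diverge is in the treatment of the ``delicate case'' $\SM\not\subset\q$. You compute directly from (1)--(2) and the Boltzmann marginal that $\P(\SM=s\mid\BM=m)=\alpha(s)\1_{\{s\subset m\}}$, deduce that $\1_{\{\SM\subset\q\}}$ is conditionally independent of $\BM$ given $\{\q\subset\BM\}$, and conclude that conditioning further on $\{\SM\not\subset\q\}$ leaves the law of $\BM$ on $\{\q\subset\BM\}$ unchanged. The paper instead runs a monotone-class argument: it proves identities of the form in Lemma~\ref{lem.local_stopping_2} (which always carry the factor $\1_{\{\SM\subset\q\subset\BM\}}$) and then handles the complementary indicators via the algebraic expansion $\1_{\{\SM\not\subset\p\}}=1-\1_{\{\SM\subset\p\}}$, reducing to cases where either some $\1_{\{\SM\subset\p\}}$ is present (which forces $\SM\subset\q$, so the lemma applies) or no $\SM$-dependence remains (so the weak Markov property of Theorem~\ref{t.weakMarkov} suffices). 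Your direct computation is more transparent about \emph{why} the event $\{\SM\not\subset\q\}$ does not bias the fillings: it makes visible that $\alpha(s)$ is a function of $s$ alone. The paper's expansion route is somewhat less illuminating on this point but generalizes more mechanically to the decorated and metric settings treated in later sections, where the analogue of your pointwise identity $\P(\SM=s\mid\BM=m)$ requires more care because the relevant events have measure zero. Both arguments are sound; yours is a worthwhile alternative for the undecorated case.

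One small point to tighten if you write this up: the theorem statement asks for independence conditionally on $\SM$, whereas the paper's Theorem~\ref{t.strong_Markov} proves the stronger conditioning on $\F_\SM$. Your forward argument as written gives the $\SM$-version directly, which suffices for the statement as posed, but if you want to match the strong Markov property you should note that your computation in fact identifies $\E\bigl[\prod_h f_h(\BM^\SM_h)\mid\F_\SM\bigr]$ on each cell $\{\SM=\q\}$ since $\{\SM=\q\}\in\F_\SM$.
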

This theorem is proved in two parts. The first one is Theorem \ref{t.strong_Markov}, where we show that $\F$-stopping maps satisfy (1) and (2). The second one is Theorem \ref{t.local_implies_stopping}, where we take a coupling $(\BM,\SM)$ and construct a filtration that satisfies the properties.

Theorem \ref{t.main_strong_Markov_no_decoration} provides a characterization of random maps that induce a Markovian decomposition. One might expect this characterization is also equivalent to the sets obtain through a (randomised) peeling procedure (see e.g., \cite{Cu}), which discovers the set $\SM$ one face at a time. However, we show in Section \ref{s.not_algorithmic} that this is not the case. There exists a stopping set $\SM$ such that there is no sequence $(\SM_n)$ of maps obtained via peelings such that $\SM = \bigcup_n \SM_n$ and  $\SM_n \setminus \SM_{n-1}$ contains at most one face.

The results presented for the undecorated case have their analogue for the decorated case. A decorated planar map is a pair $(\m,\phi)$ where $\phi$ is a function from the faces of $\m$ to $\R^n$. We are interested in probability measures $\P$ on decorated planar maps such that the law of $\phi$ given $\m = \q$ is
\begin{align}\label{e.spin-decorated1}
    \P(\phi\in d\sigma\mid \m=\q)\propto \exp\left(-\frac{\beta}{2 } \sum_{i\sim j} \|\sigma_i-\sigma_j\|^2 \right ) \prod_{k\in F(\q)}\mu(d\sigma_k),
\end{align}
for a given measure $\mu$. When one fixes the graph the probability law is known as Generalised Ising Systems, and has been studied at least since \cite{New}. We refer to probability measures that satisfy \eqref{e.spin-decorated1} as \textit{spin-decorated} maps, encompassing cases like Ising-decorated maps ($\mu= \delta_{-1} + \delta_1$), GFF-decorated maps ($\mu$ is Lebesgue in $\R$), and $O(n)$-spin systems ($\mu$ is the Haar measure on $\S^{n-1}$). As you see in the examples, the more meaningul decorations are obtained when one takes a group $G\subseteq \R^n$ and takes $\mu$ to be its Haar measure. Even though this is the setup that inspire these models, it does not simplify any proof, so we decided to work with the more general setup.

To define probability measures on spin-decorated maps, we need an additional degree of freedom: we index the probability measures by boundary conditions. These are functions from the boundary edges to the support of $\mu$ and appears in the sum as follows: every time you have to add an edge $i\sim j$ where $i$ is the exterior face, the value of $\phi(i)$ is set to be $b(i)$.

We consider a family of probability measures $(\P^{\ell,b})_{\ell,b}$ indexed by semi-perimeter $\ell$ and boundary condition $b$. We define a Markov property analogous to Definition \ref{d.MP_Undecorated} and show in Theorem \ref{t.uniqueness_decorated} that the only rerooting-invariant laws satisfying this Markov property are the spin-decorated Boltzmann maps. Specifically, for any quadrangulation with boundary $\m$ and feasible decoration $\phi$,
\begin{align*}
\P^{\ell,b}_q(\m=\q, \phi \in d\sigma) \propto q^{|F(\q)|}\exp\left(-\frac{\beta}{2} \sum_{i\sim j} \|\sigma_i-\sigma_j\|^2 \right ) \prod_{k\in F(\q)}\mu(d\sigma_k).
\end{align*}

In Theorem \ref{t.strong_Markov_decorated} and Theorem \ref{t.local_implies_stopping}, we also obtain a characterisation of random submaps that satisfy the Markov property for spin-decorated Boltzmann maps as those of stopping maps. We introduce this context as we expect that for a properly chosen decoration the map itself will converge to the so called Liouville Quantum Gravity surfaces \cite{DKRV, DMS}. These are surface constructed from the exponential of a version of a Gaussian free field and that have a surprisingly integrable behaviour \cite{KRV,GKRV,NQSZ}.

In particular, we understand part of our results as the discrete version of \cite{AG}, where it is shown, in the continuum, that one can still obtain a Markov property when cutting a Liouville Quantum Gravity by a SLE$_\kappa$ that has not the right central charge. In our context, this would be obtained by, for example, taking a map decorated by two Isings and using a stopping map that only looks at the first one. We believe that these two-models may inspire each other to obtain new results.

Finally, we study (decorated) metric planar maps: pairs $(\widetilde \m, \widetilde \phi)$,  where $\widetilde \m=(\m, (w_e)_e)$ is a metric space obtained by replacing each edge $e$ of the dual of a quadrangulation $\m$ by a copy of $[0,w_e]$, and $\widetilde \phi$ is a continuous function from the vertices of $\m$ to $\R^n$. In this case, we are also interested in spin-decorated metric maps as in \eqref{e.spin-decorated1}, however the Hamiltonian changes due to the length of the edges, and the values on the edges do not always need to lie in $G$. To be more precise, given the value of the metric map $\widetilde \m$, the restriction of the decoration to the vertices has the following law:
\begin{align}\label{e.spin-decorated2}
    \P\left (\widetilde \phi|_{V(\q)}\in d\sigma\Large\mid \widetilde{\m}=\widetilde{\q}\right )\propto  \exp\left (-\frac{\beta}{2}\sum_{i\sim j}\frac{\|\sigma_i-\sigma_j\|^2}{w_{ij}}\right )\prod_{k\in V(\q)}\mu(d\sigma_k).
\end{align}
Furthermore, conditionally on the values on the vertices, the values on the edges are independent Brownian bridges of length $w_e$, reflecting the behaviour of spin $O(N)$-systems in the metric graph \cite{LW,AGS}.

In this context, we introduce probability measures satisfying the condition to be a metric map and that satisfies the Markov property even when stopping mid-edge. We call them Boltzmann metric maps and they are defined by
\begin{align*}
\P(\widetilde \m \in d\widetilde \q, \widetilde \phi \in d\Msigma) \propto q^{|V(\q)|}\left( \prod_{ij \in E(\tilde \q)} e^{-\lambda w_{ij} }\hat \P_{w_{ij}}^{\tilde \sigma_i, \tilde \sigma_j}\left( d\tilde \sigma|_{ij}\right ) dw_{ij}\right ) \prod_{v\in V(\tilde \q)} \mu(d\tilde \sigma_v),
\end{align*}
where $\hat \P_{w}^{a,b}$ is the unnormalised $n$-dimensional Brownian bridge measure\footnote{The precise definition appears in \eqref{e.unnormalised_Brownian_Measure}.}, that measures paths of length $w$ that go from $a$ to $b$ and has total mass
\begin{align*}
\frac{\exp\left(-(a-b)^2/(2w)\right ) }{(2\pi w)^{n/2} }.    
    \end{align*}

We show, in Theorem \ref{t.characterization_metric}, that Boltzmann metric maps are the unique laws on metric maps that satisfy the Markov property and are invariant under rerooting. This was surprising for us, as these maps do not exactly add an exponential weight to the edges, but also penalise small edges by factor of the edge length to the power $-n/2$. Of course, in Theorem \ref{t.metric_strong_Markov}, we also characterise the random metric sub maps that satisfy the Markov property for Boltzmann metric maps.

\subsection{Ideas of proof}
As we have discussed, there are two main types of theorems in this work. The first one characterises laws that satisfy the Markov property and the second one characterises the (random) submap that satisfy the Markov property. Remarkably, this paper is completely self contained, except by the fact that we use that certain sums over quadrangulations with boundaries are finite.

\subsubsection*{Characterisation of laws} Let us first describe the proof strategy in this case. We start with an intermediate step. We first restrict our attention to a subclass of probability measures that satisfies both the Markov property, and whose law is prescribed on maps with a fixed number of faces (or, in the metric setting, a fixed edge lengths). We require that the law conditioned on this set is proportional to the partition function of the decoration (uniform in the case of undecorated ones).

To show that the intermediate step implies being a Boltzman map, we locally modify the map by adding a face without altering the energy of the decoration. We show that, for any quadrangulation, the probability of the original map and that of its modified version only depends on the length and decoration of the boundary. By writing this ratio in two different ways, we deduce that it must be equal to a global parameter~$q$, which corresponds to the Boltzmann weight. Then, we need to show that the Markov property and the reroot invariance imply the intermediate step. To do this, we use different types of peeling to obtain the same decorated map. In this case, we use induction to reduce the amount of faces and conclude.

The proof strategy works almost directly in the case of undecorated maps, which is why we separate this context in its own section. However the decorated and metric case, need additional ideas to make them work.
\begin{itemize}
    \item In the case of decorated maps, there are two main difficulties. We need to work with densities instead of probabilities, this implies that equalities are not everywhere, but almost everywhere. In consequence we need to prove continuity of the densities by properly using the Markov property. The second problem is more fundamental: the argument fails if the measure is supported only on trees. This implies that even to understand the probabilities on trees we need to add faces and use the spin-decoration property.
    \item In the metric case a new difficulty is added, we need to work with densities both containing the decoration and lengths. Thus we need to show that certain types of continuous ``peeling'' procedures admit a density with respect to natural reference measures. Furthermore, the reason that the polynomial powers appear is explained by exploring an edge without hitting a (dual) vertex. We see that typically we do not satisfy the memoryless property because as we explore the edge the values of the decoration change. However, if we came back to the same value that we started with we would have this memoryless property. The density for a Brownian motion to come back to the same point is the one that adds the term $\omega^{-n/2}$.
\end{itemize}

\subsubsection*{Characterisation of submaps} We again split the proof in two parts. First, we define stopping maps and show that they satisfy the Markov property.. To do this, we are inspired by both the classic theory of Markov processes and the theory of local sets of the Gaussian free field from the point of view of its filtrations (See Chapter 1 of \cite{Aru}). In our case, filtrations are indexed by possible submaps rather than time or closed sets. Particular care is needed-- specially in the case of metric maps-- to ensure that the index set is not too large so  that the event $\{\q\subseteq \BM\}$ has positive probability. Once the right definitions are set in place, many of the proofs closely mirror classical arguments. Thus, the main difficulty is properly choosing the right indexation for the filtrations, this is somehow direct in the undecorated and decorated case, but there is some work to be done in the metric case.

Second, we show that every random submap that induces a Markovian decomposition must in fact be a stopping map. This is done by starting with a Markovian coupling (or ``local map coupling,'' in analogy with the GFF; see \cite{SS, Aru}) and explicitly constructing a filtration $\F$ such that the Boltzmann map is an $\F$-Boltzmann map and the submap is an $\mathcal{F}$-stopping map. The key input here is that the decomposition of a Boltzmann map along a deterministic submap $\q$ can be obtained by first sampling the local part and then decomposing the remainder, using what is left of $\q$.

\subsubsection*{A stopping map that cannot be constructed through peelings.} Finally, let us discuss how to construct a stopping map $\SM$ that is not Markovian. For simplicity, we work with a Boltzmann quadrangulation with semi-perimeter $1$. We start from the root edge and we take the right-most face successively (this forms a cycle) in which the same edge is never discovered twice by the peeling. This exploration can also be obtained backwards by starting exploring from the other boundary edge and following left-most face successively. Now, consider the smallest subcycle (with no repeated faces following the right-most edge) along this path that contains the root edge (see Figure \ref{fig:Q} for the drawing in the dual). This is in fact a stopping map but it cannot be obtained via a peeling procedure. To prove the later fact, one assumes that such a peeling exists and proceeds discovering it one edge at a time. Via a simple case by case argument one sees that, apart from the two boundary edges, it is impossible to discover any other face without having positive probability of discovering a face that does not belong to $\SM$.
 
 \medskip

The paper is organised as follows. Section \ref{s.prel} contains the preliminaries. In Sections \ref{sec:3}, \ref{s.decorated}, and \ref{s.metric}, we characterise maps that satisfy the weak Markov and strong Markov properties using stopping maps, in the context of undecorated, decorated, and metric quadrangulations, respectively. In Section \ref{sec:LocalMaps}, we show that if a submap induces a Markovian decomposition of a Boltzmann map, then it must be a stopping map for a suitable filtration. Finally, in Section \ref{s.not_algorithmic}, we present an example of a stopping map that cannot be obtained via the peeling procedure. 
 
\subsection*{Acknowledgements}
P.A. and A.S. thank the program "Probabilistic methods in quantum field theory" of the Hausdorff Research Institute for Mathematics where the last parts of this paper were finished. P.A. is supported by ANID-Subdireccion de Capital Humano/Doctorado Nacional/2023-21231096. L.F is partially supported by the Project PEPS JCJC 2025–UMR 5251 (IMB), INSMI. The research of A.S.\,is supported by Centro de Modelamiento Matem\'{a}tico Basal Funds FB210005 from ANID-Chile, FONDECYT regular 1240884 and ERC 101043450 Vortex, and was supported by FONDECYT iniciaci\'on de investigaci\'on N$^o$ 11200085. 

\section{Preliminaries}\label{s.prel}
In this section, we will discuss several key lemmas related to conditional probabilities and important properties of Brownian bridges.

\subsection{On conditional probabilities}
In this subsection we establish a lemma that allow us to work with conditional laws that are key to establish the Markov property in the future sections.

\begin{lemma}[Theorem 8.5 of \cite{kallenberg2001foundations}]
\label{l.Ley_condicional}
Let $\mathbf X$ and $\mathbf Z$ be two Polish spaces, $\mu$ and $\nu$ two measures on $\mathbf X$ and $\mathbf Z$ respectively and $F:\mathbf X\times \mathbf Z\to \R^+$ a measurable function. Now, take a pair $(X,Z)$ of random variables whose law is proportional to $ F(x,z)\mu(dx)\nu(dz)$. Then, the regular conditional law of $X$ given $Z$ is 
\begin{align*}
 \P(X\in dx\mid Z=z) \propto F(X,z)\mu(dx).
\end{align*}
\end{lemma}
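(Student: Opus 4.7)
My plan is to verify the claimed regular conditional distribution by directly checking the defining property: that for every measurable rectangle $A \times B \subseteq \mathbf{X} \times \mathbf{Z}$, one has
\begin{equation*}
\P(X \in A,\, Z \in B) = \int_B \P(X \in A \mid Z = z)\, \P_Z(dz),
\end{equation*}
where $\P_Z$ is the marginal law of $Z$. The existence of a regular version then follows from the fact that $\mathbf{X}$ is Polish (standard Borel is enough), which is why this assumption is imposed.

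First, let $C = \int F(x,z)\,\mu(dx)\nu(dz)$, which is finite and positive by the hypothesis that the joint law is a probability measure proportional to $F(x,z)\mu(dx)\nu(dz)$. Set $g(z) := \int F(x,z)\,\mu(dx)$, which is $\nu$-measurable by Tonelli's theorem. Computing the $Z$-marginal by integrating out $x$ gives $\P_Z(dz) = C^{-1} g(z)\,\nu(dz)$, and in particular $g(z) < \infty$ and $g(z) > 0$ for $\P_Z$-almost every $z$. On the exceptional $\nu$-null set $\{g = 0\} \cup \{g = \infty\}$ one may define $\P(X \in \cdot \mid Z = z)$ arbitrarily (say as a fixed probability measure $\mu_0$) without affecting the conclusion.

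Next, for $z$ in the set where $0 < g(z) < \infty$, define the candidate conditional law
\begin{equation*}
Q_z(dx) := \frac{F(x,z)}{g(z)}\,\mu(dx),
\end{equation*}
which is a probability measure on $\mathbf{X}$. The map $z \mapsto Q_z(A)$ is measurable for each Borel $A$ by Fubini, so $(Q_z)_z$ is a Markov kernel. To verify it is indeed the conditional law, I compute
\begin{equation*}
\int_B Q_z(A)\, \P_Z(dz) = \int_B \frac{1}{g(z)}\int_A F(x,z)\,\mu(dx)\, \frac{g(z)}{C}\,\nu(dz) = \frac{1}{C}\int_B\int_A F(x,z)\,\mu(dx)\,\nu(dz),
\end{equation*}
and the right-hand side equals $\P(X \in A,\, Z \in B)$ by the definition of the joint law. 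Since measurable rectangles generate the product $\sigma$-algebra and both sides are finite measures on $\mathbf{X} \times \mathbf{Z}$, a monotone class argument extends the identity to all joint Borel sets.

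The only delicate point is the existence of a regular version of the conditional law, i.e.\ a version $(Q_z)_z$ that is simultaneously a probability kernel and a conditional distribution. The explicit kernel constructed above already has this property wherever $g(z) \in (0,\infty)$, and the remaining set is $\P_Z$-null, so patching with $\mu_0$ preserves measurability and the identity above. This is the standard content of Theorem 8.5 of \cite{kallenberg2001foundations}; the Polish hypothesis is used only to guarantee that a regular conditional law exists a priori, but here the explicit formula supplies it directly, so the main (and only real) obstacle---existence of a regular disintegration---is bypassed by construction.
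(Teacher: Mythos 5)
Your proof is correct. The paper itself gives no argument for this lemma, instead deferring to Theorem 8.5 of Kallenberg, so there is no internal proof to compare against; your self-contained verification (explicit construction of the kernel $Q_z(dx)=F(x,z)\mu(dx)/g(z)$, measurability via Fubini--Tonelli, disintegration identity on rectangles, monotone class extension, and the observation that $\{g=0\}\cup\{g=\infty\}$ is $\P_Z$-null) is the standard and appropriate way to establish it, and your closing remark that the Polish hypothesis is superfluous once the kernel is written down explicitly is also accurate.
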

By this notation we mean that the law of $X$ given $Z$ is absolutely continuous with respect to $\mu$ and its Radon-Nykodim derivative is proportional to $F(X,z)$.

\subsection{On non-normalised Brownian Bridges} 
For $u,v,w\in\R$, we defined the following measure, denoted as $\hat{\P}^{u,v}_{w}$
\begin{align}\label{e.unnormalised_Brownian_Measure}
    \hat{\P}^{u,v}_{w}\left[g\left((P_t)_{t\in[0,w]}\right)\right] = \frac{1}{\sqrt{2\pi w}}\exp\left(-\frac{1}{2w}(u-v)^2\right)\E^{u,v}_{w}\left[g\left((P_t)_{t\in[0,w]}\right)\right].
\end{align}
where $\E^{u,v}_{w}$ denotes the expected value of the non-normalised Brownian Bridge conditioned to start at $u$ and finish on time $w$ at $v$.
The following lemma is a classical property of the Brownian bridges that allow us to decompose the law of a Brownian bridge for a point in the middle of its trajectory.
\begin{lemma}\label{lem-decomp}
Take $u,v,w_1,w_2\in \R$. Then,
\begin{align*}
   \int_{\R}\hat{\P}^{u,z}_{w_1}\times \hat{\P}^{z , v}_{w_2}dz  =\hat{\P}^{u,v}_{w}, 
\end{align*}
where $w = w_1 + w_2$. 
\end{lemma}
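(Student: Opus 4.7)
The identity is a restatement of the Markov decomposition of a Brownian bridge at an intermediate time, combined with the definition of $\hat{\P}^{u,v}_w$ as the product of the Gaussian heat kernel and the normalised bridge law. My plan is to evaluate both sides on an arbitrary bounded measurable functional $g$ of the path and match the resulting expressions term by term.

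Write $p_w(u,v) := (2\pi w)^{-1/2}\exp(-(u-v)^2/(2w))$, so that by \eqref{e.unnormalised_Brownian_Measure} one has $\hat{\P}^{u,v}_w = p_w(u,v)\,\P^{u,v}_w$, where $\P^{u,v}_w$ denotes the normalised Brownian bridge law on $C([0,w],\R)$. The only nontrivial input I will use is the classical bridge decomposition: if $(P_t)_{t\in[0,w]}\sim \P^{u,v}_w$ with $w=w_1+w_2$, then the density of $P_{w_1}$ at $z\in\R$ equals $p_{w_1}(u,z)\,p_{w_2}(z,v)/p_w(u,v)$, and conditionally on $\{P_{w_1}=z\}$ the two halves of the path $(P_t)_{t\in[0,w_1]}$ and $(P_{w_1+s})_{s\in[0,w_2]}$ are independent bridges distributed as $\P^{u,z}_{w_1}$ and $\P^{z,v}_{w_2}$ respectively. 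This is standard and can be derived directly from the Gaussian structure of the bridge, or from its representation as a Doob $h$-transform of Brownian motion.

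Applying this decomposition inside $\E^{u,v}_w[g]$, multiplying through by $p_w(u,v)$, and regrouping factors as $p_{w_1}(u,z)\,\E^{u,z}_{w_1} = \hat{\P}^{u,z}_{w_1}$ and $p_{w_2}(z,v)\,\E^{z,v}_{w_2} = \hat{\P}^{z,v}_{w_2}$, the identity $\hat{\P}^{u,v}_w[g] = \int_\R \hat{\P}^{u,z}_{w_1}\times \hat{\P}^{z,v}_{w_2}[g]\,dz$ drops out after integrating in $z$. Note that the prefactor $p_w(u,v)$ cancels: this is precisely why the clean semigroup identity holds at the level of the unnormalised measures rather than the normalised ones, and specialising to $g\equiv 1$ recovers the Chapman--Kolmogorov equation for the Gaussian heat kernel as a free by-product.

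I do not foresee any substantive obstacle. The two small care points are (i) interpreting $\hat{\P}^{u,z}_{w_1}\times \hat{\P}^{z,v}_{w_2}$ as a measure on $C([0,w],\R)$ via concatenation of the two path segments at time $w_1$, and (ii) a routine Fubini to swap the $z$-integral with the expectation over paths. Both are immediate by approximation with cylinder functionals of $g$, so the entire argument is essentially a bookkeeping exercise once the bridge decomposition is invoked.
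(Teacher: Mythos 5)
Your proposal is correct and takes essentially the same route as the paper: both decompose the bridge at time $w_1$, use conditional independence of the two halves, and observe that the Gaussian normalising factors combine so that the identity closes at the level of the unnormalised measures. The only cosmetic difference is that the paper verifies the density identity $p_{w_1}(u,z)\,p_{w_2}(z,v)/p_w(u,v)$ by an explicit manipulation of the exponent, whereas you cite it as the standard bridge decomposition.
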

\begin{proof}
    Take $f$ and $g$ two bounded measurable real valued functions. Then,
    \begin{align*}
    \E^{u,v}_w\left[f((P_t)_{t\in[0,w_1]})g((P_t)_{t\in[w_1,w]})\right] &= \E^{u,v}_w\left[\E^{u,v}_w\left[f((P_t)_{t\in[0,w_1]})g((P_t)_{t\in[w_1,w]})|P_{w_1}\right]\right]\\
    &= \E^{u,v}_w\left[\E^{u,P_{w_1}}_{w_1}\left[f((P_t)_{t\in[0,w_1]})\right]\E^{P_{w_1},v}_{w_2}\left[g((P_t)_{t\in[0,w_2]})\right]\right],
    \end{align*}
where we used the Markov property to separate the conditional expected value. Then,
\begin{align*}
     &\E^{u,v}_w\left[f((P_t)_{t\in[0,w_1]})g((P_t)_{t\in[w_1,w]})\right] \\
     = &\int_{\R}\sqrt{\frac{w}{2\pi w_1w_2}}\exp\left(-\frac{w}{2w_1w_2}\left(z - \left(\frac{w_2}{w}u + \frac{w_1}{w}v\right)\right)^2\right)\E^{u,z}_{w_1}\left[f((P_t)_{t\in[0,w_1]})\right]\E^{z,v}_{w_2}\left[g((P_t)_{t\in[0,w_2]})\right]dz.
\end{align*}
Notice that we can rewrite the factor inside of the exponential as follows
\begin{align*}
    \frac{w}{2w_1w_2}\left(z - \left(\frac{w_2}{w}u + \frac{w_1}{w}v\right)\right)^2 = \frac{1 }{2 w_1 }(z-u)^2 +\frac{1 }{2w_2 }(z-v)^2 - \frac{1}{2w}\left(u-v\right)^2.
\end{align*}
Replacing this factor we obtain the following
\begin{align*}
    \E^{u,v}_w\left[f((P_t)_{t\in[0,w_1]})g((P_t)_{t\in[w_1,w]})\right]
    &= \frac{1}{Z^{u,v}}\int_\R \hat{\E}^{u,z}\left[f((P_t)_{t\in[0,w_1]})\right]\hat{\E}^{z,v}\left[g((P_t)_{t\in[0,w_2]})\right]dz,
\end{align*}
where the constant $Z^{u,v}$ is equal to 
\begin{align*}
    Z^{u,v}=\sqrt{2\pi w}\exp\left(\frac{1}{2w}(u-v)^2\right).
\end{align*}
 This proves the lemma.
\end{proof}

\begin{rem}\label{r.decomposition_BB}
Lemma \ref{lem-decomp} can also be written as
\begin{align*}
\hat \P^{u,v}_w[ dx] =  dx_{w_1} \hat \P^{u,x_{w_1}}_{w_1}(dx|_{[0,w_1]}) \hat \P^{x_{w_1},v}_{w_2}(dx|_{[w_1,w]}),
\end{align*}
where the density associated to the measure $\hat{\P}^{u,v}_w$ can be decomposed as the multiplication of the densities associated to stop in the middle of the trajectory itself.
\end{rem}

\section{Markov properties and associated measures : non-decorated maps.}\label{sec:3}

In this section, we give a theoretical framework for the spatial Markov property associated to a general type of maps defined by \cite{Cu}. We also give a characterization of the maps that have the Markov property under reasonable hypothesis.
\subsection{Model.}\label{models}
A planar map $\m$ is a finite connected graph that is properly embedded in the sphere  with a distinguished oriented edge $e_r$ called its \textit{root} and whose starting vertex is called the \textit{root} vertex. We denote the following sets associated to $\m$,
\begin{itemize}
    \item $V(\m);$ set of vertices of $\m$,
    \item $E(\m);$ set of edges of $\m$,
    \item $\overrightarrow{E}(\m);$ set of oriented edges (or semi-edges) of $\m$,
    \item $F(\m);$ set of faces of $\m$.
\end{itemize}

Furthermore, we call \textit{root face} the face to the right of $e_r$. We define the \textit{perimeter} of $\m$, denoted as $Per(\m)$, to the degree of the root face. 

An important type of map is the map with holes. We say that the map has \textit{holes} if the map is provided with a sequence of distinguished faces $h_1,\dots,h_n$ each one with a marked oriented edge $e_1,\dots,e_n$. We denote $H(\m)$ as the set of all holes of $\m$. Also, we denote $\PM_H$ as the space of rooted planar maps with holes. Additionally, we define the \textit{active boundary} of $\m$, denoted as $Active(\m)$, as the edges adjacent to the holes of $\m$. 

From here onward, and for the sake of simplification\footnote{All the result of this paper should extend easily in the context of more general maps.}, we are going to work only with maps that are quadrangulations with a boundary, that is to say all faces are squares except maybe the exterior one. In this context and for $\ell, f\in \N$, we define $\Qm^{\ell,f}$ as the set of  quadrangulations with boundary of half-perimeter $\ell$ and $f$ interior faces, $\Qm^{\ell}= \bigcup_{f}\Qm^{\ell,f}$ and  $\Qm= \bigcup_{\ell} \Qm^{\ell}$. Furthermore, the set of quandgrangulations with holes is denoted by $\Qm_H$.

Now, we define a way to glue maps. Let $\q_1$ be a rooted planar map with only one hole $h$ and $\q_2$ be a rooted planar map with holes having $Per(\q_2) = deg(h)$. We identified the associated distinguished edge of the hole with the root edge of $\q_2$. Then, following the orientation of this oriented marked edge, we identified the edges (and consequently the vertices) of the boundary of $\q_2$ with the edges of the hole $h$. The resulting map of \textit{gluing $\q_2$ into $h$}, denoted as $\q_1\glue\q_2$, is the map with vertex $V(\q_1)\cup V(\q_2)$ and edges $E(\q_1)\cup E(\q_2)$ counting only one time the identified vertices and identified edges, with holes $H(\q_2)$, and with the same boundary as $\q_1$ maintaining the root. In the case that $\q_1$ has more than one hole, we can glue this map with a collection of maps $(\q_h)_{h\in H(\q_1)}$ such that $Per(\q_h) = deg(h)$ and $\q_h$ is glued with the hole $h$. In this case, we denote this new map as $\q_1 \glue (\q_h)_{h\in H(\q_1)}$.
\color{blue}
\begin{figure}[h!]
    \centering
    \includegraphics[scale = 1]{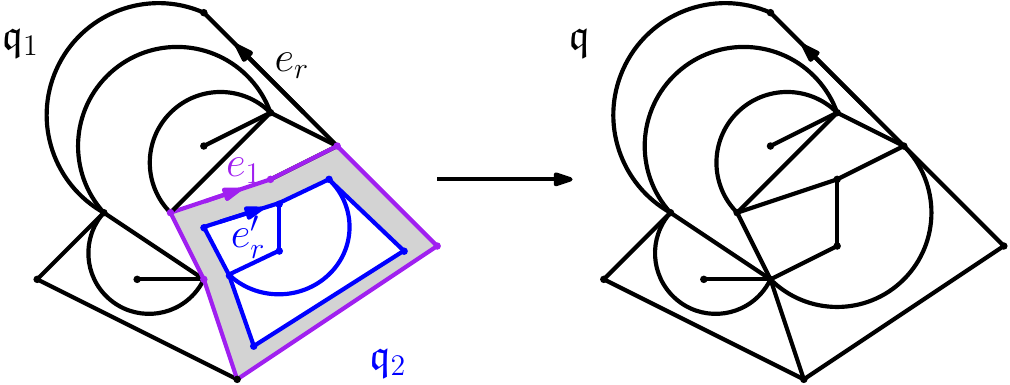}
    \caption{An example of a map $\q$ obtained from the gluing between $\q_1$ and $\q_2$, where  to the left we see $\q_1$ the map with one hole (colored gray and surrounded by purple) and $\q_2$x the map in blue. The transformation identifies (glues) $e_1$ the marked edge of $\q_1$ and $e_r$ the root edge of $\q_2$ in blue and all the edges following the sense of them in the hole of $\q_1$ and the edges of the external face of $\q_2$.}
    \label{fig:gluing}
\end{figure}
\color{black}

Thanks to this operation, we can define an order relation between maps. We say that a map with holes $\q_1$ is a \textit{submap} of a map with holes $\q_2$ with the same root of $\q_1$, denoted as $\q_1\Lalg \q_2$, if there exists a collection of maps with holes $(u_h)$, indexed by the holes of $\q_1$, such that
\begin{itemize}
    \item $Per(u_h) = deg(h)$, for every $h\in H(\q)$,
    \item and, $\q_2 = \q_1\glue (u_h)_{h\in H(\q_1)}$.
\end{itemize}
\begin{rem}
The second condition of the relation $\Lalg$ assures that the boundary of $\q_1$ it has to be the same as $\q_2$ maintaining the root edge.
\end{rem}
\begin{rem}\label{rem_bef_pee}
    The collection of maps $(u_h)_{h\in H(\q_1)}$ that needs to be glued to a map $\q_1$ to obtain another map $\q_2$ is uniquely defined for any map $\q_1 \Lalg \q_2$.
\end{rem}
This relation has a minimal element called the \textit{cemetery}, denoted as $\dagger$, which is the empty map. This map satisfies that $\dagger\Lalg\q$, for any $\q\in\Qm_H$.

The previous order relation allows a way of exploring rooted planar maps in an algorithmic manner\footnote{Recall that we are working with finite planar maps.}, called the \textit{peeling exploration}. Starting from the face next to the root $\mathfrak{e}_0$, the algorithm discovers a face of the map $\q$ in each iteration, giving as a result, a finite collection of maps $(\mathfrak{e}_i)_{i}$, such that
\begin{align}
    \mathfrak{e}_0\Lalg\mathfrak{e}_1\Lalg \dots \Lalg\mathfrak{e}_n = \q.
\end{align}
This exploration depends on a function $\mathcal{A}$, called the \textit{peeling algorithm}, which takes a map with holes $\mathfrak{e}$ and gives an edge $e$ from its active boundary. For a map $\q$ such that $\mathfrak{e}\subseteq \q$ we denote $F_e$ the face in $\q$ that is adjacent to $\q$ that was not adjacent to $e$ in $\mathfrak e$. Then $F_e$ can be of two types.
\begin{itemize}
    \item\textbf{Peeling of type 1:} The face $F_e$ is not a face of $\mathfrak{e}_i$. Then, $\mathfrak{e}_{i+1}$ is obtained by gluing $F_e$ with $\mathfrak{e}_i$ on $e$. The new active boundary set is obtained from the previous one by suppressing the selected edge $e$ and adding the newly discovered edges in $F_e$. The new active boundary is the result of erasing the selected edge $e$ and adding the rest of the edges of the discovered face $F_e$. In what follows, we denote by \type{$1$}  the peeling of type 1.
    \item\textbf{Peeling of type 2:} The face $F_e$ is actually a face of $\mathfrak{e}_i$. In this case, the edge $e$ is identified in $\mathfrak{m}$ with another edge $\overline{e}$ on the boundary of the same hole. The map $\mathfrak{e}_{i+1}$ is obtained from $\mathfrak{e}_i$ by identifying the half-edge $e$ with the half-edge $\overline{e}$. The holes that do not have $ e$ as a half-edge remain unchanged, while the hole that has the edge $e$ is now divided into two holes\footnote{Note that for this to work, we assign the label $1$ to the hole to the left of the half edge.}, $h_1$ and $h_2$, with perimeters $Per(h_1) + Per(h_2) - 2 = Per(\mathfrak{e}_i)$, joined by this edge $e$. Any of those holes that did not have a marked half-edge is given a new marked half-edge starting from $e$ and going in the right-hand direction. We denote as \type{$2,\ell_1,\ell_2$}  the peeling of type 2 with $Per(h_1) = \ell_1$ and $Per(h_2) = \ell_2$.
\end{itemize}
 We denote as $Peel(\mathfrak{e_i},e,\q)$ to the resultant map of peeling $e\in Active(\q) $ from $\mathfrak{e_i}$  (See Figure \ref{fig:peel}).  
 
 We, now, state some important properties that may be satisfied by the law of a random planar quadrangulation.

\begin{figure}[h!]
    \centering
    \includegraphics[width=5in]{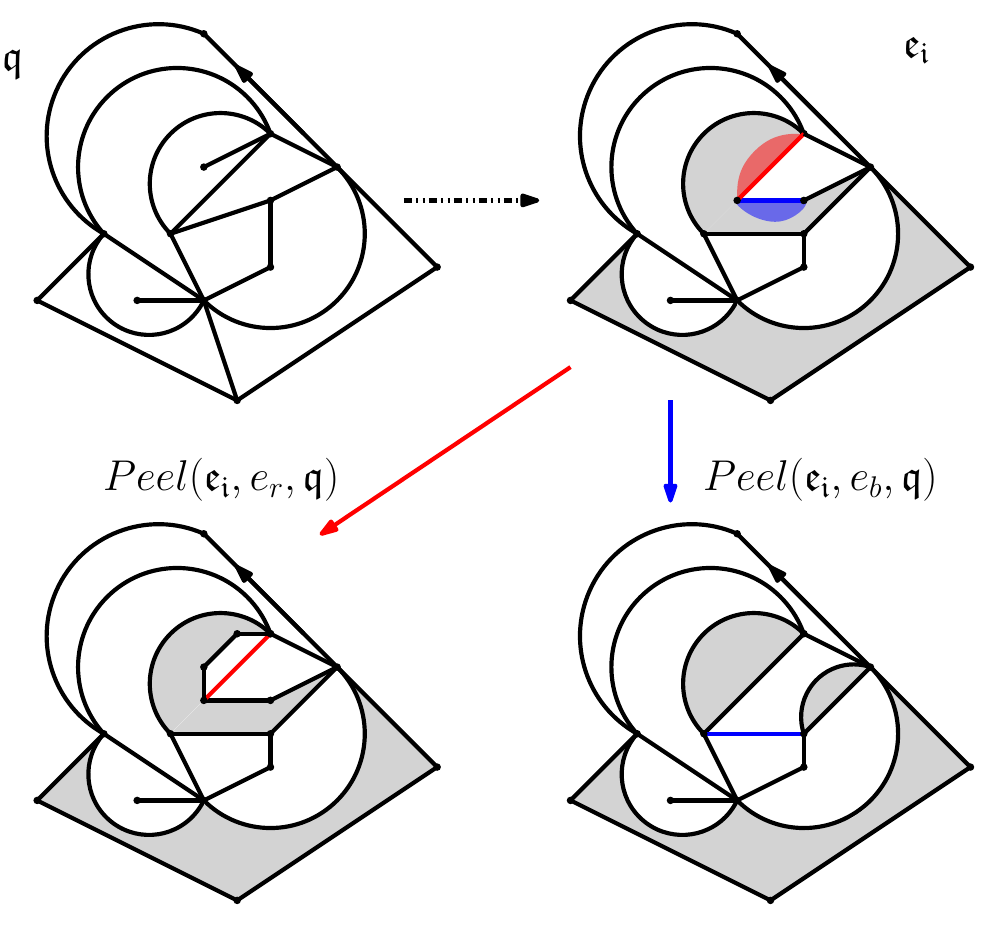}
    \caption{At the top left, the map $\q$ and immediately to its right the map with holes obtained after some peeling iterations. We colored white the discovered map by the peelings so far and here the boundary of the grey regions represents the active boundary. At the bottom left the result of a peeling iteration type 1 (\textit{Type $1$}) on $\q$ from $\mathfrak{e_i}$ when peeling the red edge $e_r$. And to its right the result of a peeling iteration type 2 (\textit{Type $2$,4,4}) on $\q$ from $\mathfrak{e_i}$ when peeling the blue edge $e_b$.}
    \label{fig:peel}
\end{figure}

\begin{enumerate}
    \item\label{Invariance-root} \textbf{(Invariance under rerooting)}
    $\BM$ is \textit{invariant under rerooting} if for any deterministic rooted quadrangulation $\q_1$ and any copy $\q_2$ of $\q_1$ differing only on the position of the root edge, $\BM$ satisfies
    \begin{align*}
            \P(\BM=\q_1) = \P(\BM=\q_2).
    \end{align*}
    \item\label{uniform-fix-faces} \textbf{(Uniformly distributed on $\Qm^{\ell,f}$)} $\BM$ is \textit{uniformly distributed in $\Qm^{\ell,f}$} if for any $\q_1,\q_2 \in \Qm^{\ell,f}$ 
    \begin{align*}
        \P\left(\BM= \q_1\right) = \P\left(\BM= \q_2\right).
    \end{align*}
\end{enumerate}
\begin{rem}\label{rem:unif-re}
    If $\BM$ is uniformly distributed on $\QQ^{\ell,f}$, then it immediately satisfies the property of invariance under rerooting, since the rerooting procedure preserves the number of faces and the half-perimeter.
\end{rem}

Now we present a well known law on random quadrangulations that is key for the results of this paper: the Boltzmann map. For $q> 0$ and $\ell \in \N$, we say that $\BM$ is a \textit{$q$-Boltzmann map} with half-perimeter $\ell$ if the law of $\BM$ is supported on $\QQ^{\ell}$ and is given by 
\begin{align*}
    \P^{\ell}(\BM = \q) =\frac{1}{W_q^{\ell}}q^{|F(\q)|},\qquad \forall \q\in \QQ^{\ell}. 
\end{align*}
Here $W_q^{\ell}$ is the normalising constant. Note that this probability measure only make sense as long as $q$ is a positive constant such that $W_q<\infty$, which is true for $q\leq \frac{1}{12}$ (see for example \cite{Cu} ). We will only mention the parameter $q$ when necessary, but in general we omit it.

An important result about the Boltzmann maps is the following.
\begin{prop}\label{prop:inv}
    Boltzmann maps are invariant under rerooting (\ref{Invariance-root}) and are uniformly distributed on $\QQ^{\ell,f}$ (\ref{uniform-fix-faces}).
\end{prop}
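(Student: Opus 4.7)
The plan is to observe that both properties are immediate from the fact that the Boltzmann weight $q^{|F(\q)|}/W_q^{\ell}$ depends on $\q$ only through its half-perimeter and its number of interior faces.

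First I would establish uniformity on $\QQ^{\ell,f}$. Given any two quadrangulations $\q_1, \q_2 \in \QQ^{\ell,f}$, both have half-perimeter $\ell$ (so both lie in the support of $\P^\ell$) and both have $|F(\q_i)| = f$. Substituting into the defining formula,
\begin{align*}
\P^{\ell}(\BM = \q_1) = \frac{q^{|F(\q_1)|}}{W_q^{\ell}} = \frac{q^f}{W_q^{\ell}} = \frac{q^{|F(\q_2)|}}{W_q^{\ell}} = \P^{\ell}(\BM = \q_2),
\end{align*}
which is exactly property \eqref{uniform-fix-faces}.

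Next I would deduce invariance under rerooting. If $\q_2$ is obtained from $\q_1$ by moving the root edge, the underlying unrooted embedded graph is unchanged, so $|F(\q_1)| = |F(\q_2)|$ and both maps have the same half-perimeter $\ell$. Hence $\q_1, \q_2 \in \QQ^{\ell, |F(\q_1)|}$ and the previous computation gives $\P^{\ell}(\BM=\q_1) = \P^{\ell}(\BM=\q_2)$. Alternatively, this is exactly the content of Remark \ref{rem:unif-re}, so once uniformity on $\QQ^{\ell,f}$ is established, invariance under rerooting is free.

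There is no substantive obstacle here: the proof is a direct unpacking of definitions, since the Boltzmann weight is a function of $(\ell, |F(\q)|)$ alone and both of these quantities are constant on $\QQ^{\ell,f}$ as well as under rerooting.
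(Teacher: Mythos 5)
Your proof is correct and follows the same approach as the paper: establish uniformity on $\QQ^{\ell,f}$ by direct computation from the Boltzmann weight, then deduce rerooting invariance via Remark \ref{rem:unif-re} since rerooting preserves both half-perimeter and number of faces. The paper's proof is essentially identical.
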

\begin{proof}
  From Remark \ref{rem:unif-re} it suffices to prove the uniform distribution on $\QQ^{\ell,f}$. Let $\q_1, \q_2\in \QQ^{\ell,f}$, then
    \begin{align*}
        \P^{\ell}(\BM = \q_1) = \frac{1}{W_q^{\ell}}q^{|F(\q_1)|} =  \frac{1}{W_q^{\ell}}q^{|F(\q_2)|} = \P^{\ell}(\BM = \q_2).
    \end{align*}
\end{proof}

\subsection{Markov property: stopping maps.}
In this section, we state the Markov property for Bolztmann maps. We begin by stating the weak Markov property associated to conditioning on an event depending on a deterministic submap, and then, we describe a type of random submaps that also satisfies the Markov property that we call \textit{stopping maps}, as it is folklore we call this version the strong Markov property. In Section \ref{sec:LocalMaps}, we will also show that any submap that satisfies the Markov property is also a stopping map.
\subsubsection{Weak Markov property.}
In this section, we show that Boltzmann maps satisfy the weak Markov Property.
\begin{thm}\label{t.weakMarkov}
    Let $\BM$ be a Boltzmann map and $\q\in\Qm_{H}$. Then, on the event $\{\q \Lalg \BM\}$, $\BM$ can be decomposed as follows
    \begin{align*}
        \BM = \q \glue (\BM^\q_h)_{h\in H(\q)},
    \end{align*}
    where $(\BM^\q_h)_{h\in H(\q)}$ is a collection of independent $q$ Boltzmann maps with boundary $h$ for every $h\in H(\q)$.
\end{thm}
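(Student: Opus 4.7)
The plan is to prove this via a direct computation using the explicit Boltzmann formula together with the uniqueness of the decomposition from Remark \ref{rem_bef_pee}. The argument is essentially combinatorial: the Boltzmann weight multiplies across the gluing, so conditioning on $\{\q \Lalg \BM\}$ will factor cleanly.

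First, I would exploit Remark \ref{rem_bef_pee}: on $\{\q \Lalg \BM\}$ there is a \emph{unique} tuple $(u_h)_{h\in H(\q)}$ of maps with $u_h$ of half-perimeter $\deg(h)/2$ such that $\BM = \q \glue (u_h)_h$. Hence the map
\[
\{\BM \in \Qm^\ell : \q \Lalg \BM\} \;\longleftrightarrow\; \prod_{h\in H(\q)} \Qm^{\deg(h)/2}
\]
is a bijection. Moreover, one checks directly from the definition of gluing that the interior faces of $\q \glue (u_h)_h$ are the disjoint union of the interior faces of $\q$ and those of the $u_h$, so
\[
|F(\q \glue (u_h)_h)| \;=\; |F(\q)| + \sum_{h\in H(\q)} |F(u_h)|.
\]

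Second, I would use the Boltzmann formula to compute the joint probability for each compatible tuple:
\[
\P\bigl(\BM = \q \glue (u_h)_h\bigr) \;=\; \frac{1}{W_q^\ell}\, q^{|F(\q)|}\, \prod_{h\in H(\q)} q^{|F(u_h)|}.
\]
Summing over all tuples $(u_h)_h \in \prod_h \Qm^{\deg(h)/2}$ and using the bijection above gives
\[
\P\bigl(\q \Lalg \BM\bigr) \;=\; \frac{q^{|F(\q)|}}{W_q^\ell}\, \prod_{h\in H(\q)} W_q^{\deg(h)/2},
\]
which is finite since each $W_q^{\deg(h)/2}$ is finite for $q \le 1/12$.

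Finally, taking the ratio yields
\[
\P\bigl(\BM = \q \glue (u_h)_h \bigm| \q \Lalg \BM\bigr) \;=\; \prod_{h\in H(\q)} \frac{q^{|F(u_h)|}}{W_q^{\deg(h)/2}},
\]
which is precisely the product of the Boltzmann laws $\P^{\deg(h)/2}$ evaluated at $u_h$. This simultaneously identifies the conditional law of each $\BM^\q_h$ as a $q$-Boltzmann map with the appropriate half-perimeter and exhibits the independence across $h \in H(\q)$. I do not anticipate any serious obstacle beyond verifying the face-count additivity and the bijection carefully; both follow immediately from the definition of $\glue$ and Remark \ref{rem_bef_pee}.
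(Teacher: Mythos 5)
Your proof is correct and follows essentially the same route as the paper: both compute $\P(\q\Lalg\BM)$ by summing the Boltzmann weight over all fill-ins $(u_h)_h$, factor using the additivity $|F(\q\glue(u_h)_h)|=|F(\q)|+\sum_h|F(u_h)|$, and take the ratio to exhibit the product of Boltzmann laws. Your explicit appeal to the uniqueness of the decomposition (Remark \ref{rem_bef_pee}) and the resulting bijection is a welcome clarification of a step the paper leaves implicit, but it is not a different argument.
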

\begin{proof} 
Denote by $\Qm_H(\q)$ to the following set
\begin{align*}
    \Qm_H(\q) &= \left\{(\q_h)_{h\in H(\q)}\subseteq\Qm_H: |\partial \q_h| = |\partial h|\text{ for every }h\in H(\q)\right\}.
\end{align*}
Then, for $\p\in\Qm_H$ such that $\q\Lalg\p$, we have the following
\begin{align}\label{Markov_proof}
    \P^{\ell}(\BM = \p|\q\Lalg\BM) &= \frac{\P^{\ell}(\BM = \p,\q\Lalg\BM)}{\P^{\ell}(\q\Lalg\BM)} = \frac{\P^{\ell}\left(\BM = \q\glue(\p_h)_{h\in H(\q)} \right)}{\P^{\ell}(\q\Lalg\BM)}.
\end{align}
Using that the possible values of $\BM$ is countable, we can rewrite the denominator as a sum
\begin{align*}
    \P^{\ell}(\q\Lalg\BM) &= \sum_{(\q_{h})_{h\in H(\q)}\in\Qm_H(\q)}\P^{\ell}\left(\BM = \q\glue (\q_h)_{h\in H(\q)}\right)\\
    &= \frac{q^{|F(\q)|}}{W_q^{\ell}}\sum_{(\q_{h})_{h\in H(\q)}\in\Qm_H(\q)}q^{\sum_{h\in H(\q)}|F(\q_h)|}\\
    &=  \frac{q^{|F(\q)|}}{W_q^{\ell}}\prod_{h\in H(\q)}W_q^{|\partial h|}.
\end{align*}
 Then, replacing in \eqref{Markov_proof}

\begin{align*}
    \P^{\ell}(\BM = \p\mid\q\Lalg\BM) &= \frac{\frac{1}{W_q^{\ell}}q^{|F(\q)| + \sum_{h\in H(\q)}|F(\p_h)|}}{\frac{q^{|F(\q)|}}{W_q^{\ell}}\prod_{h\in H(\q)}W_q^{|\partial h|}} = \prod_{h\in H(\q)}\left(\frac{q^{|F(\p_h)|}}{W_q^{|\partial h|}}\right).
\end{align*}
With this equality we obtain that for every $h\in H(\q)$
\begin{align*}
    \P^{|\partial h|}(\BM_h^{\q} = \q_h\mid\q\Lalg\BM) = \frac{q^{|F(\q_h)|}}{W_q^{|\partial h|}},
\end{align*}
which is the distribution of a Boltzmann map with boundary $h$. Finally, we write the previous equality as follows
\begin{align*}
    \P^{\ell}(\BM = \q\glue(\q_h)_{h\in H(\q)} \mid\q\Lalg\BM) &=  \prod_{h\in H(\q)}\P^{|\partial h|}(\BM_h^{\q} = \q_h),
\end{align*}
which implies the independence of the collection $(\BM^\q_h)_{h\in H(\q)}$.

\end{proof}

\subsubsection{Filtrations and stopping maps.}
In order to describe the strong Markov property, we define a filtration indexed by planar maps. 
\begin{defn}
    We say that a collection of $\sigma-$algebras $\FF = (\F_{\q})_{\q\in\Qm_H}$ is a \textit{filtration} if satisfies the property of \textit{Monotonicity},
\begin{align}
    \tag{Monotonicity} \mbox{For every }\q_1,\q_2\in\Qm_H\mbox{ such that }\q_1\Lalg \q_2\mbox{ one has } \F_{\q_1}\subseteq\mathcal{F}_{\q_2}.\label{filtration-monotonicity}
\end{align}
When working with a probability measure $\P$, the filtrations that we will use will also be complete for it, i.e
\begin{align}
    \tag{Completeness} \mbox{ }\F_{\q}\mbox{ is complete with respect to }\P,\mbox{ for every }\q\in\Qm_H.\label{filtration-completesness}
\end{align}
\end{defn}

For the rest of this section, $\BM$ is going to be a Boltzmann map. 
\begin{defn}\label{def_Boltzmann}
    We say that a Boltzmann map $\BM$ is an $\FF$-\textit{Boltzmann map} if it satisfies the following properties
\begin{itemize}[label={--}]
    \item\label{Adaptability}\textit{Adaptability:} the event $\{\q\Lalg \BM\}$ is $\F_{\q}-$measurable.
    \item\label{Indep-increments}\textit{Independent increments:} conditionally on $\F_{\q}$ and on the event $\{\q\Lalg \BM\}$, the law of $(\BM^\q_h)_{h\in\mathcal{H}(\q)}$ is that of a collection of independent $q$ Boltzmann maps with boundary $h$ for every $h\in\mathcal{H}(\q)$.
\end{itemize}
\end{defn}
\begin{rem}
   To give an intuition about filtrations in this context, let us remark that the natural filtration of a Boltzmann map $\BM$ is defined as
   \begin{align*}
    \F_{\q} := \overline{\bigvee_{\p\Lalg \q} \sigma\left( \{\p \Lalg \BM\} \right)}^{\P}.
\end{align*}
We use the joint operator to denote the $\sigma$-algebra of the union, this is important in order to have \textbf{monotonicity} of the filtration. The over-line represents the completion of the $\sigma$-algebra with respect to $\P$, i.e. it gives the $\sigma$-algebra generated when adding all subsets of negligible sets for $\P$. It obviously gives the \textbf{completeness} with respect to $\P$ of the filtration. Firstly, the filtration is built so that the \textbf{adaptability} of $\BM$ is direct, and secondly, the property of \textbf{independent increments} of $\BM$ is a direct consequence of the weak Markov property (Theorem \ref{t.weakMarkov}).
\end{rem}

To obtain a strong Markov property, we need to define a specific class of random submaps. 
\begin{defn}\label{def_stop}
We say that a random planar map $\SM$ is an $\FF$-\textit{stopping map} for an $\FF$-Boltzmann map $\BM$ if
\begin{enumerate}
    \item $\P$-almost surely $\SM\Lalg \BM$,
    \item and, for any $\q\in\Qm_H$ we have that  $\left\{\SM\Lalg \q\right\}\in\F_{\q}$.
\end{enumerate}
\end{defn}
\begin{rem}
    In the case of stopping times for classical Markov chains indexed by time the first condition is not necessary as for that case the domain is deterministic, this will become clearer in Section \ref{s.metric}.
\end{rem}

\begin{rem}\label{r.subpeeling}
	An important example of a stopping map is the following. Take a peeling $\mathcal A$ and a stopping time $\tau$ of it. Then, the submap explored up to time $\tau$ is a stopping map. In Section \ref{s.not_algorithmic}, we will see that not all stopping maps can be built this way.
\end{rem}

Now, we state some important properties associated to the stopping maps.
\begin{prop}\label{prop.stopping}
    Let $\BM$ be an $\FF-$Boltzmann map and $\SM$ an $\FF-$stopping map for $\BM$. Then, for $\q\in\QQ_H$, the following maps are $\F_{\q}-$measurable,
    \begin{enumerate}
        \item $\SM\mathbbm{1}_{\SM\Lalg \q}:= \begin{cases}
            \SM, &\mbox{ if }\SM\Lalg \q,\\
            \dagger, &\mbox{ otherwise,}
        \end{cases}$
        \item $\SM^{\wedge \q}:= \begin{cases}
            \SM, &\mbox{ if }\SM\Lalg \q,\\
            \q, &\mbox{ otherwise.}
        \end{cases}$
    \end{enumerate}
    Furthermore, $\SM^{\wedge \q}$ is also an $\FF$-stopping map.
\end{prop}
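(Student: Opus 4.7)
The strategy is to establish a single key lemma and then derive all three claims by finite Boolean bookkeeping. The lemma is that for every $\p \in \Qm_H$ the event $\{\SM = \p\}$ lies in $\F_{\p}$; granted this, the two measurability statements and the stopping-map property of $\SM^{\wedge \q}$ all reduce to a case analysis on the value taken by the random submap.

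\textbf{Key lemma.} I would first show $\{\SM = \p\}\in\F_{\p}$ for every $\p \in \Qm_H$. Because $\p$ is a finite map, it admits only finitely many submaps (by Remark~\ref{rem_bef_pee}, each such $\p'$ is uniquely determined by the finitely many connected subregions of $\p$ that it contracts into holes), and so
\begin{align*}
\{\SM = \p\} \;=\; \{\SM \Lalg \p\} \,\setminus\, \bigcup_{\p' \Lalg \p,\; \p' \neq \p} \{\SM \Lalg \p'\}.
\end{align*}
By the measurability condition in Definition~\ref{def_stop}, each $\{\SM \Lalg \p'\}$ belongs to $\F_{\p'}$, and \eqref{filtration-monotonicity} gives $\F_{\p'}\subseteq\F_{\p}$. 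Hence the finite Boolean combination is $\F_{\p}$-measurable.

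\textbf{Parts (1) and (2).} For part (1), I would decompose $\SM\mathbbm 1_{\SM\Lalg \q}$ by its possible values: the event $\{\SM\mathbbm 1_{\SM\Lalg \q}=\p\}$ is empty unless $\p=\dagger$ or $\p\Lalg\q$; for $\dagger\neq\p\Lalg\q$ it equals $\{\SM=\p\}\in\F_{\p}\subseteq\F_{\q}$; and for $\p=\dagger$ it equals $\{\SM\not\Lalg\q\}\cup\{\SM=\dagger\}$, $\F_{\q}$-measurable by Definition~\ref{def_stop} and the key lemma. An identical scheme handles $\SM^{\wedge \q}$, with $\q$ playing the role previously played by $\dagger$: $\{\SM^{\wedge \q}=\q\}=\{\SM=\q\}\cup\{\SM\not\Lalg\q\}\in\F_{\q}$, $\{\SM^{\wedge \q}=\p\}=\{\SM=\p\}$ for $\p\Lalg\q$ with $\p\neq\q$, and the event is empty otherwise.

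\textbf{Stopping-map property of $\SM^{\wedge \q}$.} Note first that $\SM^{\wedge \q}\Lalg\q$ by construction. For the measurability condition in Definition~\ref{def_stop}, if $\q\Lalg\r$ then $\{\SM^{\wedge \q}\Lalg\r\}=\Omega$; otherwise the branch $\{\SM\not\Lalg\q\}$ yields $\SM^{\wedge \q}=\q\not\Lalg\r$ and so contributes nothing, whence
\begin{align*}
\{\SM^{\wedge \q}\Lalg\r\} \;=\; \{\SM\Lalg\q\}\cap\{\SM\Lalg\r\} \;=\; \bigsqcup_{\p\Lalg\q,\;\p\Lalg\r}\{\SM=\p\} \;\in\; \F_{\r},
\end{align*}
using the key lemma and monotonicity. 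The almost-sure inclusion $\SM^{\wedge \q}\Lalg\BM$ is immediate on $\{\SM\Lalg\q\}$ from $\SM\Lalg\BM$; the main obstacle of the plan is the branch $\{\SM\not\Lalg\q\}$, where $\SM^{\wedge \q}=\q$ forces us to argue $\q\Lalg\BM$ on that event, in analogy with requiring a valid deterministic horizon $t$ for $\tau\wedge t$ to be a stopping time. This is handled by restricting the statement to the event $\{\q\Lalg\BM\}$, the implicit standing hypothesis that makes $\SM^{\wedge \q}$ a legitimate random submap of $\BM$.
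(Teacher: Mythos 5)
Your proof is correct for both measurability claims and for the second condition in Definition~\ref{def_stop}, but it proceeds by a genuinely different route than the paper. The paper verifies $\{\,\cdot \Lalg \q_1\}\in\F_{\q}$ directly, for each $\q_1$, by splitting on $\{\SM\Lalg\q\}$ and using the meet $\q_1\wedge\q$ (and monotonicity via $\F_{\q_1\wedge\q}\subseteq\F_{\q}$); you instead isolate a single reusable ingredient, the fact that $\{\SM=\p\}\in\F_{\p}$, and then read off the three assertions from case-by-case decompositions on the finitely many values involved. Both arguments exploit the same finiteness of the poset of submaps of a fixed finite map. Your route is slightly more modular and in fact anticipates a lemma the paper uses later (in Proposition~\ref{p.Basic_properties_F_M} the authors need precisely that $\{\SM=\q\}$ is measurable with respect to a suitable $\sigma$-algebra); the paper's wedge-based argument is a touch shorter for this one proposition.

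The one substantive point you raise deserves emphasis: the first condition of Definition~\ref{def_stop}, that a.s.\ $\SM^{\wedge\q}\Lalg\BM$, genuinely fails in general — on $\{\SM\not\Lalg\q\}$ one has $\SM^{\wedge\q}=\q$, and there is nothing forcing $\q\Lalg\BM$ on that event (take, say, $\SM$ the result of one peeling step and $\q$ any fixed map with one interior face; a type-2 step at the root already makes $\q\not\Lalg\BM$ with positive probability). The paper's own proof only checks the second condition and does not address this at all, so you have in fact spotted an omission in the source. Your proposed remedy of restricting the claim to $\{\q\Lalg\BM\}$ is one sensible reading, though it modifies the statement rather than fixes the proof; an alternative would be to weaken the proposition to assert only the measurability property $\{\SM^{\wedge\q}\Lalg\r\}\in\F_{\r}$, which is what both your argument and the paper's actually establish, or to redefine $\SM^{\wedge\q}$ on $\{\SM\not\Lalg\q\}\cap\{\q\not\Lalg\BM\}$. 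In short: your measurability arguments are sound and cleanly organised, and your flag on the a.s.\ inclusion is a valid criticism of the paper's claim as written.
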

\begin{proof}
Let $\q\in\QQ_{H}$, we prove each result individually.
\begin{enumerate}
    \item It suffices to show that $\{\SM\mathbbm{1}_{\SM\Lalg \q}\Lalg \q_1\}\in\F_{\q}$ for any $\q_1\in\QQ_{H}$. There are two possibilities: \\
    $\bullet$ The map $\SM$ is not included in $\q$ : 
    \begin{align*}
\{\SM\mathbbm{1}_{\SM\Lalg \q}\Lalg \q_1\}\cap \{\SM \nsubseteq \q\} = \{\dagger\Lalg \q_1\}\cap \{\SM \nsubseteq \q\}= \{\SM \nsubseteq \q\} \in \F_{\q}.
    \end{align*}
    $\bullet$ The map $\SM$ is included in $\q$: 
    \begin{align*}
        \{\SM\mathbbm{1}_{\SM\Lalg \q}\Lalg \q_1\} \cap \{\SM\Lalg\q\} &= \{\SM\Lalg \q_1\}\cap \{\SM\Lalg\q\} = \{\SM\Lalg \q_1\wedge \q\}\in \F_{\q_1\wedge\q} \subseteq \F_{\q},
\end{align*}

where $\q_1\wedge \q$ denotes the biggest map with holes that is contained\footnote{Note that this map is unique as if you take two of them you can join them so that they are still contaiend in both $\q$ and $\q_1$.} in both $\q$ and $\q_1$.
    \item Similarly as before, we have two cases:\\
    $\bullet$ The map $\SM$ is not included in $\q$ :
    \begin{align}\label{e.Mm_nocontenido}
\{\SM^{\wedge \q}\Lalg \q_1\}\cap \{\SM \nsubseteq \q\} = \{\q\Lalg \q_1\}\cap \{\SM \nsubseteq \q\} \in \F_{\q}.
    \end{align}
    $\bullet$ The map $\SM$ is included in $\q$
    \begin{align}\label{e.Mm_contenido}
    \{\SM^{\wedge \q}\Lalg \q_1\}\cap \{\SM^{\wedge \q}\Lalg\q\}  = \{\SM^{\wedge \q}\Lalg \q_1\wedge\q\}\in\F_{\q_1\wedge\q}\subseteq\F_{\q}.
    \end{align}
In order to prove that $\SM^{\wedge\q}$ is an $\FF$-stopping map, we need to prove that $\{\SM^{\wedge \q}\Lalg\q_1\}\in\F_{\q_1}$ for any $\q_1\in\Qm_H$. This is clear when $\SM$ is contained in $\q$ from the right hand side of \eqref{e.Mm_contenido}, since $\F_{\q_1\wedge\q}\subseteq\F_{\q_1}$. For the case when $\SM$ is not contained in $\q$, i.e. for the event in \eqref{e.Mm_nocontenido}, we have that if $\q\Lalg \q_1$ this implies $\F_\q\subset \F_{\q_1}$ so we have the result; and if not, then the event $\{\q \Lalg \q_1\}$ is empty and thus it trivially belongs to $\F_{\q_1}$.
\end{enumerate}
\end{proof}
\begin{defn}\label{stopping-sigma}
The $\sigma$-algebra associated to an $\FF$-stopping map $\SM$ as follows
\begin{align*}
    \F_{\SM} := \left\{\left.\Theta\in\bigvee_{\q\in\Qm_H}\F_{\q}\right| \Theta\cap \{\SM\Lalg\q\}\in\F_{\q},\mbox{ for any }\q\in\Qm_H\right\}.
\end{align*}
\end{defn}

We have the following properties for this $\sigma$-algebra.
\begin{prop}\label{p.Basic_properties_F_M}
Let $\BM$ be an $\F$-Boltzmann map and $\SM, \SM_1, \SM_2$ be an $\FF$-stopping map for $\BM$.Then,
    \begin{enumerate}
        \item  $\SM$ is $\F_{\SM}-$measurable. 
        \item If a.s. $\SM_1\Lalg \SM_2$, we have that $\F_{\SM_1}\subseteq\F_{\SM_2}$.
    \end{enumerate}
\end{prop}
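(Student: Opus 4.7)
The plan is to work directly from Definition \ref{stopping-sigma}: an event $\Theta$ lies in $\F_{\SM}$ precisely when $\Theta \in \bigvee_{\q}\F_{\q}$ and $\Theta \cap \{\SM \Lalg \q\} \in \F_{\q}$ for every $\q \in \Qm_H$. Both parts of the proposition reduce to verifying these two clauses for the right events, the main ingredients being monotonicity of $\FF$, the defining property of a stopping map, and the antisymmetry of $\Lalg$ (which is a partial order on the countable set $\Qm_H$, with uniqueness of the decomposition noted in Remark \ref{rem_bef_pee}).

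For part (1), since $\Qm_H$ is countable it suffices to prove $\{\SM = \q_1\} \in \F_{\SM}$ for each fixed $\q_1$. The key identity is
\[
\{\SM = \q_1\} \;=\; \{\SM \Lalg \q_1\} \,\cap\, \bigcap_{\q_2 \Lalg \q_1,\, \q_2 \neq \q_1} \{\SM \not\Lalg \q_2\},
\]
which follows from antisymmetry: if $\SM \Lalg \q_1$ and $\SM \neq \q_1$, then $\SM$ itself is a proper submap of $\q_1$, hence witnesses one of the excluded events on the right. The right-hand side is a finite intersection; each factor lies in $\F_{\q_2}$ by the stopping-map axiom applied to $\SM$, and monotonicity lifts them all into $\F_{\q_1}$. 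Thus $\{\SM = \q_1\} \in \F_{\q_1}$, which covers the first clause of Definition \ref{stopping-sigma}. For the second clause, split on whether $\q_1 \Lalg \q$: if so, $\{\SM = \q_1\} \cap \{\SM \Lalg \q\} = \{\SM = \q_1\} \in \F_{\q_1} \subseteq \F_{\q}$ by monotonicity; if not, the intersection is empty by antisymmetry. Hence every $\{\SM = \q_1\}$ lies in $\F_{\SM}$ and $\SM$ is $\F_{\SM}$-measurable.

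For part (2), fix $\Theta \in \F_{\SM_1}$. Membership in $\bigvee_{\q}\F_{\q}$ is inherited. To check the second clause at a given $\q$, observe that the a.s.\ inclusion $\SM_1 \Lalg \SM_2$ together with transitivity of $\Lalg$ gives $\{\SM_2 \Lalg \q\} \subseteq \{\SM_1 \Lalg \q\}$ up to a $\P$-null set, absorbed by the completeness of $\F_{\q}$. Therefore
\[
\Theta \cap \{\SM_2 \Lalg \q\} \;=\; \bigl(\Theta \cap \{\SM_1 \Lalg \q\}\bigr) \cap \{\SM_2 \Lalg \q\},
\]
where the first factor lies in $\F_{\q}$ because $\Theta \in \F_{\SM_1}$, and the second because $\SM_2$ is an $\FF$-stopping map. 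Hence $\Theta \in \F_{\SM_2}$.

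The only step that uses anything beyond formal manipulation of the definitions is the rewriting of $\{\SM = \q_1\}$ in terms of events $\{\SM \Lalg \q_j\}$, and this depends only on the partial-order structure of $\Lalg$; I do not anticipate any serious obstacle.
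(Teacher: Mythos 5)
Your argument is correct in both parts. Part (2) is essentially the paper's proof: you intersect with the a.s.\ event $\{\SM_1 \Lalg \SM_2\}$ (implicitly, via the ``up to a null set'' inclusion), observe that $\Theta \cap \{\SM_1 \Lalg \q\} \in \F_\q$ by $\Theta\in\F_{\SM_1}$ and $\{\SM_2 \Lalg \q\}\in\F_\q$ by the stopping-map axiom, and absorb the null difference by completeness; the paper just writes out the third factor $\{\SM_1\Lalg\SM_2\}$ explicitly. Part (1) takes a genuinely different route. The paper shows directly that each generating event $\{\SM\Lalg\q\}$ lies in $\F_\SM$ by the identity $\{\SM\Lalg\q\}\cap\{\SM\Lalg\q_1\}=\{\SM\Lalg\q\wedge\q_1\}\in\F_{\q\wedge\q_1}\subseteq\F_{\q_1}$, which requires the meet $\q\wedge\q_1$ to exist and be unique (justified in a footnote). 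You instead target the atomic events $\{\SM=\q_1\}$, and use antisymmetry of $\Lalg$ to rewrite them as a finite Boolean combination $\{\SM\Lalg\q_1\}\cap\bigcap_{\q_2\Lalg\q_1,\,\q_2\neq\q_1}\{\SM\not\Lalg\q_2\}$, each factor lying in $\F_{\q_2}\subseteq\F_{\q_1}$ by the stopping-map axiom and monotonicity; the case split on whether $\q_1\Lalg\q$ then verifies the second clause of Definition~\ref{stopping-sigma}. Your decomposition avoids any appeal to the lattice structure (existence of meets) on $\Qm_H$, relying only on $\Lalg$ being a partial order on a set with finitely many predecessors, which is a modest gain in robustness; the paper's version is shorter once the meet is taken for granted and directly exhibits the generators of the $\sigma$-algebra rather than the atoms.
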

\begin{proof}$\ $
\begin{enumerate}
    \item It suffices to prove that the events of the form $\Theta = \{\SM\Lalg \q\}$ belong to $\F_{\SM}$, for all $\q\in\Qm_H$.\\
    Let $\q_1\in\Qm_H$ be arbitrary. It follows that
    \begin{align*}
        \Theta \cap \{\SM\Lalg \q_1\} = \{\SM\Lalg \q\wedge \q_1\}\in\F_{\q\wedge\q_1} \subseteq\F_{\q_1},
    \end{align*}
    from which the conclusion follows.
    \item Let $\Theta\in\F_{\SM_1}$ and $\q\in \Qm_H$. We will show that $\Theta\cap\{\SM_2\Lalg \q\} \in \F_{\q}$.
    \begin{align*}
        \Theta\cap\{\SM_2\Lalg \q\} &\stackrel{a.s.}{=} (\Theta\cap\{\SM_1\Lalg \q\}) \cap \{\SM_2\Lalg \q\} \cap \{\SM_1\Lalg \SM_2\}.
    \end{align*}
    The first term on the right hand side lives in $\F_{\q}$, from the hypothesis; and the second term also does because $\SM_2$ is an $\FF$-stopping map. Finally the last term belongs to $\F_\q$, since it is the complement of a negligible event and the $\sigma$-algebra $\F_{\q}$ is complete with respect to $\P$ (the probability associated to Boltzmann maps).
\end{enumerate}
\end{proof}
Now we have all the elements we need to state spatial Markov property by means of (random) stopping maps, this is what we call \textbf{Strong Markov property}.

Here we show that Boltzmann maps satisfy the strong Markov property.
\begin{thm} \label{t.strong_Markov}
    Let $\BM$ be an $\FF$-Boltzmann map and $\SM$ an $\FF$-stopping map for $\BM$. Then, $\BM$ can be decomposed as follows
    \begin{align*}
        \BM = \SM \glue (\BM^{\SM}_h)_{h\in H(\SM)},
    \end{align*}
    where, conditional on $\F_{\SM}$, $(\BM^{\SM}_h)_{h\in H(\SM)}$ is a collection of independent $q$-Boltzmann maps with boundary $h$ for every $h\in H(\SM)$.
\end{thm}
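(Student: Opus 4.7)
The plan is to decompose over the countably many possible values of the stopping map $\SM$ and, on each piece $\{\SM = \q\}$, reduce to the $\FF$-Boltzmann property applied to the deterministic map $\q$. First, the existence of a decomposition $\BM = \SM \glue (\BM^{\SM}_h)_{h \in H(\SM)}$ holds almost surely because $\SM \Lalg \BM$ almost surely, and by Remark \ref{rem_bef_pee} the collection $(\BM^{\SM}_h)_h$ is then uniquely determined.

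For the distributional part, a key preliminary step is to verify that for each $\q \in \Qm_H$,
\begin{align*}
\{\SM = \q\} = \{\SM \Lalg \q\} \setminus \bigcup_{\substack{\q' \Lalg \q \\ \q' \neq \q}} \{\SM \Lalg \q'\} \in \F_{\q},
\end{align*}
since each $\{\SM \Lalg \q'\}$ lies in $\F_{\q'} \subseteq \F_{\q}$ by Definition \ref{def_stop} and monotonicity of $\FF$, and the union is over a countable index set. Moreover, on $\{\SM = \q\}$ we have $(\BM^{\SM}_h)_{h \in H(\SM)} = (\BM^{\q}_h)_{h \in H(\q)}$, and the event is contained in $\{\q \Lalg \BM\}$ because $\SM \Lalg \BM$ almost surely.

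Given any bounded measurable functional $F$ acting on pairs $(\q, (\m_h)_{h \in H(\q)})$ and any event $A \in \F_{\SM}$, I would then compute
\begin{align*}
\E\!\left[F(\SM, (\BM^{\SM}_h)_h)\, \mathbf{1}_A\right] = \sum_{\q \in \Qm_H} \E\!\left[F(\q, (\BM^{\q}_h)_h)\, \mathbf{1}_{A \cap \{\SM = \q\}}\right].
\end{align*}
By Definition \ref{stopping-sigma}, $A \cap \{\SM \Lalg \q\} \in \F_{\q}$, hence $A \cap \{\SM = \q\} \in \F_{\q}$ too. Since this event is $\F_{\q}$-measurable and contained in $\{\q \Lalg \BM\}$, the independent-increments property of the $\FF$-Boltzmann map (Definition \ref{def_Boltzmann}) yields
\begin{align*}
\E\!\left[F(\q, (\BM^{\q}_h)_h)\, \mathbf{1}_{A \cap \{\SM = \q\}}\right] = \P(A \cap \{\SM = \q\}) \int F(\q, (\m_h)_h)\, \prod_{h \in H(\q)} d\P^{|\partial h|}(\m_h).
\end{align*}
Summing over $\q$ and recognising the right-hand side as $\E[\mathbf{1}_A \, G(\SM)]$, where $G(\q)$ is the integral of $F(\q, \cdot)$ against the product of Boltzmann laws indexed by the holes of $\q$, gives exactly the claim that conditionally on $\F_{\SM}$ the maps $(\BM^{\SM}_h)_{h \in H(\SM)}$ are independent Boltzmann maps with the prescribed boundary perimeters.

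The main obstacle is purely measure-theoretic bookkeeping: one must verify carefully that $\{\SM = \q\} \in \F_{\q}$ and that Definition \ref{stopping-sigma} is strong enough to let an arbitrary $A \in \F_{\SM}$ be sliced into $\F_{\q}$-measurable pieces after intersecting with $\{\SM = \q\}$. Once these measurability checks are in place, the proof reduces by countable additivity to the weak Markov input already encoded in the $\FF$-Boltzmann hypothesis, so no new combinatorial ingredient is needed.
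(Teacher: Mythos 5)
Your proposal is correct and follows essentially the same route as the paper: decompose over the countably many possible values $\q$ of $\SM$, observe that $A \cap \{\SM = \q\} \in \F_\q$ for $A \in \F_\SM$, and apply the independent-increments property of the $\FF$-Boltzmann map on each slice. The paper packages the key computation as a standalone lemma (equation \eqref{e.Conditional_expectation}, $\E(X\mathbbm{1}_{\SM=\q}\mid\F_{\SM}) = \E(X\mid\F_{\q})\mathbbm{1}_{\SM=\q}$) before summing, while you carry out the same manipulation inline against a test event $A \in \F_\SM$; the paper also obtains $\{\SM = \q\}\in\F_\q$ via Proposition \ref{p.Basic_properties_F_M}(1) and Definition \ref{stopping-sigma}, whereas you derive it by the explicit countable set difference $\{\SM \Lalg \q\}\setminus\bigcup_{\q'\subsetneq\q}\{\SM\Lalg\q'\}$, but these are cosmetic differences in an argument that is otherwise identical.
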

In order to prove the strong Markov property we need to prove the following lemma.

\begin{lemma}
    For any real integrable random variable $X$, we have that
    \begin{align}\label{e.Conditional_expectation}
        \E(X\mathbbm{1}_{\SM=\q}|\F_{\SM}) = \E(X|\F_{\q})\mathbbm{1}_{\SM=\q}.
    \end{align}
\end{lemma}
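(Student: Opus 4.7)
The plan is to mimic the classical proof of the analogous fact for stopping times in discrete-time Markov chains, namely that $\E(X\mathbbm{1}_{\{\tau=n\}}\mid \F_\tau) = \E(X\mid \F_n)\mathbbm{1}_{\{\tau=n\}}$. To verify \eqref{e.Conditional_expectation} I need to check two things: that the right-hand side is $\F_{\SM}$-measurable, and that it satisfies the integral identity characterising $\E(X\mathbbm{1}_{\SM=\q}\mid \F_\SM)$.

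First, I would establish that $\{\SM=\q\}\in \F_\q$, and hence $\{\SM=\q\}\in \F_\SM$. Proposition~\ref{prop.stopping}(1) tells us that $\SM\mathbbm{1}_{\SM\Lalg\q}$ is $\F_\q$-measurable, so $\{\SM=\q\} = \{\SM\Lalg \q\}\cap \{\SM\mathbbm{1}_{\SM\Lalg\q}=\q\}$ lies in $\F_\q$. To then show $\{\SM=\q\}\in \F_\SM$, Definition~\ref{stopping-sigma} requires verifying $\{\SM=\q\}\cap\{\SM\Lalg \q'\}\in \F_{\q'}$ for every $\q'\in \Qm_H$. If $\q\not\Lalg \q'$ this intersection is empty; if $\q\Lalg \q'$ it equals $\{\SM=\q\}$, which lies in $\F_\q\subseteq \F_{\q'}$ by monotonicity.

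Second, I would prove the general statement: for every $\F_\q$-measurable random variable $Y$, the product $Y\mathbbm{1}_{\SM=\q}$ is $\F_\SM$-measurable, so that in particular $\E(X\mid \F_\q)\mathbbm{1}_{\SM=\q}$ is $\F_\SM$-measurable. For a Borel set $B$ not containing $0$, the set $\{Y\mathbbm{1}_{\SM=\q}\in B\}=\{Y\in B\}\cap\{\SM=\q\}$ is contained in $\{\SM\Lalg \q\}$; when intersected with $\{\SM\Lalg \q'\}$ it is either empty (when $\q\not\Lalg \q'$) or equals $\{Y\in B\}\cap\{\SM=\q\}\in \F_\q\subseteq \F_{\q'}$ (when $\q\Lalg \q'$). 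The case $0\in B$ follows by complementing, using that $\{\SM=\q\}\in \F_\SM$.

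Third, I verify the averaging property. Given $\Theta\in \F_\SM$, the key observation is that $\Theta\cap\{\SM=\q\}\in \F_\q$: by definition of $\F_\SM$, $\Theta\cap\{\SM\Lalg \q\}\in \F_\q$, and intersecting with the $\F_\q$-measurable event $\{\SM=\q\}$ gives the claim. Therefore $\mathbbm{1}_{\SM=\q}\mathbbm{1}_\Theta$ is $\F_\q$-measurable, and the defining property of $\E(\cdot\mid\F_\q)$ yields
\begin{align*}
\E\bigl(X\mathbbm{1}_{\SM=\q}\mathbbm{1}_\Theta\bigr) = \E\bigl(\E(X\mid\F_\q)\mathbbm{1}_{\SM=\q}\mathbbm{1}_\Theta\bigr),
\end{align*}
which together with the measurability from the previous step proves \eqref{e.Conditional_expectation}. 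No serious obstacle is expected; the argument is bookkeeping about the partial order $\Lalg$ and filtration monotonicity, and the only care required is in the case split on whether $\q\Lalg \q'$ when checking $\F_\SM$-measurability.
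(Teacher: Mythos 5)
Your proof is correct and follows essentially the same route as the paper's: establish $\{\SM=\q\}\in\F_\q$ (and hence $\in\F_\SM$), verify $\F_\SM$-measurability of the right-hand side via a case split on whether $\q\Lalg\q'$, then use the tower property through $\F_\q$ for the averaging identity. The one place you go beyond the paper is worth keeping in mind: in the averaging step you explicitly justify that $\Theta\cap\{\SM=\q\}\in\F_\q$ (hence $\mathbbm 1_{\SM=\q}\mathbbm 1_\Theta$ is $\F_\q$-measurable), which is precisely what licenses pulling these indicators out of $\E(\cdot\mid\F_\q)$; the paper leaves this implicit.
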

\begin{proof}
    We first check that the right hand side of \eqref{e.Conditional_expectation} is $\F_\SM$-measurable.   To do that, take $\p \in \Qm_H$ and $A$ a Borelian of $\R$ that does not contain $0$. It suffices to show that the following event is $\F_\p$ measurable    \begin{align}\label{e.measurability_cond_exp}
        \{\E(X|\F_{\q})\mathbbm{1}_{\SM=\q}\in A\}\cap \{\SM\subseteq \p\} &=\begin{cases}
        \emptyset  & \text{ if } \q\not\Lalg \p\\
        \{\E(X|\F_{\q}) \in A\} \cap \{\SM= \q\} & \text{ if } \q \Lalg \p,
        \end{cases}
    \end{align}
The first case follows trivially since $\emptyset \in \F_\p$. When $\q\subset \p$, note that $\{\E(X|\F_{\q}) \in A\}\in \F_\q$, that $\{\SM=\q\}\in \F_\q$ (from \ref{p.Basic_properties_F_M} (1)), and that $\F_\q\subset\F_\p$.

We are left to show that the left hand side of \eqref{e.Conditional_expectation} satisfies the integral property of the conditional expectation. To do that take  $\Theta\in\F_{\SM}$ and compute
\begin{align*}
    \E(X\mathbbm{1}_{\SM=\q}\mathbbm{1}_{\Theta}) &= \E\left[ \E(X\mathbbm{1}_{\SM=\q}\mathbbm{1}_{\Theta}|\F_{\q})\right]  = \E\left[\E(X|\F_{\q})\mathbbm{1}_{\SM=\q}\mathbbm{1}_{\Theta}\right],
\end{align*}
from where we conclude.
\end{proof}

Now, we are ready to prove the strong Markov property.

\begin{proof}[Proof of Theorem \ref{t.strong_Markov}]
    Since $\SM$ is an $\F$-stopping map, we know that a.s. $\SM\Lalg\BM$. This implies that there exist a collection of maps $(\BM^{\SM}_h)_{h\in\mathcal{H}(\SM)}$, such that
\begin{align*}
    \BM = \SM \glue (\BM^{\SM}_h)_{h\in\mathcal{H}(\SM)}.
\end{align*}
Denote by $P$ the set of all the possible values for $\SM$ and notice that this set is countable. Take $h\in H(\SM)$ and  $f$  a bounded measurable function taking values in $\R$. Then,
\begin{align*}
    \E(f(\BM^{\SM}_h)|\F_{\SM}) &= \sum_{\q\in P}\E(f(\BM^{\q}_h)\mathbbm{1}_{\SM = \q}|\F_{\SM})\\
    &= \sum_{\q\in P}\E(f(\BM^{\q}_h)|\F_{\q})\mathbbm{1}_{\SM = \q}.
\end{align*}
We conclude thanks to the fact that $\{\SM=\q\}$ is $\F_\SM$-measurable (from \ref{p.Basic_properties_F_M} (1)) and that conditionally on $\F_\q$ the map $\BM^{\q}_h$ has the law of a Boltzmann map for any $\q\in P$, this is nothing else than the Weak spatial Markov property.
\end{proof}

\begin{rem}\label{r.weak->strong} In this section, we have shown that a map satisfying the weak Markov property will also satisfy the strong Markov property. This with Remark \ref{r.subpeeling} imply that for any map satisfying the weak Markov property, the submap explored by a peeling exploration at a stopping time induces a Markovian decomposition of the map. 
		
		Finally, when we say that the distribution of a random quadrangulation satisfies the Markov property we mean that it satisfies the weak Markov property, and thus the strong one. 
\end{rem}

\subsection{Characterisation of random quadrangulations satisfying the Markov Property.}
Here, we present the main result of this section that is to characterize the planar maps that satisfy the Markov property. In order to do this, we need to state the Markov property for a collection of measures $(\P^{\ell})_{\ell\in\N^*}$ supported on quadrangulations with half perimeter $\ell$.

\begin{defn} We say that $(\P^{\ell})_{\ell\in\N^*}$ satisfies the \textit{Markov property}, if, for any deterministic quadrangulation with holes $\q$, we can describe the conditional law  $\P^{\ell}(\cdot \mid \{\q\Lalg \BM\})$ as
    \begin{align*}
        \BM = \q\glue (\BM^{\q}_h)_{h\in H(\q)}.
    \end{align*}
Here, $(\BM^{\q}_h)_{h\in H(\q)}$ is a collection of independent maps with law $\P^{|\partial h|}$ for any $h\in H(\q)$.
\end{defn}

This definition together with Remark \ref{r.weak->strong} immediately implies explicit formulae for the peeling procedure (definition after Remark \ref{rem_bef_pee}).
\begin{rem}
    The Markov property implies the following decomposition. If the peeling of the root of the edge of $\q$ is a type 1 peeling, we have that
\begin{align}\label{Decomposition type 1}
    \P^{\ell}(\BM = \q) = \P^{\ell}(\BM = \q\mid\type{1})\P^{\ell}(\type{1})  = \P^{\ell+1}(\BM = \hat{\q})\P^{\ell}(\type{1}),
\end{align}
where $\hat{\q}$ is the map remaining after the peeling of the root of the quadrangulation $\q$. We also have an analogous formula when the peeling of the root is of type 2
\begin{align}\label{Decomposition type 2}
    \P^{\ell}(\BM = \q) = \P^{\ell_1}(\BM = \q_1)\P^{\ell_2}(\BM = \q_2)\P^{\ell}(\type{2,\ell_1,\ell_2}),
\end{align}
where $\q_1$ and $\q_2$ are the maps remaining after the peeling of the root of $\q$. 

These previous equalities imply the following decomposition for the event $\{\BM = \q\}$
\begin{align}\label{Decomposition}
    \P^{\ell}(\BM = \q) &= \prod_{i = 1}^{\sharp \text{ steps}} \P^{\ell_i}(\type{j_i}),
\end{align}
where, depending on the step $i$ and the structure of $\q$, we can decomposed the probability as a peeling of the root of type 1 or type 2 with boundary $\ell_i$.
\end{rem}

Now, we are ready to state the main result of this section, which characterizes all the quandrangulations having the Markov property.

\begin{thm}\label{t.main_undecorated}
The following three conditions are equivalent for a sequence of measures $(\P^{\ell})_{\ell\in\N^*}$ supported on quadrangulations with a boundary of size $2\ell$
\begin{enumerate}
    \item There is a positive $q$ such that $(\P^{\ell})_{\ell\in\N^*}$ is a the law of $q-$Boltzmann map,
    \item $(\P^{\ell})_{\ell\in\N^*}$ has the Markov property and for each $\ell$ the measure $\P^\ell$ is uniformly distributed when the number of faces is fixed,
    \item $(\P^{\ell})_{\ell\in\N^*}$ has the Markov property and for each $\ell$ the measure $\P^\ell$ is invariant under rerooting.
\end{enumerate}
\end{thm}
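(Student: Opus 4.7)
The implications $(1)\Rightarrow (2)$ and $(1)\Rightarrow (3)$ are immediate from Proposition \ref{prop:inv} and Theorem \ref{t.weakMarkov}, so the plan is to establish $(3) \Rightarrow (2)$ and $(2) \Rightarrow (1)$.

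For $(3) \Rightarrow (2)$, the plan is to prove uniformity on $\QQ^{\ell, f}$ by double induction, outer on $f$ and, for the base case $f = 0$, inner on $\ell$. For the outer inductive step $f \geq 1$, since the root edge of any $\q \in \QQ^{\ell, f}$ is adjacent to an internal square, its peeling is of type $1$, and equation \eqref{Decomposition type 1} gives
\[\P^\ell(\BM = \q) = \P^\ell(\type{1}) \cdot \P^{\ell+1}(\BM = \hat\q),\]
with $\hat\q \in \QQ^{\ell+1, f-1}$; by the outer inductive hypothesis, the right-hand side depends only on $\ell$ and $f$. For the base case $f = 0$, the case $\ell = 1$ is trivial (unique tree), and for $\ell \geq 2$ equation \eqref{Decomposition type 2} combined with the inner inductive hypothesis yields
\[\P^\ell(\BM = \q) = A(\ell_1(\q), \ell_2(\q)) \quad\text{with}\quad A(\ell_1, \ell_2) := \P^\ell(\type{2, \ell_1, \ell_2})\cdot h_{\ell_1}h_{\ell_2},\]
where $h_{\ell'}$ denotes the common tree probability at half-perimeter $\ell'$. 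Rerooting invariance applied to the path tree with $\ell$ edges (whose $\ell$ rootings realize every admissible split $\ell_1+\ell_2 = \ell+1$ with $\ell_i\geq 1$) then forces $A$ to be constant on all such splits, hence $\P^\ell(\BM = \q)$ is the same for every tree.

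For $(2) \Rightarrow (1)$, uniformity lets me write $\P^\ell(\BM = \q) = g_\ell(|F(\q)|)$; iterating \eqref{Decomposition type 1} yields
\[\frac{g_\ell(f+1)}{g_\ell(f)} = \frac{g_{\ell+1}(f)}{g_{\ell+1}(f-1)} = \cdots = \frac{g_{\ell+f}(1)}{g_{\ell+f}(0)} =: \mu(\ell+f).\]
To show $\mu$ is a constant $q$, I would apply the Markov property to a submap $\q' \in \QQ_H$ with external half-perimeter $\ell$, no non-hole internal faces, and a single hole of half-perimeter $\ell'$; such $\q'$ always exists for $\ell,\ell'\geq 1$ by taking a $2\min(\ell,\ell')$-cycle with $|\ell-\ell'|$ pendant edges added on the appropriate side. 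Gluing a map $\r\in\QQ^{\ell'}$ into the hole and comparing both sides of the Markov formula under uniformity gives $g_\ell(f) = C_{\q'}\cdot g_{\ell'}(f)$ for all $f$; in particular $g_\ell(f)/g_\ell(0) = g_{\ell'}(f)/g_{\ell'}(0)$, and $f=1$ yields $\mu(\ell) = \mu(\ell')$. With $\mu \equiv q$, iteration gives $g_\ell(f) = q^f g_\ell(0)$; imposing $\sum_{\q\in\QQ^\ell} g_\ell(|F(\q)|) = 1$ fixes $g_\ell(0) = 1/W_q^\ell$ (and forces $q \leq q_c = 1/12$ for summability), identifying $\P^\ell$ with the $q$-Boltzmann law.

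The main obstacle I expect is the base case of the $(3) \Rightarrow (2)$ induction on trees: the argument depends on verifying that rerooting the path tree truly realizes every admissible split, and on handling the degenerate case where the path tree itself has probability zero (in which case the reroot argument still yields $A\equiv 0$ and hence the trivial uniformity $\P^\ell(\q) = 0$ for all trees). A secondary technical point is the explicit construction of the submap $\q'$ in $(2)\Rightarrow (1)$: it must be verified to be a valid element of $\QQ_H$ with $\P^\ell(\q' \Lalg \BM) > 0$ so that the Markov decomposition is non-trivial.
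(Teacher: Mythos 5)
Your proposal is correct in outline and follows the same cycle $(1)\Rightarrow(3)\Rightarrow(2)\Rightarrow(1)$ as the paper, but takes a genuinely different route at both technical steps. For $(3)\Rightarrow(2)$, the outer induction on the number of faces is the same, but you handle the base case of trees via an inner induction on $\ell$ combined with rerooting the path tree to range over all type-$2$ splits, whereas the paper directly exhibits, for every tree, a single ``always peel a leaf'' exploration yielding the tree-independent product $\prod_{i=1}^{\ell}\P^{\ell-i+1}(\type{2,0,\ell-i})$. For $(2)\Rightarrow(1)$, you introduce $g_\ell(f)$ and show $g_\ell(f+1)/g_\ell(f)$ is constant via a type-$1$ telescoping chain together with a gluing argument using a zero-face submap $\q'\in\QQ_H$ bridging half-perimeter $\ell$ to $\ell'$; the paper instead sets $q(\ell,\q)=\P^{\ell}(\BM=\overline{\q})/\P^{\ell}(\BM=\q)$ where $\overline{\q}$ attaches one extra face at the root, identifies it via a type-$1$ then a type-$(2,0,\ell)$ peeling, derives the tentative formula $q(\ell)^{|F(\q)|}/W_{q(\ell)}^{\ell}$, and forces $q(\ell)=q(\ell+1)$ by comparing the resulting identity for maps with $1$ and $2$ faces. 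Your gluing construction is a nice alternative: it treats $\ell$ and $\ell'$ symmetrically and avoids the $\overline{\q}$ construction altogether, at the cost of needing the auxiliary submap to be well-defined and charged.

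Two small corrections are needed. In the outer step of $(3)\Rightarrow(2)$, the assertion that the root edge of any $\q\in\QQ^{\ell,f}$ is adjacent to an internal square is false in general (the root may be a pendant edge with the exterior face on both sides); one must first reroot $\q$ so that the root is incident to an internal face, which is precisely what assumption~$(3)$ licenses and what the paper does explicitly. Also, the type-$2$ split arithmetic should read $\ell_1+\ell_2=\ell-1$ with $\ell_i\geq 0$ (leaf peelings give $\ell_i=0$), not $\ell_1+\ell_2=\ell+1$ with $\ell_i\geq 1$; the path tree with $\ell$ edges, rooted at each of its edges in both orientations, does realize every admissible ordered pair $(k,\ell-1-k)$ for $k=0,\dots,\ell-1$, so once the bookkeeping is fixed the argument goes through. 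You correctly flag that $\P^{\ell}(\q'\Lalg\BM)>0$ must be verified for the gluing step; this follows from the normalization $\sum_{f}|\QQ^{\ell,f}|g_\ell(f)=1$ together with the fact that every $f\geq 0$ is achieved by some $\r\in\QQ^{\ell'}$.
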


\begin{proof}
From Property \ref{prop:inv} we know that $(1)\Rightarrow (2)\Rightarrow (3)$. We are going to prove $(2)\Rightarrow (1)$ and $(3)\Rightarrow (2)$.
\begin{itemize}
    \item $(2)\Rightarrow (1)$: For $\ell\in\mathbb{Z}_+$ and $\q\in\Qm_H$ we define $q(\ell,\q)$ as follows 
\begin{align*}
    q(\ell,\q) := \frac{\P^{\ell}(\BM = \overline{\q})}{\P^{\ell}(\BM = \q)},
\end{align*}
where $\overline{\q}\in\Qm_H$ is a copy of $\q$ but gluing at the root edge a new face as drawn in Figure \ref{f.tilde m}.

\begin{figure}[ht!]	   	   
			\centering
			\includegraphics[width=5in]{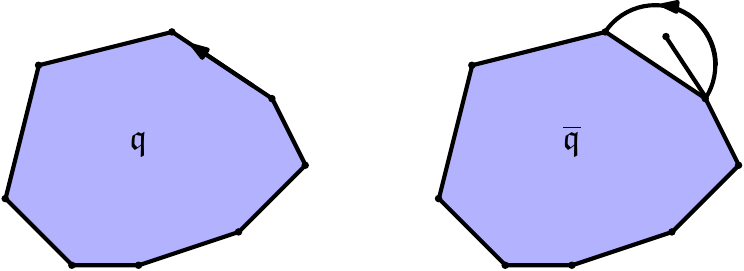}
			\caption {At the root edge we glue a new face and we declare the new external edge as the new root edge, with the right orientation in order to keep the external zone as the new external face. With this procedure $\overline{\q}$ satisfies that $|F(\overline{\q})| = |F(\q)| + 1$ and $|\partial\overline{\q}| = |\partial \q|$.}
			\label{f.tilde m}
\end{figure}

To prove the Theorem we just need to show that $q(\cdot,\cdot)$ is constant. For this, we first prove that for any $\ell\in \N^*$, $q(\ell,\cdot)$ is constant. Note that (2) implies that $q(\ell,\q)$ does not depends on the structure of $\q$, only on the number of its faces. We now show that it also does not depend on the number of faces of $\q$. Note that to obtain $\q$ from $\overline{\q}$ through a peeling exploration, we must first perform a type 1 peeling, followed by a type $2,0,\ell$ peeling. Let $\widehat{\q}$ the resultant map of the first peeling of $\overline{\q}$. Then

\begin{align*}
     q(\ell,\q) &= \frac{\P^{\ell}(\BM = \overline{\q})}{\P^{\ell}(\BM = \q)}\\
     &= \frac{\P^{\ell+1}(\BM = \widehat{\q})\P^{\ell}(\type{1})}{\P^{\ell}(\BM = \q)}\\
     &= \P^{\ell}(\type{1})\P^{\ell+1}(\type{2,0,\ell}).
\end{align*}
From this we deduce that $q(\ell,\cdot)$ is constant, consequently we now note $q(\ell) := q(\ell,\q)$ where $\q\in\Qm_H$ is any quadrangulation with boundary $\ell$.

We claim that the definition of $q(\ell,\q)$ and property (2) imply
\begin{align}\label{casi_boltz}
    \P^{\ell}(\BM = \q) = \frac{q(\ell)^{|F(\q)|}}{W_{q(\ell)}^{\ell}},
\end{align}
where $W_{q(\ell)}^{\ell}<\infty$ is the normalization constant. To prove the claim, since $\P^\ell$ is uniformly distributed over the maps with the same number of faces, we compare $\q$ with the map $\q^*$ which is the result of iterating the procedure in Figure \ref{f.tilde m} a number $F(\q)$ of times in a map which is equals to a path of length $\ell$ (rooted in any edge). From this we deduce the claim. 

Returning to the proof, we use \eqref{casi_boltz} to show that $q$ does not depends on $\ell$.
\begin{align*}
    q(\ell) &= \frac{\P^{\ell}(\BM = \overline{\q})}{\P^{\ell}(\BM = \q)}\\
    &= \frac{\P^{\ell+1}(\BM = \widehat{\q}) \P^{\ell}(\type{1})}{\P^{\ell}(\BM = \q)}\\
    &= \left(\frac{q(\ell+1)}{q(\ell)}\right)^{|F(\q)|}\frac{W_{q(\ell)}^{\ell}}{W_{q(\ell+1)}^{\ell+1}}\P^{\ell}(\type{1}).
\end{align*}
Notice that the right side depends on the number of faces of $\q$ but the left side does not. Doing the same for two different maps with number of faces equal to $1$ and $2$ we obtain the following equality 
\begin{align*}
    \frac{q(\ell+1)}{q(\ell)} = \left(\frac{q(\ell+1)}{q(\ell)}\right)^{2} \implies q(\ell)= q(\ell +1),
\end{align*}
so $q(\cdot)$ does not depend on the half perimeter $\ell$ and therefore we conclude that
\begin{align*}
    \P^{\ell}(\BM = \q) = \frac{q(1)^{|F(\q)|}}{W_{q(1)}^{\ell}},
\end{align*}
which is the distribution of a $q(1)$-Boltzmann map.

    \item $(3)\Rightarrow (2)$:  This implication follows from \eqref{Decomposition}. The fact that $\P^\ell$ is invariant under re-rooting implies that \eqref{Decomposition} is true for any possible deterministic exploration of $\q$.

    We now show that $\P^\ell$ puts the same mass on quadrangulations with the same number of faces. We do this by induction on the number of faces. The base case is when the map has $0$ internal faces, meaning that it is a tree. In that case there is a way of exploring it just by peeling at each stage the first leaf encountered when following the root edge. This implies that if $\mathbf t$ is a tree with $\ell$ edges 
    \begin{align*}
    \P^{\ell}(\BM= \mathbf t) = \prod_{i=1}^{\ell} \P^{\ell-i+1}(\type{2, 0,l-i}).
    \end{align*}
    Thus, all trees have the same probability.
    
    Now, for the inductive step, we assume that for any $\ell\in \N$ all quadrangulation with $n$ faces and half perimeter $\ell$ have the same probability for $\P^\ell$. Take a quadrangulation $\q$ with $n+1$ faces and half perimeter of size $\ell$, then there exist an internal face that shares an edge with the external face. Peeling this edge an using \eqref{Decomposition type 1} together with invariance under rerooting implies that
    \begin{align*}
    \P^{\ell}(\BM=\q)= \P^{\ell+1}(\BM=\hat \q) \P^\ell(\type{1}),
    \end{align*}
    where $\hat \q$ is the peeling of type 1 of $\q$. By induction hypothesis, $\P^{\ell+1}(\BM=\hat \q)$ does not depend on $\hat \q$ so we conclude.

\end{itemize}

\end{proof}

\section{Markov property and associated measures: decorated maps.}\label{s.decorated}
In this section, we are interested in the study of spin decorated maps. That is to say, we modify the model of the previous section by adding a decoration on top of $\BM$, that is to say a function $\sigma$ from the faces of $\BM$ to $\R^d$, such that conditionally on $\BM$ the law of $\sigma$ is that of a spin system on $\BM$. The canonical example is when $\sigma$ takes values on $\{-1,1\}$, and then we have an Ising decorated quadrangulation with a boundary. In this case, the measures we work with come with an additional parameter a given boundary condition.

We start by defining spin decorated maps. Then, we introduce decorated Boltzmann quadrangulations with boundaries, show that they satisfy the weak Markov property and construct the proper framework to have the strong Markov property. Then, we characterize all `reasonable' spin decorated quadrangulations satisfying the Markov property.

\subsection{Model} To define this model it is useful to slightly change our definition of a quadrangulation with a boundary $\q$. We add, what we call phantom exterior faces $F_e(\q)$. To define them, let us first define $F_i(\q)$ as the interior faces of $\q$, and for each edge that connects an interior face with the exterior face we create a phantom exterior face. Thus, $F_e$ can be identified with $\{0,1,..,2\ell-1\}$ by following the direction of the root edge. We define $F(\q)$ as the union of $F_e(\q)$ with $F_i(\q)$. We note that the phantom faces $F_i(\q)$ share only one edge with another face, which is the adjacent interior face. 
\begin{figure}[h!]
    \centering
    \includegraphics[scale=0.8]{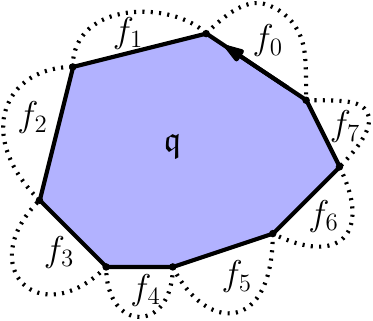}
	\caption{We describe the set of phantom faces. We put dotted edges to express that these are not actual edges of the map and we named them $f_k$ where $k$ denotes the index in $\{0,1,2,\dots,2\ell-1\}$ according to the identification.}
    \label{fig:Fe}
\end{figure}

Consider a pair $(\q,\sigma)$, where the first coordinate is a quadrangulation $\q\in\Qm_H$ and the second coordinate is a function $\sigma:F(\q)\to \R^n$. Let $b:\{0,\dots,2\ell-1\}\to \R^n$ a function. We say that $\sigma$ has \textit{boundary condition $b$} if the values on $F_e(\q)$ are equal to $b$ (after identification of $F_e(\q)$ with $\{0,..,2\ell-1\}$). Notice that if a quadrangulation has semi-perimeter $\ell$ then $b\in (\R^n)^{2\ell}$. In this section, we are going to work with a measure $\mu$ with $\supp(\mu)\subseteq \R^n$ and we denote by $\Bc$ the space of all boundary condition for $\Qm$ that take values in $\supp(\mu)$. Depending on the $\supp(\mu)$ we are going to obtain different decorations such as the Ising model, the GFF, among others.

The main object of this section is what we call \textit{spin decorated map (SDM)}. To define them, fix an inverse temperature $\beta>0$ and take $\mu$ a measure on $\R^n$.
\begin{defn}We say that a random pair $(\BM,\Dec)$ is a SDM of semi-perimeter $\ell$ and boundary condition $b$ if its probability measure satisfies that for $\Dec$ with boundary condition $b$
\begin{align} \label{def_cond}
    \P^{\ell,b}(\Dec\in d\sigma|\BM=\q)& =  \frac{1}{Z^b(\q)}\exp\left(-\Ham^b(\q,\sigma)\right)\prod_{f\in F_i(\q)}\mu(d\sigma_f),
\end{align}
where $\Ham^{b}(\q,\sigma):= \frac{\beta}{2}\displaystyle\sum_{ij\in E(\q)}\|\sigma_i-\sigma_j\|^2$  and the sum runs once over all (non-oriented) edges in $\q$, including the ones from $F_e(\q)$ to $F_i(\q)$ and
\begin{align}\label{Zq}
    Z^b(\q) = Z_{\beta, \ell,\mu}^b(\q) := \int_{(\R^n)^{|F_i(\q)|}}\exp\left(-\Ham^b(\q,\sigma)\right)\prod_{f\in F_i(\q)}\mu(d\sigma_f).
\end{align}
\end{defn}
A special case of SDM are Bolztmann decorated maps. We say that $(\BM,\Dec)$ is a \textit{$q-$Boltzmann decorated map} with semi-perimeter $\ell$ and boundary condition $b$ if its density satisfies the following
\begin{align}
    \P^{\ell,b}\left(\BM=\q,\Dec\in d\sigma\right) = \frac{1}{W_{q}^{\ell,b}}q^{|F_i(\q)|}\exp\left(-\Ham^b(\q,\sigma)\right)\prod_{f\in F_i(\q)}\mu(d\sigma_f),
\end{align}
where $q$ satisfies that $W_{q}^{\ell,b}<\infty$, $\q$ has semi-perimeter $\ell$ and $\sigma$ has boundary condition $b$.

\begin{rem}
For most of the measures we are interested in there always exists a $q>0$ s.t. $W_{q}^{\ell,b}<\infty$. This is clearly the case when the measure $\mu$ is finite as one can compare with the case without decoration. When the measure is Lebesgue in $\R^n$, we can check by noting that 
\begin{align*}
Z^b(\q)=c^b \sqrt{2\pi^{\sharp F_i(q)} Det(\Delta)/\beta},
\end{align*} where $\Delta$ is the Laplacian in $\q$ with boundary conditions $0$ (rooted at the boundary). This Laplacian is related by the Matrix-tree theorem to the amount of spanning trees in $\q$ which grows at most logarithmic. For the case where the decoration takes values on $\Z$ and $\mu$ is the counting measure, we can manually bound the $Z^b_\Z(\q)\leq c^{\sharp F(q)}(Z^b_\R(\q) + 1)$. 
\end{rem}

As before, we state properties that are analogous of invariance under rerooting (\ref{Invariance-root}) and uniformly distributed on $\Qm^{\ell,f}$ (\ref{uniform-fix-faces}) for spin decorated maps.
\begin{enumerate}
    \item\label{Invariance-root-dec} \textbf{(Invariance under rerooting)}
    The SDM $(\BM,\Dec)\sim \P^{\ell,b}$ with semi-perimeter $\ell$ and boundary condition $b$ is \textit{invariant under rerooting} if after rerooting it, its law is that of $\P^{\ell,b_{s}}$ where $b_s$ is the proper shift of $b$.
    
    \item\label{gibbs-fix-faces-dec} \textbf{(Gibbs distribution on $\Qm^{\ell,f}$)} The SDM $(\BM,\Dec)$ has \textit{Gibbs distribution on $\Qm^{\ell,f}$} if for any $\q_1,\q_2 \in \Qm^{\ell,f}$
    \begin{align*}
        \frac{\P^{\ell,b}\left(\BM= \q_1\right)}{\P^{\ell,b}\left(\BM= \q_2\right)} = \frac{Z^b(\q_1)}{Z^b(\q_2)}.
    \end{align*}
\end{enumerate}

\begin{rem}
    Notice that we don't have that being invariant under rerooting is a consequence of having Gibbs distribution on $\Qm^{\ell,f}$. This occurs because the boundary conditions are not behaving well under rerooting.
\end{rem}
An example of random maps that satisfies this properties are the Boltzmann decorated maps. 
\begin{prop}
    Boltzmann decorated maps are invariant under rerooting (\ref{Invariance-root-dec}) and have Gibbs distribution over $\Qm^{\ell,f}$ (\ref{gibbs-fix-faces-dec}). 
\end{prop}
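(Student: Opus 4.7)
The plan is to verify both properties by direct computation from the explicit joint density of a Boltzmann decorated map. The argument is purely algebraic: first I integrate out $\Dec$ to obtain the marginal law of $\BM$, and then I exhibit a bijection between a map and its rerooting under which the Hamiltonian and partition functions are preserved up to a cyclic shift in the boundary condition. No deep probabilistic input is needed.

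For the Gibbs property \ref{gibbs-fix-faces-dec}, integrating $\sigma$ out of the joint density of a Boltzmann decorated map and using the definition of $Z^b(\q)$ in \eqref{Zq} yields
\[
\P^{\ell,b}(\BM = \q) \;=\; \frac{q^{|F_i(\q)|}\, Z^b(\q)}{W_q^{\ell,b}}.
\]
For $\q_1, \q_2 \in \Qm^{\ell,f}$ the prefactor $q^{|F_i(\q)|} = q^f$ is the same for both maps, so the ratio of the two probabilities reduces to $Z^b(\q_1)/Z^b(\q_2)$, which is exactly the required equality.

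For invariance under rerooting \ref{Invariance-root-dec}, let $\q'$ denote $\q$ with a different distinguished oriented edge on the root face, and let $b_s$ be the correspondingly shifted boundary condition. The internal faces and all edges of $\q$ and $\q'$ coincide; only the identification of $F_e$ with $\{0,\ldots,2\ell-1\}$ is cyclically renumbered. After applying the matching shift to $b$, every boundary edge pairs its internal endpoint with the same boundary spin as before, so
\[
\Ham^b(\q,\sigma) \;=\; \Ham^{b_s}(\q',\sigma),
\]
and therefore $Z^b(\q) = Z^{b_s}(\q')$ and $|F_i(\q)|=|F_i(\q')|$. Since rerooting is also a bijection on the set of rooted quadrangulations of a given half-perimeter, summing the preceding identity over all such maps gives $W_q^{\ell,b} = W_q^{\ell,b_s}$, so the joint density is preserved by the combined rerooting--plus--shift map.

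The only subtlety is bookkeeping: one needs to pin down conventions for the identification $F_e(\q)\leftrightarrow\{0,\ldots,2\ell-1\}$ and for the precise cyclic shift $b\mapsto b_s$ induced when the root edge moves by one position. Once those conventions are fixed, both properties follow by reading off the definitions of $\Ham^b$ in \eqref{def_cond} and $Z^b$ in \eqref{Zq}, and no genuine obstacle is expected.
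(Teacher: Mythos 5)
Your proof is correct and matches what the paper intends by ``analogous to the previous section'': you integrate out $\Dec$ to read off the Gibbs ratio, and you check rerooting directly via the energy identity $\Ham^b(\q,\sigma)=\Ham^{b_s}(\q',\sigma)$ together with the bijectivity argument for the normalising constant. One thing worth noting is that in the undecorated case the paper deduced rerooting invariance \emph{from} uniformity (Remark \ref{rem:unif-re}), whereas in the decorated setting the paper explicitly remarks that the Gibbs property does \emph{not} imply rerooting invariance because the boundary condition itself is shifted; your proposal correctly handles this by verifying both properties independently rather than trying to derive one from the other.
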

The proof of this result is analogous to the proof in the previous section.

\subsection{Markov property: stopping maps.} In this subsection we state the Markov property for the Boltzmann decorated map. Also, we give a framework in order to have a Markov property for random decorated maps.

\subsubsection{Weak Markov property.} As before, we state the weak Markov property associated to the Boltzmann decorated maps.
\begin{thm}\label{t.decorated_weak_Markov}
    Let $(\BM,\Dec)$ be a Boltzmann decorated map and $\q\in\Qm_{H}$. Then, conditionally on $\{\q\Lalg\BM\}$ and $\Dec|_{\q}$, $(\BM,\Dec)$ can be decomposed as follows
    \begin{align} \label{e.decomposition}
        \BM = \q\glue (\BM_h^{\q})_{h\in H(\q)},\  \text { and } \ 
        \Dec = \Dec|_{\q} + \Dec^{\q},
    \end{align}
    where, $(\BM_{h}^{\q},\Dec_{h}^{\q})$ is a collection of independent Boltzmann decorated maps with boundary condition $\Dec|_{h}$ for every $h\in H(\q)$.
\end{thm}
What we mean by the boundary condition of holes is that the faces adjacent to them now turn into phantom faces seen from them. More rigorously, we keep the spin of these faces (these are the spins imposed by the knowledge of $\phi|_q$) and connect them only with the internal faces of the hole they are adjacent to.
\begin{proof}
Fix $\ell,f \in \N$, and $\q$ a quadrangulation with holes. Take $\p\in\Qm^{\ell,f}$ satisfying that $\q\Lalg\p$ and $\sigma$ a decoration over $\p$. Let us write the density of the decorated map $(\BM,\Dec)$ under the event $\{\q\Lalg\BM\}$ as follows
\begin{align*}
    &\P^{\ell,b}\left[(\BM_h^\q)_{h\in H(\q)} = (\p_h)_{h\in H(\q)},\Dec \in d\sigma\mid\q\Lalg\BM\right] \\
    &\propto q^{|F(\q\glue(\p_h)_{h\in H(\q)})|}\exp\left({-\Ham^b(\q\glue(\p_h)_{h\in H(\q)} ,\sigma)}\right) \left( \sum_{\m\supseteq\q}\delta_{\mm}(\q\glue(\p_h)_{h\in H(\q)})\prod_{f\in F_i(\m)}\mu(d\sigma_f)\right)\\
    & \propto \left( \prod_{h\in H(\q)}q^{|F(\p_h)|}\right) \exp\left(-\Ham^b(\q\glue(\p_h)_{h\in H(\q)} ,\sigma)\right)\prod_{h\in H(\q)}\left( \sum_{\mm_h}\delta_{\mm_h}(\p_h)\prod_{f\in F_i(\m_h)}\mu(d\sigma_f)\right ) \prod_{f\in F_i(\q)}\mu(d\sigma_f).
\end{align*}
where the sum inside the parenthesis, ranges over all possible completions of a hole (there is a condition of consistency with the lengths of the boundaries). We now want to use Lemma \ref{l.Ley_condicional}. To do this, define the random variables $X = (\BM^{\q}_h,\Dec_h^{\q})_{h\in H(\q)}$ and $Z = \Dec_{\q}$ and the measures
\begin{align*}
    \mu_X &= \prod_{h\in H(\q)}\left (\sum_{\m_h}\delta_{\m_h}\prod_{f\in F_i(\m_h)}\mu(d\sigma_f) \right )\ \ \text{ and } \ \  \mu_Z = \prod_{f\in F_i(\q)}\mu(d\sigma_f).
\end{align*}
Furthermore, define 
\begin{align*}
F(X,Z)=  \exp\left(-\Ham^b(\q\glue(\p_h)_{h\in H(\q)} ,\sigma)\right) \prod_{h\in H(\q)}q^{|F(\p_h)|}.    
\end{align*}
Then, Lemma \ref{l.Ley_condicional} implies that
\begin{align*}
&\P^{\ell,b}\left (\BM = \p,\Dec^{\q} \in d\sigma^{\q}\mid\q\Lalg\BM,\Dec|_{\q}\in d\sigma_{\q}\right )\\
&\propto\prod_{h\in H(\q)}\left( q^{|F_i(\p_h)|}\exp\left(-\Ham^{\sigma_h}\left(\p_h,\sigma_{\p_h}\right)\right)\sum_{\m_h} \delta_{\m_h}(\p_h)\prod_{f\in F_i(\m_h)}\mu(d\sigma^q_f)\right).
\end{align*}
This directly implies the independence between the maps associated to each hole and that in each hole they have the distribution of a Boltzmann decorated maps.
\end{proof}

\subsubsection{Filtrations and stopping maps.} Since we are adding a decoration on the faces of our map, it will be necessary to strengthen some definitions introduced in the previous section in order to obtain Markov properties on Boltzmann decorated maps. 

The definition of filtration is kept unchanged because the structure of the map has not been modified. 

\begin{defn}\label{def_dec_Boltzmann}Take $\FF$ a filtration indexed by maps with holes. We say that a Boltzmann decorated map $(\BM,\Dec)$ is an \textit{$\F-$Boltzmann decorated map} if it satisfies the properties
\begin{itemize}[label={--}]
    \item\textit{Adaptability:} the event $\{\q\Lalg \BM\}$ and the function $\phi|_{\q}\1_{\{\q\Lalg \BM\}}$ are $\F_{\q}-$measurable.
    \item \textit{Independent increments:} conditionally on $\F_{\q}$ and the event $\{\q\Lalg\BM\}$, the law of $(\BM_h^{\q},\Dec^{\q}_h)_{h\in H(\q)}$ is a collection of independent $q$-Boltzmann decorated maps with boundary $h$ and boundary condition $\Dec|_{h}$ for every $h\in H(\q)$.
\end{itemize}
\end{defn}

\begin{rem}
As before, an example of a filtration for this model is the \textit{natural filtration}, which is enriched with the decorations, meaning that for all $\q\in \Qm_H$ its $\sigma$-algebra is defined as
\begin{align}
    \F_{\q} := \overline{\bigvee_{\p\Lalg \q} \sigma\left( \left\{\p \Lalg \BM \right\},\Dec|_{\p}\1_{\{\q\Lalg \BM\}} \right)}^{\P}
\end{align} 
Of course the weak Markov property implies that $(\BM,\Dec)$ is an $\F$-Boltzmann decorated map for its natural filtration.
\end{rem}
In this context, we keep the definition of stopping map given in Definition \ref{def_stop}, as well, as the definition of the $\sigma$-algebra associated to this random maps, which is given in Definition  \ref{stopping-sigma}.

\subsubsection{Strong Markov property.}
The following result is the strong Markov property for the Boltzmann decorated maps.
\begin{thm}\label{t.strong_Markov_decorated}
    Take $\FF$ a filtration indexed by maps with holes. Let $(\BM,\Dec)$ be an $\FF$-Boltzmann decorated map and $\SM\in\Qm_H$ an $\FF-$stopping map for $(\BM,\Dec)$. Then, $\BM$ and $\Dec$ can be decomposed as follows,
    \begin{align*}
        \BM = \SM\glue (\BM^{\SM}_h)_{h\in H(\SM)},\  \text { and } \ \Dec = \Dec|_{\SM} + \Dec^{\SM},
    \end{align*}
    where, conditional on $\F_{\SM}$ and $\Dec|_{\SM}$, $(\BM_h^{\SM},\Dec^{\SM}_h)_{h\in H(\SM)}$ is a collection of independent Boltzmann decorated maps with boundary $h$ and boundary condition $\Dec|_h$ for every $h\in H(\SM)$.
\end{thm}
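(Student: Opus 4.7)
The plan is to mirror the proof of Theorem~\ref{t.strong_Markov} from the undecorated case, paying extra attention to the random decoration $\Dec|_\SM$, whose ``domain'' depends on the random map $\SM$.

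First, I would upgrade Proposition~\ref{p.Basic_properties_F_M} to the decorated setting by showing that $\Dec|_\SM$ is itself $\F_\SM$-measurable. By the adaptability in Definition~\ref{def_dec_Boltzmann}, the function $\Dec|_\q \1_{\{\q \Lalg \BM\}}$ is $\F_\q$-measurable for every $\q \in \Qm_H$; on the $\F_\SM$-measurable event $\{\SM = \q\} \subset \{\q \Lalg \BM\}$ the restriction $\Dec|_\SM$ coincides with $\Dec|_\q$, so conditioning on $\F_\SM$ already incorporates the information of $\Dec|_\SM$. In particular, the statement of the theorem becomes equivalent to a statement about conditioning on $\F_\SM$ alone.

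Second, I would prove the decorated analogue of the key conditional-expectation identity used in Section~\ref{sec:3}: for any bounded measurable random variable $X$ and any $\q \in \Qm_H$,
\begin{align*}
\E(X \1_{\{\SM = \q\}} \mid \F_\SM) = \E(X \mid \F_\q) \1_{\{\SM = \q\}}.
\end{align*}
The $\F_\SM$-measurability of the right-hand side follows exactly as before: for $\p \in \Qm_H$ and a Borel set $A \not\ni 0$, the set $\{\E(X \mid \F_\q) \1_{\{\SM = \q\}} \in A\} \cap \{\SM \Lalg \p\}$ is either empty (when $\q \not\Lalg \p$) or equal to $\{\E(X \mid \F_\q) \in A\} \cap \{\SM = \q\}$, which lies in $\F_\q \subseteq \F_\p$ by Proposition~\ref{prop.stopping}. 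The integration property is then checked by conditioning on $\F_\q$ and using $\{\SM = \q\} \in \F_\q$.

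Third, since $\SM$ takes values in the countable set $\Qm_H$, I would write, for any bounded measurable $f$ and any $h \in H(\SM)$,
\begin{align*}
\E(f(\BM^\SM_h, \Dec^\SM_h) \mid \F_\SM) = \sum_{\q} \E(f(\BM^\q_h, \Dec^\q_h) \1_{\{\SM = \q\}} \mid \F_\SM) = \sum_{\q} \E(f(\BM^\q_h, \Dec^\q_h) \mid \F_\q) \1_{\{\SM = \q\}}.
\end{align*}
Since $\{\SM = \q\} \subseteq \{\q \Lalg \BM\}$, the independent-increments property of Definition~\ref{def_dec_Boltzmann} ensures that on this event, conditionally on $\F_\q$, the family $(\BM^\q_h, \Dec^\q_h)_{h \in H(\q)}$ consists of independent Boltzmann decorated maps with boundary $h$ and boundary condition $\Dec|_h$; the conclusion then follows by summing over~$\q$.

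I expect the main obstacle to be notational rather than conceptual: correctly making sense of ``conditionally on $\F_\SM$ and $\Dec|_\SM$'' when the function $\Dec|_\SM$ lives on a random domain, and in particular keeping track of the fact that the boundary conditions $\Dec|_h$ appearing in the target Boltzmann laws are themselves random variables measurable with respect to $\F_\q$ on each $\{\SM = \q\}$. The countable partition according to the value of $\SM$, combined with adaptability, is exactly what resolves these measurability issues.
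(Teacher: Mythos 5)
Your proposal is correct and follows essentially the same route as the paper, whose own proof of Theorem~\ref{t.strong_Markov_decorated} simply refers back to the undecorated argument of Theorem~\ref{t.strong_Markov}. You carry out exactly that transfer: the conditional-expectation identity $\E(X\1_{\{\SM=\q\}}\mid \F_\SM)=\E(X\mid\F_\q)\1_{\{\SM=\q\}}$, the countable partition over the value of $\SM$, and the independent-increments property on $\{\SM=\q\}\subseteq\{\q\Lalg\BM\}$, adding the (correct and useful) observation that $\Dec|_\SM$ is already $\F_\SM$-measurable by adaptability so that conditioning on $(\F_\SM,\Dec|_\SM)$ reduces to conditioning on $\F_\SM$.
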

\begin{proof}
    The proof of this theorem is exactly analogous to Theorem \ref{t.strong_Markov}. Thus we refer the reader to that proof.
\end{proof}

\subsection{Characterization of random quadrangulations satisfying the Markov Property.}
As before, we present a characterization of all the SDM satisfying the Markov property. In order to do this, we need to state the Markov property for a general collection of measures $(\P^{\ell,b})_{(\ell,b)\in\N^*\times\Bc}$ 
\begin{defn}
We say that $(\P^{\ell,b})_{(\ell,b)\in\N^*\times\Bc}$ satisfies the \textit{Markov property}, if for any deterministic quadrangulation with holes $\q$, we can describe the conditional law $\P^{\ell,b}(\cdot|\{\q\Lalg\BM\})$ as 
    \begin{align}
        \BM = \q \glue (\BM_h^{\q})_{h\in H(\q)},\  \text { and } \ \Dec = \Dec|_{\q} + \Dec^{\q}.
    \end{align}
    Here, conditional to $\Dec|_{\q}$ ,$(\BM^{\q}_h,\Dec^{\q}_h)_{h\in H(\q)}$ is a collection of independent decorated maps with law $\P^{|\partial h|,\Dec|_{h}}$ for every $h\in H(\q)$.
\end{defn}
The peeling exploration introduced in the precedent chapter extends naturally as follows: each time that a new face is discovered, it reveals also the value of its spin.

Now, we are ready to state the main result of this section that characterizes all the decorated quadrangulations having the Markov property.
\begin{thm}\label{t.uniqueness_decorated}
Take a collection of measures $(\P^{\ell,b})_{(\ell,b)\in \N^*\times \Bc}$ on spin decorated quadrangulations with a semi-perimeter $\ell$ and boundary condition equal to $b:\cro{0,2\ell-1}\mapsto \supp \mu$. Futhermore, assume that for any $\ell$ and $b$ we have that $\P^{\ell,b}(\BM \text{ has at least one face})>0$ and that the measure $\P^{\ell,b}$ is continuous on $b\in \supp\mu$ for the weak topology of measures. Then, the following are equivalent.
\begin{enumerate}
    \item There is $q>0$ such that for any $\ell$ and $b$, $\P^{\ell,b}$ is a $q$-Boltzmann decorated map, 
    \item $(\P^{\ell,b})_{(\ell,b)\in \N^*\times \Bc}$ has the Markov property, and for each $\ell\in \N$ and $b\in \Bc$, the measure $\P^{\ell,b}$ has the Gibbs distribution on $\Qm^{\ell,f}$ and is invariant under rerooting.
    \item $(\P^{\ell,b})_{(\ell,b)\in \N^*\times \Bc}$ has the Markov property and for each $\ell$ and $b$ the measure $\P^{\ell,b}$ is invariant under rerooting.
\end{enumerate}
\end{thm}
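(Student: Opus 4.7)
The plan is to establish the four implications $(1) \Rightarrow (2)$, $(2) \Rightarrow (3)$, $(3) \Rightarrow (2)$, and $(2) \Rightarrow (1)$. The first follows from the proposition preceding this theorem, combined with the weak Markov property for Boltzmann decorated maps established earlier in this section; the second is immediate. I will therefore focus on $(3) \Rightarrow (2)$ and $(2) \Rightarrow (1)$, both of which adapt the strategy of Theorem \ref{t.main_undecorated} to the decorated setting.

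For $(3) \Rightarrow (2)$, the aim is to derive the Gibbs marginal $\P^{\ell,b}(\BM=\q) \propto Z^b(\q)$ on $\Qm^{\ell,f}$ from the Markov property and rerooting invariance. I would proceed by induction on the number $f$ of internal faces. In the base case $f=0$ (trees), every edge peel is a type $2$ peel with trivial parameter $(0,\ell-1)$, and iterating the Markov property yields $\P^{\ell,b}(\BM=\mathbf{t}, \Dec \in d\sigma)$ as a product of one-step transition factors times a density in $\sigma$. Rerooting invariance implies independence of the peeling order, and comparing two such expansions for different leaf orderings forces both the conditional law of $\Dec$ to be Gibbs and the marginal on $\mathbf t$ to be proportional to $Z^b(\mathbf t)$. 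For the inductive step, I would select a face of $\q$ sharing an edge with the boundary (which exists whenever $f \geq 1$), apply a type $1$ peel, and invoke the Markov property to reduce to a quadrangulation $\hat \q \in \Qm^{\ell+1,f-1}$ under an updated boundary condition; the induction hypothesis handles $\hat \q$, and decomposing the integral defining $Z^b(\q)$ over the spin of the peeled face matches the factor produced by the Markov step.

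For $(2) \Rightarrow (1)$, the Gibbs hypothesis gives $\P^{\ell,b}(\BM=\q, \Dec \in d\sigma) = a(\ell,f,b)\,e^{-\Ham^b(\q,\sigma)}\prod_{k\in F_i(\q)}\mu(d\sigma_k)$ for some function $a$, and the goal is to prove $a(\ell,f,b) = A(\ell,b)\,q^f$ for a global constant $q>0$. Adapting Figure \ref{f.tilde m}, I would introduce a modification $\q \mapsto \bar\q$ that adds one interior face without changing the perimeter or the boundary condition, and which is realized from $\bar\q$ by one type $1$ peel followed by one type $2$ peel with parameters $(0,\ell)$. With $\bar\sigma$ extending $\sigma$ by an additional spin $\tau$ on the new face, the ratio $\P^{\ell,b}(\BM=\bar\q, \Dec \in d\bar\sigma) / \P^{\ell,b}(\BM=\q, \Dec \in d\sigma)$ can be computed in two ways: from the Gibbs formula it equals $(a(\ell,f+1,b)/a(\ell,f,b))\,e^{-\Delta\Ham(\tau)}\,\mu(d\tau)$, while from the Markov property applied to the two-step peeling it equals $\P^{\ell,b}(\type{1})\,\P^{\ell+1,b'}(\type{2,0,\ell})$ times the density obtained by integrating the Markov decomposition over the spin created by the type $1$ peel. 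Matching the two expressions shows that $q(\ell,f,b) := a(\ell,f+1,b)/a(\ell,f,b)$ is independent of $\q$ and of $f$; comparing the identity for maps with different face counts (possible thanks to the assumption $\P^{\ell,b}(\BM \text{ has at least one face})>0$) eliminates the $\ell$-dependence, and the continuity hypothesis in $b$ extends the resulting equality to all boundary conditions in $\supp\mu$.

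The main obstacle is verifying in the $(2) \Rightarrow (1)$ argument that the two computations of the ratio $\P^{\ell,b}(\BM=\bar\q, \Dec \in d\bar\sigma) / \P^{\ell,b}(\BM=\q, \Dec \in d\sigma)$ really coincide as densities in the new spin $\tau$: on the Gibbs side the $\tau$-dependence is encoded in $\Delta\Ham(\tau)$ through the spins adjacent to the new face, while on the peeling side it arises from integrating out the intermediate spin introduced by the type $1$ peel against $\mu$, and one must check that these two representations agree as measures in $\tau$. A secondary difficulty, flagged in the introduction, is that the comparison degenerates when $\P^{\ell,b}$ is concentrated on trees, which is precisely why the hypothesis that at least one face appears with positive probability is imposed; the continuity in $b$ plays an analogous role in propagating the local identities obtained from the peeling argument to the full range of admissible boundary conditions.
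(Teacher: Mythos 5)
The overall architecture matches the paper's — prove $(2)\Rightarrow(1)$ and $(3)\Rightarrow(2)$, the latter by induction on the number of internal faces with trees as the base case — but the base case as you describe it contains a real gap, and it is precisely the point the paper flags as the genuine new difficulty of the decorated setting.

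For a tree $\mathbf t$ there are no internal faces, so the SDM condition \eqref{def_cond} is vacuous: there is no decoration to integrate, and $Z^b(\mathbf t)=\exp(-\Ham^b(\mathbf t))$ is a deterministic function of the boundary condition and the tree's pairing of half-edges. In particular, two distinct trees $\mathbf t_1,\mathbf t_2$ of the same semi-perimeter pair up the boundary spins differently, so $Z^b(\mathbf t_1)/Z^b(\mathbf t_2)$ is a genuine Gaussian-type factor that the proof must produce. Your proposed argument — iterate type-$2$ peelings, use rerooting invariance, and compare two leaf orderings — cannot produce it. Peeling a tree only ever yields a product of type-$2$ transition probabilities $\P^{\ell_k,b_k}(\text{Type }2,\dots)$, and rerooting plus Markov constrain these only to be consistent across orderings; nothing in that comparison introduces $e^{-\Ham^b(\mathbf t)}$, because the spin structure never appears when you stay inside the class of trees. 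Concretely, already for $\ell=2$ there are exactly two trees and only one peel available from the root (up to rerooting, which changes $b$ to its shift $b_s$ and hence compares $\P^{\ell,b}(\mathbf t_1)$ to $\P^{\ell,b_s}(\mathbf t_2)$, not $\P^{\ell,b}(\mathbf t_2)$), so no comparison of expansions closes the loop. The paper addresses this with the map $\boxplus$: it glues the tree into a fixed quadrangulation with four internal faces (so that the Gibbs constraint actually bites), exploits the cyclic shift $s$ of the boundary together with the rerooting invariance, and only then extracts the tree-level ratio \eqref{e.ratio_trees}; the hypothesis $\P^{\ell,b}(\BM\text{ has at least one face})>0$ is used exactly here to ensure $q_4\neq 0$. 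This construction is the crux of Claim \ref{c.start_induction_trees}, and the remark after the theorem explains why no purely tree-internal argument can work. Your induction step and your treatment of $(2)\Rightarrow(1)$ (ratio $q(\ell,b,\q,\sigma)$ via the one-face augmentation of Figure \ref{f.tilde m}, then eliminating the $\q$-, $b$-, and $\ell$-dependence) are on the right track and close to the paper's, but the base case needs the $\boxplus$ idea or an equivalent device that re-injects internal faces.

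A smaller imprecision: you also owe an argument that $q(\ell,b)$ does not depend on $b$. Continuity in $b$ and the ``at least one face'' hypothesis are the right ingredients, but you need the explicit manipulation (as the paper does with the boundary modification $b_x$ and rerooting) to show that changing a single boundary spin leaves $q$ unchanged before continuity can be invoked to cover all of $\supp\mu$.
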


Before starting the proof, assumme that $(\P^{\ell,b})_{(\ell,b)\in \N^*\times \Bc}$ is like in the theorem and satisfies the Markov property. Take $\q$ with semi-perimeter $\ell$ and $\sigma:F(\q)\mapsto \supp \mu$ with boundary condition $b$, note that the law of $\sigma$ on the event that  $\q=\BM$ is absolutely continuous with respect to $\mu^{\sharp F_i(q)}$ and the Radon-Nykodim derivative is given by
\begin{align} \label{e.def_p_q_s}
    p^{\ell,b}(\q,\sigma) := \frac{d \P^{\ell,b}(\BM=\q, \Dec|_{F_i(\q)} \in d\sigma) }{d \mu^{\sharp F_i(\q)}(d\sigma)} (\sigma) = \P^{\ell,b}(\BM=\q)\frac{ \exp(-\Ham^b( \q,\sigma)) }{Z^b(\q)}.
\end{align}
Here the last equality follows from \eqref{def_cond}. Note furthermore than $p^{\ell,b}(\q,\cdot)$ is continuous.
Now, we also need to show that on the event that the first peeling is of type $1$, the value of the decoration $\phi$ in the peeled faced is absolutely continuous with respect to $\mu$.

\begin{lemma}\label{l.continuity_decorated}
Let us work in the context of Theorem \ref{t.uniqueness_decorated}. On the event where the first peeling is of type $1$, call $f$ the new face discovered. Then, the law of $\phi_f$ is absolutely continuous with respect to $\mu$. Furthermore,
\begin{align*}
    p^{\ell,b}_1(x):= \frac{d\P(\text{Type 1}, \phi_f\in dx)  }{\mu(dx) }(x).
\end{align*}
is  continuous\footnote{To be more precise, it has a version (i.e. a representative) in $L^1(\mu)$ which is continuous. We always use this version as the equalities we obtain are all almost everywhere equalities on $\mu$ and Lemma \ref{l.continuity_decorated} implies that we can extend this equalities to all points} on the support of $\mu$.
\end{lemma}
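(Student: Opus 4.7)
The plan is to apply the Markov property on the submap consisting of a single square at the root, and read off $p_1^{\ell,b}$ as an explicit ratio whose $\sigma$-dependence can be analysed by hand.

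Fix $\sigma_0\in\supp(\mu)$. For a type $1$ peeling of the root producing a new face with spin $\sigma$, write $b^\sigma$ for the boundary condition on the resulting hole: it agrees with $b$ away from the root and assigns the value $\sigma$ to the three newly exposed boundary edges. By the assumption $\P^{\ell+1,b^{\sigma_0}}(\BM\text{ has at least one face})>0$, we can choose a quadrangulation $\hat\p_0$ with $|F_i(\hat\p_0)|\geq 1$ such that $\P^{\ell+1,b^{\sigma_0}}(\BM=\hat\p_0)>0$; since $\{\BM=\hat\p_0\}$ is clopen in the space of decorated maps, the weak-continuity hypothesis forces $\sigma\mapsto \P^{\ell+1,b^\sigma}(\BM=\hat\p_0)$ to be continuous, hence positive on a neighbourhood $U$ of $\sigma_0$. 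Let $\p_0$ be the map built by gluing a single square $f$ along the root edge and then filling the hole with $\hat\p_0$; its first peeling from the root is of type $1$ and yields $\hat\p_0$.

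The Markov property says that, conditionally on this first peeling being of type $1$ and on $\phi_f=\sigma$, the hole and its decoration are distributed according to $\P^{\ell+1,b^\sigma}$. Writing $\Ham^b(\p_0,(\sigma,\hat\sigma))=\tfrac{\beta}{2}\|\sigma-b(0)\|^2+\Ham^{b^\sigma}(\hat\p_0,\hat\sigma)$ (where $b(0)$ is the boundary spin adjacent to $f$) and integrating out $\hat\sigma$ yields the identity of measures on $\supp(\mu)$
\begin{align*}
\frac{\P^{\ell,b}(\BM=\p_0)\,Z^{b^\sigma}(\hat\p_0)}{Z^b(\p_0)}\,e^{-\frac{\beta}{2}\|\sigma-b(0)\|^2}\mu(d\sigma)=\P^{\ell+1,b^\sigma}(\BM=\hat\p_0)\,\P^{\ell,b}(\type{1},\phi_f\in d\sigma).
\end{align*}
For $\sigma\in U$ the factor $\P^{\ell+1,b^\sigma}(\BM=\hat\p_0)$ is positive, so $\P^{\ell,b}(\type{1},\phi_f\in d\sigma)$ is absolutely continuous with respect to $\mu$, with density
\begin{align*}
p_1^{\ell,b}(\sigma)=\frac{\P^{\ell,b}(\BM=\p_0)\,Z^{b^\sigma}(\hat\p_0)}{Z^b(\p_0)\,\P^{\ell+1,b^\sigma}(\BM=\hat\p_0)}\,e^{-\frac{\beta}{2}\|\sigma-b(0)\|^2}.
\end{align*}

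It remains to check continuity of the right-hand side on $U$. The factors $\P^{\ell,b}(\BM=\p_0)$ and $Z^b(\p_0)$ are constant in $\sigma$, the exponential factor is obviously continuous, and $\P^{\ell+1,b^\sigma}(\BM=\hat\p_0)$ is continuous by the weak-continuity hypothesis. The main technical point is continuity of $\sigma\mapsto Z^{b^\sigma}(\hat\p_0)$: only the three terms of $\Ham^{b^\sigma}$ attached to the new boundary depend on $\sigma$, and for $\sigma$ ranging in a compact subneighbourhood their polynomial fluctuation in the neighbouring spins is dominated by the quadratic decay of $e^{-\Ham^{b^\sigma}(\hat\p_0,\cdot)}$ in $\|\hat\sigma_{n_i}\|$, so dominated convergence applies. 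This gives a continuous representative of $p_1^{\ell,b}$ on $U$; since the formula is independent of the particular choice of $\hat\p_0$ (the density being a.e.\ unique), the local representatives agree on overlaps and glue into a single continuous function on all of $\supp(\mu)$. The only nontrivial ingredient is the dominated-convergence step controlling $Z^{b^\sigma}(\hat\p_0)$.
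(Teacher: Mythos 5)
Your proposal follows essentially the same route as the paper's own proof: peel a single square at the root, use the Markov property to write the law of the pair $(\BM,\phi_f)$ in two ways on a chosen good map, and read off $p_1^{\ell,b}$ as a ratio whose $\sigma$-dependence is explicit. Your derivation of absolute continuity is in fact cleaner and more self-contained than the paper's (which argues via finiteness of the number of faces rather than directly from the identity of measures), and the factorisation
\begin{align*}
p_1^{\ell,b}(\sigma)=\frac{\P^{\ell,b}(\BM=\p_0)\,Z^{b^\sigma}(\hat\p_0)}{Z^b(\p_0)\,\P^{\ell+1,b^\sigma}(\BM=\hat\p_0)}\,e^{-\frac{\beta}{2}\|\sigma-b(0)\|^2}
\end{align*}
is correct and agrees with what the paper's argument should produce. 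The gluing of local continuous representatives over the covering neighbourhoods $U$ is also fine. Two remarks.

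First, you rightly isolate the continuity of $\sigma\mapsto Z^{b^\sigma}(\hat\p_0)$ as the only nontrivial ingredient. It is worth noting that the paper's displayed formula for $p^{\ell,b}(\r,\sigma_x)$ in this lemma appears to drop the $Z^{\hat b_x}(\q)$ factor entirely from the ratio (the $x$-dependence there is attributed only to $p^{\ell,b}(\r,\sigma_x)$ and $\P^{\ell+1,\hat b_x}(\BM=\q)$), so your treatment is actually the more careful of the two.

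Second, the dominated-convergence step deserves more than the one sentence you give it. The $\sigma$-dependent part of $\Ham^{b^\sigma}(\hat\p_0,\hat\sigma)$ contributes cross-terms $-\beta\langle\sigma,\hat\sigma_{n_i}\rangle$ that are linear in the neighbouring spins, so $\sup_{\sigma\in K}e^{-\Ham^{b^\sigma}(\hat\p_0,\hat\sigma)}$ differs from $e^{-\Ham^{b^{\sigma_0}}(\hat\p_0,\hat\sigma)}$ by a factor growing like $e^{c\sum_i\|\hat\sigma_{n_i}\|}$ with $c\propto\mathrm{diam}(K)$. To dominate this you must absorb the linear growth into part of the quadratic decay of $\Ham^{b^{\sigma_0}}$, which leaves you needing finiteness of a partition-function-type integral with a strictly smaller effective inverse temperature on the three new boundary edges. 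This is true for the measures $\mu$ of interest (bounded, Lebesgue, counting on $\Z$) but is not an immediate consequence of the hypotheses of Theorem~\ref{t.uniqueness_decorated} as stated (which only guarantee $Z^{b'}(\q)<\infty$ at the given $\beta$). The step is plausible and your intuition for why is right, but a fully careful argument should spell out the trade-off and invoke whatever tail assumption on $\mu$ makes it go through.
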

\begin{proof}
To show absolute continuity with respect to $\mu$, we do as follows. First, note that for any $\delta>0$ there is $M\in \N$ such that the probability that $\BM$ has more than $M$ faces goes to $0$ as $M$ converges to infinity. Then, note that because $(\BM,\phi)$ is a SDM, on the event that $\BM$ has $M$ faces the law of $\phi$ is absolutely continuous with respect to $\mu$. Thus, any event that has probability $0$ for $\mu$ has probability at most $\delta$. From here we conclude.

Now, to show continuity we proceed as follows. Assume that the probability of having a peeling of type 1 is positive. Take $z\in \supp \mu$ and take $\q$ such that $\P^{\ell+1,\hat b_x}(\q)>0$, where $\hat b_x$ is the function that takes values $x$ in $0$, $1$ and $2$ and values $b_{j-2}$ for all $j\geq 3$. Define $\r$ as the glueing between the result of a peeling of type $1$ of a map with boundary size $\ell$ with $\q$. Take $\sigma$ a decoration of $\r$ and  $\sigma_x$ taking value $x$ in the face adjacent to the root vertex and $\sigma$ elsewhere, we have that
\begin{align}
p^{\ell,b}(\r,\sigma_x)= p_1^{\ell,b}(x) \P^{\ell+1,\hat b_x}(\BM=\q) \frac{e^{-\Ham^b( \r, \sigma)} }{Z^b(\r) }.
\end{align}
As $\P^{\ell+1,\hat b_x}(\BM=\q)$ and $p^{\ell,b}(\r,\sigma_x)$ are continous and non zero in $x=z$, we see that $p_1^{\ell,b}(x)$ is a.e. continuous on $x$.
\end{proof}

We can now start with the proof of the main theorem of this section.

\begin{proof}\label{proof.sec4_charac}
This proof follows closely that of Theorem \ref{t.main_characterisation_no_decoration}, however, new difficulties arise due to the presence of decorations. As before, it suffices to prove that $(2) \Rightarrow (1)$ and $(3) \Rightarrow (2)$.

We begin by assuming only (3) (noting that (2) trivially implies (3)). For $\ell\in\Z_+$, $b\in \Bc$, $\q\in\Qm_H$ and $\sigma$ a decoration over $F(\q)$, we define $q(\ell,b,\q,\sigma)$ as follows
    \begin{align*}
        q(\ell,b,\q,\sigma) := \frac{p^{\ell,b}(\overline{\q},\overline{\sigma})}{p^{\ell,b}( \q,\sigma)},
    \end{align*}
    where $\overline{\q}$ is constructed from $\q$ as explained in Figure \ref{f.tilde m} and $\overline{\sigma}$ takes the same values as $\sigma$ in $\q$ but in the new face, where its value is equal to the value of the adjacent phantom face $b_0$. Note that $q$ is never $0$ as long as $\P^{\ell,b}(\BM=\bar \q)$ is not $0$.
    
    We first prove for any $\ell\in\N^*$ and $b$ boundary condition, $q(\ell,b,\cdot,\cdot)$ is constant. Again, thanks to $(2)$, $q(\ell,b,\q,\sigma)$ does not depend on the structure of $\q$ nor  the number of faces of $\q$
    \begin{align}\label{e.no_dep_q_sigma_dec}
        q(\ell,b,\q,\sigma) &= \frac{p^{\ell,b}(\overline{\q},\overline{\sigma})}{p^{\ell,b}( \q,\sigma)}= \frac{p^{\ell+1,\hat{b}}(\widehat{\q},\widehat \sigma)p^{\ell,b}_1\left(b_0\right)}{p^{\ell,b}( \q,\sigma)} =p^{\ell,b}_1\left(b_0\right)\P^{\ell+1,\hat{b}}(\type{2,0,\ell}).
    \end{align}
    Here we applied twice the Markov property and $\hat q$ and $\hat \sigma$  are what remains undiscovered after applying a peeling in the root of $\bar q$ and $\hat b$ is the boundary condition of $\hat \sigma$ (which is not random by construction of $\bar \sigma$). Note that \eqref{e.no_dep_q_sigma_dec} also implies that if $p^{\ell,b}(q,\sigma)$ is $0$ so is $p^{\ell,b}(\bar q,\bar \sigma)$. Futhermore, since with positive probability $\BM$ has at least one face, we know that at least there is $\ell$, $b$, $\q$ and $\sigma$ for which $q$ is well defined. A small modification of the argument of \eqref{e.no_dep_q_sigma_dec} also shows that $p_1^{\ell,b}(x)$ is continuous in $x\in \supp \mu$, as moving $x$ just slightly changes the definition of $\bar \sigma$ and of $\bar b$ and all the terms that depend on this are continuous.

 Since we have proved the right hand side of \eqref{e.no_dep_q_sigma_dec} does not depend on $\q$ and $\sigma$, it follows that $q$ can only depend on $\ell$ and $b$. From now on, we write  $q(\ell,b):= q(\ell,b,\q,\sigma)$ where $(\q,\sigma)$ is any decorated quadrangulation with semi-perimeter $\ell$ and boundary condition $b$.
\begin{itemize}
    \item $(2)\Rightarrow(1):$ We first prove that in this context, $q$ does not depend on $b$ nor on $\ell$. From \eqref{e.def_p_q_s} and Property (2), we have that for any $\q_1$ and $\q_2$ with the same number of internal faces
    \begin{align}\label{e.from_q1_q2}
         \frac{p^{\ell,b}(\q_1,\sigma_1)}{p^{\ell,b}(\q_2,\sigma_2)} = \exp\left(\Ham^b(\q_2,\sigma_2)-\Ham^b(\q_1,\sigma_1) \right).
    \end{align}
    We claim that this formula implies that for any $\q$
    \begin{align} \label{e.density_l_b}
        p^{\ell,b}(\q,\sigma) = \frac{q(\ell,b)^{|F_i(\q)|} }{W_{q(\ell,b)}^{\ell,b} }\exp\left(-\Ham^b(\q,\sigma)\right),
    \end{align}
    where $W_{q(\ell,b)}^{\ell,b}<\infty$ is a renormalisation factor.
    
    To be more precise we prove that for any $\q_1,\q_2\in \QQ_H$
    \begin{align}
        \frac{p^{\ell,b}(\q_1,\sigma_1) }{p^{\ell,b}(\q_2,\sigma_2) } = q(\ell,b)^{|F_i(\q_1)|-|F_i(\q_2)|}\exp\left(\Ham^b(\q_2,\sigma_2)-\Ham^b(\q_1,\sigma_1) \right).
    \end{align}
    
    To establish this, we proceed by induction in the maximum number of internal faces of $\q_1$ and $\q_2$.
    The base case is when $\q_1$ and $\q_2$ are both trees and it follows from equation \eqref{e.from_q1_q2} since trees do not have internal faces. Now, we assume that the statement is true for all quadrangulations with boundary and with no more than $f$ internal faces.  Take $\q_1$ with exactly $f$ internal faces and define $\q_2=\overline{\q_1}$ the map obtained from applying the transformation in Figure \ref{f.tilde m}. Now, it is useful to note that $p^{\ell,b}(\q_2,\hat \sigma)= q(\ell,b) p^{\ell,b}(\q_1, \sigma )$ and that $\Ham^b(q_2,\overline\sigma)= \Ham^b(\q_1,\sigma)$. This together with \eqref{e.from_q1_q2} allows us to conclude.

    Next, we show that $q(\ell,b)$ depends only on $\ell$. Returning to the definitions of $\bar \q$ and $\bar \sigma$, we introduce $ \sigma_x$ as equal to $\bar \sigma$ except that its values in the new face is $x\in \supp \mu$.Suppose further that the new face neighbours $\mathfrak f$ in $\bar \q$ and $\mathfrak f$ satisfies $\sigma(\mathfrak f)=x$. Then
    \begin{align*}
    p^{\ell,b}(\bar \q, \bar \sigma)= p^{\ell, b}(\bar \q, \sigma_x).
    \end{align*}
     Define, now, $b_x$ as the boundary condition that takes values $ b_x(0)=x$ and everywhere else coinciding with $b$. As a consequence of the above, we obtain that applying peeling and equation \eqref{e.density_l_b}
    \begin{align*}
        q(\ell,b) &= \frac{p^{\ell,b}(\overline{\q},\sigma_x)}{p^{\ell,b}(\q, \sigma)} \\
        &= p_1^{\ell,b}(x) \P^{\ell+1, \widehat{b_x}}(\type{2,0,\ell}) \frac{p^{\ell, b_x}(\q,\sigma) }{ p^{\ell,b}(\q,\sigma)}\\
        &=\left(\frac{q(\ell,b_x)}{q(\ell,b)}\right)^{|F_i(\q)|}\frac{W_{q(\ell,b)}^{\ell,b}}{W_{q(\ell,b_x)}^{\ell,b_x}}p^{\ell,b}_1\left(x\right)e^{\frac{\beta}{2 } (x-b(0))^2}\P^{\ell+1, \widehat{b_x}}(\type{2,0,\ell}).
    \end{align*}
    Since the left hand side does not depend on the number of faces we conclude that $q(\ell,b)=q(\ell,b_x)$. This means that we can modify just one value of $b$ and keep the value of $q$. By rerooting invariance we conclude that we can change all of them.

    Finally, we show that $q(\ell)$ does not depend on $\ell$. For this we compute using equation \eqref{e.density_l_b}
    \begin{align*}
       q(\ell) &= \frac{p^{\ell,b}(\overline{\q},\overline{\sigma})}{p^{\ell,b}(\q,\sigma)}= \frac{p^{\ell+1,\hat{b}}(\widehat{\q},\widehat{\sigma})p^{\ell,b}_1\left(b(0)\right)}{p^{\ell,b}(\q,\sigma)}= \left(\frac{q(\ell+1)}{q(\ell)}\right)^{|F_i(\q)|}\frac{W_{q(\ell,b)}^{\ell,b}}{W_{q(\ell+1,\hat{b})}^{\ell+1,\hat{b}}}p^{\ell,b}_1\left(b(0)\right).
    \end{align*}
    Again as the left hand side does not depend on the number of internal faces we conclude that $q(\ell+1)=q(\ell)$.

    \item $(3)\Rightarrow(2):$     
    We now show that for any $\q_1,\q_2 \in \Qm^{\ell,f}$
    \begin{align}\label{e.Gibbs}
    \frac{\P^{\ell,b}(\BM = \q_1) }{\P^{\ell,b}(\BM = \q_2) }=\frac{ Z^b(\q_1)}{Z^b(\q_2) }.
    \end{align}
     Similar to Theorem \ref{t.main_characterisation_no_decoration}, we  prove this by induction in the number of internal faces (both maps have to have the same number). However this proof is trickier than the one without decoration. \footnote{The reason is that it is not enough if there is probability 0 to peel a new face,  which will restrict the measure to trees.} To initiate the induction, we need the following claim. 
    \begin{claim}\label{c.start_induction_trees}
 Take two trees $\mathbf{t}_1$ and $\mathbf{t}_2$ with $\ell$ edges, we have that
    \begin{align}\label{e.ratio_trees}
        \frac{\P^{\ell,b}(\BM = \mathbf{t}_1)}{\P^{\ell,b}(\BM = \mathbf{t}_2)} = \frac{Z^b(\mathbf{t}_1)}{Z^b(\mathbf{t}_2)}.
    \end{align}    \end{claim}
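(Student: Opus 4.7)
The plan is to prove Claim~\ref{c.start_induction_trees} by strong induction on the tree semi-perimeter $\ell$. The base case $\ell=1$ is immediate: there is a unique tree of one edge, so the ratio is trivially $1$ on both sides. For the inductive step, fix $\ell\geq 2$ and assume the claim for all trees of semi-perimeter strictly less than $\ell$ with any boundary condition.

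The key move is to apply the Markov property with a particularly economical deterministic submap. For each pair of consecutive boundary positions $(k,k+1)$ (indices modulo $2\ell$), let $\q_{k,k+1}\in\Qm_H$ denote the quadrangulation with holes consisting of a single leaf edge placed at those two semi-edges and enclosing a unique hole of semi-perimeter $\ell-1$. Then $\q_{k,k+1}\Lalg\mathbf{t}$ holds iff $\mathbf{t}$ has a leaf at positions $(k,k+1)$, and since $\q_{k,k+1}$ has no interior faces its decoration is trivial. The Markov property therefore gives, for any tree $\mathbf{t}_i$ with a leaf at $(k,k+1)$,
\[
\P^{\ell,b}(\BM=\mathbf{t}_i)=\P^{\ell,b}(\q_{k,k+1}\Lalg\BM)\cdot\P^{\ell-1,b_{k,k+1}}(\BM=\mathbf{t}_i'),
\]
where $\mathbf{t}_i'$ is $\mathbf{t}_i$ with the leaf removed and $b_{k,k+1}$ is $b$ with positions $k,k+1$ deleted. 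When $\mathbf{t}_1$ and $\mathbf{t}_2$ share a leaf at the same positions the peeling factor cancels, and the inductive hypothesis together with the identity $Z^b(\mathbf{t})=\exp(-\Ham^b(\mathbf{t}))$ for trees --- combined with the observation that removing the shared leaf modifies both Hamiltonians by the same constant $\tfrac{\beta}{2}\|b(k)-b(k+1)\|^2$ --- yields $\P(\mathbf{t}_1)/\P(\mathbf{t}_2)=Z^b(\mathbf{t}_1)/Z^b(\mathbf{t}_2)$.

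The remaining case, where $\mathbf{t}_1$ and $\mathbf{t}_2$ have disjoint leaf positions, is handled by chaining: for $\ell\geq 2$, any two non-overlapping consecutive position pairs are simultaneously leaf positions of some tree with $\ell$ edges (place the two prescribed leaf edges and connect them by an arbitrary subtree on the remaining $\ell-2$ edges, which amounts combinatorially to choosing a Dyck path of length $2\ell-4$). Consequently the ``shared-leaf graph'' on the finite set of trees with $\ell$ edges and boundary $b$ is connected, and applying the shared-leaf argument along a connecting path closes the induction. The step expected to be the main technical obstacle is verifying this combinatorial connectivity of the shared-leaf graph; everything else reduces directly to the Markov property and the inductive hypothesis.
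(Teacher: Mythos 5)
Your peeling-a-shared-leaf argument and the chaining through intermediate trees is essentially the paper's inductive step, but your proof has a genuine gap at $\ell=2$, and that gap is the whole point of the claim. For $\ell=2$ there are exactly two trees: $\mathbf t_1$ with leaves at boundary positions $(0,1)$ and $(2,3)$, and $\mathbf t_2$ with leaves at $(1,2)$ and $(3,0)$. Every leaf position of $\mathbf t_1$ overlaps every leaf position of $\mathbf t_2$ (they always share one boundary half-edge), so no tree can carry a leaf at a position of each; and since there are only these two trees, there is also no third tree to chain through. Your ``shared-leaf graph'' is therefore disconnected at $\ell=2$, and your induction cannot advance from the trivial $\ell=1$ base case. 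The paper explicitly flags this --- the base case of its induction is $\ell=2$, not $\ell=1$, and it notes that ``the induction step is more straightforward than the base case.''

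The paper's treatment of $\ell=2$ is a substantially different argument that does not reduce to peeling and the Markov property alone. It glues a fixed quadrangulation $\boxplus$ with four interior faces onto the boundary to form $\tr_i^{\boxplus}$, compares the decoration $\sigma^{\boxplus}$ with its cyclic shift $\sigma^{\boxplus_s}$ (which turns $\tr_1^{\boxplus}$'s density into one proportional to $\P^{2,b}(\BM=\tr_2)$), identifies the relevant Radon--Nikodym derivative $r(b_s)$ by summing over completions of $\boxplus$ together with a rerooting bijection $\q\mapsto s(\q)$, and separately proves positivity of the constant $q_4$ using the hypothesis that $\BM$ has a face with positive probability and rerooting invariance. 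Your inductive step for $\ell\ge 3$ matches the paper's (including the rerooting-to-share-a-leaf trick and the cancellation of $\exp(-\tfrac{\beta}{2}\|b(k)-b(k+1)\|^2)$ in both numerator and denominator), so the missing piece is precisely this two-edge base case; without it the chain never starts.
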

    As this result is technical, we defer its proof to the end and for now we assume it.
    
   Take $f\in \N$ and assume that \eqref{e.Gibbs} holds for maps with no more than $f$ internal faces. Choose $\q_1$ with $f+1$ faces. At least one internal face has to share an edge with the boundary. By rerooting invariance, we assume that the root edge is adjacent to an internal face we call $\mathfrak f$. Let $\q_2$ be another map with $f+1$ faces and suppose that that the root edge is adjacent to an internal face, that we call $\mathfrak f$ too (we will deal with the other case later). Additionally, define $\q_i^{*}$ as the result of applying a peeling in the root of $\q_i$ and for $x\in \supp(\mu)$
   \begin{align*}
   p^{\ell,b}(\q_i,x) &:= \frac{d \P(\BM=\q_i, \Dec(\mathfrak{f})\in dx) }{\mu(dx) }(x)
   \end{align*}
  is the Radon-Nykodim derivative of the decoration at the face $\mathfrak f$ with respect to the measure $\mu$ on the event that the map is exactly $\q_i$.
   
  To show that \eqref{e.Gibbs} is true for $\q_1$ and $\q_2$, we compute   
   \begin{align}\label{eq:quot:RN}
   p^{\ell,b}(\q_i, b_0)&= \P^{\ell,b}(\BM=\q_i)\frac{1 }{Z^b(\q_i) }\int \exp(\Ham^{b^{*}}(\q_i^{*}, \sigma )) \mu^{F-1}(d\sigma)= \P^{\ell,b}(\BM=\q_i)\frac{Z^{b^*}(\q_i^{*}) }{ Z^{b} (\q_i)},
   \end{align}
   where $b^{*}$ is the boundary condition\footnote{To be more precise, it is exactly equal to $b$ everywhere, except that now $0$ has become three edges with value equal to $b_0$} that appears when peeling the face $\mathfrak f$ and discovering that it has value $b_0$. 
   
  Then, we can use the Markov property to peel the face $\mathfrak f$ and see that, thanks to \cref{eq:quot:RN}, $\P^{\ell,b}(\BM=\q_1)/Z^b(\q_1)$ is equal to
   \begin{align*}
    \frac{ p^{\ell,b}(\q_1,  b_0)}{Z^{b^{*}}(\q_1^{*}) } = \frac{p^{\ell,b}_1(b_0)\P^{\ell+1, b^{*}}(\BM=\q_1^{*}) }{Z^{b^*}(\q_1^*) }
    = \frac{p^{\ell,b}_1(b_0)\P^{\ell+1, b^*}(\BM=\q_2^*) }{Z^{b^*}(\q_2^*) }   = \frac{\P^{\ell,b}(\BM=\q_2) }{Z^b(\q_2)}.
   \end{align*}
   Note that in the second equality we used the induction hypothesis as $\q_1^*$ and $\q_2^*$ have $f$ faces.
   
   We have shown that as long as $\q_1$ and $\q_2$ both have internal faces that are adjacent to the root edge they satisfy \eqref{e.Gibbs}. To extend this for general $\q_1$ and $\q_2$, we argue as follows. First, we assume by rerooting invariance that $\q_1$ has a face adjacent to the root edge.  Then, we take $\mathfrak e$ an edge of the outer boundary of $\q_2$ that is adjacent to an internal face (this is possible as $\q_2$ has at least one face). Now, we construct $\q_3$ a map that has a face $\mathfrak f$ such that its boundary $\partial \mathfrak f$ contains both the root edge and $\mathfrak e$. See \cref{fig:q3} for a formal explanation.
   \begin{figure}[h!]
       \centering
       \includegraphics[scale=0.5]{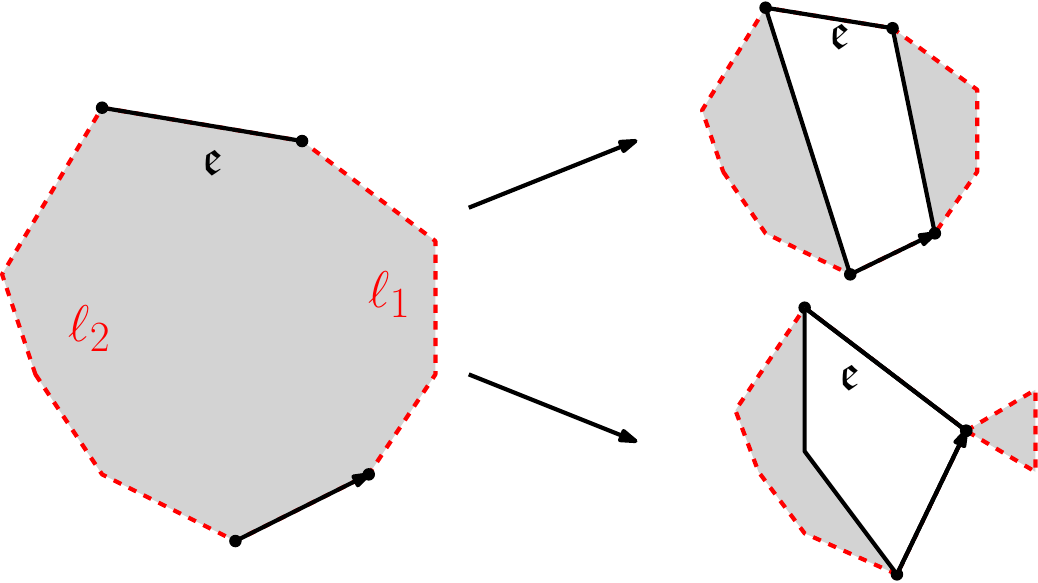}
       \caption{Count the number of edges from the tip of the root edge in the sense of the root edge and localise $\mathfrak{e}$, this is $\ell_1$. \underline{If $\ell_1$ is odd :} then $\ell_2$ is odd, then we follow the transformation on the top. \underline{If $\ell_1$ is even :}, then $\ell_2$ is even and we do as the transformation at the bottom.
       Notice that the gray areas have even perimeter, so define $\q_3$ as the fill-in of the gray areas with quadrangulations with a boundary in order to obtain the same number of faces as $q_1$ and $q_2$ (this includes the face adjacent to the root edge).  }
       \label{fig:q3}
   \end{figure}
   Thus,
   \begin{align*}
   \frac{\P^{\ell,b}(\q_1) }{\P^{\ell,b}(\q_2) }= \frac{\P^{\ell,b}(\q_1) }{\P^{\ell,b}(\q_3) }\frac{\P^{\ell,b}(\q_3) }{\P^{\ell,b}(\q_2) }= \frac{Z^b(\q_1) }{ Z^b(\q_3)}\frac{Z^b(\q_3) }{ Z^b(\q_2)}=\frac{Z^b(\q_1) }{ Z^b(\q_2)}.
   \end{align*}

   To finish, we just need to show the claim.
   
   \begin{proof}[Proof of Claim \ref{c.start_induction_trees}] We proceed by induction on $\ell$, this is obviously true for $\ell=1$, however our base case needs to be $\ell=2$. Surprisingly, the induction step is more straightforward than the base case, so we begin with it.
   
   \underline{Induction step:} Assume that the \eqref{e.ratio_trees} is true for all trees of size $\ell$ with $\ell\geq 2$. Take $\mathbf t_1$ and $\mathbf t_2$ two trees of size $\ell+1$ that share a leaf (i.e., they identify the same two contiguous edges). By rerooting invariance, we assume that both of them associate the root edge to next edge counter-clockwise. Then for any boundary condition $b$
   \begin{align*}
       \P^{\ell+1,b}(\BM={\mathbf t}_1) = \P^{\ell+1,b}(\text{type 2},0,\ell) \P^{\ell,b}(\BM={\mathbf t}_1^*) =\P^{\ell+1,b}(\BM={\mathbf t}_2) \frac{Z^{b^*}( {\mathbf t}_1^*) }{ Z^{b^*} ({\mathbf t}_2^*) },
   \end{align*}
   where $\tr_1^*$ and $b^*$ are the results of applying one peeling from the root to $\tr_1$ and $b$ respectively. We conclude this case by noting that 
   \begin{align*}
       Z^{b^*} ({\mathbf t}_i^*)=  Z^b ( {\mathbf t}_i)\exp\left(-\frac{\beta}{ 2} (b_0-b_1)^2\right).
   \end{align*}
  
   For the general case, we construct an intermediate tree of size $\ell+1$ having two special leaves, one matching a leaf of $\mathbf{t}_1$ and another matching a leaf of $\mathbf{t}_2$. Such a tree always exists if $\ell+1\geq 3$; this is left to the reader since the argument is similar to that of \Cref{fig:q3}.

   \underline{Base case:} Now, we verify the claim for $\ell=2$ and an arbitrary boundary condition $b\in \Bc$. In this case, we numerate the edges of the boundary from $0$ to $3$. Define $\mathbf t_1$ as the tree where $0$ and 1, and 2 and 3 are glued together and $\mathbf t_2$ as that where $0$ and $3$, and $1$ and $2$ are glued together. Note that $\mathbf t_1$ and $\mathbf t_2$ are the only two trees that can be obtained in this setup. See \Cref{fig:2trees} for an explanation on the resulting $\mathbf{t}_1$ and $\mathbf{t}_2$.
   \begin{figure}[h!]
       \centering
       \includegraphics[scale = 0.65]{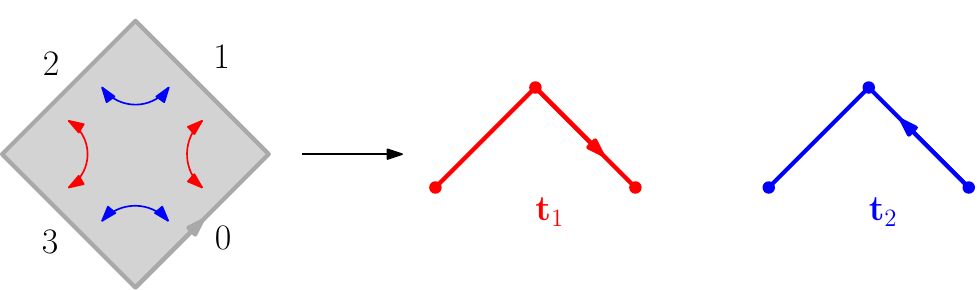}
       \caption{In red (resp. blue) the identifications made for the 4 half edges forming $\mathbf{t}_1$ (resp. $\mathbf{t}_2$).}
       \label{fig:2trees}
   \end{figure}
   
  Now, we need to define $\tr_i^\boxplus$. Let $\boxplus$ be the quadrangulation with holes presented in \Cref{fig:t1square} such that $\tr_i^\boxplus= \boxplus \glue \tr_i$ and $\sigma^\boxplus$ its corresponding decoration where each face that is adjacent to the root face receives spin equal to the spin in the phantom face. 
   
    \begin{figure}[h!]
       \centering
       \includegraphics[scale = 0.75]{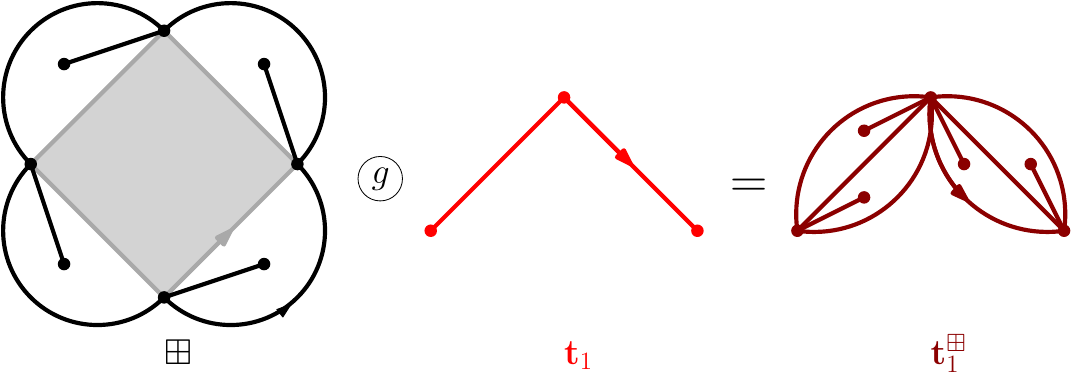}
       \caption{An example of the transformation, where the gray part is a hole of $\boxplus$ where we glued $\tr_1$ to obtain $\tr_1^\boxplus$. Each face on $\tr_1^\boxplus$ has a spin associated.}
       \label{fig:t1square}
   \end{figure}

   Define $s$ the counterclockwise shift $s(i)=i+4\mod 4$ and note $q_4=q(2,b)q(2,b\circ s)q(2,b\circ s^2) q(2,b\circ s^3)$, we compute
   \begin{align}
       p^{2,b}(\tr_i^{\boxplus},\sigma^\boxplus)= q_4\P^{2,b}(\BM=\tr_i).
   \end{align}
   
   Now,  define $b_s=b\circ s$ and $\sigma^{\boxplus_s}$ as the counterclockwise shift of $\sigma^{\boxplus}$ in the inner faces\footnote{Note that $\sigma^{\boxplus_s}$  has the same boundary conditions as $\sigma^{\boxplus}$}. We have that
   \begin{align*}
       p^{2,b}(\tr_1^{\boxplus},\sigma^{\boxplus_s})&= p^{2, b}(\tr_1^{\boxplus},\sigma^{\boxplus}) \exp\left(
       {- \Ham^b(\tr_1) + \Ham^b(\tr_2)-\frac{\beta }{2 }\sum_{i=0}^3(b_i-(b_s)_i)^2}\right)\\
       &= p^{2, b}(\tr_1^{\boxplus},\sigma^{\boxplus}) \exp\left({-\frac{\beta }{2 }\sum_{i=0}^3(b_i-(b_s)_i)^2} \frac{Z^b(\tr_2) }{Z^b(\tr_1) }\right).
   \end{align*}
   and furthermore, by doing properly chosen peelings we also obtain that
   \begin{align*}
       p^{2,b}(\tr^{\boxplus}_1, \sigma^{\boxplus_s}) = r(b_s) \P^{2,b}(\BM=\tr_2),
   \end{align*}
   where $r(\cdot)$ is defined as follows. For a function $c:\cro{0,3} \mapsto \supp(\mu)$, we define $\sigma_c: F_i(\boxplus)\mapsto \supp(\mu)$ such that $\left(\sigma_c\right)_{\mathfrak f}= c(i)$, where $i$ is the edge of $\mathfrak f$ that belongs to the boundary of $\boxplus$. Then,
   \begin{align}
       r(c) = \frac{d \P^{2,b}(\boxplus \Lalg \BM, \Dec\mid_{F_i(\boxplus)} \in dx  ) }{d\mu^4(dx)}(\sigma_c).
   \end{align}
  In particular $r(b)=q_4$.
   
 Note that now, to conclude, we need to verify that
   \begin{align}\label{e.needed_for_r}
       r(b_s)= q_4\exp\left({-\frac{\beta }{2 }\sum_{i=0}^3(b_i-(b_s)_i)^2}\right),
   \end{align}
 and that $q_4$ is not $0$.
   
   To prove \eqref{e.needed_for_r}, we compute $r(b_s)$, see \Cref{fig:rbs}.
   \begin{align*}
       r(b_s)&= \sum_{\q} \int p^{2,b}(\boxplus \glue \q,\sigma^{\boxplus_s}+\sigma) \mu^{\sharp F_i(\q)}(d\sigma)\\
       &= \sum_{\q} \int p^{2,b}(\boxplus \glue \q ,\sigma^{\boxplus}+\sigma)  \exp\left({-\frac{\beta }{2 }\sum_{i=0}^3(b_i-(b_s)_i)^2 + (b_i - \sigma_{\mathfrak f_i} )^2  -((b_s)_i-\sigma_{\mathfrak f_i})^2 }\right) \mu^{\sharp F_i(\q)} (d\sigma),
   \end{align*}
  here $\mathfrak f_i$ is the face of $\mathfrak r \glue \q$ that appears when peeling the edge $i$ of $\q$.
  
  \begin{figure}[h!]
       \centering
       \includegraphics[scale=0.8]{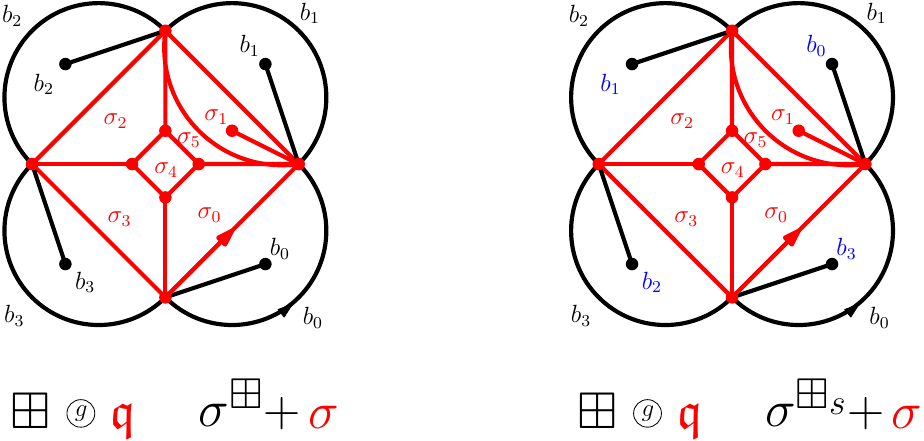}
      \caption{We present examples of the maps acting in the computation of $r(b_s)$. To the left we present an example of $\boxplus$ glued with $\q$ and the associated spin configuration $\sigma^{\boxplus} +\sigma$. To the right the corresponding transformation to $\sigma^{\boxplus_s} + \sigma$. Notice that $\sigma^{\boxplus}$ is completely determined by the boundary spins $b$. We present in red the elements glued inside $\boxplus$ and in blue, the spins that are changed in $\sigma^{\boxplus_s}$. }
       \label{fig:rbs}
   \end{figure}
   
   Now, denote $s(\q)$ as the map $\q$ but with the root edge moved one edge counter-clockwise and note that this is a bijection on the space of quadrangulations with semi-perimeter 2, and note that for a decoration $\sigma$ of $\q$ we can associate a decoration $s(\sigma)$ of $\sigma(\q)$ by properly renaming the coordinates of $\sigma$. A direct computation implies that
   \begin{align*}
       p^{2,b}(\mathfrak r \glue s(\q), \sigma^{\boxplus} + s(\sigma)) = p^{2,b}(\mathfrak r \glue \q, \sigma^{\boxplus}+\sigma) \exp\left (\frac{\beta }{2 }\sum_{i=0}^3 ((b_s)_i-\sigma_{\mathfrak f_i})^2 - (b_i-\sigma_{\mathfrak f_i})^2 \right ).
   \end{align*}
   This implies that $r(b_s)$ is equal to
   \begin{align*}
       &\sum_{\q} \int p^{2,b} (\boxplus \glue s(\q), \sigma^{\boxplus}+s(\sigma)) \exp\left({-\frac{\beta }{2 }\sum_{i=0}^3(b_i-(b_s)_i)^2 }\right) \mu^{\sharp F_i(\q)}(d\sigma)\\
       = & \ r(b)\exp\left({-\frac{\beta }{2 }\sum_{i=0}^3(b_i-(b_s)_i)^2 }\right) \\
       = & \ q_4 \exp\left({-\frac{\beta }{2 }\sum_{i=0}^3(b_i-(b_s)_i)^2 }\right).
   \end{align*}

Now we only need to show now that $q_4>0$. This follows from the fact that $\P^{2,b}(\BM \text{ has a face}) >0$. This shows that for any boundary condition at least 2 edges have positive probability of having a face. Thanks to the fact that $\BM$ is an SDM and continuity of the measure on $\ell$, it is possible to see that if $q_4$ is $0$ there have to be an $i\in \cro{1,3}$ and at least two values $b_0$ and $b_i$ such that if the phantom face $0$ takes values $b_0$ and $i$ takes values $b_i$ then, under $\P^{\ell,b}$, $\BM$ always identifies the semi-edge $0$ with the semi-edge $i$. However, this is a contradiction as follows, assume for example that $i=1$, then take $b(0)=b(2)=b_0$ and $b(1)=b(3)=b_1$, then thanks to the shift invariance $\BM$ would a.s. have no faces under $\P^{\ell,b}$. 
\end{proof}
   
\end{itemize}
\end{proof}

\begin{rem}
Let us remark that Theorem \ref{t.uniqueness_decorated} needs the hipothesis that with positive probability $\BM$ has at least one face. This is (3) implies (2) is not true when a.s. $\BM$ has no faces. In this case, $\P^{\ell,b}$ being a spin decorated map is an empty condition, as trees have no face where to put decoration. This implies that $\P^{\ell,b}$ has too many degrees of freedom so there would be no way to make appear $e^{-\Ham(\tr,\sigma)}$. This is seen in the proof when showing the base case of the induction when working with trees. It is necessary to introduce a map with four faces to compare the laws of the trees, this is not at all the case when there are no decorations.  
\end{rem}

\section{Markov property and associated measures : metric maps}\label{s.metric}
In this section, we introduce the first new model of this paper and the most technical one, called \textit{metric decorated maps}. We start by slightly modifying the framework, working with  the dual graph so that decorations live in the edges instead of the dual ones. Then, we shows that the model  satisfies both the weak and strong Markov property. In the final part of this section, as before, we characterize all the reasonable metric decorated maps that satisfy the Markov property.  

\subsection{The dual model}
 We denote by $\q^{\dagger}$ the dual of the map $\q$, which corresponds to the map where the vertices represent the faces of $\q$, and the edges are dual to those in $\q$ (see Figure \ref{fig:dual}). Notice that if the map is rooted, so is the dual in the associated dual oriented edge. Additionally, if the map has holes, the dual will contain special vertices corresponding to these holes, where for each marked edge, there will be a corresponding dual not oriented marked edge. 

\begin{figure}[H]
    \centering
    \includegraphics[width=0.3\textwidth]{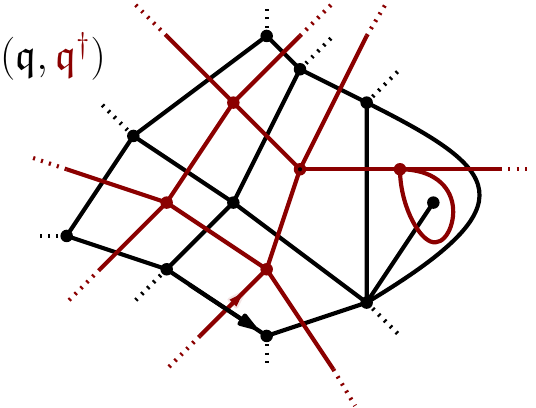}
    \caption{An image of the map $\q$ and $\q^{\dagger}$. Here $\q$ is locally represented in black and $\q^\dagger$ in dark red.}
    \label{fig:dual}
\end{figure}

Let us briefly described the peeling exploration from the point of view of the dual maps. In each step, instead of  discovering a new face we discover a new vertex  together with the knowledge of how many dual edges are connected to it (see Figure \ref{fig:dual_peel}).

\begin{figure}[h!]
    \centering
    \includegraphics[scale=0.6]{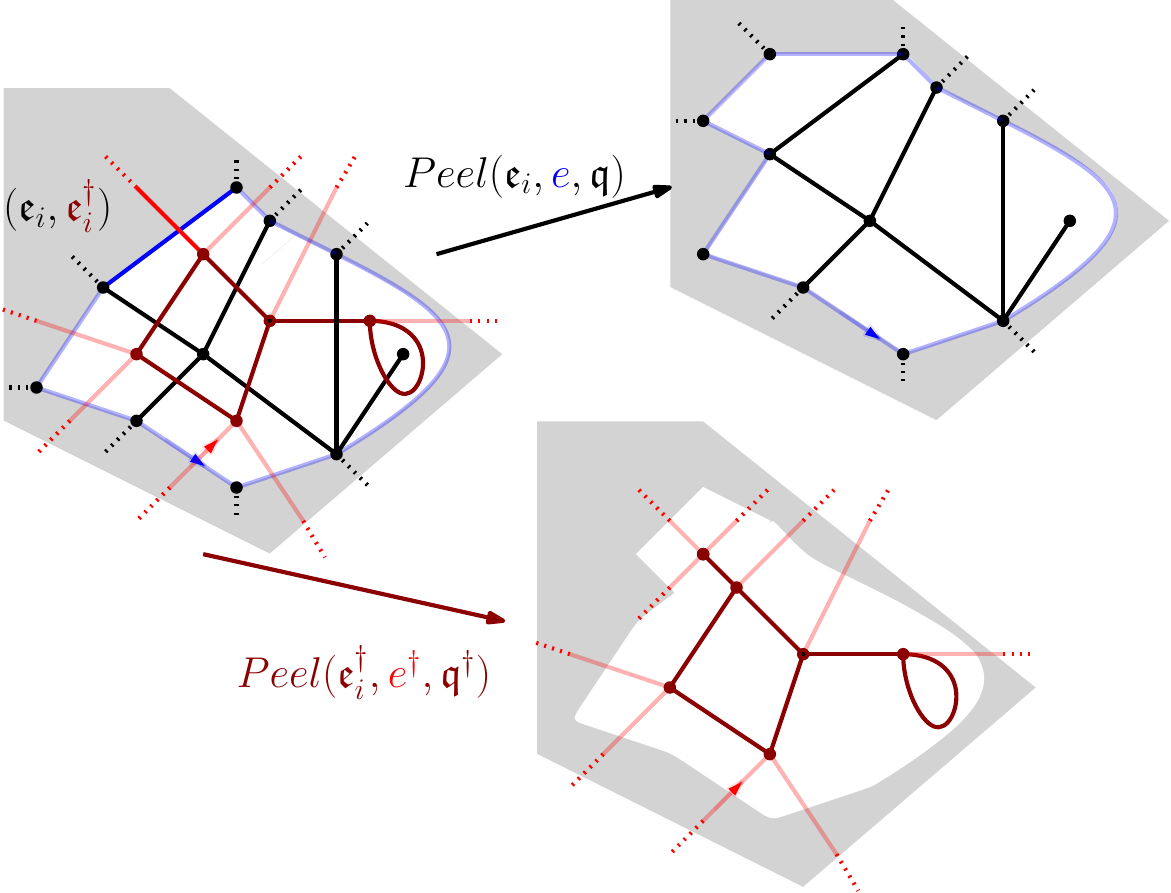}
    \caption{We present to the left a stage of a peeling $\mathfrak{e}_i$ described in section \Cref{sec:3}, where the white area is the one discovered by the peeling so far, seen from the map $\q$ and from the dual map $\q^\dagger$, which reads $\mathfrak{e}_i^\dagger$. We present in blue the active edges and with red the dual associated active edges. To the right we apply one peeling step, when choosing the solid blue active edge and correspondingly the solid red edge. When a face $f$ is discovered in the primal map $\q$, the dual peeling discovers one vertex $f^{\dagger}$ and the half-edges adjacent to it, which are the dual edges associated with the boundary edges of the face $f$. }
    \label{fig:dual_peel}
\end{figure}

In this chapter and unless said otherwise, we work on the dual of a quadrangulation with boundary (or a quadrangulation with holes) but we skip the notation $\q^{\dagger}$ and just use $\q$ to not overcharge the notation. This is because, we want to put the decoration as taking values on the vertices instead of the edges, i.e., $\Dec:V(\q)\to\R$. Notice that we are going to work only with decoration that lives in $\R$ as it simplifies many of the computations, however almost all the results in this section remain true when the decoration lives in $\R^n$. We will point out where changes need to be made when necessary.

\subsection{Metric maps}
We call \textit{metric map} to a rooted map $\q$ where each edge of the dual is replaced by a continuous line segment isometric to an interval $[0,w_e]\subseteq \R$ for each edge $e$. 

As before, we are going to work only with the duals of quadrangulations with boundary, which are maps in which each vertex has degree 4 (except for $v_r$, the root vertex). We denote the space of metric quadrangulations as $\MQm$. Also, for $\ell,f\in\N$ we define $\MQm^{\ell,f}$ as the set of metric quadrangulations with boundary of half-perimeter $\ell$ and $f$ interior vertices, $\MQm^{\ell} =\cup_f \MQm^{\ell,f}$ and $\MQm_H$ denote the set of metric quandrangulations with holes.

We define the skeleton function $\ske:\MQm\to\Qm$, which takes a metric quadrangulation $\Mq$ and returns the graph structure of the previous map, i.e. it forgets the lengths of the edges. In the case of metric quadrangulations with holes the skeleton is a quadrangulation with holes, in particular it keeps the information of the active edges. Also, we define a way of gluing metric maps as follows. Let $\Mq_1$ be a metric map with one hole $h$, and $\Mq_2$ be a metric map such that $Per(\Mq_2) = deg(h)$. We denote $\Mq_1\glue\Mq_2$ as the resulting metric quadrangulation that satisfies that 
\begin{itemize}
    \item $\ske(\Mq_1\glue\Mq_2) = \ske(\Mq_1)\glue\ske(\Mq_2)$,
    \item and, the length are given by
    \begin{align*}
        w_e = \begin{cases}w_e^{\Mq_1}, & \text{if }e\in E(\Mq_1)\setminus E(\Mq_2),\\ w_e^{\Mq_2}, & \text{if }e\in E(\Mq_2)\setminus E(\Mq_1), \\ w_e^{\Mq_1}+w_e^{\Mq_2}, &  \text{if }e\in E(\Mq_1)\cap E(\Mq_2) \end{cases}
    \end{align*}
    where $E(\Mq_1)\cap E(\Mq_2)$ denote the edges that are identified in the gluing procedure of $\ske(\Mq_1)\glue\ske(\Mq_2)$.
\end{itemize}

We can generalize the gluing procedure for a metric quadrangulation $\Mq$ with more than one hole, where, in each hole, we can glue a metric quadrangulation doing the same procedure as before.

We say that a metric quadrangulation with holes $\Mq_1$ is an \textit{active submap} of $\Mq_2$, denoted as $\Mq_1\MLalg\Mq_2$, if
\begin{itemize}
    \item $\mathfrak{s}(\Mq_1)\Lalg\mathfrak{s}(\Mq_2)$, 
     \item and, for every edge $e\in Active(\Mq_1)$, $w_e^{\Mq_1}\leq w_e^{\Mq_2}$.
\end{itemize}
Also, if for every edge $e\in E(\Mq_1)\setminus Active(\Mq_1)$, $w_e^{\Mq_1} = w_e^{\Mq_2}$ then we say that $\Mq_1$ is a \textit{submap} of $\Mq_2$, which we denote as $\Mq_1\Lalg\Mq_2$.

\begin{rem}\label{rem:eq}
Note that the relation $\Lalg$ is an order relation for metric maps but the relation $\MLalg$ it is not. However, $\MLalg$ induces an order relation for the equivalence classes defined as follows: a metric quadrangulation with holes $\Mq_1$ is \textit{equivalent} to $\Mq_2$, denoted as $\Mq_1\sim\Mq_2$, if
\begin{itemize}
    \item $\mathfrak{s}(\Mq_1) = \mathfrak{s}(\Mq_2)$,
    \item and, for every edge $e\in Active(\Mq_1)$, $w_e^{\Mq_1} = w_e^{\Mq_2}$. 
\end{itemize} 
The relation $\sim$ is an equivalence relation and we denote as $[\Mq]_{\sim}$ the equivalence class of $\Mq$. In this context, the relation $\MLalg$ means the following 
\begin{align*}
    [\Mq_1]_{\sim} \MLalg [\Mq_2]_{\sim} \Longleftrightarrow (\forall \Mp_1\in [\Mq_1]_{\sim}) (\exists \Mp_2\in[\Mq_2]_{\sim}) \ \Mp_1\MLalg\Mp_2.  
\end{align*}
Under this definition, the relation $\MLalg$ is an order relation over the classes of equivalence of the relation $\sim$.
\end{rem} 

The reason why we introduced two different ordering for metric maps is that the event $\{\Mq\Lalg\BMM\}$ is an event of null probability for Boltzmann metric maps but the event $\{\Mq\MLalg\BMM\}$ has positive probability. This is key to make weak Markov property work.

 Decorations, in the context of metric maps,  are continuous function $\MDec:\Mq\backslash \{\vroot\}\to\R$ such that their restriction to the vertices is a decoration in $\R$. The boundary condition of $\MDec$ is the value of the decoration on phantom vertices, which is equal to the limit of the decoration along the edges connecting phantom vertices. In order to simplify the notation, we denote $\Dec$ as the decoration $\MDec$ restricted to the vertices of the metric quadrangulation $\Mq$, i.e. $\Dec = \MDec|_{V(\Mq)}$.

Let us now discuss about the topology of decorated metric maps. This is important, as we need certain continuity properties of the measures we are studying. Take $(\tilde \q_1, \tilde \sigma_1)$ and $(\tilde \q_2, \tilde \sigma_2)$ two decorated metric maps. We define the following distance between them
\begin{align*}
d\left((\tilde \q_1, \tilde \sigma_1),(\tilde \q_2, \tilde \sigma_2)\right)=\begin{cases}
\infty & \text{ if }\mathfrak{s}(\Mq_1) \neq \mathfrak{s}(\Mq_2),\\
\sum_{e \in E(\Mq_1)} d_{sko}(\tilde \sigma_1\mid_e,\tilde \sigma_2 \mid_e), & \text{ if }\mathfrak{s}(\Mq_1) = \mathfrak{s}(\Mq_2),
\end{cases}
\end{align*}
where the Skorohod's distance is defined as in (12.16) of \cite{Bi}
\begin{align*}
    d_{sko}(X,Y) = \inf_{f} \left \{\|X-Y\circ f\| \vee \sup_{0\leq s\leq t\leq w_x}\left |\log \frac{f(t)-f(s) }{t-s } \right |\right\}.
\end{align*}
Here the infimum runs over bijective functions $f$ that are increasing going from $[0,w_x]$, the domain of $w_x$ (the length of the edge $x$), to $[0,w_y]$, the domain of $w_y$. Note that the space generated by this distance is, in fact, a Polish space.

\subsection{Boltzmann decorated metric maps}

The main object of study of this section are \textit{spin decorated metric map} (SDMM). To define them, fix $\beta >0$ the inverse temperature, and $\mu$ a measure on $\R$.

\begin{defn}
We say that a random pair $(\BMM,\MDec)$ is an SDMM of semi-perimeter $\ell$ and boundary condition $b$ if its probability measure satisfies the following
\begin{align}
\P^{\ell,b}\left(\MDec\in d\Msigma\left|\BMM=\Mq \right . \right) \propto \prod_{ij\in E(\Mq)}\hat \P_{w_{ij}\beta^{-1}}^{\sigma_i,\sigma_j}\left((P_t)_{t\in[0,w_{ij}\beta^{-1}]} \in d\Msigma|_{ij}\right)\prod_{v\in V_i(\Mq)}\mu(d\sigma_v),
\end{align}
 where $\hat \P^{a,b}_{w}$ was defined in Lemma \ref{lem-decomp} and is the unnormalised probability measure of a Brownian Bridge of length $w$ started at $a$ and finishing at $b$. Here the edge is oriented going from  $i$ to $j$. \footnote{Here we assume we have numbered the vertices and we point the edges from smallest to biggest endpoint of each edge.}
\end{defn}
Let us give an equivalent representation of an SDMM.
\begin{rem}
Let $(\BMM,\MDec)$ be an SDMM then for any $(\tilde q, \sigma)$ 
\begin{align} \label{def_cond_metric}
    \P^{\ell,b}\left(\Dec\in d\sigma\left|\BMM=\Mq \right.\right) &=  \frac{1}{Z^{\beta,b}(\Mq)}\exp\left(-\Ham^{\beta,b}(\Mq,\sigma)\right)\prod_{v\in V_i(\Mq)}\mu(d\sigma_v),
\end{align}
where 
\begin{align}\label{Zq_metric}
&\Ham^{\beta,b}(\Mq,\sigma):= \frac{\beta}{2}\displaystyle\sum_{\substack{i,j\in V(\Mq)\\i\sim j}}\frac{\|\sigma_i-\sigma_j\|^2}{w_{ij}} \ \ \text{ and }
    &Z^{\beta,b}\left(\Mq\right):= \int_{\R^{|V_i(\q)|}}\exp\left(-\Ham^{\beta,b}(\Mq,\sigma)\right)\prod_{v\in V_i(\Mq)}\mu(d\sigma_v).
\end{align}
Furthemore, given $\Dec$ and $\widetilde \BM$, the law of $(\widetilde \phi|_{ij})_{ij\in E(\widetilde \BM)}$ is that of independent brownian bridges of length $w_{ij}$ and starting at $\Dec_i$ and ending at $\Dec_j$. 
\end{rem}

A special case of SDMM are Boltzmann decorated metric maps. They are going to be the focal point of the first part of this section.
\begin{defn}
We say that $(\BMM,\MDec)$ is a \textit{$(q,\lambda,\beta)$-Boltzmann decorated metric map} if its probability measure satisfies the following

\begin{align}\label{e.Boltzmann-metric}
    &\P^{\ell,b}_{\lambda,q,\beta}\left(\BMM\in d\Mq,\MDec\in d\Msigma\right)\propto q^{|V_i(\Mq)|}\prod_{ij\in E(\q)}\exp\left(-\lambda w_{ij} \right )\hat{\P}_{w_{ij}\beta^{-1}}^{\sigma_i,\sigma_j}\left(d\Msigma|_{ij}\right)  dw_{ij}\prod_{v\in V_i(\Mq)}\mu(d\sigma_v),
 \end{align}

where the value of the normalization constant $W_{\lambda,q,\beta}^{\ell, b}$ is given by 
\begin{align}
    W_{\lambda,q,\beta}^{\ell, b} = \sum_{\q}q^{|V_i(\q)|} \int_{\R^{V_i(\Mq)}}  \int_{\R_+^{E(\q)}}     \left( \prod_{ij\in E(\Mq)}\frac{\exp\left (-\frac{\beta}{2} \frac{(\sigma_i-\sigma_j)^2}{w_{ij}} -\lambda w_{ij} \right )  }{ \sqrt{2\pi w_{ij}/\beta}} \right ) \prod_{ij \in E(\Mq)} dw_{ij}  \prod_{v\in V_i(\Mq)} \mu(d\sigma_v),
\end{align}
where the sum is taken over all the maps with semi-perimeter $\ell$.
\end{defn}
An attentive reader may realize that the parameters $\lambda$ and $\beta$ play a similar role, as the longer an edge is, the colder the system gets. The truth is slightly more complicated, and is given explained in the following remark.
\begin{rem}
If $(\BMM, \MDec)$ is a $(q,\lambda,\beta)$-Boltzmann decorated metric map, then define $\BMM/\beta$ the metric map where all distances have been reduced by $\beta$, $\Dec_\beta(x)= \Dec(x/\beta)$, where $x \in \BMM/\beta$. Then $(\BMM/\beta, \MDec_\beta)$ is a $(q\beta^2,\lambda\beta,1)$-Boltzmann decorated metric map. Thus, from here onward we always assume that $\beta=1$.
\end{rem}

Furthermore, as expected Boltzmann maps are SDMM.
\begin{rem}
The Bolztmann decorated metric maps are indeed an SDMM as
\begin{align*}
    \P^{\ell,b}\left(\MDec\in d\Msigma\left|\BMM=\Mq \right.\right) 
    &\propto  q^{|V_i(\Mq)|}\prod_{ij\in E(\q)}\exp\left(-\lambda w_{ij} \right ) \hat{\P}_{w_{ij}\beta^{-1}}^{\sigma_i,\sigma_j}\left( d\Msigma|_{ij}\right)  dw_{ij}\prod_{v\in V_i(\Mq)}\mu(d\sigma_v) \\
    &\propto\prod_{ij\in E(\Mq)} \hat{\P}_{w_{ij}\beta^{-1}}^{\sigma_i,\sigma_j}\left( d\Msigma|_{ij}\right)\prod_{v\in V_i(\Mq)}\mu(d\sigma_v).
\end{align*}
Since over this conditioning $q^{|V_i(\Mq)|}\prod_{ij\in E(\q)}\exp\left(-\lambda w_{ij} \right )$ is a constant factor.
\end{rem}
And of course, Boltzmann maps are not trivial.
\begin{rem}
    If the measure $\mu$ is bounded, then there always exists a $q>0$  and $\lambda>0$ s.t. for any $\ell$ and $b$, $W_{q,\lambda,\beta}^{\ell,b}<\infty$. This is because $e^{-\lambda w}/\sqrt{w}$ is integrable in $\R^+$. The case where the measure $\mu$ is Lebesgue, can be treated as in the decorated case as the partition function for the GFF is explicit and increases exponentially with the number of vertices. For the case when $\mu$ is the sum of Dirac's in $\Z$, a similar bound as in the decorated  case allows us to conclude.
\end{rem}

When the decoration lives in $\R^n$ some modifications need to added.
\begin{rem}
When the decoration lives in $\R^n$, we need to use $n$-dimensional Brownian bridges instead of the $1$-dimensional one. This measures are normalized by a factor $\beta^{d/2}/(2\pi w_{ij})^{d/2}$. This is a priori non integrable in $w$, thus we need to further ask a condition on the measure $\mu$. It is enough that for any point in the support of the measure, the mass $\mu(B(x,r))$ is smaller than a constant times $r^{d/2-1+\epsilon}$ for some $\epsilon>0$. 
\end{rem}

Once again, we state the analogous properties of the invariance under rerooting and Gibbs distribution on $\MQm^{\ell,f}$.
\begin{enumerate}
    \item\label{Invariance-root-dec-met} \textbf{(Invariance under rerooting)}
    The SDMM $(\BMM,\phi)\sim \P^{\ell,b}$ with semi-perimeter $\ell$ and boundary condition $b$ is \textit{invariant under rerooting} if after rerooting it, its law is that of $\P^{\ell,b_s}$ where $b_s$ is the proper shift of $b$.
    
    \item\label{gibbs-fix-faces-dec-met} \textbf{(Gibbs distribution on $\MQm^{\ell,f}$)} The SDMM $(\BMM,\Dec)$ has \textit{Gibbs distribution on $\MQm^{\ell,f}$} if for any $\Mq_1,\Mq_2 \in \MQm^{\ell,f}$ such that there exist a bijection $g$ between $E(\Mq_1)$ and $E(\Mq_2)$ that satisfies that $w_e = w_{g(e)}$ for any $e\in E(\Mq_1)$, then 
    \begin{align*}
        \frac{d\P^{\ell,b}\left(\BMM\in d\Mq_1\right)}{d\P^{\ell,b}\left(\BMM\in d\Mq_2\right)} = \frac{Z^b\left(\Mq_1\right)}{Z^b\left(\Mq_2\right)}.
    \end{align*}
\end{enumerate}

The canonical example of an SDMM that satisfies these properties are the Boltzmann decorated metric maps.

\begin{prop}
    Boltzmann decorated metric maps are invariant under rerooting (\ref{Invariance-root-dec-met}) and have Gibbs distribution over $\MQm^{\ell,f}$ (\ref{gibbs-fix-faces-dec-met}). 
\end{prop}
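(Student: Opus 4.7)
The plan is to first integrate out the decoration $\MDec$ from the Boltzmann joint density \eqref{e.Boltzmann-metric} to obtain an explicit formula for the marginal law of $\BMM$, and then read off both properties directly. The key input is the total mass identity for the unnormalised Brownian bridge from \eqref{e.unnormalised_Brownian_Measure}: $\int \hat\P^{u,v}_w = \frac{1}{\sqrt{2\pi w}}\exp\bigl(-(u-v)^2/(2w)\bigr)$. Applying this edge-by-edge (after reducing to $\beta=1$ by the preceding remark) and then integrating the vertex values against $\prod_{v\in V_i(\Mq)}\mu(d\sigma_v)$ assembles the Brownian-bridge factors into $\exp(-\Ham^{\beta,b}(\Mq,\sigma))$ and yields
\begin{align*}
\P^{\ell,b}_{\lambda,q,\beta}(\BMM \in d\Mq) \;\propto\; q^{|V_i(\Mq)|}\, Z^{\beta,b}(\Mq) \prod_{ij\in E(\Mq)} \frac{e^{-\lambda w_{ij}}}{\sqrt{2\pi w_{ij}/\beta}}\, dw_{ij}.
\end{align*}

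From this formula, the Gibbs property is immediate. Given $\Mq_1,\Mq_2 \in \MQm^{\ell,f}$ equipped with a weight-preserving edge bijection $g$, we have $|V_i(\Mq_1)|=|V_i(\Mq_2)|=f$ by definition of $\MQm^{\ell,f}$, and the edge products $\prod_{ij} \frac{e^{-\lambda w_{ij}}}{\sqrt{2\pi w_{ij}/\beta}}\, dw_{ij}$ coincide term by term under $g$. Taking the Radon--Nikodym derivative, all factors except $Z^{\beta,b}$ cancel, giving exactly $Z^b(\Mq_1)/Z^b(\Mq_2)$.

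For invariance under rerooting, the key observation is that moving the root along the boundary changes neither the underlying unrooted metric quadrangulation, nor the number of interior vertices, nor the multiset of edge weights; it only shifts cyclically the identification of the phantom boundary vertices with $\cro{0,2\ell-1}$. Consequently, a decoration with boundary condition $b$ under the original root carries boundary condition $b_s$ under the shifted root. Every factor on the right-hand side of the displayed formula (the power of $q$, the product over edges, and the partition function $Z^{\beta,b}$) depends on the data in a way that respects this relabelling, so the pushforward of $\P^{\ell,b}_{\lambda,q,\beta}$ by the rerooting map is $\P^{\ell,b_s}_{\lambda,q,\beta}$. There is no serious obstacle in this proof; the main point of care is tracking the $\beta$ normalisation consistently between $\hat\P^{\sigma_i,\sigma_j}_{w_{ij}/\beta}$ and its Gaussian total mass, to ensure the final formula genuinely depends only on $(\Mq,b)$ through $q^{|V_i(\Mq)|}$, $Z^{\beta,b}(\Mq)$, and the edge-weight factors.
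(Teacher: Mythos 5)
Your proof is correct and matches the computation the paper implicitly expects: the paper simply defers to the analogous argument from the earlier sections, and integrating out the Brownian-bridge mass and the vertex decoration to obtain the explicit marginal density of $\BMM$, then comparing densities directly, is precisely that computation. One small point of care: the rerooting property (\ref{Invariance-root-dec-met}) concerns the joint law of $(\BMM,\MDec)$, not merely the marginal of $\BMM$, so the relabelling argument should strictly be applied to the full joint density \eqref{e.Boltzmann-metric} rather than to the marginalised display; since every factor there (the power of $q$, the per-edge factors $e^{-\lambda w_{ij}}\hat\P^{\sigma_i,\sigma_j}_{w_{ij}\beta^{-1}}(d\Msigma|_{ij})\,dw_{ij}$, the vertex measures, and the normalising constant $W^{\ell,b}_{\lambda,q,\beta}$) is equally manifestly covariant under a cyclic shift of the boundary labels, your conclusion stands unchanged.
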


The proof of this result is analogous to the proof in the previous section, so it is left to the reader.

Let us now show that Boltzmann metric maps are continuous with respect to their boundary conditions. 
\begin{prop}\label{p.continuity_metric}
    Take  $(\BMM,\MDec)$ a $(q,\lambda)$-Boltzmann decorated metric map and assumme there is $\delta>0$ such that $\sup_{b'\in B(b,\delta)}W^{\ell,b'}_{q,\lambda}<\infty$.    Then $ \P_{\lambda,q}^{\ell, b_n}\to \P_{\lambda,q}^{\ell, b}$ as $b_n\to b$.
\end{prop}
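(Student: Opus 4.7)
The plan is to prove weak convergence $\P^{\ell,b_n}_{\lambda,q} \to \P^{\ell,b}_{\lambda,q}$ by unpacking the explicit density \eqref{e.Boltzmann-metric} and observing that the boundary condition $b$ enters the expression only in two places: the normalising constant $W^{\ell,b}_{\lambda,q}$, and the unnormalised Brownian bridge weights $\hat{\P}^{\sigma_i,\sigma_j}_{w_{ij}}$ for those edges whose endpoints include a phantom vertex (where $\sigma_j$ is fixed to the prescribed value $b_j$). The combinatorial sum over skeletons, the weights $q^{|V_i|}$, the edge-length measures $e^{-\lambda w_{ij}}\,dw_{ij}$, and the vertex measures $\mu(d\sigma_v)$ are all independent of $b$. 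To show weak convergence I will verify, for every bounded continuous test function $F$, that both the unnormalised integral $\int F \cdot (\text{density})$ and the normaliser $W^{\ell,b_n}_{\lambda,q}$ converge to their $b$-limits, then divide.

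First I would establish pointwise continuity of the integrand in $b$. By \eqref{e.unnormalised_Brownian_Measure}, $\hat{\P}^{a,c}_w = \frac{1}{\sqrt{2\pi w}} \exp\!\bigl(-\frac{(a-c)^2}{2w}\bigr)\, \E^{a,c}_w$, and the normalised Brownian bridge $(a,c) \mapsto \E^{a,c}_w$ depends continuously on its endpoints in the weak topology on path space (coupling via the affine shift $t \mapsto (1-t/w)(a-a_0) + (t/w)(c-c_0)$ from $\E^{a_0,c_0}_w$), while the scalar prefactor is jointly continuous in $(a,c,w)$ for $w>0$. Hence, for any bounded continuous $F$ on decorated metric maps and any fixed skeleton $\q$, the integrand of \eqref{e.Boltzmann-metric} converges pointwise when $b_n \to b$.

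Second, I would invoke the hypothesis $\sup_{b' \in B(b,\delta)} W^{\ell,b'}_{\lambda,q} < \infty$ to supply a $b'$-uniform integrable majorant. The crucial inequality is the elementary bound $(\sigma_i - b'_j)^2 \geq \tfrac{1}{2}(\sigma_i - b_j)^2 - (b'_j - b_j)^2$, which, once inserted into the Gaussian factors, shows that for $b' \in B(b,\delta)$ the density of $F \cdot \P^{\ell,b'}_{\lambda,q}/\|F\|_\infty$ against the reference measure $\prod e^{-\lambda w_{ij}}\,dw_{ij}\prod \mu(d\sigma_v)$ is dominated by a $b$-independent multiple of the corresponding density at $b$. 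The finite supremum of $W^{\ell,b'}_{\lambda,q}$ over $B(b,\delta)$ then confirms this majorant is integrable. Dominated convergence applied first to $W^{\ell,b_n}_{\lambda,q} \to W^{\ell,b}_{\lambda,q}$ and then to the numerator, together with $W^{\ell,b}_{\lambda,q} > 0$, yields $\int F\, d\P^{\ell,b_n}_{\lambda,q} \to \int F\, d\P^{\ell,b}_{\lambda,q}$.

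The main obstacle is handling the singular behaviour of $e^{-(\sigma_i - b_j)^2/(2w_{ij})}/\sqrt{2\pi w_{ij}}$ as $w_{ij} \to 0^+$ and simultaneously controlling the infinite sum over skeletons $\q$, since a naive pointwise bound on the density is not integrable. The uniform-finiteness assumption on $W^{\ell,b'}_{\lambda,q}$ is precisely what circumvents this: it transports the integrability question from each individual skeleton to the total mass, which is assumed finite on a neighbourhood of $b$. With that input, the rest of the argument reduces to standard dominated-convergence bookkeeping.
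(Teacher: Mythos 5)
Your proposed dominating bound does not follow from the inequality you cite. Inserting $(\sigma_i - b'_j)^2 \geq \tfrac12 (\sigma_i - b_j)^2 - (b'_j - b_j)^2$ into the Gaussian weight yields
\[
\exp\!\left(-\frac{(\sigma_i-b'_j)^2}{2w}\right)\;\leq\; \exp\!\left(\frac{(b'_j-b_j)^2}{2w}\right)\exp\!\left(-\frac{(\sigma_i-b_j)^2}{4w}\right),
\]
which is not a constant multiple of the corresponding factor at $b$. Two things go wrong: the prefactor $\exp((b'_j-b_j)^2/(2w))$ is only bounded by $\exp(\delta^2/(2w))$, which blows up as $w\to 0^+$ (precisely the singular regime you flag in your final paragraph); and the exponent $-(\sigma_i-b_j)^2/(4w)$ is \emph{larger} than $-(\sigma_i-b_j)^2/(2w)$, so the Gaussian factor you arrive at is the kernel with \emph{doubled} variance, not a constant times the kernel appearing in $W^{\ell,b}_{q,\lambda}$. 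A symptom of the misstep is that your argument never genuinely invokes the hypothesis $\sup_{b'\in B(b,\delta)}W^{\ell,b'}_{q,\lambda}<\infty$: the integrability of the majorant you describe would only require $W^{\ell,b}_{q,\lambda}<\infty$.

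The paper instead starts from the mean-value-type \emph{difference} bound
\[
\left|\exp\!\left(-\frac{(x-y)^2}{2w}\right)-\exp\!\left(-\frac{(x'-y)^2}{2w}\right)\right|\leq \frac{|x-x'|}{w}\left|\frac{x+x'}{2}-y\right|\exp\!\left(-\frac{(x-y)^2\wedge(x'-y)^2}{2w}\right),
\]
which retains the full Gaussian tail and is small because $|x-x'|\to 0$; the uniform bound on $W^{\ell,b'}_{q,\lambda}$ over $B(b,\delta)$ is what controls the small-$w$ region, where the $1/w$ factor is dangerous. Once $W^{\ell,b_n}_{q,\lambda}\to W^{\ell,b}_{q,\lambda}$ is established, the convergence of the skeleton marginal, of the edge lengths, of the vertex decorations, and of the edge-wise Brownian-bridge laws proceeds much as you describe; your second paragraph on pointwise continuity (including weak continuity of the bridge law in its endpoints) is sound and parallels the paper's closing observations.
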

\begin{proof}
Note that 
\begin{align*}
&\left |\exp\left (-\frac{ (x-y)^2}{2w_{ij}}\right )-\exp\left (-\frac{ (x'-y)^2}{2w_{ij}}\right )\right  |\\
\leq &\frac{|x-x'|}{ w_{ij}}\left|\frac{ (x+x')}{2} - y\right| \exp\left (-\frac{ (x-y)^2\wedge (x'-y)^2}{2w_{ij}}\right) .
\end{align*}

Using this inequality in the edge of $\q$ that intersect the boundary, we can use dominated convergence to see that $W^{\ell,b_n}_{q,\lambda}- W^{\ell,b}_{q,\lambda}\to 0$. This directly implies that the marginal law of $\ske(\BMM)$ under $\P^{\ell,b_n}$ converges to that under $\P^{\ell,b_n}$. Furthermore, one can check that both the length, and the decoration on vertices are also converging. Finally, the trickiest topology is that of the decoration over edges, this is converging as the law of Brownian bridge is continuous for the weak convergence (of the Skorohod topology) in the length, initial point and end point.
\end{proof}

\subsection{Markov property: stopping maps.} In this section, we present the Markov property for the metric decorated maps. We begin by stating the weak Markov property for the Boltzmann decorated metric map, and then, we describe the modification of the previous definition of stopping maps for the metric maps in order to have the Markov property.
\subsubsection{Weak Markov property.}
We present the weak Markov property for the Boltzmann decorated metric maps.

\begin{thm}\label{t.metric_weak_markov}
    Let $(\BMM,\MDec)$ be a Boltzmann decorated metric map and $\Mq\in\MQm_{H}$. Then, conditionally on $\{\Mq\Lalg\BMM\}$ and $\MDec|_{\Mq}$, $(\BMM,\MDec)$ can be decomposed as follows
    \begin{align} \label{e.decomposition_metric}
        \BMM = \Mq\glue (\BMM_h^{\Mq})_{h\in H(\Mq)},\  \text { and } \ 
        \MDec = \MDec|_{\Mq} + \MDec^{\Mq},
    \end{align}
    where, $(\BMM_{h}^{\Mq},\MDec_{h}^{\Mq})$ is a collection of independent Boltzmann decorated metric maps with boundary $h$ and boundary condition $\Dec|_{h}$ for every $h\in H(\Mq)$.
\end{thm}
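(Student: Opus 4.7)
The plan is to mirror the strategy of the weak Markov property proofs in the undecorated and decorated settings, with the new ingredient being the Brownian bridge decomposition from Lemma \ref{lem-decomp} (equivalently Remark \ref{r.decomposition_BB}), which plays the role of the continuous analogue of splitting a product over faces. The event $\{\Mq \Lalg \BMM\}$ is interpreted at the level of densities: fix the skeleton and non-active edge lengths of $\Mq$, as well as the decoration $\MDec|_\Mq$, and identify the regular conditional law of the remaining ``completion'' $(\BMM_h^\Mq, \MDec_h^\Mq)_{h \in H(\Mq)}$.

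First, starting from the explicit Boltzmann density \eqref{e.Boltzmann-metric} on the event $\{\ske(\Mq) \Lalg \ske(\BMM)\}$, partition $E(\BMM)$ into (i) interior edges of $\Mq$, (ii) active edges of $\Mq$, whose lengths in $\BMM$ satisfy $w_e^{\BMM} = w_e^\Mq + w_e^{\BMM_h^\Mq}$ by the gluing rule, and (iii) interior edges of each $\BMM_h^\Mq$. The vertex prefactor splits as $q^{|V_i(\BMM)|} = q^{|V_i(\Mq)|} \prod_h q^{|V_i(\BMM_h^\Mq)|}$, and $\prod_v \mu(d\sigma_v)$ factorizes along the same partition.

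Next, on each active edge $e$ of $\Mq$ adjacent to a hole $h$, apply Remark \ref{r.decomposition_BB} to rewrite the Brownian bridge of length $w_e^\Mq + w_e^{\BMM_h^\Mq}$ as an integral against the intermediate value $\sigma^*$ at position $w_e^\Mq$ of the product
\begin{align*}
\hat{\P}^{\sigma_i, \sigma^*}_{w_e^\Mq}\bigl(d\MDec|_{[0,w_e^\Mq]}\bigr) \cdot \hat{\P}^{\sigma^*, \sigma_j}_{w_e^{\BMM_h^\Mq}}\bigl(d\MDec|_{[w_e^\Mq, w_e^{\BMM}]}\bigr),
\end{align*}
where $\sigma^*$ is precisely the value of $\MDec$ at the boundary vertex of $\BMM_h^\Mq$ incident to the edge (that is, a boundary-condition value for $\BMM_h^\Mq$ inherited from $\MDec|_h$). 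Simultaneously, use $dw_e = dw_e^\Mq\, dw_e^{\BMM_h^\Mq}$ and $e^{-\lambda w_e} = e^{-\lambda w_e^\Mq} e^{-\lambda w_e^{\BMM_h^\Mq}}$.

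Once these substitutions are made, the joint density factorizes as
\begin{align*}
F(\Mq, \MDec|_\Mq) \cdot \prod_{h \in H(\Mq)} G_h\bigl(\BMM_h^\Mq, \MDec_h^\Mq;\, \MDec|_h\bigr),
\end{align*}
where each $G_h$ is, by inspection, proportional to the Boltzmann decorated metric map density \eqref{e.Boltzmann-metric} with perimeter $|\partial h|$ and boundary condition $\MDec|_h$. Applying Lemma \ref{l.Ley_condicional} with $X = (\BMM_h^\Mq, \MDec_h^\Mq)_h$ and $Z = (\Mq, \MDec|_\Mq)$ yields the claimed conditional law, in particular the independence across holes and the identification of each conditional factor as a Boltzmann decorated metric map. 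The main obstacle I expect is the bookkeeping on active edges: each is shared between $\Mq$ and exactly one hole, and one must verify that the Brownian bridge decomposition produces precisely the unnormalised measures $\hat{\P}$ needed on each side, so that the normalising factors $(2\pi w_e^\Mq)^{-1/2} e^{-(\sigma_i - \sigma^*)^2/(2w_e^\Mq)}$ produced by the split combine cleanly with the $e^{-\lambda w_e^\Mq}$ weight on the $\Mq$-side and the analogous factor on the hole side, leaving no spurious terms and matching the Boltzmann form on the completions.
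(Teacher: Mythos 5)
Your proposal follows essentially the same route as the paper's proof: work with the positive-probability event $\{\Mq\MLalg\BMM\}$ at the level of densities, split each active-edge Brownian bridge at the tip of $\Mq$ using Remark \ref{r.decomposition_BB} (which also produces the intermediate factor $d\sigma^{*}$ that becomes the boundary datum for the completion), factorize, and apply Lemma \ref{l.Ley_condicional} with $X=(\BMM_h^{\Mq},\MDec_h^{\Mq})_h$ and $Z$ the explored data. One minor slip: with $w_e^{\Mq}$ fixed, the correct change of variable is $dw_e = dw_e^{\BMM_h^{\Mq}}$ (translation invariance of Lebesgue measure), not $dw_e = dw_e^{\Mq}\,dw_e^{\BMM_h^{\Mq}}$; the exponential weight does split multiplicatively as you state.
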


The difference with the decorated objects of the previous section is that here $\Mq$ can have a small part of an edge, in that case, we use the value associated to the tip of $\Mq$ as boundary condition. Here we use the relation $\Lalg$, to work on a positive probability event, which allow us to use Lemma \ref{l.Ley_condicional}. The only new idea with respect to the proof of Theorem \ref{t.decorated_weak_Markov} is to properly use Lemma \ref{lem-decomp} to separate the behaviour between the inside and outside of the edge.
\begin{proof}[Proof of Theorem \ref{t.metric_weak_markov}]
Let $\Mp\in\MQm_H$ satisfying that $\Mq\Lalg\Mp$ and $\Msigma$ a decoration over $\Mp$.  Denote by $(\Mp_h)_{h\in H(\Mq)}$ the collection of maps such that $\Mp = \Mq\glue(\Mp_h)_{h\in H(\Mq)}$. Let us write the density of the decorated metric map $(\BMM,\MDec)$ under the event $\{\Mq\MLalg\BMM\}$ as follows
\begin{align}
\label{e.law_weak_Markov}
    &\P^{\ell,b}\left[\BMM\in d\Mp, \MDec \in d\Msigma \mid\Mq\MLalg\BMM\right]
    \\
    \nonumber\propto \ &\left(\prod_{h\in H(\Mq)}q^{|V_i(\Mp_h)|} \prod_{e\in E(\Mp_h)}e^{-\lambda w_e}\right)\left(\prod_{e\in E(\Mq)}e^{-\lambda w_e}\right)\left(\prod_{e\in \text{Active}(\Mq)}\1_{w_{e}^{\Mp}-w_{e}^{\Mq}\geq 0}\right)\\
    \nonumber&\left[\sum_{\mm\supseteq\q}\delta_{\mm}(\q\glue(\p_h)_{h\in H(\Mq)}) \prod_{v\in V_i(\mm)}\mu(d\sigma_v) \prod_{ ij\in E(\mm)}\hat{\P}^{\sigma_i,\sigma_j}_{w_{ij}}\left( d\Msigma|_{ij}\right)dw_{ij}\right].
\end{align}

Note that, in each edge of the active boundary of $\Mq$, we can decompose the probability of the Brownian bridge in two parts: the explored and unexplored part of the edge. We now use Lemma \ref{lem-decomp} and call $k$ the point in the edge $ij$ that is at distance $w_{ij}^{\Mq}$ of $i$, that is to say $w_{ik}=w_{ij}^{\Mq}$. Then, recalling the notation of Remark \ref{r.decomposition_BB}
\begin{align}\label{eq:pivot}
    \hat{\P}^{\sigma_i,\sigma_j}_{w_{ij}}\left( d\Msigma|_{ij}\right)\1_{w_{ij}\geq \tilde w_{ij}^{\Mq}}dw_{ij}
    = d\sigma_{k}  \hat{\P}^{\sigma_i,\sigma_k}_{\tilde w_{ik}}\left( d\Msigma|_{ik}\right) \hat{\P}^{\sigma_k,\sigma_j}_{w_{kj}}\left( d\Msigma|_{kj}\right)\1_{w_{kj}\geq 0}dw_{kj}, 
\end{align}
where $k\in Active(\Mq)$ such that $k\in e_{ij}$. For a map $\mm\supset\q$, we denote as $\mm^{\text{ext}(\q)}$ a copy of $\mm$ in which we add a vertex at the tip of each active edge of $\q$. Here $\mm^{\text{ext}(\q)}$ let us divide the behavior of the Brownian bridges at the newly added vertices as in \Cref{eq:pivot}. Applying this to \eqref{e.law_weak_Markov} we get
\begin{align*}
    &\P^{\ell,b}\left[\BMM\in d\Mp, \MDec \in d\Msigma\left|\Mq\MLalg\BMM\right.\right] \\
    \propto &\left(\prod_{h\in H(\Mq)}q^{|V_i(\Mp_h)|} \prod_{e\in E(\Mp_h)}e^{-\lambda w_e}\right)\left(\prod_{e\in E(\Mq)}e^{-\lambda w_e}\right)\left(\prod_{k\in Active(\Mq)}d\sigma_k\right) \\
    &\left[ \sum_{\mm\supseteq\q} \delta_{\mm}(\q\glue(\p_h)_{h\in H(\Mq)})\left( \prod_{v\in V_i(\mm)}\mu(d\sigma_v)\right) \prod_{ij\in E(\mm^{\text{ext}(\q)})}\hat{\P}^{\sigma_i,\sigma_j}_{w_{ij}}\left( d\Msigma|_{ij} \right)dw_{ij}\right]\\
    \propto & \left(\prod_{ij\in  E(\q)}\hat{\P}^{\sigma_i,\sigma_j}_{w_{ij}}\left( d\Msigma|_{ij} \right)e^{-\lambda w_{ij}}dw_{ij}
\right) \left(\prod_{h\in H(\Mq)}q^{|V_i(\Mp_h)|} \prod_{e\in E(\Mp_h)}e^{-\lambda w_e}\right )\\
    &\left[ \left(\sum_{\mm_h}\delta_{\mm_h}(\p_h)\left(\prod_{v\in Active(h)}d\sigma_v\right) \left(\prod_{v\in V_i(\mm_h)}\mu(d\sigma_v)\right) \prod_{ij\in E(\mm_h)}\hat{\P}^{\sigma_i,\sigma_j}_{w_{ij}}\left( d\Msigma|_{ij} \right) dw_{ij}\right)\right],
\end{align*}

We now want to use Lemma \ref{l.Ley_condicional}. To do this, define the random variables $X = (\BMM_h^{\Mq},\MDec_h^{\Mq})_{h\in H(\Mq)}$ and $Z = \left((w_e)_{e\in E(\Mq)} , \MDec_{\Mq}\right)$ and the measures
\begin{align*}
    &\mu_X =\prod_{h\in H(\Mq)}\left(\sum_{\mm_h}\delta_{\mm_h}(\p_h) \left(\prod_{v\in Active(h)}d\sigma_v\right) \left(\prod_{v\in V_i(\mm_h)}\mu(d\sigma_v)\right)\prod_{ij\in E(\mm_h)}\hat{\P}^{\sigma_i,\sigma_j}_{w_{ij}}\left( d\Msigma|_{ij} \right)dw_{ij}\right),\\
    &\mu_Z = \left(\prod_{v\in V_i(\Mq)}\mu(d\sigma_v)\right)\left(\prod_{ij\in E(\Mq)}\hat{\P}^{\sigma_i,\sigma_j}_{w_{ij}}\left( d\Msigma|_{ij} \right)dw_{ij}\right).
\end{align*}
Futhermore, define 
\begin{align*}
    F(X,Z) = \left(\prod_{h\in H(\q)}q^{|V_i(\p_h)|}\prod_{e\in E(\Mp_h)}e
    ^{-\lambda w_e}\right)\prod_{e\in E(\Mq)}e^{-\lambda w_e}.
\end{align*} 
Then, Lemma \ref{l.Ley_condicional} implies that
\begin{align*}
    &\P^{\ell,b}\left[\BMM \in d\Mp, \MDec \in d\Msigma\left|\Mq\Lalg\BMM, \MDec_{\Mq} \in d\Msigma_{\Mq}  \right.\right]\\
    \propto & \prod_{h\in H(\q)}\left(q^{|V_i(\p_h)|}\prod_{e\in E(\Mp_h)}e^{-\lambda w_e}\sum_{\mm_h}\delta_{\mm_h}(\p_h)\left(\prod_{v\in V_i(\mm_h)}\mu(d\sigma_v)\right) \prod_{ij\in E(\mm_h)}\hat{\P}^{\sigma_i,\sigma_j}_{w_{ij}}\left(  d\Msigma|_{ij} \right)dw_{ij} \right).
\end{align*}

This directly implies the independence between the metric maps associated to each hole and that in each hole they have the distribution of a Boltzmann decorated metric map.
\end{proof}

\subsubsection{Filtration and stopping maps.}
As before, we need to define filtrations indexed by metric planar maps. To do this, it is necessary to define a what it means for a map with holes to decrease to a limiting maps.
\begin{defn}
Given a metric quadrangulation with holes $\Mq$ and a sequence of metric quadrangulations with holes $\left(\Mq_n\right)_{n\in\N}$, we say that $\left[\Mq_n\right]\searrow\left[\Mq\right]$ if 
\begin{itemize}
    \item $\ske(\Mq_n) = \ske(\Mq)$ for every $n\in\N$,
    \item and, $w_e^{\Mq_{n}}\searrow w_e^{\Mq}$ for every $e\in Active(\Mq)$.
\end{itemize}
\end{defn}

Now, we are ready to define filtrations in this context. They are going to be collection of $\sigma$-algberas indexed by the equivalence classes of metric maps with respect to $\sim$ but we are going to skip the notation $[\cdot]_{\sim}$, see \Cref{rem:eq}.

\begin{defn}
We say that $\FF=(\F_{\Mq})_{\Mq\in\MQm_H}$ is a \textit{filtration} if it satisfies the properties \ref{filtration-monotonicity}, \ref{filtration-completesness} and \textit{Right Continuity},
\begin{align}
    \tag{Right Continuity}\label{filtration-continuity} \mbox{for any }\Mq_n\searrow \Mq\mbox{, we have that }\displaystyle\bigcap_{n\in\mathbb{N}}\F_{\Mq_n} = \F_{\Mq}.
\end{align}
\end{defn}

For the rest of this section, $(\BMM,\MDec)$ is going to be a Boltzmann decorated metric quadrangulation. 

\begin{defn}
We say that a Boltzmann decorated metric quadrangulation $(\BMM,\Dec)$ is an \textit{$\FF$-Boltzmann decorated metric quadrangulation} if it satisfies the following properties
\begin{itemize}[label={--}]
    \item\label{Adaptability-dec}\textit{Adaptability:} the event $\{\Mq\MLalg\BMM\}$ and the function $\Dec|_{\Mq}\1_{\Mq\MLalg\BMM}$ are $\F_{\Mq}$-measurable.
    \item\label{Indep-increments-dec}\textit{Independent increments:} conditionally on $\F_{\Mq}$ and $\MDec|_{\Mq}$, the law of $(\BMM_{h}^{\Mq},\Dec_{h}^{\Mq})_{h\in H(\Mq)}$ is that of a collection of independent $q$-Boltzmann decorated metric maps with boundary $h$ and boundary condition $\MDec|_{h}$ for every $h\in H(\Mq)$.
\end{itemize}
\end{defn}
Before we give the natural filtration for the Boltzmann decorated metric map we need to define an operation to grow a metric map.
\begin{defn}
Given a metric (undecorated) quadrangulation $\Mq$ and $\varepsilon>0$, we denote as $\Mq_{\varepsilon}$ to the metric map that satisfies the following
\begin{itemize}
    \item $\ske(\Mq_{\varepsilon}) = \ske(\Mq) $,
    \item and, $w_e^{\Mq_{\varepsilon}} = w_e^{\Mq} + \varepsilon$ for every edge $e\in E(\Mq)$.
\end{itemize}
\end{defn}
\begin{rem}
    Once again, an example of filtration is the \textit{natural filtration} associated to a Boltzmann decorated metric map $(\BMM,\Dec)$ defined as follows,

\begin{align*}
    \F_{\Mq} := \overline{\bigcap_{\varepsilon >0}\bigvee_{\Mp\Lalg \Mq_{\varepsilon}} \sigma\left( \{\Mp \MLalg \BMM\} , \Dec|_{\Mp}\1_{\Mp\MLalg\BMM} \right)}^{\P}.
\end{align*}

The intersection with respect to $\varepsilon>0$ is made in order to satisfy \ref{filtration-continuity}.\
\end{rem}

With this, we are in condition to define the random metric submaps that are going to satisfy the Markov property. Unfortunately, we need to modify the definition of the previous sections. 
\begin{defn}
We say that a random metric map with holes $\SMM$ is an \textit{$\FF-$stopping metric map} if
\begin{enumerate}
    \item $\P$-almost surely $\SMM\Lalg\BMM$,
    \item and, for any $\Mq\in\MQm_H$, we have that $\left\{\SMM\MLalg\Mq \right\}\cap \left\{\Mq\MLalg \BMM\right\} \in\mathcal{F}_{\Mq}.$
\end{enumerate}
\end{defn}

For the proof of the strong Markov property we need a way to approximate submaps in a continuous way from above. 

\begin{defn}
Given $\Mp\Lalg\Mq$ and $\varepsilon>0$, we define the metric map (with holes) $\Mp^{\varepsilon}$ as the map that is constructed as follows. Starting from $\ske(\Mp^{\varepsilon}) = \ske(\Mp)$, with the same lengths that the edges of $\Mp$ with the exception of the active ones where $\Mp^{\varepsilon}$ is defined as follows
\begin{align*}
    w_e^{\Mp^{\varepsilon}} = (w_e^{\Mp} + \varepsilon)\wedge w_e^{\Mq},
\end{align*}
in which, if the edge was completely discovered, then we add the vertex associated to this edge and every edge associated to the discovered vertex to the map $\Mp^{\varepsilon}$ . Furthermore for $n\in \N$, We denote as $\left[\Mp\right]_n = \Mp^{2^{-n}}$.
\end{defn}

\begin{rem}
The previous approximation induces a third possible result of a peeling exploration. Take a length parameter $L>0$ as input.  
\begin{itemize}
    \item\textbf{Peeling of Type 3:} We say that we obtain a peeling of Type 3 and parameter $L$, if starting from the root edge we discover the edge up to $L$ starting from the end and we still have not found any other vertex. Note that in this case, the information we have seen contains all the spins associated to this section of the edge. 
\end{itemize}
To obtain this peeling, we explore an edge until we either have explored $L$ units of length or we have hit another vertex. We denote as $Peel(\tilde{\mathfrak{e_i}},L,e,\Mq)$ the resultant map of peeling $e \in Active(\Mq)$ on $\Mq$ from $\mathfrak{e_i}$  (See Figure \ref{fig:peeldagger})


\begin{figure}[H]
    \centering
    \includegraphics[scale=0.8]{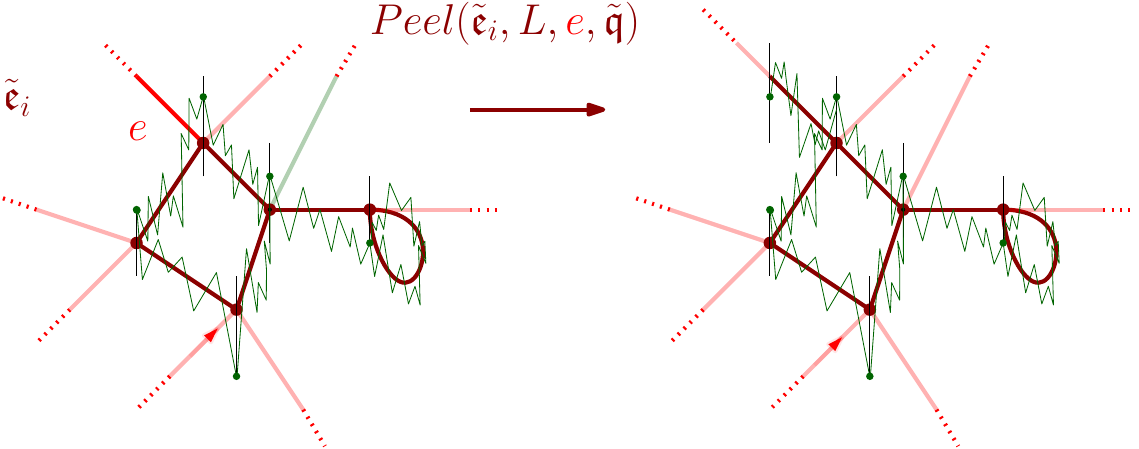}
    \caption{Representation of the peeling procedure for the metric map $\Mq$, starting from $\tilde{\mathfrak{e}}_i$, when selecting the edge $e$; i.e. when $L$ is smaller than the full length of the edge $e$. Note that the result is a peeling of Type 3. We added a representation of the decoration as an example to also illustrate that the spins are revealed.  }
    \label{fig:peeldagger}
\end{figure}
The Markov property induced by this type of peeling turn the tip of the edge revealed by the peeling of type 3 into phantom vertex with its decoration in order to induce a boundary condition. 
\end{rem}

Next, we present some properties associated to the stopping metric maps. Notice that Proposition \ref{prop.stopping} is also true for stopping metric maps.
\begin{prop}
Let $(\BMM,\Dec)$ be an $\FF$-Boltzmann decorated metric map.
\begin{enumerate}
    \item Let $\SMM_n$ be a sequence of $\FF-$stopping metric maps for $(\BMM,\Dec)$. Then $\limsup\SMM_n$ is an $\FF-$stopping metric map.
    \item Let $\SMM$ be an $\FF-$stopping metric map for $(\BMM,\Dec)$ and $n\in\N$, then, $\left[\SMM\right]_n$ is an $\FF-$stopping map.
\end{enumerate}
\end{prop}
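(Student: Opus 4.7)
The plan is to verify, for each of $\limsup_n\SMM_n$ and $[\SMM]_n$, the two defining properties of an $\FF$-stopping metric map: almost-sure containment in $\BMM$, and measurability of the event $\{\SMM'\MLalg\Mq\}\cap\{\Mq\MLalg\BMM\}$ in $\F_\Mq$ for every $\Mq\in\MQm_H$. The containment is immediate in both cases: for $(1)$ because each $\SMM_n\Lalg\BMM$ and $\ske(\BMM)$ is finite, and for $(2)$ directly from the definition of the $\varepsilon$-extension. The substance lies in the measurability, so this is where I would focus.

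For $(1)$, since $\ske(\BMM)$ is finite the skeletons $\ske(\SMM_n)$ take only finitely many values, so $\limsup_n\SMM_n$ is well defined as the submap of $\BMM$ whose skeleton consists of the edges belonging to $\SMM_n$ for infinitely many $n$ and whose active edge lengths are the $\limsup$ of $w_e^{\SMM_n}$ along those $n$. To establish the measurability I would prove the identity
\begin{equation*}
\{\limsup_n\SMM_n\MLalg\Mq\}\cap\{\Mq\MLalg\BMM\}=\bigcap_{k\geq 1}\bigcup_{N\geq 1}\bigcap_{n\geq N}\{\SMM_n\MLalg\Mq^{(k)}\}\cap\{\Mq^{(k)}\MLalg\BMM\},
\end{equation*}
where $\Mq^{(k)}$ is obtained from $\Mq$ by enlarging each active edge by $2^{-k}$ without changing the skeleton, so that $\Mq^{(k)}\searrow\Mq$. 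The backward inclusion follows from the fact that if $\SMM_n\MLalg\Mq^{(k)}$ eventually for every $k$, then both the skeleton and the edge lengths of $\limsup_n\SMM_n$ are controlled by those of $\Mq$. The forward direction is obtained by choosing, for each $k$, a subsequence realising the $\limsup$ along which $\ske(\SMM_n)$ stabilises to that of $\limsup_n\SMM_n$. Each event on the right-hand side lies in $\F_{\Mq^{(k)}}$ by the stopping property of $\SMM_n$, so the whole expression lies in $\bigcap_k\F_{\Mq^{(k)}}=\F_\Mq$ by right-continuity \ref{filtration-continuity}.

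For $(2)$, I would rewrite $\{[\SMM]_n\MLalg\Mq\}$ in terms of $\SMM$ and a retracted version of $\Mq$. The plan is to partition the event by the finite data $S$ describing which active edges of $\SMM$ become fully discovered by the $2^{-n}$-extension, together with the incremental skeleton $K$ that is thereby revealed. Conditionally on $(S,K)$, the condition $[\SMM]_n\MLalg\Mq$ amounts to: $K$ is contained in $\ske(\Mq)$, and for each active edge $e\notin S$ of $\SMM$ the inequality $w_e^{\SMM}+2^{-n}\leq w_e^{\Mq}$ holds. Equivalently, $\SMM\MLalg\Mq^{(S)}$ for a suitable $\Mq^{(S)}\Lalg\Mq$ obtained by retracting the active edges of $\Mq$ by $2^{-n}$ away from $S$ (when possible) and by restricting appropriately near $S$. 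Since the list of possibilities $(S,K)$ is finite, the event decomposes into finitely many events of the form $\{\SMM\MLalg\Mq^{(S)}\}\cap\{\Mq^{(S)}\MLalg\BMM\}\cap A_{S,K}$, where $A_{S,K}$ describes the local geometry of $\BMM$ inside $\Mq$ (namely which edges fully discover within $2^{-n}$ and which skeletal increment they reveal). The first intersection lies in $\F_{\Mq^{(S)}}\subseteq\F_\Mq$ by monotonicity \ref{filtration-monotonicity} since $\SMM$ is an $\FF$-stopping metric map, and $A_{S,K}$ is $\F_\Mq$-measurable by adaptability of $\BMM$ combined with $\{\Mq\MLalg\BMM\}\in\F_\Mq$.

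The main obstacle is the case analysis in $(2)$: precisely tracking how the $\varepsilon$-extension interacts with the vertex-discovery mechanism and verifying that the decomposition into $(S,K)$ is both exhaustive and $\F_\Mq$-measurable. Once this bookkeeping is in place, part $(1)$ follows a classical right-continuity scheme, adapted from the time-indexed theory of Markov processes to the partial order $\MLalg$ on metric submaps.
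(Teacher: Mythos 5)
For part $(1)$, your decomposition and use of right-continuity is essentially identical to the paper's; the only cosmetic difference is that you write $\Mq^{(k)}$ for what the paper calls $\Mq_{1/k}$.

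For part $(2)$, your route is genuinely different. The paper proves a single, clean identity: setting $\lfloor\Mq\rfloor_n$ to be the map with the same skeleton as $\Mq$ but with every active edge retracted by $2^{-n}$, one has
\begin{equation*}
\bigl\{[\SMM]_n\MLalg\Mq\bigr\}\cap\bigl\{\Mq\MLalg\BMM\bigr\}=\bigl\{\SMM\MLalg\lfloor\Mq\rfloor_n\bigr\}\cap\bigl\{\lfloor\Mq\rfloor_n\MLalg\BMM\bigr\}\cap\bigl\{\Mq\MLalg\BMM\bigr\},
\end{equation*}
which lies in $\F_{\lfloor\Mq\rfloor_n}\vee\F_\Mq=\F_\Mq$ immediately from the stopping property of $\SMM$, adaptability of $\BMM$, and monotonicity. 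You instead partition by the random data $(S,K)$ of fully-discovered edges and revealed skeleton, then reassemble. Both exploit the same intuition (reduce $[\SMM]_n\MLalg\Mq$ to a statement about $\SMM$ against a retracted $\Mq$), but the paper's version avoids your case analysis entirely, and in fact makes it unnecessary for a reason you would need to spell out in your route: on the event $\{[\SMM]_n\MLalg\Mq\}$, any edge of $\SMM$ that becomes fully discovered inside $[\SMM]_n$ is \emph{forced} to be non-active in $\Mq$, since otherwise the skeleton containment $\ske([\SMM]_n)\Lalg\ske(\Mq)$ fails; and for active edges of $\SMM$ that are non-active in $\Mq$, the active-length constraint in $\MLalg$ is vacuous after passing to the $\sim$-equivalence class. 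These are exactly the facts that make your events $A_{S,K}$ well-defined and $\F_\Mq$-measurable — the full edge length $w_e^\BMM$ is only $\F_\Mq$-measurable when $e$ is non-active in $\Mq$, so your decomposition only makes sense once you've argued that $S$ is constrained to such edges. You flag this bookkeeping as the main obstacle; the paper's single-identity formulation is precisely what lets one bypass it.
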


\begin{proof} 
Let $\Mq\in\MQm_H$.
    \begin{enumerate}
        \item Note the following 
        \begin{align*}
            \{\limsup\SMM_n\MLalg \Mq\} \cap \{\Mq\MLalg \BMM\}&= \underbrace{\bigcap_{k\in \N}\underbrace{\bigcup_{N_0\in\N}\bigcap_{n\geq N_0}\left(\{\SMM_n\MLalg\Mq_{1/k} \}\cap \{\Mq_{1/k}\MLalg \BMM\}\right)}_{\in \F_{\Mq_{1/k}}}}_{\displaystyle\bigcap_{k\in \N}\F_{\Mq_{1/k}}},
        \end{align*}
where, thanks to the right continuity, we conclude.
\item For $k\in\N$ and $\Mp\in\MQm_H$ such that $w_e> 1/k$ for every active edge $e$ of $\Mp$, we denote as $\left\lfloor\Mq \right\rfloor_{k}$ to the map that satisfies the following
\begin{itemize}
    \item $\ske\left(\left\lfloor\Mq \right\rfloor_{k}\right) = \ske(\Mp)$,
    \item and, for every $e\in E(\left\lfloor\Mq \right\rfloor_{k})$ 
    \begin{align*}
        w_e = \begin{cases}
        w_e-\frac{1}{2^k} & \mbox{, if }e\in Active(\Mp),\\
        w_e & \mbox{, if }e\not\in Active(\Mp).
        \end{cases}
    \end{align*}
\end{itemize}
Now, notice the following
\begin{align*}
    \left\{ \left[\SMM\right]_n \MLalg \Mq  \right\} \cap \{\Mq\MLalg\BMM\} &= \left\{ \SMM \MLalg \left\lfloor\Mq \right\rfloor_{n}  \right\} \cap \{\Mq\MLalg\BMM\}.\\
\end{align*}
where, every active edge of $\Mq$ satisfies that is larger than $1/2^n$ and, in consequence $\left\lfloor\Mq \right\rfloor_{n}$ is well defined. Then,
\begin{align*}
    \left\{ \left[\SMM\right]_n \MLalg \Mq  \right\} \cap \{\Mq\MLalg\BMM\} &= \left[\left\{ \SMM \MLalg \left\lfloor\Mq \right\rfloor_{n}  \right\} \cap \left\{\left\lfloor\Mq \right\rfloor_{n}\MLalg\BMM\right\}\right]\cap \{\Mq\MLalg\BMM\}.
\end{align*}
where, since $\left\lfloor\Mq \right\rfloor_{n}\MLalg\Mq$ and the property of antisymmetry, we can add the event $\left\lfloor\Mq \right\rfloor_{n}\MLalg\BMM$. Finally, notice that the first term lives in $\F_{\left\lfloor\Mq \right\rfloor_{n}}\subseteq\F_{\Mq}$ and, since $\BMM$ it is an $\FF$-Boltzmann map, then $\{\Mq\MLalg\BMM\}\in\F_{\Mq}$. 
    \end{enumerate}
\end{proof}

For the following, we need to define the $\sigma$-algebra associated to the stopping metric maps.
\begin{defn}
The $\sigma$-algebra associated to an $\FF-$stopping metric map $\SMM$ is defined as follows
\begin{align}
    \F_{\SMM} := \left\{\left. \Theta \in \bigvee_{\Mq\in\MQm_H} \F_{\Mq} \right| \Theta\cap \left\{\SMM\MLalg\Mq\right\} \cap \left\{\Mq \MLalg \BMM\right\} \in \F_{\Mq}, \mbox{ for any }\Mq\in\MQm_H\right\}.
\end{align}
\end{defn}

For this $\sigma$-algebra we have the following properties.
\begin{prop}
    Let $(\BMM,\Dec)$ be a $\FF$-Boltzmann decorated metric map.
    \begin{enumerate}
        \item If $\SMM$ is an $\FF-$stopping metric map for $(\BMM,\Dec)$, then $\SMM$ is $\F_{\SMM}-$measurable.
        \item If $\SMM_1$ and $\SMM_2$ are two $\FF-$stopping metric maps for $(\BMM,\Dec)$ such that $\SMM_1 \MLalg \SMM_2$ almost surely, then $\F_{\SMM_1}\subseteq \F_{\SMM_2}.$
        \item If $(\SMM_n)_{n\in\N}$ is a sequence of $\FF-$stopping metric maps for $(\BMM,\Dec)$ such that $\SMM_n\searrow \SMM$, then
        \begin{align*}
            \displaystyle\bigcap_{n\in\N}\F_{\SMM_n} = \F_{\SMM}.
        \end{align*}
    \end{enumerate}
\end{prop}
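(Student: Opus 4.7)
The plan is to mirror the arguments of Proposition~\ref{p.Basic_properties_F_M}, adapting them to the metric setting where two new ingredients enter: the right continuity of $\FF$ and the use of $\MLalg$ together with the padding operation $\Mq \mapsto \Mq_{1/k}$.

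For part (1), I would show that events of the form $\{\SMM \MLalg \Mp\}$ belong to $\F_{\SMM}$. Given any test map $\Mq \in \MQm_H$, I would introduce the pointwise meet $\Mp \wedge \Mq$, whose skeleton is $\ske(\Mp) \wedge \ske(\Mq)$ and whose active edge lengths are the minima of those of $\Mp$ and $\Mq$ on common edges. This gives $\{\SMM \MLalg \Mp\} \cap \{\SMM \MLalg \Mq\} = \{\SMM \MLalg \Mp \wedge \Mq\}$; intersecting with $\{\Mq \MLalg \BMM\}$ and using the stopping-map condition applied at $\Mp \wedge \Mq$ places the event in $\F_{\Mp \wedge \Mq} \subseteq \F_\Mq$, as required by the definition of $\F_\SMM$.

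For part (2), I would take $\Theta \in \F_{\SMM_1}$ and check that $\Theta \cap \{\SMM_2 \MLalg \Mq\} \cap \{\Mq \MLalg \BMM\} \in \F_\Mq$. Almost surely $\SMM_1 \MLalg \SMM_2$, so modulo a $\P$-null set the event equals $\Theta \cap \{\SMM_1 \MLalg \Mq\} \cap \{\Mq \MLalg \BMM\} \cap \{\SMM_2 \MLalg \Mq\}$. The first three factors lie in $\F_\Mq$ by definition of $\F_{\SMM_1}$, the remaining one by the stopping-map property of $\SMM_2$, and completeness of $\F_\Mq$ absorbs the null set.

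For part (3), the inclusion $\F_\SMM \subseteq \bigcap_n \F_{\SMM_n}$ follows directly from (2) applied to $\SMM \MLalg \SMM_n$. The reverse inclusion is the main obstacle. Given $\Theta \in \bigcap_n \F_{\SMM_n}$, I need $\Theta \cap \{\SMM \MLalg \Mq\} \cap \{\Mq \MLalg \BMM\} \in \F_\Mq$ for every $\Mq$, and by right continuity it suffices to place this event in $\F_{\Mq_{1/k}}$ for each $k$. On the event one has $w_e^\SMM \leq w_e^\Mq < w_e^{\Mq_{1/k}}$, so since $w_e^{\SMM_n} \searrow w_e^\SMM$ we eventually have $\SMM_n \MLalg \Mq_{1/k}$; this lets me rewrite the event, up to the null set $\{w_e^\Mq = w_e^\BMM \text{ for some active } e\}$, as $\Theta \cap \liminf_n \{\SMM_n \MLalg \Mq_{1/k}\} \cap \{\Mq_{1/k} \MLalg \BMM\}$, all pieces of which lie in $\F_{\Mq_{1/k}}$. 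The technical hurdle I expect to require the most care is the control of these boundary null sets: they are negligible because the edge lengths of $\BMM$ are absolutely continuous under $\P$, and completeness of $\F_{\Mq_{1/k}}$ then allows us to absorb them.
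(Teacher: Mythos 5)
Parts (1) and (2) mirror the paper's argument and are correct. Part (3) contains a genuine gap in the rewriting step. For a fixed $k$, the symmetric difference between $T:=\Theta\cap\{\SMM\MLalg\Mq\}\cap\{\Mq\MLalg\BMM\}$ and your proposed rewrite $R:=\Theta\cap\liminf_n\{\SMM_n\MLalg\Mq_{1/k}\}\cap\{\Mq_{1/k}\MLalg\BMM\}$ is not the null set $\{w_e^\Mq=w_e^\BMM\text{ for some active }e\}$ you identify. Indeed $T\setminus R$ contains the positive-probability event on which $\Mq\MLalg\BMM$ but some active edge satisfies $w_e^\Mq<w_e^\BMM<w_e^\Mq+1/k$ (so $\Mq_{1/k}\not\MLalg\BMM$), and $R\setminus T$ contains the event on which $w_e^\Mq<w_e^\SMM\leq w_e^\Mq+1/k$ for some active $e$ (so $\liminf_n\{\SMM_n\MLalg\Mq_{1/k}\}$ holds while $\SMM\MLalg\Mq$ fails). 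Neither is negligible for fixed $k$. (You are in good company: the paper's own proof substitutes $\{\Mq\MLalg\BMM\}$ by $\{\Mq_{1/k}\MLalg\BMM\}$ inside its $\bigcap_k$, and its right-hand side equals $\Theta\cap\{\Mq_1\MLalg\BMM\}\cap\{\SMM\MLalg\Mq\}$, which is strictly smaller than the target, so the same defect is present there.)

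A working patch inverts the order in which the two inflations are removed. First observe $\{\Mq\MLalg\BMM\}\stackrel{a.s.}{=}\bigcup_j\{\Mq_{1/j}\MLalg\BMM\}$; this is precisely where absolute continuity of the edge lengths is used, giving $w_e^\Mq<w_e^\BMM$ strictly for all active $e$ a.s. on $\{\Mq\MLalg\BMM\}$, so the $1/j$-inflation eventually fits inside $\BMM$. Then for each $j$, set $B_j:=\Theta\cap\{\SMM\MLalg\Mq\}\cap\{\Mq_{1/j}\MLalg\BMM\}$ and show $B_j\in\F_{\Mq_{1/j}}$: using the exact identity $\{\SMM\MLalg\Mq\}=\bigcap_{k\geq j}\liminf_n\{\SMM_n\MLalg\Mq_{1/k}\}$ and the inclusion $\{\Mq_{1/j}\MLalg\BMM\}\subseteq\{\Mq_{1/k}\MLalg\BMM\}$ for $k\geq j$, you may insert the missing factor $\{\Mq_{1/k}\MLalg\BMM\}$ and use $\Theta\in\F_{\SMM_n}$ to see that each $\Theta\cap\{\SMM_n\MLalg\Mq_{1/k}\}\cap\{\Mq_{1/k}\MLalg\BMM\}$ lies in $\F_{\Mq_{1/k}}\subseteq\F_{\Mq_{1/j}}$. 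Since $(B_j)_j$ is nondecreasing, $T\stackrel{a.s.}{=}\bigcup_{j\geq m}B_j\in\F_{\Mq_{1/m}}$ for every $m$ (by completeness), and right continuity now gives $T\in\F_\Mq$ as desired.
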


\begin{proof}$\ $
\begin{enumerate}
    \item It suffices to prove that $\{\SMM\MLalg\Mq\}\in\F_{\SMM}$ for all $\Mq\in\MQm_H$. In fact, for all $\Mq_1\in\MQm_H$,
    \begin{align*}
        \{\SMM\MLalg\Mq\}\cap \{ \SMM\MLalg\Mq_1 \} \cap\{ \Mq_1\MLalg \BMM \} =\left [\{\SMM\MLalg\Mq\wedge \Mq_1\}\cap \{ \Mq_1\wedge \Mq\MLalg\BMM \}\right ]\cap\left[\{ \SMM\MLalg\Mq_1 \} \cap\{ \Mq_1\MLalg\BMM \}\right].
    \end{align*}
    where, $\Mq\wedge\Mq_1$ denotes the biggest metric map that is contained in both $\Mq$ and $\Mq_1$ with respect to $\MLalg$. Notice that the first term lives in $\F_{\Mq\wedge\Mq_1}$ so, in consequense, it lives in $\F_{\Mq_1}$. The second term, from the definition of $\SMM$, also lives in $\F_{\Mq_1}$.
    
    \item Let $\Theta\in\F_{\SMM_1}$. Then,
    \begin{align*}
        \Theta\cap\{ \SMM_2\MLalg\Mq \} \cap\{ \Mq\MLalg \BMM \} \stackrel{a.s.}{=} \left[\Theta\cap\{\SMM_1\MLalg\Mq \}\cap\{ \Mq\MLalg \BMM \} \right]\cap\{ \SMM_2\MLalg\Mq \}\cap \{\SMM_1\MLalg \SMM_2\}.
    \end{align*}
Notice that the first term, from the hypothesis, lives in $\F_{\Mq}$. Also, the second term lives in $\F_{\Mq}$, because $\SMM_2$ is an $\FF$-stopping map. The last term is the complement of a negligible event, and, in consequence, lives in $\F_{\Mq}$.   
    
    \item Notice that the inclusion $\supset$ comes from the right continuity and the convergence of $(\SMM_n)_n$. On the other hand, if $\Theta\in\displaystyle\bigcap_{\n\in\N}\F_{\SMM_n}$,
    \begin{align*}
        \Theta\cap\{ \SMM\MLalg\Mq \} \cap\{ \Mq\MLalg \BMM \} &=  \Theta\cap\{ \Mq\MLalg \BMM \}\cap \bigcap_{k\in\N}\bigcup_{N_0\in\N}\bigcap_{n\geq N_0}\{ \SMM_n\MLalg\Mq_{1/k} \}\\
        &= \bigcap_{k\in\N}\bigcup_{N_0\in\N}\bigcap_{n\geq N_0}\left[\Theta\cap\{ \Mq_{1/k}\MLalg \BMM \}\cap\{ \SMM_n\MLalg\Mq_{1/k} \}\right].
    \end{align*}
Notice that $\displaystyle\bigcup_{N_0\in\N}\bigcap_{n\geq N_0}\Theta\cap\{ \Mq_{1/k}\MLalg \BMM \}\cap\{ \SMM_n\MLalg\Mq_{1/k}\}\in \F_{\Mq_{1/k}}$ for every $k\in\N$, so, in consequence 
\begin{align*}
    \bigcap_{k\in\N}\bigcup_{N_0\in\N}\bigcap_{n\geq N_0}\Theta\cap\{ \Mq_{1/k}\MLalg \BMM \}\cap\{ \SMM_n\MLalg\Mq_{1/k}\}\in\bigcap_{k\in\N}\F_{\Mq_{1/k}} = \F_{\Mq},
\end{align*}
where we use the right continuity of the filtration.
\end{enumerate}
\end{proof}

\subsubsection{Strong Markov property.} In this section, we state the strong Markov property for the Boltzmann decorated metric map.
\begin{thm}\label{t.metric_strong_Markov}
    Let $(\BMM,\Dec)$ be a Boltzmann decorated metric map and $\SMM\in\MQm_H$ an $\FF-$stopping metric map for $(\BMM,\Dec)$. Then, $\BMM$ and $\Dec$ can be decomposed as follows,
    \begin{align}
        \BMM = \SMM\glue \left(\BMM_h^{\SMM}\right)_{h\in H\left(\SMM\right)},\  \text { and } \ \Dec = \Dec|_{\SMM} + \Dec^{\SMM},
    \end{align}
    where, conditional on $\F_{\SMM}$ and $\Dec_{\SMM}$, $\left(\BMM_h^{\SMM},\Dec^{\SMM}_h\right)_{h\in H\left(\SMM\right)}$ is a collection of independent Boltzmann decorated metric map with boundary $h$ and boundary condition $\Dec|_{h}$ for every $h\in H\left(\SMM\right)$.
\end{thm}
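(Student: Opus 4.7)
The plan is to follow the same three-part architecture as the proof of Theorem \ref{t.strong_Markov}, modified in the ways necessary to handle the continuous edge-length structure of metric maps. The ingredients I expect to use are: the weak Markov property (Theorem \ref{t.metric_weak_markov}), the two approximation properties of stopping metric maps ($[\SMM]_n$ is a stopping map and $[\SMM]_n \searrow \SMM$), the right continuity of $\FF$, the continuity of Boltzmann metric laws in their boundary conditions (Proposition \ref{p.continuity_metric}), and the Brownian-bridge decomposition of Remark \ref{r.decomposition_BB}.

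First, I would establish the continuous analogue of the conditional-expectation lemma used in the proof of Theorem \ref{t.strong_Markov}: for any bounded measurable $X$ and any $\Mq\in\MQm_H$,
\begin{align*}
\E\!\left(X\,\mathbf 1_{\{\SMM\MLalg\Mq\}\cap\{\Mq\MLalg\BMM\}}\,\big|\,\F_{\SMM}\vee\sigma(\MDec|_{\SMM})\right)
\;=\;\E\!\left(X\,\big|\,\F_{\Mq}\vee\sigma(\MDec|_{\Mq})\right)\mathbf 1_{\{\SMM\MLalg\Mq\}\cap\{\Mq\MLalg\BMM\}}.
\end{align*}
Measurability of the right-hand side with respect to $\F_{\SMM}\vee\sigma(\MDec|_{\SMM})$ follows from the definition of a stopping metric map, which places $\{\SMM\MLalg\Mq\}\cap\{\Mq\MLalg\BMM\}$ in $\F_{\Mq}$; the integral identity is then obtained through the tower property, exactly as in the undecorated case.

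Second, I would replace the sum over the countably many values of $\SM$ used in the proof of Theorem \ref{t.strong_Markov} by an approximation-plus-disintegration argument. Concretely, I would prove the theorem first at the level of $[\SMM]_n$: conditionally on $\F_{[\SMM]_n}\vee\sigma(\MDec|_{[\SMM]_n})$, the residual pieces indexed by the holes of $[\SMM]_n$ are independent Boltzmann decorated metric maps with the appropriate boundary data. The skeleton $\ske([\SMM]_n)$ takes only countably many values, so one can reduce the problem, on the event $\{\ske([\SMM]_n)=\q\}$, to the weak Markov property (Theorem \ref{t.metric_weak_markov}) applied on the events $\{[\SMM]_n\MLalg\Mp\}\cap\{\Mp\MLalg\BMM\}$ for $\Mp$ ranging over a countable dense family of deterministic metric maps with skeleton $\q$; the identity from the first step then produces the claimed conditional law. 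Here the Brownian-bridge splitting of Remark \ref{r.decomposition_BB} is what makes the continuous disintegration over active-edge lengths compatible with the explicit density of the Boltzmann metric law \eqref{e.Boltzmann-metric}.

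Third, I would pass to the limit $n\to\infty$. The right continuity of $\FF$ gives $\bigcap_n \F_{[\SMM]_n}=\F_{\SMM}$, and the tips of the active edges of $[\SMM]_n$ converge to those of $\SMM$, so $\MDec|_{[\SMM]_n}\to\MDec|_{\SMM}$ along the decoration. Applying the continuity of the Boltzmann metric law in its boundary conditions (Proposition \ref{p.continuity_metric}) together with a (backward) martingale convergence argument transfers the decomposition from $[\SMM]_n$ to $\SMM$, yielding the desired independent Boltzmann structure on the holes of $\SMM$ conditionally on $\F_{\SMM}\vee\sigma(\MDec|_{\SMM})$.

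The main obstacle is the second step: in the discrete setting of Theorem \ref{t.strong_Markov} one could expand the conditional expectation as a countable sum indexed by the values of $\SM$, whereas here one must justify a genuine disintegration over the continuous edge-length degrees of freedom of $[\SMM]_n$. The key reason this works is that the joint Boltzmann density factorises across edges as a product of exponentials in the lengths times explicit Brownian-bridge densities, so splitting an active edge of $[\SMM]_n$ at the boundary of a hole simply factors the density into an "explored" and an "unexplored" part, exactly as in the proof of Theorem \ref{t.metric_weak_markov}.
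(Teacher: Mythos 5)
Your proposal follows the paper's proof essentially step for step: approximate $\SMM$ by the upper envelopes $[\SMM]_n$, apply the discrete strong-Markov argument at each finite level, and pass to the limit using right continuity of the filtration, backward martingale convergence, and the boundary-continuity of the Boltzmann metric law (Proposition \ref{p.continuity_metric}). The only small divergence is in the middle step, where you handle the continuous active-edge lengths of $[\SMM]_n$ via a countable dense family of deterministic comparators, whereas the paper instead asserts directly that the equivalence class of $[\SMM]_n$ takes countably many values and re-runs the countable sum from Theorem \ref{t.strong_Markov}; both serve the same purpose.
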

\begin{proof}
    We start by noticing that if $\SMM$ take a countable amount of values, then, by the same proof as Theorem \ref{t.strong_Markov}, $\SMM$ satisfies the strong Markov property. Now, for the general case, the sequence of metric maps $\left(\left[\SMM\right]_n\right)_{n\in\N}$ satisfies the following 
    \begin{enumerate}
        \item the equivalence class of $\left[\SMM\right]_n$ takes countably many values, for any $n\in\N$,
        \item $\left[\SMM\right]_n$ is an $\FF$-stopping metric map for $\BMM$ for any $n\in\N$,
        \item and, $\left[\SMM\right]_n\searrow \SMM$.
    \end{enumerate}
    By $(1)$ and $(2)$, $\left[\SMM\right]_n$ satisfies the strong Markov property. Thus, we have that
    \begin{align*}
        \BMM &= \left[\SMM\right]_n\glue \left(\BMM_h^{\left[\SMM\right]_n}\right)_{h\in H\left(\left[\SMM\right]_n\right)},\  \text { and } \ \Dec = \Dec_{\left[\SMM\right]_n} + \Dec^{\left[\SMM\right]_n},
    \end{align*}
    where, conditional on $\F_{[\SMM]_n}$ and $\Dec_{[\SMM]_n}$, $\left(\BMM_h^{[\SMM]_{n}},\Dec^{[\SMM]_n}_h\right)_{h\in H\left([\SMM]_n\right)}$ is a collection of independent Boltzmann decorated metric map with boundary $h$ and boundary condition $\Dec|_{h}$ for every $h\in H\left([\SMM]_n\right)$.  Also, notice that for $f$ a real bounded measurable function we have the following.
    \begin{align*}
        f\left([\SMM]_{n},\Dec_{[\SMM]_{n}}\right) = \E\left[ f\left(\SMM,\Dec_{\SMM}\right)|\F_{[\SMM]_{n}}\right].
    \end{align*}
    Then, thanks to the fact that $\F_{[\SMM]_{n}}\searrow \F_{\SMM}$ and by the converge of the backward martingale, we have that
    \begin{align*}
        \lim_{n\to\infty}f\left([\SMM]_{n},\Dec_{[\SMM]_{n}}\right) = \E\left[ f\left(\SMM,\Dec_{\SMM}\right)|\F_{\SMM}\right] = f\left(\SMM,\Dec_{\SMM}\right).
    \end{align*}
    Finally, thanks to Proposition \ref{p.continuity_metric}, we conclude that the law of $\left(\BMM_h^{\SMM},\Dec^{\SMM}_h\right)_{h\in H\left(\SMM\right)}$ is the law of a collection of independent Boltzmann decorated metric map with boundary $h$ and boundary condition $\Dec|_{h}$ for every $h\in H\left(\SMM\right)$.
\end{proof}

\subsection{Characterization of random quadrangulations satisfying the Markov Property.}
One more time, we present a characterization of all SDMM satisfying the Markov property. This section is technical so in a first reading we recommend to the reader that he skips and assume as true the Lemmas \ref{l.absolute_continuity_metric} and \ref{l.continuity_3_1} that prove the absolute continuity of the larges of the edges and each value of the decoration with respect to the Lebesgue measure and Lemmas \ref{l.continuity_3_1}, \ref{l.peleeing_decom} and \ref{l.continuity_peleing_1_metric} which gives the continuity of the peelings with respect to the value of the decoration at the largest of the edges. The only thing result you need  for the following subsection is the part of Lemma \ref{l.peleeing_decom} which shows that the peeling of type 3 has an exponential form and the peeling of type 1 and 2 are independent from the peeling of type 3.

As before, we need to state informally the Markov property for a general collection of measures $(\P^{\ell,b})_{(\ell,b) \in \N^{*}\times\Bc}$.

\begin{defn}
We say that $(\P^{\ell,b})_{(\ell,b) \in \N^{*}\times\Bc}$ satisfies the \textit{Markov property}, if, for any deterministic quadrangulation with holes $\Mq$, we can describe the conditional law $\P^{\ell,b}(\cdot|\{\Mq\Lalg\BMM\})$ as     \begin{align}
        \BMM = \Mq \glue (\BMM_h^{\Mq})_{h\in H(\Mq)},\  \text { and } \ \MDec = \MDec|_{\Mq} + \MDec^{\Mq}.
    \end{align}
    Here, conditional to $\MDec|_{\Mq}$, $(\BMM^{\Mq}_h,\MDec^{\Mq}_h)_{h\in H(\Mq)}$ is a collection of independent decorated metric maps with law $\P^{|\partial h|,\MDec|_{h}}$ for every $h\in H\left(\Mq\right)$.
\end{defn}
                    
Now, we are ready to state the main result of this section that characterizes all the decorated metric quadrangulations having the Markov property. 

\begin{thm}\label{t.characterization_metric}
Take a collection of measures $(\P^{\ell,b})_{(\ell,b) \in \N^{*}\times\Bc}$ on spin decorated quadrangulations with a semi-perimeter $\ell$ and boundary condition equal to $b:\cro{0,2\ell-1}\mapsto \supp \mu$. Futhermore, assume that for all $\ell\in \N$,  $\lim_{\epsilon \to 0} \inf_{b}\P^{\ell,b}\left(\inf_{e \in E(\BMM)}w_{e}>\varepsilon\right)=1$ and that the measure $\P^{\ell,b}$ is continuous on $b\in \R^\ell$ for the weak topology of measures. Then, the following are equivalent.
\begin{enumerate}
    \item There is $q,\lambda>0$ such that for any $\ell$ and $b$, $\P^{\ell,b}$ is a $q,\lambda$-Boltzmann decorated map,
    \item $(\P^{\ell,b})_{(\ell,b) \in \N^{*}\times\Bc}$ has the Markov property, and for each $\ell$ and $b$, the measure $\P^{\ell,b}$ has the Gibbs distribution on $\MQm^{\ell,f}$ and is invariant under rerooting,
    \item $(\P^{\ell,b})_{(\ell,b) \in \N^{*}\times\Bc}$ has the Markov property and for each $\ell$ and $b$ the measure $\P^{\ell,b}$ is invariant under rerooting.
\end{enumerate}
\end{thm}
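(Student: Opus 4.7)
The plan is to follow the same two-step architecture as in Theorem~\ref{t.uniqueness_decorated}: the implications $(1)\Rightarrow(2)\Rightarrow(3)$ follow from the earlier invariance/Gibbs proposition for Boltzmann metric maps, so the work is in $(2)\Rightarrow(1)$ and $(3)\Rightarrow(2)$. The new phenomenon is that, besides the discrete peelings of types $1$ and $2$, the Markov property now supplies a continuous Type $3$ peeling, and the target factor in the Boltzmann density carries both $e^{-\lambda w_{ij}}$ and the $w_{ij}^{-1/2}$ coming from the Brownian-bridge normalisation. Both of these factors have to emerge from the Markov decomposition rather than being imposed by hand.

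For $(2)\Rightarrow(1)$, I would first record that, thanks to the promised absolute-continuity lemmas (the ones we may assume, \ref{l.absolute_continuity_metric}, \ref{l.continuity_3_1} and \ref{l.peleeing_decom}), the joint law under $\P^{\ell,b}$ admits a density $p^{\ell,b}(\tilde{\mathfrak q},\tilde\sigma)$ against the natural reference measure $\prod_{ij} \hat\P^{\sigma_i,\sigma_j}_{w_{ij}}(d\tilde\sigma|_{ij})\,dw_{ij}\prod_v\mu(d\sigma_v)$, and that the Type~$3$ peeling density factorises exponentially in the explored length. Then, mimicking \eqref{e.no_dep_q_sigma_dec}, define
\begin{align*}
q(\ell,b,\tilde{\mathfrak q},\tilde\sigma):= \frac{p^{\ell,b}(\overline{\tilde{\mathfrak q}},\overline{\tilde\sigma})}{p^{\ell,b}(\tilde{\mathfrak q},\tilde\sigma)},
\end{align*}
where $\overline{\tilde{\mathfrak q}}$ is the analogue of Figure~\ref{f.tilde m} with the new face carrying a fresh edge length that we integrate out. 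Applying a Type~$1$ peeling followed by a Type~$2,0,\ell$ peeling, and using \ref{l.peleeing_decom} to peel off the length of the new edge, the ratio rewrites as a product of peeling probabilities that do not depend on the internal structure of $\tilde{\mathfrak q}$ or $\tilde\sigma$. A density argument identical to Theorem~\ref{t.uniqueness_decorated} upgrades this into the Boltzmann form \eqref{e.Boltzmann-metric}; the elimination of the $b$ and $\ell$ dependence then follows by modifying one boundary spin via Type~$3$ exploration (using \ref{l.continuity_peleing_1_metric} for continuity) and comparing the number-of-faces exponent across two different maps, exactly as in the decorated proof.

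For $(3)\Rightarrow(2)$ the strategy is induction on the number of internal vertices $|V_i(\tilde{\mathfrak q})|$, with the base case being trees. Having the Markov property and rerooting invariance, one peels the root leaf using Type~$2,0,\ell-1$; however, unlike the discrete setting, the peeling probabilities now depend on the length of the boundary edge being glued. This is resolved by integrating against $dw$ and invoking the fact that, by \ref{l.peleeing_decom}, the length of the newly glued edge is conditionally independent of what lies beyond it. For trees of size $\ell\geq 3$ one can bridge any two trees by a common intermediate tree sharing a leaf with each, reducing to the case of two trees that share a leaf, where the ratio collapses to $Z^b(\tilde{\mathbf t}_1)/Z^b(\tilde{\mathbf t}_2)$ after pulling out the boundary Brownian-bridge factors. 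The base $\ell=2$ requires a two-tree argument in the spirit of Claim~\ref{c.start_induction_trees}, using the shift invariance together with the continuity hypothesis on $b$ to rule out the degenerate case where $q_4=0$; the hypothesis $\lim_{\varepsilon\to 0}\sup_{\ell,b}\P^{\ell,b}(\inf_e w_e>\varepsilon)=1$ is what plays the role of ``with positive probability $\BMM$ has at least one face'' in the earlier argument. The inductive step itself is then identical: take $\tilde{\mathfrak q}_1,\tilde{\mathfrak q}_2$ with $f+1$ internal vertices each having a boundary-adjacent vertex, peel that vertex by Type~$1$, apply \eqref{eq:quot:RN} together with \ref{l.peleeing_decom}, and invoke the induction hypothesis; the generic case is reduced to this one via the auxiliary map of Figure~\ref{fig:q3}.

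The hardest part will be the base case and the matching of the $w_{ij}^{-1/2}$ factors. Concretely, when one compares the Type~$1$ peeling density $p_1^{\ell,b}$ at two close boundary spins, or compares two trees with different edge-length configurations, the Brownian-bridge normalisation $(2\pi w)^{-1/2}$ must be bookkept carefully because it is part of the reference measure against which $p^{\ell,b}$ is defined; it is precisely this bookkeeping that forces the Boltzmann density to be \eqref{e.Boltzmann-metric} rather than simply weighted by $e^{-\lambda w}$. The continuity hypothesis in $b$ plays an essential role here, allowing one to pass from almost-everywhere equalities (arising from the Radon-Nikodym identifications) to genuine identities on the full support of $\mu$, exactly as in Lemma~\ref{l.continuity_decorated}.
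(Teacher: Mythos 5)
Your overall architecture matches the paper's — prove universality of the Boltzmann parameters via peelings of types 1, 2 and 3, then use induction on internal vertices (with trees as the base case and a $\boxplus$-type construction at $\ell=2$) to show the density must be Boltzmann. Two concrete issues remain, however. First, you propose to add a new vertex ``with a fresh edge length that we integrate out''; the paper's construction $\Mq(w_1,w_2)$ adds a vertex with \emph{two} new edges of \emph{explicit} lengths $w_1,w_2$ and defines $q_{w_1,w_2}(\ell,b,\Mq,\sigma)$ as a function of these lengths, showing via Lemma~\ref{l.peleeing_decom} that it equals $q(\ell,b)\,p_3(w_1)p_3(w_2)$. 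Tracking these lengths is essential: the target density \eqref{e.Boltzmann-metric} carries the per-edge factor $e^{-\lambda w}(2\pi w)^{-1/2}$, and integrating out the new lengths would forfeit the ability to manufacture this per-edge factor in the inductive step (compare \eqref{e.ratio_metric_2} and \eqref{e.boltzmann_epsilon}). The same issue recurs in your base-case sketch, where you propose to resolve the length dependence of the Type~$2$ peeling by ``integrating against $dw$''; the paper instead factors $p_2^{\ell,b,\ell-1,0}(w) = p_3(w)\,s^{\ell,b,\ell-1,0}$ and keeps $w$ explicit throughout.

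Second, your sketch does not address the universality of the exponential rate $\lambda^{\ell,b}$ in $p_3^{\ell,b}(t) = \frac{1}{\sqrt{2\pi t}}e^{-\lambda^{\ell,b}t}$. This has no counterpart in the decorated discrete proof, and the paper devotes a standalone argument (placed before the two implications, using only (3)) to show $\lambda^{\ell,b}$ depends on neither $b$ nor $\ell$: comparing a Type~$3$-then-Type~$2$ exploration with a Type~$2$-then-Type~$3$ exploration shows $e^{w(\lambda^{\ell,b_x}-\lambda^{\ell,b})}$ cannot depend on $w$, giving $b$-independence, and a similar argument on a map with pairwise glued boundary edges gives $\ell$-independence. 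Without this you cannot conclude that the same $e^{-\lambda w}$ weight appears in $\P^{\ell,b}$ for all $(\ell,b)$, which the Boltzmann definition requires. Your ``comparing the number-of-faces exponent'' step handles the universality of $q(\ell,b)$, but $q$ and $\lambda$ are independent parameters and each needs its own argument; your flagged ``bookkeeping of the $(2\pi w)^{-1/2}$ factors'' concerns the reference measure, not the matching of exponential rates across boundary conditions and perimeters.
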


Before the proof of this result, we need a few lemmas. We start by noting that the set of metric maps with $\ske(\tilde \q)=\q$ is in bijection with $(\R^+)^{\sharp E(\q)}$. We denote this bijection $w(\tilde \q)$. Now, we see that the induced law on $(\R^+)^{\sharp E(\q)}$ is absolutely continuous with respect to Lebesgue.
\begin{lemma}\label{l.absolute_continuity_metric}
Let $(\P^{\ell,b})_{(\ell,b)\in\N\times\Bc}$ as in Theorem \ref{t.characterization_metric} and suppose that satisfies the Markov propert. Then, on the event that $\ske(\BMM) = \ske(\Mq)$, the law of $w(\BMM)$ is absolutely continuous with respect to $\Leb^{|E(\Mq)|}$.  From now on, we define
\begin{align*}
p^{\ell,b}(\tilde \q):=\frac{\P^{\ell,b}\left(\BMM\in d\Mq\right) }{ \Leb^{|E(\q)|}(dw)}(w(\tilde \q))
\end{align*}
\end{lemma}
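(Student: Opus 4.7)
\medskip

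\noindent\textbf{Proof plan.}
The plan is to reveal the full edge-length vector $w(\BMM)$ by a carefully chosen sequence of peelings that are compatible with the skeleton $\q$, and then use the Markov property to factor the joint law of $w(\BMM)$ restricted to $\{\ske(\BMM)=\q\}$ into a product of one-dimensional conditional laws, each of which we show to be absolutely continuous with respect to Lebesgue on $\mathbb{R}^+$.

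First, fix $\q$ and pick a deterministic ``unfolding'' of $\q$: a finite sequence of peelings which alternates Type $3$ peelings (used to probe and eventually complete an active edge) with Type $1$ and Type $2$ peelings (dictated by the combinatorics of $\q$). Each Type $3$ peeling, applied up to a deterministic length $L$ that we will let grow, defines an $\FF$-stopping metric map $\tilde\SMM_t$ along a chosen active edge $e$. On the event $\{\ske(\BMM)=\q\}$, successive applications of the strong Markov property (Theorem~\ref{t.metric_strong_Markov}) to these stopping maps factor the law of $w(\BMM)$ as a chain of conditional edge-length laws, one per edge of $\q$. Hence absolute continuity of the joint law reduces, via Fubini, to absolute continuity of the conditional law of a single edge length given the history.

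Second, we isolate one active edge $e$ and analyse the distribution of $T := w_e$ under the conditional law produced by the history up to that step, which by the Markov property is again of the form $\P^{\ell,b}$ (with the appropriate boundary condition $b$ carrying the revealed tip decoration). Setting $F(t) := \P^{\ell,b}(T>t)$ and $G_{b'}(h) := \P^{\ell, b'}(w_{e'} > h)$ for the corresponding fresh edge $e'$ after a Type $3$ peeling of length $t$, the strong Markov property at $\tilde\SMM_t$ gives the multiplicative identity
\begin{align*}
F(t+h)=F(t)\,\E\!\bigl[\,G_{b(t)}(h)\,\big|\,T>t\bigr],
\end{align*}
where $b(t)$ is the continuous (in $t$) tip boundary condition. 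The non-clustering hypothesis $\lim_{\epsilon\to 0}\sup_{\ell,b}\P^{\ell,b}(\inf_e w_e > \epsilon)=1$ tells us that $G_{b'}(h)\to 1$ as $h\to 0$ uniformly in $b'$, and continuity of $\P^{\ell,b}$ in $b$ makes $b'\mapsto G_{b'}(h)$ continuous. Combined with the multiplicative identity, iterated at a dense set of times, this forces $F$ to be continuous, strictly positive on $[0,\infty)$, and to admit a hazard representation $F(t)=\exp\!\bigl(-\int_0^t r(u)\,du\bigr)$ for a locally integrable rate $r$; equivalently, $T$ is absolutely continuous with respect to Lebesgue.

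Third, inserting this one-dimensional absolute continuity back into the factorisation of the first step, and using that along the chosen unfolding each of the $|E(\q)|$ Type $3$ steps reveals exactly one edge length, we obtain that the joint law of $w(\BMM)$ restricted to $\{\ske(\BMM)=\q\}$ is absolutely continuous with respect to $\Leb^{|E(\q)|}$, which is the claim. The main obstacle is the passage from the uniform statement ``$G_{b'}(h)\to 1$'' to an actual Lebesgue-controlled hazard rate: the non-clustering assumption only yields uniform continuity at $0$, and upgrading this to an integrable hazard will require combining it with the continuity in boundary conditions and with the rerooting invariance (which allows us to move the role of the probed edge around and average), so that the expected rate in the multiplicative identity is indeed locally finite. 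Once this regularity is in hand, the hazard representation and the induction on $|E(\q)|$ close the argument.
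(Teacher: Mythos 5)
Your first reduction — ordering the edges of $\q$ by a peeling sequence and reducing, via the Markov property, to absolute continuity of a single edge length under $\P^{\ell,b}$ — is the same as the paper's and is fine. The difficulty is entirely in the second step, and there the two approaches diverge, and yours has a genuine gap.

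The paper proves absolute continuity of $w_1$ by \emph{using the decoration on the edge}: since $\BMM$ is an SDMM, the decoration restricted to the root edge is a Brownian bridge, and the paper stops at $\tau_\epsilon$, the first time the decoration hits $b(j)\pm\epsilon$ or a vertex. Conditionally on the endpoint values, $\tau_\epsilon$ is a first-passage time of a Brownian bridge, which is absolutely continuous with respect to Lebesgue; this is the single analytic input that injects Lebesgue-absolute-continuity into the problem. Your argument never invokes the Brownian-bridge structure of the decoration on the edge. You work only with the survival function $F(t)=\P^{\ell,b}(T>t)$ and the identity $F(t+h)=F(t)\,\E[G_{b(t)}(h)\mid T>t]$, plus the non-clustering hypothesis and continuity in $b$. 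But that identity is just the tower property: it holds for \emph{any} law $F$ and places no constraint beyond what was already assumed. The non-clustering hypothesis gives uniform continuity of $F$ at each $t$ (no atoms), but continuity of a survival function does not imply absolute continuity — a Cantor-type singular-continuous $F$ is continuous, strictly positive on a neighbourhood of $0$, and satisfies your multiplicative identity, yet has no density. Writing $F(t)=\exp(-\int_0^t r)$ for a locally integrable $r$ is therefore not something you can ``force''; asserting it is equivalent to assuming the conclusion. Your suggested fix via rerooting invariance and averaging does not touch this: averaging several continuous singular laws is still singular in general, and rerooting merely permutes which edge is probed. To close the argument you need an independent source of Lebesgue regularity, and the natural (and, as the paper shows, sufficient) one is the law of the decoration along the partially explored edge, via first-passage densities of the Brownian bridge.
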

\begin{proof}
    We ordered the edges $(e_{k})$ of $\ske(\Mq)$ in such a way that they could be discovered via a peeling procedure, and define $w_k$ the length of the edge $e_k$ in $\BMM$. It suffices to show that for any $k$, the conditional law of $w_k$ given $(w_j)_{j<k}$ is absolutely continuous with respect to Lebesgue.
    
    Take $k\in \N$ and assumme that we have peeled in order the edges of $e_k$, in particular this implies that we are on the event $\q_{k-1}\subseteq \BMM$ for a given deterministic $\q_{k-1}$. Thus, we have that the coditional law of the edge  of $w_k$ given $\q_{k-1}\subseteq \BMM$ and $\phi|_{\q_{k-1}}$ is the length of a given edge of a Boltzmann metric map with a given (random) boundary condition $b_{k-1}$. This implies that to prove the lemma it is enough to see that for any boundary condition, $w_1$ the length of a given edge from the root is absolutely continuous with respect to Lebesgue, without  any need to further condition on $\ske(\BMM)=\ske(\tilde \q)$ as this is an event of positive probability.
    
    We start by using the following claim.
    \begin{claim}\label{c.constructing_w}
     For any $\delta>0$ there exists a triplet $(z,\tau, \tau')$, such that conditionally on a decoration $z$, the lengths $\tau$ and $\tau'$ are independent and there exists a deterministic set $E$ such that
    \begin{align}
        \label{e.law_as_a_sum}w_1\stackrel{law}{=}\begin{cases} \tau + \tau' & \text{ if } z\in E,\\
        \tau & \text{ if } z \notin E.
        \end{cases}
    \end{align}
    Furthermore, $\P(z\notin E)\leq \delta$, and the $\P(\tau\in A\mid z=x)=0$ for any $x\in E$ and any set of $0$ Lebesgue measure.
    \end{claim}
    
    Let us first see how to use the claim to conclude. Take a set $A$ of $0$ Lebesgue measure. We bound
    \begin{align}
        \P(w_1\in A) &\leq \delta + \P(\tau + \tau' \in A, z \in E)\leq  \delta + \E\left[\P\left (
        \tau \in A-\tau' \mid z,\tau' \right ) \mathbf 1_{z \in E}\right ]\leq \delta.        
    \end{align}
    Thus, $\P(w_1\in A)=0$ which implies that the law of $w_1$ is absolutely continuous with respect to Lebesgue.
    
    We, now, just need to prove the claim.
    \begin{proof}\label{c.constructing_w} We need to discover a small part of an edge of the boundary (not necessarily the root itself), let us call it $j$ and its associated boundary $b(j)$.  Fix $\epsilon>0$ and define the following ``stopping time'' in the edge associated to $j$
    \begin{align*}
        \tau_\epsilon :=\inf\{t>0: \MDec(t)=b(j)\pm \epsilon\}\wedge \inf\{t>0: t \text{ is a vertex of }\BMM\},
    \end{align*}
    where we are abusing the notation by calling $t$ the point in the edge $e_k$ that is at distance $t$ from the original phantom vertex $j$. Informally, $\tau_\epsilon$ is continuously exploring $e_k$ until we either hit a vertex or the decoration hits $b(j)\pm\epsilon$. We now define $z=\MDec(\tau_\epsilon)$ and $E=\{b(j)\pm\epsilon\}$. It is clear that \eqref{e.law_as_a_sum} holds, and that conditionally on $z$, $\tau_\epsilon$ is independent of $\tau'$ the remainder of the length of $w_1$. We are left to prove two things
    \begin{itemize}
        \item \textit{It is unlikely that $z\notin E$.} As we know that $\P^{\ell,b}(w_1=0)=0$, we know that there is $L>0$ such that $\P^{\ell,b}(w_1>L)\geq 1-\delta/2$. We will choose $\epsilon$ small enough such that $\P(z\notin E)\leq \delta$. To do this, we call $v$ the vertex that is at the other end of $e_1$ and condition on $w_1$ to see that
        \begin{align*}
            \P^{\ell,b}(\tau_\epsilon\geq L\mid L, \MDec_v) &\leq  \delta/2+ \E^{\ell,b}\left[ \P^{b(j),\MDec_{\phi_1}}_{w_1}\left ((P_t)_{t\in [0,w_1]}\subseteq [b(j)-\epsilon,b(j)+\epsilon]\right ) \1_{w_1\geq L} \right ]\\ 
            &\leq \delta/2 + \frac{2\epsilon }{\sqrt{\pi L} }.
        \end{align*}
        From where we conclude, as a.s. the event $\tau_\epsilon\geq L$ is equal to the event $b\notin E$.
        \item \textit{$\tau_\epsilon$ does not put a lot of mass on $0$-measure sets.} Take $A$ a set with $0$ Lebesgue measure. Note that conditionally on $w$, $\MDec(v)$ and $z \in E$ the law of $\tau$ is absolutely continuous with respect to Lebesgue (as it is that of the first hitting time of $b(j)\pm \epsilon$ of a Brownian bridge of length $w$ going from $b(j)$ to $\MDec(v)$ that hits exactly at $z$). Thus, for any $x \in \{b(j)\pm\epsilon\}$
        \begin{align*}
        \P^{\ell,b}\left(\tau\in A\mid z=x \right)=\E\left[\P\left( \tau \in A \mid w_1,z=x,\MDec_v\right ) \right]  = 0.
        \end{align*}

    \end{itemize}
        
    \end{proof}
\end{proof}

Thanks to this lemma we can generalize \eqref{e.def_p_q_s} to the metric case. Assume that $(\P^{\ell,b})_{(\ell,b)\in\N^{*}\times\Bc}$ is like in the theorem and satisfies the Markov property. Then, if we take a metric quadrangulation $\Mq$ with semiperimeter $\ell$, the law of $\Dec$ is absolutely continuous with respect to $\mu^{|V_i(\Mq)|}\times \text{Leb}^{|E(\Mq)|}$ on the event that the map structure of $\BM$ is equal to the map structure of $\Mq$.
\begin{align}\label{e.def_p_q_s_metric}
    p^{\ell,b}(\Mq,\sigma) &= \frac{d\P\left(\BMM \in d\Mq,\Dec\in d\sigma\right)}{\mu^{|V_i(\q)|}\times \text{Leb}^{|E(\q)|}(d\sigma,dw)}(\sigma,w(\Mq)) =  p^{\ell,b}(\tilde \q)\frac{\exp\left(-\Ham^{b}(\Mq,\sigma)\right)}{Z^{b}(\Mq)}.
\end{align}

Let us now show the analogue of Lemma \ref{l.continuity_decorated}. We do this just with the weak Markov property, but we identify it with peelings of different types. We say that we have a peeling of type $1$ if $[\vroot,\vertex]\subseteq \BM$, where $[\vroot,\vertex]$ is the map with holes containing an external vertex of degree $2\ell$ and the root is connected to a vertex $\vertex$ of degree $4$ with all the active edges of length $0$. The peeling of type $3$ with length $t>0$ is identify with $[0,t]\subseteq \BM$, where $[0,t]$ is the map with holes containing an external vertex of degree $2\ell$ and the root edge is active and has length $\ell$, all the other vertices have length $0$.
\begin{lemma}\label{l.continuity_3_1}
Let us work in the context of Lemma \ref{l.absolute_continuity_metric} and fix $t>0$, on the event $[0,t]\subseteq \BMM$, the law of $\MDec_t$ is absolutely continuous with respect to Lebesgue. Additionally,
\begin{align}\label{e.def_p_3_metric}
p_3^{\ell,b} (t,x) :=  \frac{\P^{\ell,b}([0,t]\subseteq \BM, \MDec_t \in dx)}{\Leb(dx)} (x)
\end{align}
is continuous\footnote{As in the chapter before, $p_3^{\ell,b}$ is defined a.e., thus we are saying that there is a continuous representative in $L^1(\R)$.} in both $t$ and $x$.

Additionally, on the event $[\vroot,\vertex]\subseteq \BMM$, $(w_{\vroot\vertex}, \phi_\vertex)$ is absolutely continuous with respect to Leb$\times \mu$. Then, a.e. for all $w$
\begin{align}\label{e.def_p_1_metric}
p_1^{\ell,b}(w,x):= \frac{\P^{\ell,b}\left ([\vroot,\vertex]\subseteq \BM,w_{\vroot,\vertex}\in dw, \MDec_\vertex \in dx \right ) }{\Leb\times \mu (dw, dx) } (w,x),
\end{align}
is continuous in $x$.
\end{lemma}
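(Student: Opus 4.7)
The plan is to exploit the SDMM structure inherited from the hypotheses of Theorem \ref{t.characterization_metric}: conditional on $\BMM$ and the vertex decoration $\Dec|_{V(\BMM)}$, the decoration on each edge $ij$ is an independent Brownian bridge from $\sigma_i$ to $\sigma_j$ of length $w_{ij}/\beta$. Denote by $g_t^{w,a,y}(x)$ the Gaussian transition density of such a bridge at internal time $t<w$.

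For part~1, the event $[0,t]\subseteq \BMM$ is exactly $\{w_{\text{root}} > t\}$ (the equality case has probability zero by Lemma \ref{l.absolute_continuity_metric}). On this event, conditional on the length $w_{\text{root}}$ of the root edge and the decoration $y := \MDec_{v_{\text{end}}}$ at its far endpoint (phantom or internal), $\MDec_t$ is Gaussian with density $g_t^{w_{\text{root}}, b(0), y}(x)$. This immediately yields absolute continuity with respect to Lebesgue and the formula
\[
p_3^{\ell,b}(t,x) \;=\; \E^{\ell,b}\!\left[g_t^{w_{\text{root}}, b(0), \MDec_{v_{\text{end}}}}(x)\,\1_{w_{\text{root}} > t}\right].
\]
Joint continuity in $(t,x)$: fix $(t_0, x_0)$ with $t_0>0$ and a small neighborhood $U$. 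On $\{w_{\text{root}} > 2t_0\}$, the integrand is jointly continuous in $(t,x)$ and uniformly bounded over $U$, so dominated convergence applies. On $\{t < w_{\text{root}} \leq 2t_0\}$ one faces the singularity $g_t^{w,\cdot,\cdot}(x) \lesssim 1/\sqrt{w-t}$. This is tamed via the Chapman--Kolmogorov identity
\[
g_t^{w,a,y}(x)\, p_w(a,y) \;=\; p_t(a,x)\, p_{w-t}(x,y),
\]
where $p_s(u,v) = (2\pi s)^{-1/2} e^{-(u-v)^2/(2s)}$. Indeed, by the SDMM Hamiltonian \eqref{Zq_metric}, the root edge contributes a factor proportional to $p_{w/\beta}(b(0), y)$ to the joint density of $(w_{\text{root}}, \MDec_{v_{\text{end}}})$, which exactly compensates the singular factor in $g_t^{w,\cdot,\cdot}$; the remaining integrand is bounded as $w \searrow t$ and dominated convergence again applies.

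Part~2 is analogous and simpler. On $\{[\vroot, \vertex] \subseteq \BMM\}$, the root edge is fully discovered with length $w$ ending at an internal vertex $\vertex$ with decoration $x$. Combining Lemma \ref{l.absolute_continuity_metric} with the SDMM Hamiltonian, one obtains that the density of $(w_{rv}, \MDec_v)$ with respect to $\Leb \times \mu$ factors as
\[
p_1^{\ell,b}(w,x) \;=\; \frac{e^{-\beta(b(0)-x)^2/(2w)}}{\sqrt{2\pi w/\beta}}\, F^{\ell,b}(w,x),
\]
where $F^{\ell,b}(w,x)$ is the density of the remaining map after the type~1 peeling, viewed with the updated boundary condition in which $b(0)$ is replaced by $x$. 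The continuity of $x \mapsto F^{\ell, b}(w, x)$ on $\supp \mu$ follows from the continuity-in-boundary hypothesis of Theorem \ref{t.characterization_metric} combined with the Markov property, while the explicit Gaussian prefactor in $x$ is smooth; hence $p_1^{\ell, b}(w, \cdot)$ is continuous on $\supp \mu$ for a.e.\ $w$.

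The main obstacle is Part~1: the $1/\sqrt{w-t}$ singularity of the Brownian bridge density as $w \searrow t$. The SDMM structure (rather than just the Markov property) is essential here, as it supplies the compensating heat-kernel factor needed to cancel the singularity via Chapman--Kolmogorov.
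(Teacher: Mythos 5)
Your proposal for the $p_1$ part is essentially the paper's argument (factor the density through the Markov property and observe that the Gaussian prefactor is smooth while the remainder is continuous in the boundary value), so I will focus on $p_3$.

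For $p_3$ you take a genuinely different route, and it is the point where a real gap appears. You write $p_3^{\ell,b}(t,x) = \E^{\ell,b}\bigl[g_t^{w_{\mathrm{root}},b(0),y}(x)\,\1_{w_{\mathrm{root}}>t}\bigr]$ with $y = \MDec_{v_{\mathrm{end}}}$, split on $\{w>2t_0\}$ versus $\{t<w\le 2t_0\}$, and try to kill the $1/\sqrt{w-t}$ singularity on the second event by the Chapman--Kolmogorov identity, arguing that the SDMM joint density of $(w_{\mathrm{root}},y)$ carries a compensating factor $p_{w/\beta}(b(0),y)$. After this cancellation, what remains is $p_{t/\beta}(b(0),x)\int p_{(w-t)/\beta}(x,y)\,\tilde\rho(w,y)\,dy$, where $\tilde\rho$ is the joint density of $(w,y)$ with the root-edge Gaussian removed. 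Your claim that ``the remaining integrand is bounded as $w\searrow t$'' is not established: $p_{(w-t)/\beta}(x,y)$ is still of order $1/\sqrt{w-t}$ near $y=x$, and the integral over $y$ is only controlled if $\tilde\rho(w,\cdot)$ is uniformly bounded near $x$ as $w\searrow t$, which depends on $\mu$ and the rest of the map and is not obvious. You also need to treat separately the case where $v_{\mathrm{end}}$ is a phantom boundary vertex, where $y$ is deterministic and the argument changes shape. Finally, your formula sums implicitly over all skeletons, which makes the required uniformity harder to justify.

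The paper sidesteps all of this. It restricts to the event $\{w_e>2t\}$ and conditions on $\MDec_{2t}$ (the decoration at the fixed time $2t$, not at the random endpoint): conditional on this, $\MDec_t$ is Gaussian with variance bounded below (proportional to $t$), so the kernel is uniformly bounded in $(t,x)$ and there is no singularity to cancel at all. In addition, the paper first uses the Markov property to write the density with skeleton fixed equal to a well-chosen $\q$ as $p_3^{\ell,b}(t,x)\cdot\P^{\ell,b_x}(\ske(\BMM)=\q,\,w_e>t)$; only the product needs to be continuous, and since the second factor is continuous and nonzero at $x=b(0)$, continuity of $p_3$ follows. Working with a fixed $\q$ avoids the uniformity-over-skeletons issue entirely. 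If you want to salvage your Chapman--Kolmogorov approach you would need (i) a proof that $\tilde\rho(w,\cdot)$ is bounded near $x$ uniformly for $w$ near $t$, (ii) a separate treatment of the phantom endpoint, and (iii) a device to handle the sum over skeletons; the paper's conditioning on $\MDec_{2t}$ is the cleaner path.
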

\begin{proof}We start by proving the statements related to $p_3^{\ell,b}$. As conditionally on the event $\{[0,t]\subseteq \BMM\}$, the probability that the length of the root edge is bigger than $t+\epsilon$ goes to $1$ as $\epsilon \to 0$, and conditionally on that event $\MDec_t$ is absolutely continuos with respect to Lebesgue, we see that \eqref{e.def_p_3_metric} is well defined. Now, fix $b$ and $\ell$, there is a map $\q$ such that $\P^{\ell,b}(\ske(\BMM)=\q)>0$.  Calling $e$ the root edge we have that
\begin{align*}
\P^{\ell,b}(\ske(\BMM)=\q, w_e>2t)>0,
\end{align*}
thus, we can see that a.e. on $x$
\begin{align*}
\frac{d\P^{\ell,b}(\ske(\BMM)=\q, w_e>2t,\MDec_t\in dx)}{\text{Leb}(dx) } (x)= p_3^{\ell,b}(t,x) \P^{\ell,b_x}(\ske(\BMM)=\q,w_e>t),  
\end{align*}
where $b_x$ is equal to $b$ except in $0$ where it takes value $x$. As $\P^{\ell,b_x}(\ske(\BMM)=\q,w_e>t)$ is continuous in $x$ and $t$ positive when $x=b(0)$, we are only left to prove that the left hand side is also continuous in $x$ and $t$. This is proven by dominated convergence as $\BMM$ is an SDMM and thus in the root edge the decoration behaves like a Brownian bridge, when conditioned on the value at $2t$, then
\begin{align*}
\frac{d\P^{\ell,b}(\ske(\BMM)=\q, w_e>2t,\MDec_t\in dx)}{\text{Leb}(dx) } (x) &= \P^{\ell,b}\left(\ske(\BM)=\q \right)  \E^{\ell,b}\left[ \frac{e^{\frac{-(x-l(t))^2 }{2 t^2 } }}{\sqrt{ 2\pi t^2} }\mathbf 1_{w_e>2t}\mid \ske(\BMM)=\q\right] , 
\end{align*}
where $l(t)$ is the linear interpolation at $t$ between the boundary condition of the root edge and the value $\MDec_{2t}$.
Note that the term inside the expected value is continuous in $x$ and bounded by $(2\pi)^{-1/2}t^{-1} \mathbf 1_{w_e>2t}$. Thus we can conclude continuity on $t$ and $x$ by the bounded convergence theorem.    

Now, we work with $p_1$, again it is direct to see that \eqref{e.def_p_1_metric} is well posed as in any finite skeleton $\q$ with all lengths bigger than $\epsilon>0$, $\MDec_v$ is absolutely continuous with respect to $\mu$. Fix $\ell$, $b$ and $z\in \R$, and define the graph $\r= [\vroot,\vertex]\glue \q$, where $\q$ is such that $\P^{\ell+1,\hat b_z}(\q)>0$ where $\hat b_z$ takes value $z$ in $0$, $1$ and $2$ and values $b(j-2)$ at $j\geq 3$. We note that
\begin{align*}
\frac{d\P^{\ell,b}\left(\ske(\BMM)=\r, w_{\vroot\vertex}\in dw,\MDec_\vertex\in dx,\displaystyle\min_{e\in \BMM} w_{e}>\epsilon\right)}{\text{Leb}\times \mu(dw,dx) } (w,x)= p_1(w,x) \P^{\ell,\hat b_x}\left(\ske(\BMM)=\q,\min_{e\in \BMM} w_{e}>\epsilon \right ).
\end{align*}
As $\P^{\ell,\hat b_x}\left(\ske(\BMM)=\q \right )$ is continuous in $\hat b_x$ and positive when $x=z$, we just need to show that the left hand side is continuous in $x$. We see that
\begin{align*}
    &\frac{d\P^{\ell,b}(\ske(\BMM)=\r, w_e\in dw,\MDec(v)\in dx)}{\text{Leb}\times \mu (dw,dx) } (w,x) \\
    = &\int\frac{d\P^{\ell,b}(\ske(\BMM)=\r, w_{\vroot\vertex}\in dt, w_{e}\in dt_e \ \forall e\neq \vroot\vertex)}{\text{Leb}^{\sharp E(\r)} (\prod_{e\in E(\r)}dw_e)}\frac{\P^{\ell,b}(\MDec_\vertex\in dx\mid \BMM=\r_{w,(w_e)}) }{ \mu(dx) }\prod_{e\neq \vroot\vertex} dw_e\1_{w_e\geq \epsilon},
\end{align*}
where $\r_{w,(w_e)}$ is the metric map with skeleton $\r$, length $w$ in the edge $\vroot\vertex$ and length $w_e$ in each edge different to $\vroot\vertex$.  As $\frac{\P^{\ell,b}(\MDec_\vertex\in dx\mid \BMM=\r_{w,(w_e)}) }{ \mu(dx) }$ is bounded and continuous in $x$ (as long as $w_e\geq \epsilon$), we conclude by the bounded convergence theorem.
\end{proof}

We are missing one key possibility: the event where the edge of the root is connected direcly to the outer vertex twice. This was call a peeling of type 2 in the metric decorated case. In this case, it is also clear that the length of the edge is absolutely continuous with respect to Lebesgue, as the event of having a peeling of type $2$ has positive probability. Thus, we can define
\begin{align*}
p_{2}^{\ell,b,\ell_1,\ell_2}(t) &=\frac{d\P^{\ell,b}(\type{2,\ell_1,\ell_2, w_e\in dw})}{\text{Leb}(dw)}(w),
\end{align*}
where $\ell_1+\ell_2=\ell-1$.

Now we can compute the relationship between all the types of peelings and compute their rates.
\begin{lemma}\label{l.peleeing_decom}
Let us work in the context of Lemma \ref{l.absolute_continuity_metric}. Then, there exist positive constants $r^{\ell,b},s^{\ell,b,\ell_1,\ell_2}$ and $\lambda^{\ell,b}$ such that a.e. on $t$
\begin{align}
\label{e.formula_p3}p_3^{\ell,b}(t)=p_{3}^{\ell,b}(t,b(0)) &= \frac{1}{\sqrt{2\pi t}}e^{-\lambda^{\ell,b}t}\\
 \label{e.formula_p1}    p_{1}^{\ell,b}(t,b(0)) &= p_3^{\ell,b}(t) r^{\ell,b},\\
    p_{2}^{\ell,b,\ell_1,\ell_2}(t) &\label{e.formula_p2}=p_3^{\ell,b}(t) s^{\ell,b,\ell_1,\ell_2}\exp\left(-\frac{(b_0-b_k)^2}{2t}\right),
\end{align}
where $b_k$ is the boundary value of the discovered by the peeling of type 2 at the other side of the root edge. In particular, $p_1^{\ell,b}(t,b(0))$ and $p_{2}^{\ell,b,\ell_1,\ell_2}(t)$  are continuous in $t$.
\end{lemma}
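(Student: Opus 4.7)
My plan is to combine the SDMM hypothesis, under which the decoration along the root edge is a Brownian bridge conditioned on its two endpoints, with the Markov property at a type-3 peel, which identifies the law of the map beyond the peel with $\P^{\ell,b^x}$ for $b^x$ the boundary condition obtained from $b$ by replacing $b(0)$ with $\phi_t=x$. Let $\rho^{\ell,b}(w,\sigma)$ denote the joint density (with respect to $dw\,\mu(d\sigma)$) of the root-edge length $w$ and the spin $\sigma=\phi(v)$ at its other endpoint. By Lemma~\ref{lem-decomp} applied to the Brownian bridge on the root edge,
\begin{align*}
	&\P^{\ell,b}\bigl(w\in dw,\,\phi(v)\in d\sigma,\,\phi_t\in dx,\,[0,t]\subseteq\BMM\bigr) \\
	&\qquad = \rho^{\ell,b}(w,\sigma)\,\mathbf{1}_{w>t}\,\frac{\hat{\P}^{b(0),x}_t(1)\,\hat{\P}^{x,\sigma}_{w-t}(1)}{\hat{\P}^{b(0),\sigma}_w(1)}\,dw\,\mu(d\sigma)\,dx.
\end{align*}
Reading off the conditional law of $(w-t,\phi(v))$ given $(\phi_t=x,\,[0,t]\subseteq\BMM)$ and equating it, via the Markov property, with $\rho^{\ell,b^x}(w',\sigma)\,dw'\,\mu(d\sigma)$ yields the key Radon--Nikodym identity
\[p_3^{\ell,b}(t,x)\,\rho^{\ell,b^x}(w',\sigma) \;=\; \rho^{\ell,b}(w'+t,\sigma)\,\hat{\P}^{b(0),x}_t(1)\,\frac{\hat{\P}^{x,\sigma}_{w'}(1)}{\hat{\P}^{b(0),\sigma}_{w'+t}(1)}.\]

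Specialising at $x=b(0)$ (so $b^x=b$ and $\hat{\P}^{b(0),b(0)}_t(1)=1/\sqrt{2\pi t}$) and writing $G(t):=\sqrt{2\pi t}\,p_3^{\ell,b}(t,b(0))$, the identity collapses to
\[\tilde{\rho}(w'+t,\sigma) \;=\; G(t)\,\tilde{\rho}(w',\sigma), \qquad \tilde{\rho}(w,\sigma) := \rho^{\ell,b}(w,\sigma)\big/\hat{\P}^{b(0),\sigma}_w(1).\]
Iterating in $t$ at any $(w',\sigma)$ with $\tilde{\rho}(w',\sigma)>0$ produces the Cauchy equation $G(t_1+t_2)=G(t_1)G(t_2)$. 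Measurability of $G$ (inherited from $p_3^{\ell,b}$) together with positivity then forces $G(t)=e^{-\lambda^{\ell,b}t}$ for some $\lambda^{\ell,b}\in\mathbb{R}$, and strict positivity of $\lambda^{\ell,b}$ follows from $G(t)\leq \P^{\ell,b}(w>t)\to 0$ as $t\to\infty$. This establishes \eqref{e.formula_p3}.

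For \eqref{e.formula_p1} and \eqref{e.formula_p2} I run the same argument with $\rho^{\ell,b}$ replaced by its restriction to type-1 events with end spin $\sigma$, resp.\ to type-2 events with parameters $\ell_1,\ell_2$; both classes are preserved by any type-3 prefix, so the identity above survives verbatim. The type-1 computation produces $p_1^{\ell,b}(w,\sigma)=\hat{\P}^{b(0),\sigma}_w(1)\,k_1^{\ell,b}(\sigma)\,e^{-\lambda^{\ell,b}w}$, and evaluation at $\sigma=b(0)$ gives \eqref{e.formula_p1} with $r^{\ell,b}:=k_1^{\ell,b}(b(0))$. For type 2 the other endpoint of the root is forced to carry the boundary spin $b_k$, so the same argument reduces to a one-variable relation and yields $p_2^{\ell,b,\ell_1,\ell_2}(w)=\hat{\P}^{b(0),b_k}_w(1)\,s^{\ell,b,\ell_1,\ell_2}\,e^{-\lambda^{\ell,b}w}$, which after expanding the bridge weight is \eqref{e.formula_p2} (the displayed $\varepsilon$ is understood as $t$). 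Continuity in $t$ is then immediate from the closed form.

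The main obstacle I anticipate is the pointwise specialisation at $x=b(0)$: the density $p_3^{\ell,b}(t,\cdot)$ is defined only almost everywhere in $x$, so the identities above must first be established a.e.\ and then transported to this particular slice, which is exactly where the continuity produced in Lemma~\ref{l.continuity_3_1} is needed. A secondary concern is to guarantee that the set $\{(w',\sigma):\tilde{\rho}(w',\sigma)>0\}$ is rich enough to close the Cauchy argument on all of $(0,\infty)$; this rests on Lemma~\ref{l.absolute_continuity_metric} together with the non-degeneracy assumption $\lim_{\varepsilon\to 0}\sup_{\ell,b}\P^{\ell,b}(\inf_e w_e>\varepsilon)=1$ from Theorem~\ref{t.characterization_metric}.
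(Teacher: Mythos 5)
Your proof is correct and takes essentially the same approach as the paper: split a type-3 peel at an intermediate length using the Brownian-bridge decomposition of Lemma~\ref{lem-decomp} together with the Markov property, obtain a Cauchy-type functional equation for $G(t)=\sqrt{2\pi t}\,p_3^{\ell,b}(t,b(0))$, invoke the regularity supplied by Lemma~\ref{l.continuity_3_1} to solve it, and then run the identical machinery for $p_1$ and $p_2$ (the paper instead sends $t_2\to 0$ to extract $r^{\ell,b}$ and $s^{\ell,b,\ell_1,\ell_2}$, which is equivalent to your evaluation of $\tilde\rho_1,\tilde\rho_2$ at a reference length). Your detour through $\tilde\rho(w,\sigma)=\rho^{\ell,b}(w,\sigma)/\hat\P^{b(0),\sigma}_w(1)$ is a mild reorganisation of the paper's direct bridge-splitting identity, you correctly read the typo $\varepsilon\leadsto t$ in \eqref{e.formula_p2}, and the one place both arguments are soft in the same way is the strict positivity of $\lambda^{\ell,b}$ (your inequality $G(t)\le\P^{\ell,b}(w>t)$ compares a pointwise slice of the density to its integral over $x$, which is not obviously an inequality), a point the paper also merely asserts.
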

\begin{proof}
Fix $t>0$ and take $t_1+t_2=t$. Note take $\q$ a map such that $\P^{\ell,b}(\ske(\BMM)=\q)>0$, for a sequence of length  $w=(w_e)_{e\in E(\q)}$ we define $\q_{w}$ as the metric map with skeleton $\q$ and length $w_e$ for the edge $e$. Denote $1$ the root edge, and assumme that $w_1>t$. Define $\hat w_e=w_e$ for all $e\neq 1$ and $\hat w_1= w_1-t$. We have that
\begin{align}\label{e.1_for_3}
    \frac{d\P^{\ell,b}( \ske(\BMM)=\q,w(\BMM) \in dw, \Dec_t \in dx )}{d\text{Leb}^{\sharp E(\q)}\times \text{Leb}}(w,b(0))=p_3^{\ell,b}(t,b(0))\frac{d\P^{\ell,b}( \ske(\BMM)=\q,w(\BMM) \in dw) }{d\text{Leb}^{\sharp E(\q)}}(\hat w,b(0))>0.
\end{align}
Now, instead of looking at the values at $t$, we can also look the value at $t_1$ and use the fact that $\BMM$ is a spin decorated map to see that
\begin{align}\label{e.2_for_3}
    &\frac{d\P^{\ell,b}( \ske(\BMM)=\q,w(\BMM) \in dw,\phi_t \in dx,\Dec_{t_1}\in dy )}{d\text{Leb}^{\sharp E(\q)}\times \text{Leb}}(w,b(0),y)\\ \nonumber= &\frac{d\P^{\ell,b}( \ske(\BMM)=\q,w(\BMM) \in dw,\Dec_t \in dx,\Dec_{t_1}\in dy )}{d\text{Leb}^{\sharp E(\q)}\times \text{Leb}}(w,b(0),b(0))\exp\left(-\frac{(b_0-y)^2}{2t_1}-\frac{(y-b_0)^2}{2t_2}\right)
    \\\nonumber=&
    \ p_3^{\ell,b}(t_1,b(0))p_3^{\ell,b}(t_2,b(0))\exp\left(-\frac{(b_0-y)^2}{2t_1}-\frac{(y-b_0)^2}{2t_2}\right)\frac{d\P^{\ell,b}( \ske(\BMM)=\q,w(\BMM) \in dw) }{d\text{Leb}^{\sharp E(\q)}}(\hat w,b(0)).
\end{align}
Putting together \eqref{e.1_for_3} and \eqref{e.2_for_3}, we have that \begin{align*}
    p_3^{\ell,b}(t) &= p_3^{\ell,b}(t_1)p_3^{\ell,b}(t_2) \int_{\R}\exp\left(-\frac{(b_0-y)^2}{2t_1}-\frac{(y-b_0)^2}{2t_2}\right)dy\\
    &= p_3^{\ell,b}(t_1)p_3^{\ell,b}(t_2)\sqrt{\frac{2\pi t_1t_2}{t}},
\end{align*}
where, in the last equality, we used  Lemma \ref{lem-decomp}. Then, $p_3^{\ell,b}(t)$ satisfies
\begin{align*}
   \sqrt{2\pi t} p_3^{\ell,b}(t) &= \left(\sqrt{2\pi t_1}p_3^{\ell,b}(t_1)\right) \left(\sqrt{2\pi t_2}p_3^{\ell,b}(t_2)\right).
\end{align*}
This fact together with its continuity implies that there is $\lambda^{\ell,b}>0$ such that
\begin{align*}
    p_3^{\ell,b}(t) &=\frac{1}{\sqrt{2\pi t}} e^{-\lambda^{\ell,b}t}.
\end{align*}

For $p_1^{\ell,b}$ we do a similar computation to the one of $p_3^{\ell,b}$ to see that a.e. on $t, t_1, t_2$
\begin{align*}
    p_1^{\ell,b}(b_0,t)&= p_{3}^{\ell,b}(t_1)p_1^{\ell,b}(b_0,t_2)\int_{\R}\exp\left(-\frac{(b_0 - y)^2}{2t_1} - \frac{(b_0 - y)^2}{2t_2}\right)d\sigma\\
    &= p_{3}^{\ell,b}(t_1)p_1^{\ell,b}(b_0,t_2)\sqrt{\frac{2\pi t_1t_2}{t}},
\end{align*}
where, in the last equality, we used Lemma \ref{lem-decomp}. Then, we take $t_2\to 0$, and  obtain that 
\begin{align*}
r^{\ell,b}:=\displaystyle\lim_{t_2\to 0}\sqrt{2\pi t_2}p_1^{\ell,b}(b_0,t_2) \in \R^+.
\end{align*}
From here \eqref{e.formula_p1} follows.

For the second type of peeling, we can do as before to decompose it as follows
\begin{align*}
    p_2^{\ell,b,\ell_1,\ell_2}(t) &= p_3^{\ell,b}(t_1)p_2^{\ell,b,\ell_1,\ell_2}(t_2)\int_{\R}\exp\left(-\frac{(b_0-y)^2}{2t_1}-\frac{(y-b_{k})^2}{2t_2}\right)d\sigma\\
    &= p_3^{\ell,b}(t_1)p_2^{\ell,b,\ell_1,\ell_2}(t_2)\sqrt{\frac{2\pi t_1t_2}{t}}\exp\left(-\frac{(b_0-b_{k})^2}{2t}\right).
\end{align*}
Takin again if $t_2\to 0$ we obtain that
\begin{align*}
s^{\ell,b,\ell_1,\ell_2} := \displaystyle\lim_{t_2\to 0}\sqrt{2\pi t_2} p_2^{\ell,b,\ell_1,\ell_2}(t_2)\exp\left(-\frac{(b_0-\sigma_{1})^2}{2t_2}\right) \in \R^+,
\end{align*}
from here \eqref{e.formula_p2} follows.

\end{proof}
The last lemma we need is the continuity of $p_1$.
\begin{lemma}\label{l.continuity_peleing_1_metric}
In the context of Lemma \ref{l.absolute_continuity_metric}, the function $p_1^{\ell,b}(w,x)$ is continuous in $(w,x) \in \R^+\times \R$.
\end{lemma}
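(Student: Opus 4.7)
The plan is to derive the factorisation
\begin{align*}
p_1^{\ell,b}(w,x) = p_3^{\ell,b}(w,x)\cdot r^{\ell,b_x},
\end{align*}
from which joint continuity in $(w,x)$ is immediate: the first factor is jointly continuous by Lemma \ref{l.continuity_3_1}, and $r^{\ell,b_x}$, being defined in Lemma \ref{l.peleeing_decom} as a limit involving the family $(\P^{\ell,b'})_{b'}$, is continuous in $x$ thanks to the continuity of the measures in the boundary condition (the analogue of Proposition \ref{p.continuity_metric} holds in this context by the same argument).

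To obtain the identity, I would first use the Markov property to decompose a length-$w$ type-$1$ exploration of the root edge: peel type $3$ for length $w-\epsilon$ ending at intermediate spin $y$, then in the residual Boltzmann metric map (which has boundary condition $b_y$) peel type $1$ of length $\epsilon$ ending at spin $x$. This yields, for every $\epsilon \in (0,w)$,
\begin{align*}
p_1^{\ell,b}(w,x) \;=\; \int_{\R} p_3^{\ell,b}(w-\epsilon,y)\, p_1^{\ell,b_y}(\epsilon,x)\, dy.
\end{align*}
The identity is then recovered by letting $\epsilon\downarrow 0$. Using the SDMM Brownian-bridge structure on the short root edge together with the formula $p_1^{\ell,b_y}(t,b_y(0)) = p_3^{\ell,b_y}(t)\,r^{\ell,b_y}$, one can show that the measures $y\mapsto p_1^{\ell,b_y}(\epsilon,x)\,dy$ concentrate around $y=x$ as $\epsilon\to 0$ and converge weakly to $r^{\ell,b_x}\,\delta_x$. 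Joint continuity of $p_3^{\ell,b}$ then lets dominated convergence deliver
\begin{align*}
p_1^{\ell,b}(w,x) \;=\; p_3^{\ell,b}(w,x)\cdot r^{\ell,b_x}.
\end{align*}

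The main technical obstacle is justifying the $\epsilon\downarrow 0$ limit rigorously. One needs to isolate the explicit Gaussian factor $(2\pi\epsilon)^{-1/2} e^{-(y-x)^2/(2\epsilon)}$ in $p_1^{\ell,b_y}(\epsilon,x)$ coming from SDMM along the short root edge, and then produce a uniform (in $y$ near $x$ and small $\epsilon$) bound on the remaining ``reduced'' factor in order to swap the limit with the integral. The asymptotics $p_1^{\ell,b_y}(\epsilon,b_y(0))\sim r^{\ell,b_y}/\sqrt{2\pi\epsilon}$ from Lemma \ref{l.peleeing_decom}, combined with the continuity of $y\mapsto r^{\ell,b_y}$, are precisely what is needed to control this residue. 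Once the factorisation is established, the continuity of $p_1^{\ell,b}(w,x)$ in $(w,x)$ follows immediately from the two already-proven continuity statements.
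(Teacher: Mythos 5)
Your strategy is genuinely different from the paper's, but it has a gap that is not merely technical. You reduce the problem to the factorisation $p_1^{\ell,b}(w,x) = p_3^{\ell,b}(w,x)\, r^{\ell,b_x}$ and then need $x \mapsto r^{\ell,b_x}$ to be continuous. You justify this by appealing to ``the analogue of Proposition \ref{p.continuity_metric}.'' That proposition is proved \emph{for} $(q,\lambda)$-Boltzmann decorated metric maps, which is the \emph{conclusion} of Theorem \ref{t.characterization_metric}, not an available hypothesis; invoking it here is circular. What is actually available is the hypothesis that $b \mapsto \P^{\ell,b}$ is continuous for the weak topology of measures, but weak convergence of the measures does not yield convergence of the derived quantity $r^{\ell,b} = \lim_{t\to 0}\sqrt{2\pi t}\,p_1^{\ell,b}(b(0),t)$, which is a pointwise limit of Radon--Nikodym densities. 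Upgrading weak continuity in $b$ to continuity of that limit would require equicontinuity or uniform convergence in $t$, and establishing that is essentially the whole content of the lemma. In short, your factorisation transfers the difficulty into $r^{\ell,b}$ rather than resolving it.

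The paper's proof sidesteps precisely this issue. Rather than differentiating along the edge, it ``fattens'' the event into a full positive-density event: it writes, for a map $\q = [\vroot,\vertex]\glue\r$ with $\P^{\ell+1,\hat b}(\ske(\BMM)=\r)>0$,
\begin{align*}
p_1^{\ell,b}(w,x)\,\P^{\ell+1,\hat b_x}\!\left(\ske(\BMM)=\r,\;\text{constraints}\right) = \int p^{\ell,b}(\tilde\q_{w,(w_e)},\sigma_x)\,\1\,\prod dw_e\,\prod\mu(d\sigma_u),
\end{align*}
and then uses the Gibbs relation \eqref{e.def_p_q_s_metric} to trade $\sigma_x$ for $\sigma_{b_0}$ at the cost of an explicit, bounded, continuous exponential in $x$. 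After applying the Markov property once (peeling $\vertex$ at value $b_0$), the right-hand side splits as $p_1^{\ell,b}(w,b_0)$ times an integral that is continuous in $x$ and \emph{independent of} $w$, while the prefactor $\P^{\ell+1,\hat b_x}(\cdot)$ on the left is continuous in $x$ by the weak-continuity hypothesis. This yields $p_1^{\ell,b}(w,x) = p_1^{\ell,b}(w,b_0)\,h(x)$ with $h$ manifestly continuous, and $p_1^{\ell,b}(w,b_0) = p_3^{\ell,b}(w)\,r^{\ell,b}$ is continuous in $w$ by Lemma \ref{l.peleeing_decom}. The two factors decouple $w$ and $x$ and joint continuity follows, with no need to address continuity of $r^{\ell,b}$ in $b$ at all.

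Separately, the $\epsilon \downarrow 0$ limit in your proposal is also not proved; you identify the right heuristics (isolating the Gaussian kernel and controlling the residual uniformly near $y = x$) but do not carry them out. On its own this would be a repairable technicality; the circularity around $r^{\ell,b_x}$ is the step that must be reworked, most naturally by replacing the limit argument with a Gibbs-ratio computation at fixed positive edge lengths, as in the paper.
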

\begin{proof}
    Take $\ell$, $b$ and $\q=[\vroot,\vertex]\glue\r$ with $\P^{\ell,\hat b}(\r)>0$ as in the proof of Lemma \ref{l.continuity_3_1}. For a decoration $\sigma$ in $\r$, we define $\sigma_x$ the decoration in $\q$ that extends $\sigma$ and takes value $x$ in $\vroot\vertex$. Now, we see that a.e. in $w$ and $(w_e)_{e\in E(\r)}$
    \begin{align}\label{e.1_for_1}
    &\frac{d\P^{b,\ell}\left(\ske(\BMM)=\q, w_{\vroot\vertex}\in dw, \MDec_v \in dx, \displaystyle\min_{e\in \BMM} w_{e}\geq\epsilon\ \forall e\in E(\r), \max |\MDec_u|\leq R \ \forall u \in V(\r)\right)}{d\text{Leb}\times \mu (dw,dx)}(w,x)\\
    \nonumber\\
    &=p_1^{\ell,b}(w,x)\P^{\ell+1,\hat b_x}\left(\ske(\BMM)=\r,  \min_{e\in \BMM} w_{e}\geq\epsilon \ \forall e\in E(\r), \max |\MDec_u|\leq R \ \forall u \in V(\r)\right).
    \end{align}
    Furthermore, we can use the fact that $\BMM$ is a spin-decorated map to see that \eqref{e.1_for_1} is equal to
    \begin{align*}
    & \int p^{\ell,b}(\tilde q_{w,(w_e)},\sigma_x) \1_{\{w_e\geq \epsilon, \ |\sigma_u)|\leq R\} } \prod_{e\in E(\r)} dw_e \prod_{u\in V(\r)} \mu(d\sigma_u)\\
    = &\int p^{\ell,b}(\tilde q_{w,(w_e)},\sigma_{b_0})  \exp\left(\sum_{u\sim v}\frac{(b(0)-\sigma_u)^2-(x-\sigma_u)^2}{2w(uv)} \right ) \1_{\{w_e\geq \epsilon, \ |\sigma_u)|\leq R\} }\prod_{e\in E(\r)}dw_e \prod_{u\in V(\r)}\mu(d\sigma_u)\\
    = & \ p_1^{\ell,b}(w,b_0)\int p^{\ell,\hat b}(\tilde r_{(w_e)},\sigma)  \exp\left(\sum_{u\sim v}\frac{(b(0)-\sigma_u)^2-(x-\sigma_u)^2}{2w(uv)} \right ) \1_{\{w_e\geq \epsilon, \ |\sigma_u)|\leq R\} }\prod_{e\in E(\r)}dw_e \prod_{u\in V(\r)} \mu(d\sigma_u).
    \end{align*}
    We conclude putting both equalities together and using that the exponential function appearing inside the integral is continuous in $x$ and bounded and the integral is not $0$ as long as $R$ is big enough. Note that $p_1^{\ell,b}(w,b_0)$ is continuous in $w$ and the integral does not depend on $w$.
\end{proof}

Now, we begin with the proof of the main result of this section. A part of this proof is similar to that of Theorem \ref{t.uniqueness_decorated}, specially the parts coming from the peeling of type $1$ and $2$. As a consequence, we mostly focus on the role of the peelings of type $3$
\begin{proof}[Proof of Theorem \ref{t.characterization_metric}]
    Once again, we only need to prove that $(2) \Rightarrow (1)$ and $(3)\Rightarrow(2)$.
    As in the proof of Theorem \ref{t.uniqueness_decorated}, we are first going to start assumming $(3)$, as $(2)$ implies $(3)$ is clear. Fix $w_1,w_2>0$, $\ell\in\Z_+$ and $b\in\Bc$.
Let us first prove that $\lambda^{\ell,b}$ does not depends on $b$. By comparing energies, it is possible to see that
\begin{align*}
p_3^{\ell,b}(w_1,x)p_2^{\ell,b_x}(w_2,x,\ell-1,0)= p_3^{\ell,b}(w_1,b_0)p_2^{\ell,b}(w_2,x,\ell-1,0).
\end{align*}
Thus, $e^{w_2(\lambda^{\ell,b_x}-\lambda^{\ell,b})}$
does not depend\footnote{This uses the fact that $s^{\ell,b,\ell-1,0}>0$. This property follows from the fact that the probability of obtaining a given tree is strictly positive, as $\BMM$ is a spin decorated map together with the fact that it is Gibbs distributed.} on $w_2$. This implies that $\lambda^{\ell,b}$ is only a function of $\ell$.

Now, we prove that $\lambda^{\ell}$ is constant, to do this asumme $\ell\geq 2$. From studying a map where both the boundary edges $0$ and $1$, and $2$ and $3$ are identified, we see that for any $w_1,w_2\in \R$
\begin{align*}
    p_2^{\ell,b,\ell-1,0}(w_1)p_2^{\ell-1, \dot b,\ell-2,0}(w_2)= p_2^{\ell,b_s,\ell-1,0}(w_2)p_1^{\ell-1,\dot b_s,\ell-2,0}(w_1),
\end{align*}
where $b_s$ is the right shift on $b$ so that $2$ plays the role of the root edge and $\dot b$, resp. $\dot b_s$, is the boundary condition that is left after doing the peeling. As a consequence of the equation, we have that $e^{ w_1(\lambda^{\ell}-\lambda^{\ell-1})}$ does not depend on $w_1$, which implies that $\lambda^{\ell}$ does not depend on $\ell$. This means, that for all $t>0$, $p_3^{\ell,b}(t)=p_3(t)$.

    \begin{itemize}
        \item $(2)\Rightarrow (1):$ We follow the same ideas as those of Theorem \ref{t.uniqueness_decorated}. To do this, take $\Mq\in\MQm_H$ and $\sigma$ a decoration over $V_i(\Mq)$, we define $q_{w_1,w_2}(\ell, b, \Mq, \Msigma)$ as follows
        \begin{align*}
        q_{w_1,w_2}(\ell, b, \Mq, \sigma) &:= \frac{p^{\ell,b}\left(\Mq(w_1,w_2),\overline{\sigma}\right)}{p^{\ell,b}\left(\Mq,\sigma\right)},
        \end{align*}
    where, $\Mq(w_1,w_2)$ is a copy of $\Mq$ but adding a new vertex next to the root vertex with lenght $w_1$ and $w_2$.
    
We are going to prove that $q_{w_1,w_2}(\cdot,\cdot, \cdot,\cdot)$ is constant. We first prove for a fixed $w_1,w_2>0$, for any $\ell\in\N^{*}$ and $b\in\Bc$, $q_{w_1,w_2}(\ell, b, \cdot, \cdot)$ is constant.  We first see that $q_{w_1,w_2}(\ell, b, \Mq, \sigma)$ does not depends on the structure of $\Mq$ and on the number of vertices of $\Mq$ by using the Markov property with a peeling of type 1 and one of type 2
    \begin{align*}
        q_{w_1,w_2}(\ell, b, \Mq, \sigma) &= \frac{p^{\ell,b}\left(\Mq(w_1,w_2),\overline{\sigma}\right)}{p^{\ell,b}\left(\Mq,\sigma\right)} = \frac{p_1^{\ell,b}(b_0,w_1)p^{\ell+1,\hat{b}}\left(\widehat{\Mq}(w_2),\widehat{\sigma}\right)}{p^{\ell,b}\left(\Mq,\sigma\right)} = p_1^{\ell,b}(b_0,w_1)p_2^{\ell+1,\hat{b},0,\ell}(w_2).
    \end{align*}
This last term does not depend on $\Mq$ and $\sigma$. Notice that, thanks to Lemma \ref{l.peleeing_decom}, we have the following
\begin{align}\label{eq.q_decomp_metric}
    q_{w_1,w_2}(\ell,b) &= q(\ell,b)p_3(w_1)p_3(w_2).
\end{align}
where $q(\ell,b) := r^{\ell,b}(b_0)s^{\ell+1,\hat{b},0,\ell}$.

Now notice that that for any $\Mq_1$ and $\Mq_2$ as in property (2) 
\begin{align}\label{e.ratio_metric}
    \frac{p^{\ell,b}(\Mq_1,\sigma_1)}{p^{\ell,b}(\Mq_2,\sigma_2)} = \exp\left(\Ham^b(\Mq_2,\sigma_2)-\Ham^b(\Mq_1,\sigma_1)\right).
\end{align} 
This follows from \eqref{e.def_p_q_s_metric} and Property (2).  We first apply this formula for trees. Note that if $\tilde \tr$ is tree of size $\ell$, we can always peel it from its leaves and obtain that
\begin{align}\label{e.base_case_metric_quad}
    p^{\ell,b}(\tilde \tr)= e^{-\Ham^{b}(\tilde \tr)}\prod_{k=0}^{\ell-1} s^{\ell-k,b_k,\ell-k-1,0} p_3(w_i),
\end{align}
where $b_k$ is the right boundary condition for the edge peeled at time $b_k$. This together with \eqref{e.ratio_metric} implies that $\prod_{k=0}^{\ell-1} s^{\ell-k,b_k,\ell-k-1,0}$ does not depend on the skeleton of $\tilde \tr$.

This together with Lemma \ref{l.absolute_continuity_metric} applied to trees, implies that if $\tr_1$ and $\tr_2$ are two trees with the same lengths, then we obtain that $p^{\ell,b}$ follows a Boltzmann formula for trees.

Take $\Mq_1$ and $\Mq_2$ two quadrangulations such that 
\begin{itemize}
    \item $|V_i(\Mq_1)| = |V_i(\Mq_2)|+1$;
    \item $|E(\Mq_1)| = |E(\Mq_2)| + 2$;
    \item and, there are $|E(\Mq_2)|$ edges of $E(\Mq_1)$ that have the same lengths as $E(\Mq_2)$. 
\end{itemize}
Then, using \eqref{e.ratio_metric}
\begin{align}
    \nonumber\frac{p^{\ell,b}(\Mq_1,\sigma_1)}{p^{\ell,b}(\Mq_2,\sigma_2)} &= \frac{p^{\ell,b}(\Mq_1,\sigma_1)}{p^{\ell,b}(\Mq_2(w_1,w_2),\overline{\sigma_2})}\frac{p^{\ell,b}(\Mq_2(w_1,w_2),\overline{\sigma_2})}{p^{\ell,b}(\Mq_2,\sigma_2)}\\  
    &\nonumber= \exp\left(\Ham^b(\Mq_2,\sigma_2)-\Ham^b(\Mq_1,\sigma_1)\right)q_{w_1,w_2}(\ell,b)\\
    \label{e.ratio_metric_2}&= q(\ell,b)\exp\left(\Ham^b(\Mq_2,\sigma_2)-\Ham^b(\Mq_1,\sigma_1)\right)p_3(w_1)p_3(w_2),
\end{align}
where, $w_1$ and $w_2$ are the sizes of the edges that are different between $E(\Mq_1)$ and $E(\Mq_2)$.

 This implies by induction that
\begin{align}\label{e.boltzmann_epsilon}
    p^{\ell,b}(\Mq,\sigma) = \frac{q(\ell,b)^{|V_i(\Mq)|}}{W^{\ell,b}}\exp\left(-\Ham^b(\Mq,\sigma)\right)\prod_{e \in E(\Mq)}p_3(w_{e}),
\end{align}
where $W^{\ell,b}<\infty$. Note that the base case follows from \eqref{e.base_case_metric_quad}. From here the path to show that $q(\ell,b)$ does not depend on $\ell$ directly mymics the proof of theorem \ref{t.uniqueness_decorated} so we leave it to the reader.

\item $(3)\Rightarrow (2):$ We now show that for any $\Mq_1,\Mq_2$ as in \eqref{gibbs-fix-faces-dec-met}
\begin{align}\label{e.metric_Gibbs}
    \frac{p^{\ell,b}\left(\Mq_1\right)}{p^{\ell,b}\left(\Mq_2\right)} &= \frac{Z^{b}\left(\Mq_1\right)}{Z^{b}\left(\Mq_2\right)}.
\end{align}
In fact, we prove something stronger: if $\Mq_1$ and $\Mq_2$ have the same amount of internal vertices
\begin{align}\label{e.change_length_metric}
    \frac{p^{\ell,b}\left (\Mq_1\right)}{Z^{b}\left(\Mq_1\right)\prod_{e \in E( \Mq)} p_3(w^1_e) }= \frac{p^{\ell,b}\left (\Mq_2\right)}{Z^{b}\left(\Mq_2\right)\prod_{e \in E( \Mq)} p_3(w^2_e) },
\end{align}
where $w_e^i$ is the length of the edge $e$ in $\Mq_i$. Once again, we prove this by induction in the number of vertices. The base case being the following, that we assume the claim and prove it at the end.
\begin{claim}\label{c.inductive_step_metric}
 For any metric tree $\tilde\tr$ with $\ell$ edges \eqref{e.change_length_metric} holds.
\end{claim}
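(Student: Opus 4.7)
The proof follows the blueprint of Claim \ref{c.start_induction_trees} in the decorated setting, proceeding by induction on $\ell$. The goal is to show that the ratio
\begin{align*}
\frac{p^{\ell,b}(\tilde\tr)}{Z^{b}(\tilde\tr)\prod_{e\in E(\tilde\tr)}p_3(w_e)}
\end{align*}
depends only on $\ell$ and $b$, not on the metric tree $\tilde\tr$. For the inductive step, assume the claim holds for all metric trees with at most $\ell$ edges (with $\ell \geq 2$). Given two metric trees $\tilde\tr_1,\tilde\tr_2$ of size $\ell+1$ that share a common leaf edge of length $w_r$, by rerooting invariance we may assume that the root edge is this shared leaf, so that peeling the root corresponds to a type 2 peeling with parameters $(\ell,0)$. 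The Markov property then yields
\begin{align*}
p^{\ell+1,b}(\tilde\tr_i) = p_2^{\ell+1,b,\ell,0}(w_r)\cdot p^{\ell,b^*}(\tilde\tr_i^*),
\end{align*}
where $\tilde\tr_i^*$ is the metric tree of size $\ell$ obtained after peeling and $b^*$ is the updated boundary condition. Using the explicit formula \eqref{e.formula_p2}, the extra exponential factor $\exp(-(b_0-b_1)^2/(2w_r))$ is exactly what converts $Z^{b^*}(\tilde\tr_i^*)$ into $Z^b(\tilde\tr_i)$, while the $p_3(w_r)$ factor is absorbed into the product over edges. Applying the induction hypothesis to $\tilde\tr_1^*$ and $\tilde\tr_2^*$ yields the desired equality. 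For two trees that do not share a leaf, I would interpolate through an intermediate tree sharing one leaf with each, which exists whenever $\ell+1\geq 3$, as in the analogous construction of Figure \ref{fig:q3}.

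The base case $\ell=2$ cannot be reduced further and will be handled by adapting the $\boxplus$-gadget construction from Claim \ref{c.start_induction_trees}. There are only two tree topologies $\tilde\tr_1,\tilde\tr_2$, corresponding to the two boundary matchings in Figure \ref{fig:2trees}. I would glue each $\tilde\tr_i$ into a fixed quadrangulation $\boxplus$ whose holes match the boundary structure, producing metric quadrangulations $\tilde\tr_i^{\boxplus}$ with inner decoration $\sigma^{\boxplus}$ determined by $b$. Comparing $p^{2,b}(\tilde\tr_1^{\boxplus},\sigma^{\boxplus})$ with $p^{2,b}(\tilde\tr_1^{\boxplus},\sigma^{\boxplus_s})$, where $\sigma^{\boxplus_s}$ denotes the counterclockwise shift as in Figure \ref{fig:rbs}, in two ways---once by direct Hamiltonian computation and once by performing the peelings that reveal $\tilde\tr_2$ rather than $\tilde\tr_1$---produces a relation between the laws of $\tilde\tr_1$ and $\tilde\tr_2$ through a shift-function $r(\cdot)$. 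Combined with the positivity $q_4>0$ (obtained from the assumption $\lim_{\varepsilon\to 0}\sup_{\ell,b}\P^{\ell,b}(\inf_e w_e>\varepsilon)=1$ together with continuity in $b$), this yields the claim for $\ell=2$.

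The main obstacle will be the base case. The extra technical difficulty specific to the metric setting is that every peeling density is a nontrivial function of edge lengths; however, the factorization established in Lemma \ref{l.peleeing_decom} guarantees that each density $p_j$ splits as $p_3(w)$ times a factor depending only on combinatorial and boundary data. This is precisely why inserting $\prod_e p_3(w_e)$ into the denominator of the ratio makes the length dependence cancel automatically, reducing the algebraic content of the metric base case to a close parallel of the decorated one, with the boundary data accounted for by the same shift-invariance argument.
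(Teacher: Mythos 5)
Your proof is correct and follows essentially the same route as the paper: establish (via Lemma \ref{l.peleeing_decom} and Lemma \ref{l.absolute_continuity_metric}) that $p^{\ell,b}(\tilde\tr)$ factors as a skeleton-dependent constant times $Z^b(\tilde\tr)\prod_e p_3(w_e)$, then run the induction — inductive step by peeling a shared leaf (type 2) and using the cancellation of the $p_3$ and $\exp(-(b_0-b_1)^2/(2w_r))$ factors, base case $\ell=2$ via the $\boxplus$-gadget and shift argument, with positivity from the length lower-bound hypothesis. One small remark worth flagging: the paper notes that in the metric setting the $\boxplus$ gadget can be simplified to a configuration of four edges of lengths $(w_i)_{i=0}^3$ rather than gluing in trivial quadrangulations as in the decorated case (since one may halt the exploration mid-edge via a type 3 peeling), which makes the bookkeeping lighter; your plan of importing the decorated $\boxplus$ unchanged is not wrong, just a bit more cumbersome than necessary.
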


For the inductive step, take $n\in\N$ and asume that \eqref{e.metric_Gibbs} and \eqref{e.change_length_metric} hold for metric maps with no more than $n$ internal vertices. Take $\Mq_1$ and $\Mq_2$ with $n+1$ internal vertices. Thanks to re-rooting invariance we can assume that the root of $\Mq_1$ and $\Mq_2$ are the boundary of an internal vertex that we call $\mathfrak{v}$. Additionally, define $\Mq_{i}^{*}$ as the result of applying a peeling in the root to $\Mq_i$ and for $x\in \text{supp}(\mu)$
\begin{align*}
    p^{\ell,b}(\Mq_i,x) := \frac{d\P(\BMM\in d\Mq_i,\Dec(\mathfrak{v})\in dx)}{\mu(dx)}
\end{align*}
is the Radon-Nykodim derivative of the decoration at the vertex $\mathfrak{v}_i$ with respect to the measure $\mu$ on
the 0-measure event that the map is exactly $\Mq_i$. To show that \eqref{e.metric_Gibbs} is true for $\Mq_1$ and $\Mq_2$, we compute
\begin{align*}
    p^{\ell,b}(\Mq_i,b(0)) &= \P^{\ell,b}(\BMM\in d\Mq_i)\frac{1}{Z^{b}(\Mq_1)}\int \exp\left(-\Ham^{b^{*}}(\Mq_i^{*},\sigma)\right)\mu^{n-1}(d\sigma) = \P^{\ell,b}(\BMM\in d\Mq_i)\frac{Z^{b^{*}}(\Mq_i^{*})}{Z^{b}(\Mq_i)},
\end{align*}
where $b^{*}$ is the boundary condition that appears when peeling the vertex $\vertex$ and discovering that it has value $b(0)$. Then, we can use the Markov property to peel the vertex $\vertex$
\begin{align*}
    \frac{p^{\ell,b}(\Mq_1,b(0))}{Z^{b^{*}}(\Mq_1^{*})}& = p_1^{\ell,b}(b_0,w_0^1)\prod_{e\in E(\Mq_1^*)}p_3(w^1_e) \frac{p^{\ell+1,b^{*}}(\Mq_1^{*})}{Z^{b^{*}}(\Mq_1^{*})\prod_{e\in E(\Mq_1^*)}p_3(w^1_e)
    } \\
    &= p_1^{\ell,b}(b_0,w_0^1)\prod_{e\in E(\Mq_1^*)}p_3(w^1_e)\frac{p^{\ell+1,b^{*}}(\Mq_2^{*})}{Z^{b^{*}}(\Mq_2^{*})\prod_{e\in E(\Mq_2^*)}p_3(w^2_e)}\\ &=\frac{p_1^{\ell,b}(b_0,w_0^1) \prod_{e\in E(\Mq_1^*)}p_3(w_e^1)}{p_1^{\ell,b}(b_0,w_0^2)\prod_{e\in E(\Mq_2^*)}p_3(w_e^2) } \frac{p^{\ell,b}(\Mq_2)}{Z^{b}(\Mq_2)},
\end{align*}
where we use the induction hypothesis for $\Mq_1^{*}$ and $\Mq_2^{*}$, $w_0^i$, resp. $w_e^i$, is the length of the root edge of $\tilde \q_i$, resp. of the edge $e$ . Note that thanks to Lemma \ref{l.absolute_continuity_metric}
\begin{align*}
    \frac{p_1^{\ell,b}(b_0,w_0^1) }{p_1^{\ell,b}(b_0,w_0^2) } = \frac{p_3(w_0^1) }{p_3(w_0^2) }.\end{align*}

Now, in the case that the edge of the root of $\tilde \q_2$ does not connect with an internal face but has a vertex connected to an edge $\mathfrak e$, we proceed as in Theorem \ref{t.uniqueness_decorated} and take $\tilde \q_3$ that has an internal vertex connected to the root and to $\mathfrak e$. Then
\begin{align*}
\frac{p^{\ell,b}\left (\Mq_1\right)}{\prod_{e \in E(\tilde \q_1)} p_3(e) Z^{b}\left(\Mq_1\right)}= \frac{p^{\ell,b}\left (\Mq_3\right)}{\prod_{e \in E(\tilde \q_3)} p_3(e) Z^{b}\left(\Mq_3\right)}= \frac{p^{\ell,b}\left (\Mq_2\right)}{\prod_{e \in E(\tilde \q_2)} p_3(e) Z^{b}\left(\Mq_2\right)}.
\end{align*}
    \end{itemize}

We are only missing the claim.
\begin{proof}[Proof of Claim \ref{c.inductive_step_metric}] Note first that by using the right peelings and Lemma \ref{l.absolute_continuity_metric}, there exists a function $f^{\ell,b}$ that takes skeletons of trees such that for any metric tree
\begin{align}
    p^{\ell,b}(\tilde \tr)= f^{\ell,b}(\ske(\tr)) Z^{\ell,b}(\tr)\prod_{ e \in E(\tr)} p_3(w_{e}) .
\end{align}
We need to show by induction on the number of edges $\ell$ of the tree that $ f^{\ell,b}(\cdot)$ is constant. Again, the induction step is easier. Take $\ell\geq 2$ and assume that $f^{\ell,b}(\ske(\tr))$ is constant for any $\ell$, $b$ with length smaller than or equal $\ell$. Then, if $\tilde \tr_1$ and $\tilde \tr_2$ are two trees such that the root edge is incident to a leaf. We have that
\begin{align*}
    p^{\ell,b}(\tilde \tr_i)=p_2^{\ell,b,\ell-1,0}(w_0^1) p^{\ell-1,b*}(\tilde \tr_i^*) = p_3^{\ell,b}(w_0^1)s^{\ell,b,\ell-1,0}p^{\ell-1,b*}(\tilde \tr_i^*).
\end{align*}
This, together with adding a propper $\tilde \tr_3$ when $\tilde \tr_1$ and $\tilde \tr_2$ do not share a leaf allows us to conclude the inductive step.

For the base case, we again have to work with $\ell=2$. In this case, however the definition of $\boxplus$ is simpler, as we can use edges of size $(w_i)_{i=0}^3$ instead of trivial quadrangulations. In this case, the same argument as in the proof of Theorem \ref{t.uniqueness_decorated} implies the result, so we leave it to the reader.

\end{proof}
\end{proof}

\section{Local maps are stopping maps.}\label{sec:LocalMaps}
In this section, we characterise all random submaps of a Boltzmann map that satisfy the Markov property. Informally, we show that the only ones are stopping maps. To do this, we first need to define the concept of a local map, namely, the maps that induce a Markovian decomposition.
\begin{defn}
We say that the triplet $\left(\BM,\Dec,\SM\right)$ is a \textit{local map} if,
\begin{itemize}
    \item $(\BM,\Dec)$ is a (decorated) Boltzmann map;
    \item $\P$-almost surely $\SM\Lalg\BM$;
    \item and, conditionally on $\SM$ and $\Dec_{\SM}$, $(\BM_h^{\SM},\Dec^{\SM}_h)_{h\in H(\SM)}$ is a collection of independent Boltzmann decorated maps with perimeter $|h|$ and boundary condition $\Dec|_h$ for every $h\in H(\SM)$. 
\end{itemize}
\end{defn}
This definition is inspired by an analogous one for the Gaussian free field originally introduced by Schramm and Sheffield in Section 3.2 of \cite{SchS}.

The main result of this section is that local sets are stopping sets. To our knowledge, even in the context of the Gaussian free field, this result has not been established. However, in that setting, all the necessary tools were already available; see, for example, Lemma~3.3 of \cite{SchS} or Chapter~1 of \cite{Aru}.
\begin{thm}\label{t.local_implies_stopping}
    Let $\left(\BM,\Dec,\SM\right)$ be a local map. Then, there exist a filtration $\FF = (\FF_{\q})_{\q\in\Qm_H}$ such that $(\BM,\Dec)$ is an $\FF-$Boltzmann decorated map and $\SM$ is an $\FF-$stopping map for $(\BM,\Dec)$.
\end{thm}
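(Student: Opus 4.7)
The plan is to enlarge the natural filtration of $(\BM,\Dec)$ with the relevant information about $\SM$, and then verify each required property. Concretely, I propose
\begin{align*}
\F_\q := \overline{\bigvee_{\p \Lalg \q} \sigma\!\left(\{\p \Lalg \BM\},\ \Dec|_\p \1_{\p \Lalg \BM},\ \{\SM \Lalg \p\},\ \SM \1_{\SM \Lalg \p},\ \Dec|_{\SM} \1_{\SM \Lalg \p}\right)}^{\P}.
\end{align*}
By construction monotonicity and completeness hold, and taking $\p = \q$ inside the join shows at once that $\{\q \Lalg \BM\}$, $\Dec|_\q \1_{\q \Lalg \BM}$, $\{\SM \Lalg \q\}$, and $\SM \1_{\SM \Lalg \q}$ are $\F_\q$-measurable. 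This yields the adaptability part of Definition~\ref{def_dec_Boltzmann} and the stopping map condition $\{\SM \Lalg \q\} \in \F_\q$ essentially for free.

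The real content is the independent increments property, which I will split into two cases according to whether $\SM \Lalg \q$. On $\{\SM \Lalg \q\}$, the $\sigma$-algebra $\F_\q$ contains the full data $(\SM,\Dec|_{\SM})$ together with $\Dec|_\q \1_{\q \Lalg \BM}$. The local map hypothesis decomposes $\BM$ along $\SM$ into independent Boltzmann decorated maps $(\BM^\SM_{h'},\Dec^\SM_{h'})_{h' \in H(\SM)}$; since $\SM \Lalg \q$, the portion of $\q$ sitting inside each hole $h'$ of $\SM$ is a deterministic submap of $\BM^\SM_{h'}$. Applying the decorated weak Markov property to each Boltzmann map $\BM^\SM_{h'}$ along $\q|_{h'}$ and conditioning further on $\Dec|_\q \1_{\q \Lalg \BM}$, I obtain that $(\BM^\q_h,\Dec^\q_h)_{h \in H(\q)}$ are independent Boltzmann decorated maps with the boundary conditions read off from $\Dec|_\q$. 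Independence across different $h'$'s of $\SM$ is inherited from the local map property and within each $h'$ from the weak Markov property.

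The main obstacle is the opposite case $\{\SM \not\Lalg \q\}$, where $\SM$ has parts strictly outside $\q$ and the indicator $\1_{\SM \not\Lalg \q}$ depends nontrivially on the pieces $\BM^\q_h$. On this event every generator $\SM \1_{\SM \Lalg \p}$ and $\Dec|_\SM \1_{\SM \Lalg \p}$ with $\p \Lalg \q$ vanishes (since $\SM \Lalg \p \Lalg \q$ would force $\SM \Lalg \q$), so $\F_\q$ collapses on this event to the natural filtration of $(\BM,\Dec)$ intersected with $\{\SM \not\Lalg \q\}$. My plan is to integrate out $\SM$: partition $\{\SM \not\Lalg \q\}$ into the countably many atoms $\{\SM = \m\}$ indexed by maps $\m$ with $\m \not\Lalg \q$, and on each atom invoke the local map property to decompose $\BM$ along $\m$ into independent Boltzmann pieces $\BM^\m_{h'}$. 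For each such $\m$, the further decomposition along the part of $\q$ lying inside each $\BM^\m_{h'}$ is governed by the weak Markov property and yields, after reassembling, a product of independent Boltzmann laws on the holes of $\q$. Summing over $\m$ against the corresponding densities (with the unknown decoration on $V(\m)\setminus V(\q)$ integrated against $\mu$) and rearranging the Boltzmann normalizations along $H(\q)$ produces the desired factorization. The technical heart lies in checking that this bookkeeping reproduces exactly the Boltzmann boundary conditions read off from $\Dec|_\q$ on each hole of $\q$, which will follow from the consistency of the Hamiltonians $\Ham^b$ under the gluing operation $\glue$.
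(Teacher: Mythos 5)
Your filtration is essentially the paper's: on the event $\{\q\Lalg\BM\}$, the generator $\SM\1_{\SM\Lalg\p}$ carries the same information as the paper's $\SM^{\wedge\p}\1_{\p\Lalg\BM}$ (since the latter equals $\SM$ on $\{\SM\Lalg\p\Lalg\BM\}$ and a deterministic object otherwise, and $\{\SM\Lalg\p\}$ is separately included), and the extra generator $\Dec|_\SM\1_{\SM\Lalg\p}$ is redundant given $\Dec|_\p\1_{\p\Lalg\BM}$ and $\SM\1_{\SM\Lalg\p}$ there. Adaptability and the stopping-map condition follow as you say, and your treatment of the case $\{\SM\Lalg\q\}$ (local map decomposition along $\SM$, then weak Markov inside each hole of $\SM$ along the deterministic restriction of $\q$) is precisely the content of the paper's Lemma~\ref{lem.local_stopping_1}.

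The gap is in the case $\{\SM\not\Lalg\q\}$. Your intermediate claim that for each fixed $\m\not\Lalg\q$ the decomposition along $\m$ and then along $\q$ ``yields, after reassembling, a product of independent Boltzmann laws on the holes of $\q$'' is false as stated: conditionally on $\{\SM=\m\}$ and $\Dec|_\q$, the piece $\BM^\q_h$ in a hole $h$ of $\q$ is constrained to contain the (nontrivial) part of $\m$ that sticks into $h$, and is therefore a Boltzmann map \emph{conditioned on an event}, not a Boltzmann map. The Boltzmann form is only recovered after summing over $\m$ weighted by the local-map probabilities, and establishing that cancellation directly is exactly the ``technical heart'' you defer; nothing in your sketch indicates how the bookkeeping closes. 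The paper's route avoids this entirely by an inclusion--exclusion: the identity to be proved on $\{\q\Lalg\BM\}$ is linear in the indicator, so one writes $\1_{\SM\not\Lalg\q} = 1 - \1_{\SM\Lalg\q}$ (and more generally expands every $\1_{\{\SM\Lalg\p\}^c}=1-\1_{\SM\Lalg\p}$, $\p\Lalg\q$, via a monotone-class argument); the term with $\1_{\SM\Lalg\q}$ is the case you already handled (Lemma~\ref{lem.local_stopping_2}), and the unconditioned term is the ordinary weak Markov property of the Boltzmann map. Subtracting gives the $\{\SM\not\Lalg\q\}$ case for free. You should replace the atom-by-atom sum over $\{\SM=\m\}$ with this linear reduction.
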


Note that in this section, we work in the case of discrete Boltzmann maps. However this is only to simplify notations, as the equations that appear in the proofs are quite lengthy and the metric case would just make it more cumbersome. See Remark \ref{r.local_to_stopping_metric} to see how to adapt the proof in the case of Boltzmann decorated metric maps.

Take $\left(\BM,\Dec,\SM\right)$ a local map. We now note that conditionally on $\SM$ and $\Dec|_{\SM}$, $(\BM_{\tilde{h}}^{\SM},\Dec^{\SM}_{\tilde{h}})_{\tilde{h}\in H(\SM)}$ is a collection of independent Boltzmann decorated maps with perimeter $|\tilde{h}|$ and boundary condition $\Dec|_{\tilde{h}}$ for every $\tilde{h}\in H(\SM)$. Now, we use the weak Markov property inside each hole $\tilde{h}$, to deduce that on the event $\{\SM\Lalg\q\Lalg \BM\}$ we can decompose $\BM^\SM_{\tilde{h}}$ as follows
    \begin{align}\label{e.weak_Markov_SM}
        \BM_{\tilde{h}}^{\SM} = \BM_{\tilde{h}}^{\SM}|_{\q} \glue \left(\left(\BM_{\tilde{h}}^{\SM}\right)_h^{\q}\right)_{h\in H\left(\BM_{\tilde{h}}^{\SM}|_{\q}\right)} \ \text{ and  } \ \Dec^{\SM}_{\tilde{h}} = \Dec_{\tilde{h}}^{\SM}|_{\BM_{\tilde{h}}^{\SM}|_{\q} } + \left(\Dec^{\SM}_{\tilde{h}}\right)^{\BM_{\tilde{h}}^{\SM}|_{\q}}.
    \end{align}
Here  conditionally on $\SM$ and $\Dec|_{\q}$, $\left(\BM_{\tilde{h}}^{\SM}\right)_h^{\q}$ is a collection of independent Boltzmann map indexed by  the holes $h$ of $\q$. Furthermore, the conditional law $\left(\BM_{\tilde{h}}^{\SM}\right)_h^{\q}$ is that of a decorated Boltzmann map with perimeter $|h|$ and boundary condition $\Dec|_{h}$.

To prove Theorem \ref{t.local_implies_stopping}, we explicitly define the filtration
\begin{align}\label{e.def_filt}
        \FF_{\q} = \overline{\bigvee_{\p\Lalg\q} \sigma\left(\{\p\Lalg\BM\},\{\SM\Lalg \p\}, \Dec|_{\p}\1_{\p\Lalg\BM}, \SM^{\wedge \p}\1_{\p\Lalg\BM} \right)}^{\P}.
\end{align}
We then can reduce to using the monotone class theorem to compute specific expected values. To make the argument easier to follow and to separate the conceptual ideas from technical computations, we first present two lemmas that carry the ideas of the proof. The first one concerns the computation of a certain conditional law.
\begin{lemma}\label{lem.local_stopping_1}
Let $\left(\BM,\Dec,\SM\right)$ be a local map. Then for $\q\in\Qm_H$ and  a collection $(f_h)_{h\in H(\q)}$ of real, measurable, bounded functions, we have
\begin{align*}
    &\E\left[\left.\prod_{h\in H(\q)}f_h(\BM_h^{\q},\Dec_h^{\q})\right|\SM,\Dec|_{\q},\{\SM\Lalg\q\Lalg\BM\}\right]\1_{\SM\Lalg\q\Lalg\BM} =\prod_{h\in H(\q)}\E^{\frac{|h| }{2},\Dec|_{ h}}\left[f_h(\BM,\Dec)\right ] \1_{\SM\Lalg\q\Lalg\BM}.
\end{align*}
Here, the pair $(\BM_h^{\q},\Dec_h^{\q})$ is the one that appears from the weak Markov property of $(\BM,\Dec)$.
\end{lemma}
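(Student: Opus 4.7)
The plan is to bootstrap the local-map hypothesis by applying the weak Markov property twice, mirroring the decomposition already written in \eqref{e.weak_Markov_SM}. First, on the event $\{\SM\Lalg \q\}$ (which is $\sigma(\SM)$-measurable), the deterministic map $\q$ splits uniquely as $\q=\SM\glue(\q_{\tilde h})_{\tilde h\in H(\SM)}$, where $\q_{\tilde h}$ is the portion of $\q$ that sits inside the hole $\tilde h$ of $\SM$. Correspondingly, $H(\q)=\bigsqcup_{\tilde h\in H(\SM)}H(\q_{\tilde h})$, and for each $h\in H(\q_{\tilde h})$ the weak Markov piece of $\BM$ along $\q$ coincides with the weak Markov piece of $\BM_{\tilde h}^{\SM}$ along $\q_{\tilde h}$: $\BM_h^\q=(\BM_{\tilde h}^{\SM})_h^{\q_{\tilde h}}$ and $\Dec_h^\q=(\Dec_{\tilde h}^{\SM})_h^{\q_{\tilde h}}$.

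Second, by the local-map hypothesis, conditionally on $(\SM,\Dec|_\SM)$ the family $(\BM_{\tilde h}^\SM,\Dec_{\tilde h}^\SM)_{\tilde h\in H(\SM)}$ consists of independent Boltzmann decorated maps with perimeters $|\tilde h|$ and boundary conditions $\Dec|_{\tilde h}$. Since $\q_{\tilde h}$ is deterministic given $\SM$, Theorem \ref{t.strong_Markov_decorated} (or more precisely its weak-Markov counterpart) applied to each Boltzmann piece $\BM_{\tilde h}^\SM$ along $\q_{\tilde h}$ yields that, conditionally on $(\SM,\Dec|_{\q_{\tilde h}})$ and on $\{\q_{\tilde h}\Lalg \BM_{\tilde h}^\SM\}$, the subpieces $((\BM_{\tilde h}^\SM)_h^{\q_{\tilde h}},(\Dec_{\tilde h}^\SM)_h^{\q_{\tilde h}})_{h\in H(\q_{\tilde h})}$ are independent Boltzmann decorated maps with the correct perimeters and boundary conditions $\Dec|_h$.

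Combining these two conditional independence statements (across $\tilde h$ from the local-map hypothesis, within each $\tilde h$ from the weak Markov property), together with the fact that the event $\{\SM\Lalg \q\Lalg \BM\}$ factors, given $\SM$, into $\bigcap_{\tilde h}\{\q_{\tilde h}\Lalg \BM_{\tilde h}^\SM\}$, we conclude that conditionally on $\SM$, $\Dec|_\q$, and this event, the full family $(\BM_h^\q,\Dec_h^\q)_{h\in H(\q)}$ is independent with the stated Boltzmann laws. Writing the product $\prod_h f_h(\BM_h^\q,\Dec_h^\q)$ and factoring expectations then yields the displayed formula.

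The main obstacle is the bookkeeping of the conditioning: one must check that conditioning on $\Dec|_\q$, rather than only on $\Dec|_\SM$ together with the boundary spins of every hole of $\q$, introduces no spurious dependence. This is fine because, inside each $\tilde h$, the further Boltzmann decomposition depends on $\Dec|_{\q_{\tilde h}}$ only through the spins adjacent to the holes $h\in H(\q_{\tilde h})$, which is exactly $\Dec|_h$. The tower property together with measurability of $\SM$, $\q_{\tilde h}$ and $\Dec|_\q$ with respect to the appropriate $\sigma$-algebras can then be used to remove the auxiliary conditioning on $\Dec|_\SM$ and obtain the claim.
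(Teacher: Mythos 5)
Your proposal is correct and follows essentially the same route as the paper: identify $\BM_h^\q = (\BM_{\tilde h}^\SM)_h^{\q_{\tilde h}}$ on the event $\{\SM\Lalg\q\}$, use the local-map hypothesis for independence across the holes $\tilde h$ of $\SM$, then apply the weak Markov property within each $\tilde h$ (which is the decomposition \eqref{e.weak_Markov_SM} the paper invokes). The paper states this more tersely, and your extra paragraph on conditioning bookkeeping makes explicit a step the paper leaves implicit, but no new idea or different technique is involved.
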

\begin{proof}
   The first observation is that if $\tilde h$ is a hole of $\SM$ that contains a hole $h$ of $\q$, then $\BM_h^{\q}= (\BM^{\SM}_{\tilde h})^{\q}_h$ and $\Dec^{\q} = \left(\Dec^{\SM}\right)^{\q}$. Using this together with \eqref{e.weak_Markov_SM} and the comment that follows it, we have that
\begin{align*}
    &\E\left[\left.\prod_{h\in H(\q)}f_h(\BM_h^{\q},\Dec_h^{\q})\right|\SM,\Dec|_{\q},\{\SM\Lalg\q\Lalg\BM\}\right]\1_{\SM\Lalg\q\Lalg\BM}\\
    = &\prod_{h\in H(\q)}\E\left[\left. f_h((\BM_{\tilde h}^{\SM})_h^{\q},(\Dec^{\SM})^{\q}\mid _h)\right|\SM,\Dec|_{\q},\{\SM\Lalg\q\Lalg\BM\}\right]\1_{\SM\Lalg\q\Lalg\BM}.
\end{align*}
Finally, observe that the conditional law of $\left(\BM_{\tilde h}^{\SM})_h^{\q},(\Dec^{\SM})^{\q}\mid _h\right)$ does not depend on the geometric structure of $\SM$, and is in fact that of a Boltzmann map inside $h$ with perimeter $|h|$ and boundary condition $\phi\mid_h$. In consequence
\begin{align*}
    \E\left[\left. f_h\left ((\BM_{\tilde h}^{\SM})_h^{\q},(\Dec^{\SM})^{\q}\mid _h\right )\right|\SM,\Dec|_{\q},\{\SM\Lalg\q\Lalg\BM\}\right]\1_{\SM\Lalg\q\Lalg\BM}   = \E^{\frac{|h| }{ 2},\Dec|_{h}}\left[\left. f_h(\BM_h^{\q},\Dec_h^{\q})\right|\q\Lalg\BM\right]\1_{\SM\Lalg\q},
\end{align*}
from where we conclude.
\end{proof}

The second lemma presents the basic case of the computation needed in the proof of Theorem \ref{t.local_implies_stopping}.
\begin{lemma}\label{lem.local_stopping_2}
Let $\left(\BM,\Dec,\SM\right)$ be a local map and take $\q\in\Qm_H$. Additionally, fix $f_{\q}, (f_h)_{h\in H(\q)}, (g_{\p})_{\p\Lalg\q}$ and $(h_{p})_{\p\Lalg\q}$ a collections of real, measurable and bounded functions and $\Lambda(\q)$ a collection of submaps of $\q$. Then, we have the following
\begin{align}\label{e.stopping_2}
    &\E\left[f_{\q}(\q,\Dec|_{\q})\1_{\SM\Lalg\q\Lalg\BM}\left (\prod_{\p\in \Lambda(\q)}\1_{\SM\Lalg\p\Lalg\BM}g_{\p}(\Dec|_{\p})h_{\p}(\SM^{\wedge\p})\right ) \left( \prod_{h\in H(\q)}f_h\left(\BM_h^{\q},\Dec_h^{\q}\right )\right)\right] \\
   \nonumber &= \E\left[f_{\q}(\q,\Dec|_{\q})\1_{\SM\Lalg\q\Lalg\BM}\left (\prod_{\p\in \Lambda(\q)}\1_{\SM\Lalg\p\Lalg\BM}g_{\p}(\Dec|_{\p})h_{\p}(\SM^{\wedge\p})\right ) \left( \prod_{h\in H(\q)}\E^{\frac{ |h| }{2 },\Dec|_{h}}\left[f_h(\BM_h^{\q},\Dec_h^{\q})|\q\Lalg\BM\right]\right )\right].
\end{align}
\end{lemma}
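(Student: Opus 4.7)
The plan is to identify a $\sigma$-algebra $\mathcal{G}$ with respect to which every factor on the left-hand side of \eqref{e.stopping_2} except $\prod_{h\in H(\q)} f_h(\BM_h^\q,\Dec_h^\q)$ is measurable, and then conclude by the tower property combined with Lemma \ref{lem.local_stopping_1}. The natural candidate is $\mathcal{G}:=\sigma(\SM,\Dec|_\q)$, and the identity need only be verified on the event $\{\SM\Lalg\q\Lalg\BM\}$, since both sides already carry the common indicator $\1_{\SM\Lalg\q\Lalg\BM}$.

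The key step is the measurability check on that event. First, $f_\q(\q,\Dec|_\q)$ is a function of $\Dec|_\q$ (the map $\q$ being deterministic), hence $\mathcal{G}$-measurable. Second, for any $\p\Lalg\q$, the restriction $\Dec|_\p$ is a deterministic function of $\Dec|_\q$, so $g_\p(\Dec|_\p)$ is $\mathcal{G}$-measurable. Third, by the very definition of $\SM^{\wedge\p}$, this random map equals $\SM$ on $\{\SM\Lalg\p\}$ and equals the deterministic $\p$ otherwise; consequently $h_\p(\SM^{\wedge\p})$ is a measurable function of $\SM$, and therefore $\mathcal{G}$-measurable. Fourth, on the event $\{\SM\Lalg\q\Lalg\BM\}$ the indicator $\1_{\SM\Lalg\p\Lalg\BM}$ reduces to $\1_{\SM\Lalg\p}$, because $\p\Lalg\q\Lalg\BM$ automatically forces $\p\Lalg\BM$; and $\1_{\SM\Lalg\p}$ is determined by $\SM$.

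With these measurability facts in hand, the tower property allows me to pull every such factor out of the conditional expectation given $\mathcal{G}$ restricted to $\{\SM\Lalg\q\Lalg\BM\}$. What remains inside is $\E[\prod_{h\in H(\q)} f_h(\BM_h^\q,\Dec_h^\q)\mid \SM,\Dec|_\q,\{\SM\Lalg\q\Lalg\BM\}]\1_{\SM\Lalg\q\Lalg\BM}$, which Lemma \ref{lem.local_stopping_1} identifies with $\prod_{h\in H(\q)} \E^{|h|/2,\Dec|_h}[f_h(\BM_h^\q,\Dec_h^\q)\mid \q\Lalg\BM]\1_{\SM\Lalg\q\Lalg\BM}$. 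Reassembling all the pulled-out factors under an outer expectation produces exactly the right-hand side of \eqref{e.stopping_2}. No serious obstacle is expected; the only bookkeeping subtlety is the reduction $\1_{\SM\Lalg\p\Lalg\BM}=\1_{\SM\Lalg\p}$ under the master indicator, which is precisely what makes each factor indexed by $\p\in\Lambda(\q)$ fall into $\mathcal{G}$.
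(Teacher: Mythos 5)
Your proposal is correct and follows essentially the same route as the paper: condition on the $\sigma$-algebra generated by $\SM$, $\Dec|_{\q}$, and the event $\{\SM\Lalg\q\Lalg\BM\}$, check that every factor except $\prod_{h}f_h$ is measurable with respect to it (in particular noting that on the master event $\1_{\SM\Lalg\p\Lalg\BM}$ collapses to $\1_{\SM\Lalg\p}$, a function of $\SM$), and then invoke Lemma~\ref{lem.local_stopping_1} via the tower property. Your write-up in fact spells out the measurability bookkeeping more explicitly than the paper does; the one small nit is that $\mathcal{G}$ should be taken to be $\sigma(\SM,\Dec|_{\q},\1_{\q\Lalg\BM})$ rather than $\sigma(\SM,\Dec|_{\q})$ so that the master indicator itself is $\mathcal{G}$-measurable, which is exactly what you do implicitly when you apply Lemma~\ref{lem.local_stopping_1}.
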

\begin{proof}
    We apply the tower property to  \eqref{e.stopping_2} by conditioning on  $(\SM,\Dec|_{\q},\{\SM\Lalg\q\Lalg\BM\})$.All the terms inside the integral are measurable, except for $\prod_{h\in H(\q)}f_h\left(\BM_h^{\q},\Dec_h^{\q}\right )$. We conclude from Lemma \ref{lem.local_stopping_1}. 
\end{proof}

Finally, we have all the tools needed to prove Theorem \ref{t.local_implies_stopping}.

\begin{proof}[Proof of Theorem \ref{t.local_implies_stopping}]
Let us verify that the filtration defined in \eqref{e.def_filt} satisfies the conclusions of the theorem.
\begin{itemize}
    \item \underline{$\SM$ is an $\FF$-stopping map.} This follows directly from the definition of the filtration $\FF$.
    \item \underline{The event $\{\q\Lalg\BM\}$ and the function $\Dec|_{\q}$ are $\F_{\q}$-measurable.} This follows directly from the definition of the filtration $\FF$.
    \item \uline{Conditionally on $\F_{\q}$ and the event $\q\subseteq \BM$, $(\BM_h^{\q},\Dec^\q_h)_{h\in H(\q)}$ is a collection of independent Boltzmann decorated maps with boundary $h$ and boundary condition $\Dec|_h$ for every $h\in H(\SM)$.} This is the heart of the proof. Thanks to the monotone functional class theorem, it is enough to prove that for any collection of real bounded measurable functions $f_{\q},(f_h)_{h\in H(\q)}, (g_{\p})_{\p\Lalg\q}, (h_{\p})_{\p\Lalg\q}$ and any collection $\Lambda(\q)$ of submaps of $\q$
\begin{align}\label{e.main_equality_local_to_stopping}
    &\E\left[f_{\q}(\q,\Dec|_{\q})\1_{\q\Lalg\BM}\1_{\{\SM\Lalg\q\}^{c_\q}}\left (\prod_{\p\in \Lambda(\q)}\1_{\{\p\Lalg\BM\}^{\tilde c_\p}}\1_{\{\SM\Lalg \p\}^{c_\p}}g_{\p}(\Dec|_{\p})h_{\p}(\SM^{\wedge\p})\right ) \left( \prod_{h\in H(\q)}f_h\left(\BM_h^{q},\Dec_h^{\q}\right )\right)\right]= \\
    \nonumber& \E\left[f_{\q}(\q,\Dec|_{\q})\1_{\q\Lalg\BM}\1_{\{\SM\Lalg\q\}^{c_\q}}\left (\prod_{\p\in \Lambda(\q)}\1_{\{\p\Lalg\BM\}^{\tilde c_\p}}\1_{\{\SM\Lalg \p\}^{c_\p}}g_{\p}(\Dec|_{\p})h_{\p}(\SM^{\wedge\p})\right )  \left( \prod_{h\in H(\q)}\E^{\frac{|h| }{2 },\Dec|_{h}}\left[f_h(\BM_h^{\q},\Dec_h^{\q})|\q\Lalg\BM\right]
    \right )\right].
\end{align}

Here $c_\q$, $c_\p$ and $\tilde c_\p$ $\in \{1,c\}$ are binary variables saying whether we should or should not take the complement operation. Note that we have already proven the equality in Lemma \ref{lem.local_stopping_2} when all of this binary variables take value 1 (i.e., no complements are taken). We have to do it now when all of this may take the complement. First note that if at least one $\tilde c_p$ takes value $c$ the term inside the expected value is always $0$, as $\p\subseteq \q \subseteq \BM$, thus we can assume they always take value $1$. Now, note that
\begin{align*}
\prod_{\substack{\p\in \Lambda(\q)\\ c_p=c}}\1_{\{\SM\Lalg \p\}^{c}}g_{\p}(\Dec|_{\p})h_{\p}(\SM^{\wedge\p}) = \prod_{\substack{\p\in \Lambda(\q)\\ c_p=c}}(1-\1_{\{\SM\Lalg \p\}})g_{\p}(\Dec|_{\p})h_{\p}(\p).
\end{align*}
This reduces \eqref{e.main_equality_local_to_stopping} to a sum of terms of the form of \eqref{e.stopping_2} by properly redefining $\Lambda(\q)$ and
\begin{align*}
    \tilde f_\q(\q, \Dec\mid_\q):= f_\q(\q,\phi|_\q)\prod_{\substack{\p\in \Lambda(\q)\\ c_p=c}}g_{\p}(\Dec|_{\p})h_{\p}(\p).
\end{align*}
\end{itemize}
\end{proof}

\begin{rem}\label{r.local_to_stopping_metric}
One can obtain an analogous result for the metric case. The only new idea is the definition of the filtration in that case that is the following
\begin{align*}
\F_{\q} = \bigcap_{\varepsilon>0}\bigvee_{\q_\epsilon \text{ $\epsilon$ increasing of }\q }\bigvee_{\p\Lalg \q_\epsilon} \overline{\sigma\left(\{\p\Lalg\BM\},\{\SM\Lalg \p\}, \{\Dec|_{\p}\1_{\p\Lalg\BM}\}, \{\SM^{\wedge \p}\1_{\p\Lalg\BM}\} \right)}^{\P}.
\end{align*}
Here by $\epsilon$ increasing we mean that there is a $\BM\supset \q$ such that $\q_\epsilon$ can be obtained by an $\epsilon$ increasing of $\q$ in $\BM$. As we said before the proof in this case is analogous as the one we presented but the notation is more cumbersome, thus we leave it to the reader.
\end{rem}
\section{A stopping map that is not algorithmic.}\label{s.not_algorithmic}

In this section, we work in the context of Boltzmann (undecorated) quadrangulations as in the beginning of Section \ref{sec:3}. In Remark \ref{r.subpeeling}, we check that every algorithmic exploration can be seen as a stopping map. Now we prove that converse is not true: not all stopping maps can be obtained by algorithmic explorations as defined in \cite{Cu}. The example we present here is inspired by Lemma 7.7 of \cite{MS1}

It is easier to present this counterexample in the dual instead of the primal quadrangulation. To start we define on the dual (see Figure \ref{fig:dual} and Figure \ref{fig:dual_peel}) of a random quadrangulation $\QQ$ with boundary two (i.e. under the measure $\P^{1}$) what we call the right process as follows: starting from the root vertex we explore the map by taking the rightmost dual edge that has not been visited up to the first (and only) return to the root vertex.
We label the dual vertices visited by the right process in chronological order, we identify the point that is the first self-intersection of the right process and we suppress the submap visited between the first and last visit of this point. We define as $\SM$ the map obtained. 

Formally, define $x_0$ as the root vertex and let $x_i$ be the dual vertices visited when considering the rightmost dual edge up to $\tau_E^R$ the first time this process come back the root vertex. We define 
\begin{align*}
	\tau_I^R &= \inf\{i\in \cro{1,\dots,\tau_E^R} : x_i = x_j \text{ for some }j\neq i \in \cro{1,\dots,\tau_E^R}\}\\
	\tau^R &= \sup\{ i\in \N : x_i = x_{\tau_I^R}\}.
\end{align*}

Finally, the map $\SM$ is the map discovered when following the sequence of edges in the cycle $x\cro{0,\tau_I^R}\cup x\cro{\tau^R,\tau^R_E}$. This is analogous as running the right process up to $\tau_I^R$ and then running the left process (the same as the right process but with the leftmost edge) up to hitting $x_{\tau_I^R}$ for the first time. For an example of the stopping map $\SM$ see Figure \ref{fig:Q}

\begin{figure}[h!]
    \centering
    \includegraphics[scale= 0.6]{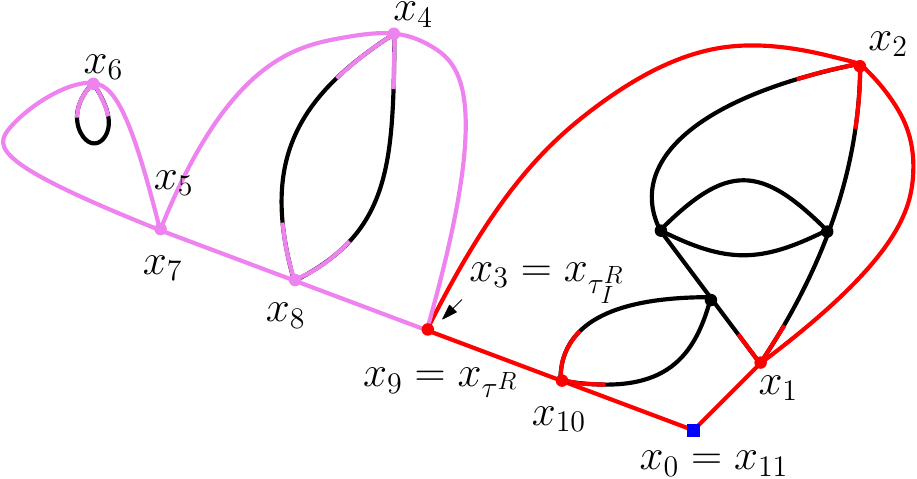}
    \caption{We present the dual of a quadrangulation with exterior face of degree 2, whose dual vertex is represented by the blue square. We follow the right process by labelling the vertices that are visited by it in chronological order and we express in red and fuchsia the map discover by this process. The first self intersecting point of the right process in this example is $x_3$, meaning that $\tau_I^R=3$ and the last time it is visited is $\tau^R = 9$. The stopping map $\SM$ is given by the red sub-structure as it is given by the exploration of $x\cro{0,\tau_I^R}\cup x\cro{\tau^R,\tau^R_E}$.}
    \label{fig:Q}
\end{figure}

Notice that the complete rightmost exploration is algorithmic, so stopping it and resuming it for the last steps is what will make it a stopping map and not an algorithmic one.

\begin{prop}
    Let $\BM$ be an $\FF$-Boltzmann map for $\ell=1$. The random map $\SM$ is an $\FF$-stopping map for $\FF$ the natural filtration of $\BM$, thus it satisfies the Markov property.
\end{prop}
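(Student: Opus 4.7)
The strategy is to verify the measurability condition of Definition \ref{def_stop} directly by characterising the event $\{\SM=\mathfrak{s}\}$ for each possible deterministic value $\mathfrak{s}$ of $\SM$. Since $\SM\Lalg\BM$ holds almost surely by construction, this will suffice to conclude that $\SM$ is an $\FF$-stopping map, after which the Markov property is an immediate consequence of Theorem \ref{t.strong_Markov}.

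Introduce the (deterministic, countable) collection $\mathcal{C}\subseteq\Qm_H$ of maps with holes $\mathfrak{s}$ that are fixed by the right-then-left construction: those $\mathfrak{s}$ such that applying the definition of $\SM$ to $\mathfrak{s}$ itself (with $\mathfrak{s}$ playing the role of $\BM$) returns $\mathfrak{s}$. The key claim is that for every $\mathfrak{s}\in\mathcal{C}$ one has the equality of events $\{\SM=\mathfrak{s}\}=\{\mathfrak{s}\Lalg\BM\}$. The inclusion $\subseteq$ is immediate from $\SM\Lalg\BM$. For $\supseteq$, assume $\mathfrak{s}\Lalg\BM$. The rightmost untraversed dual edge at a dual vertex $x_i$ is determined by the cyclic ordering of the edges around the face $x_i$ together with the list of edges of $x_i$ already used by the walk; both data are intrinsic to the face $x_i$ and therefore coincide in $\mathfrak{s}$ and in $\BM$. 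By induction on the step, the right walk in $\BM$ visits the same sequence of dual vertices and traverses the same dual edges as the right walk in $\mathfrak{s}$; in particular the first self-intersection happens at the same step $\tau_I^R$ and at the same vertex $v^\ast=x_{\tau_I^R}$. The analogous argument applied to the leftmost walk launched from the other boundary edge shows that this walk too is identical in $\mathfrak{s}$ and $\BM$ until its first hit of $v^\ast$, so the cycle extracted from $\BM$ coincides with $\mathfrak{s}$ and $\SM(\BM)=\mathfrak{s}$.

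Granted the claim, for any deterministic $\q\in\Qm_H$ one writes
\[
\{\SM\Lalg\q\}=\bigsqcup_{\mathfrak{s}\in\mathcal{C},\,\mathfrak{s}\Lalg\q}\{\mathfrak{s}\Lalg\BM\},
\]
a finite disjoint union of events each lying in $\F_\q$ by the very definition of the natural filtration. Hence $\{\SM\Lalg\q\}\in\F_\q$, which together with $\SM\Lalg\BM$ verifies Definition \ref{def_stop}. The main obstacle is the combinatorial bookkeeping underlying the key claim: one must rigorously argue that ``rightmost/leftmost untraversed edge at the current dual vertex'' is a local datum of the face together with the list of incident edges already used, and is therefore invariant under enlarging $\mathfrak{s}$ to a supermap $\BM\supseteq\mathfrak{s}$. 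Everything else amounts to unpacking definitions.
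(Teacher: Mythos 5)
Your proof takes essentially the same approach as the paper: both decompose $\{\SM\Lalg\q\}$ as a finite union over submaps $\p\Lalg\q$ and reduce measurability to the observation that, given $\{\p\Lalg\BM\}$, the event $\{\SM\Lalg\p\}$ is determined by $\p$ alone, because the right and left walks are locally determined step by step. You phrase this through the set $\mathcal{C}$ of ``fixed points'' and the identity $\{\SM=\mathfrak{s}\}=\{\mathfrak{s}\Lalg\BM\}$, whereas the paper writes $\{\SM\Lalg\q\}=\bigcup_{\p\Lalg\q}\{\SM\Lalg\p\}\cap\{\p\Lalg\BM\}$ and observes that each $\{\SM\Lalg\p\}$ is deterministic and trivial on $\{\p\Lalg\BM\}$; these are two ways of expressing the same local-determinacy fact, and they lead to the same conclusion.

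One point deserves care that you slightly gloss over and the paper handles more cleanly. You appeal to ``the first self-intersection happens at the same step $\tau_I^R$'' for the right walk \emph{on} $\mathfrak{s}$, but as defined in the paper, $\tau_I^R$ refers to the first index of a repeated vertex in the \emph{closed} right walk (the walk that returns to the root), and this is not a priori meaningful for a map with holes, where the walk cannot necessarily be continued. The paper avoids this by running \emph{both} the right and left processes on $\p$ and stopping each at the first hole: the criterion ``they intersect before either hits a hole'' is intrinsic to $\p$ and makes no reference to the undefined continuation. If you replace your intrinsic definition of $\mathcal{C}$ with this two-walk criterion (i.e., $\mathfrak{s}\in\mathcal{C}$ iff the right and left walks on $\mathfrak{s}$, each halted at the first hole, meet inside $\mathfrak{s}$ and their union is all of $\mathfrak{s}$), the rest of your argument — the key claim, the disjointness, and the finite union in $\F_\q$ — goes through exactly as written and matches the paper's proof.
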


\begin{proof}
    To prove that it is an $\FF$-stopping map two conditions have to be checked : 
    \begin{enumerate}
    \item $\P$-almost surely $\SM \Lalg \BM$ : this is clear from the definition of $\SM$.
    \item For any $\q\in\Qm_H$ we have that  $\left\{\SM\Lalg \q\right\}\in\F_{\q}$ :  this follows since
    \[
        \left\{\SM\Lalg \q\right\} = \underbrace{\bigcup_{\p\Lalg \q} \underbrace{\{\SM\Lalg \p\}\cap \{ \p \Lalg \BM \}}_{\in \F_\p\subset\F_\q}}_{\in \F_\q},
    \]
    where the event $\{\SM\Lalg\p\}$ for a given $\p\in \QQ_H$ is deterministic and trivial on the event $\{\p \Lalg \BM\}$: Run the right process and left process on $\p$ from its root and stopped them at the first hole they hit. If the right and left process intersect, then $\SM\Lalg \p$ if not  $\SM \not \Lalg \p$.
\end{enumerate}
\end{proof}

\begin{prop}
     Let $\BM$ be an $\FF$-Boltzmann map with $\ell=1$. Then, the map $\SM$ cannot be obtained from an algorithmic peeling.
\end{prop}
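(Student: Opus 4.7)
My plan is to argue by contradiction. Suppose there exists a peeling algorithm $\mathcal{A}$ together with a stopping time $\tau$ for $\mathcal{A}$ such that the explored submap $\SM_\tau$ equals $\SM$ almost surely. First I would check that $\SM$ has at least three faces with positive probability: it suffices to exhibit a single explicit quadrangulation with semi-perimeter $1$ whose right-then-left cycle encloses three or more faces (for instance any quadrangulation in which the right process has a long excursion before its first self-intersection), and then invoke the fact that the Boltzmann measure $\P^1$ charges every finite quadrangulation.

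Second, I would exploit that $\mathcal{A}$ is a deterministic function of the currently discovered submap $\SM_i$. Hence whenever the algorithm elects to peel an edge $e$ from the active boundary of $\SM_i$, the face $F_e$ that gets revealed must lie in $\SM$ with conditional probability one given the information accumulated so far. The core of the argument is then the following local claim: the only edges $e$ in the active boundary of any intermediate exploration $\SM_i \Lalg \SM$ for which $\P(F_e \in \SM \mid \SM_i) = 1$ are the two original boundary edges of the map. Once this is established, the contradiction follows: after at most two peeling steps the algorithm has exhausted the two "safe" edges and revealed at most the two faces adjacent to the root boundary; any further peeling step must select an interior edge $e$, and by the claim this carries positive probability of revealing $F_e \notin \SM$, contradicting $\SM_\tau = \SM$ a.s., while on the positive-probability event where $\SM$ has more than two faces such a further peeling is forced.

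The main obstacle, and the only non-trivial step, is the case analysis in the local claim. To handle it I would reason as follows: fix an interior edge $e$ in the active boundary of some $\SM_i \Lalg \SM$. By the weak Markov property of Theorem \ref{t.weakMarkov}, the law of $\BM$ outside $\SM_i$ is still a collection of independent Boltzmann maps filling the holes, so I have full freedom to prescribe local configurations beyond $e$ with positive probability. I would then explicitly construct two Boltzmann-positive completions of $\SM_i$: one in which the face glued to $e$ continues the rightmost (or leftmost) cycle and therefore belongs to $\SM$, and another in which the same local configuration around $e$ is preserved but the global right/left cycle avoids $F_e$, for instance by routing the rightmost exploration through a detour that creates an earlier self-intersection, so that $F_e$ ends up strictly inside the cycle and is therefore not in $\SM$. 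Because both completions are consistent with $\SM_i$ and both have positive conditional probability, $\P(F_e \notin \SM \mid \SM_i) > 0$, which is exactly what is needed. Checking that no interior edge escapes this dichotomy is then a short combinatorial verification, enumerating the possible local pictures (edges adjacent to the outer boundary of the revealed ring, edges facing the hole that $\SM$ will eventually bound, and edges identified by Type~2 peelings) and producing such a detour configuration in each case.
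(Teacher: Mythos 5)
Your proposal follows the same strategy as the paper: after the one or two safe peels at the root edges, show that every choice of edge carries positive Boltzmann probability of over-discovering (either revealing a face outside the right-then-left cycle $\SM$, or producing a Type~2 identification that $\SM$ does not contain), so that no deterministic peeling algorithm can return $\SM$ almost surely on the positive-probability event that $\SM$ has more than three edges. The paper implements exactly this via the ramification diagram of Figure~\ref{fig:arb}, whose branches exhibit concrete positive-probability failures for each available peeling choice, which is precisely the case analysis you identify but defer as ``a short combinatorial verification.''
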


\begin{proof}
    In order to prove this assertion it will be enough to show that with positive probability an algorithmic peeling fails to discover $\SM$ for the possible peeling steps. We make this by a ramification diagram where we choose to present algorithmic peeling stages that happen with positive probability and such that the leaves (final stages) fail to discover $\SM$.
    We choose to work conditioned on the event $E$ that $\SM$ has more than three edges.
    
    Without loss of generality we start peeling the edge to the right of the root vertex (root face in the primal map), since the peeling the edge to the left is symmetric to this case. The following figure will be useful to explain our proof.
    
    \begin{figure}[h!]
        \centering
        \includegraphics[width = 0.7\textwidth]{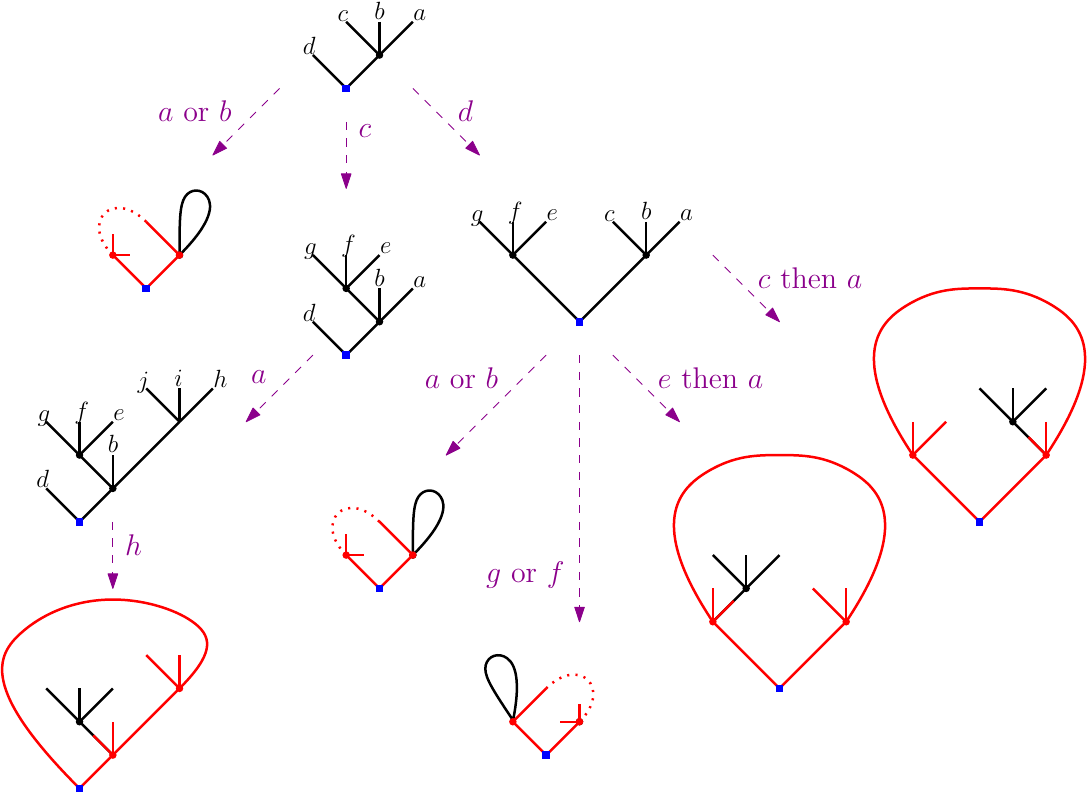}
        \caption{Possible peeling steps with positive probability where the peeling does not properly discover $\SM$. Vertices are represented by dots with exception of the root which is presented with a blue square. We name the edges available to be peeled and in purple the peeling step we apply. Finally, in the tree leaves, we present the map discovered at the end of each sequence of steps where the red submap corresponds to $\SM$. We see at the end of each exploration that they fail to discover $\SM$, since the final structures contain part or the whole $\SM$ with some extra edges and/or vertices.}
        \label{fig:arb}
    \end{figure}
    
    We explain this diagram in what follows and we abbreviate as P.P. the expression ``positive probability''.
    \begin{enumerate}
        \item \textbf{Peeling the edge $a$ or $b$:} There is P.P. to identify $a$ with $b$.  In such a case we have peeled the edges in the cycle $x\cro{\tau_I^R,\tau^R}$ which are the ones that are not peeled to obtain $\SM$.
        \item \textbf{Peeling the edge $c$:} There is P.P. to discover a new vertex with three new edges to be discovered $e,f,g$. And then by \textbf{peeling $a$} there is P.P. to discover a new vertex with tree edges $h, i ,j$ and after by \textbf{peeling $h$} there is P.P. to identify $h$ with $d$. Here we have peeled the edges in the cycle $\SM$ together with some others (See figure \ref{fig:arb}). 
        \item \textbf{Peeling the edge $d$:} We have to discover a new vertex $C$ since we conditioned on $E$, with three edges to be discovered $e,f,g$. If we decide to peel $g$, $f$, $a$ or $b$, the identifications $g$ with $f$ and $a$ with $b$ may happen with positive probability and the conclusion is similar to $(1)$; and if we decide to peel $e$ or $c$ we can discover a new vertex $D$ and copy the strategy in $(2)$.
    \end{enumerate}
\end{proof}

\bigskip

\begin{rem}
In the preceding counterexample we could have worked with arbitrary $\ell$. This is a consequence of the following: suppose you start with $\ell\neq 1$, and apply the peeling procedure until you reach a hole with perimeter 2; at this point we can apply the same arguments as before.
\end{rem}

\newpage
\bibliographystyle{alpha} 

\begin{thebibliography}{GKRV24}
	
	\bibitem[AG23]{AG}
	Morris Ang and Ewain Gwynne.
	\newblock {Cutting $\gamma$-Liouville quantum gravity by Schramm-Loewner
		evolution for $\kappa\notin\{\gamma^{2}, 16/\gamma^2\}$}.
	\newblock {\em arXiv preprint arXiv:2310.11455}, 2023.
	
	\bibitem[AGS25]{AGS}
	Juhan Aru, Christophe Garban, and Avelio Sep{\'u}lveda.
	\newblock {Percolation for 2D classical Heisenberg model and exit sets of
		vector valued GFF}.
	\newblock {\em Communications in Mathematical Physics}, 406(2):37, 2025.
	
	\bibitem[AHPS23]{AHPS}
	Juhan Aru, Nina Holden, Ellen Powell, and Xin Sun.
	\newblock Brownian half-plane excursion and critical liouville quantum gravity.
	\newblock {\em Journal of the London Mathematical Society}, 107(1):441--509,
	2023.
	
	\bibitem[ALS20]{ALS2}
	Juhan Aru, Titus Lupu, and Avelio Sep{\'u}lveda.
	\newblock The first passage sets of the {2D Gaussian} free field: convergence
	and isomorphisms.
	\newblock {\em Communications in Mathematical Physics}, 375(3):1885--1929,
	2020.
	
	\bibitem[AMS21]{AMS}
	Marie Albenque, Laurent M{\'e}nard, and Gilles Schaeffer.
	\newblock Local convergence of large random triangulations coupled with an
	ising model.
	\newblock {\em Transactions of the American Mathematical Society},
	374(1):175--217, 2021.
	
	\bibitem[Aru15]{Aru}
	Juhan Aru.
	\newblock {\em The geometry of the Gaussian free field combined with SLE
		processes and the KPZ relation}.
	\newblock PhD thesis, Ecole normale sup{\'e}rieure de lyon-ENS LYON, 2015.
	
	\bibitem[BBM11]{BM}
	Olivier Bernardi and Mireille Bousquet-M{\'e}lou.
	\newblock Counting colored planar maps: algebraicity results.
	\newblock {\em Journal of Combinatorial Theory, Series B}, 101(5):315--377,
	2011.
	
	\bibitem[BBM17]{BM2}
	Olivier Bernardi and Mireille Bousquet-M{\'e}lou.
	\newblock Counting coloured planar maps: differential equations.
	\newblock {\em Communications in Mathematical Physics}, 354:31--84, 2017.
	
	\bibitem[BDFG04]{BG}
	J{\'e}r{\'e}mie Bouttier, Philippe Di~Francesco, and Emmanuel Guitter.
	\newblock Planar maps as labeled mobiles.
	\newblock {\em The electronic journal of combinatorics}, 11(1):69, 2004.
	
	\bibitem[Bil13]{Bi}
	Patrick Billingsley.
	\newblock {\em Convergence of probability measures}.
	\newblock John Wiley \& Sons, 2013.
	
	\bibitem[BL21]{budzinski2021local}
	Thomas Budzinski and Baptiste Louf.
	\newblock Local limits of uniform triangulations in high genus.
	\newblock {\em Inventiones mathematicae}, 223:1--47, 2021.
	
	\bibitem[BM17]{BetM}
	J{\'e}r{\'e}mie Bettinelli and Gr{\'e}gory Miermont.
	\newblock Compact {Brownian} surfaces {I}: {Brownian} disks.
	\newblock {\em Probability Theory and Related Fields}, 167(3-4):555--614, 2017.
	
	\bibitem[BMR19]{BGR19}
	Erich Baur, Gr{\'e}gory Miermont, and Gourab Ray.
	\newblock {Classification of scaling limits of uniform quadrangulations with a
		boundary}.
	\newblock {\em The Annals of Probability}, 47(6):3397 -- 3477, 2019.
	
	\bibitem[CC19]{CC19}
	Alessandra Caraceni and Nicolas Curien.
	\newblock Self-avoiding walks on the uipq.
	\newblock In {\em Sojourns in Probability Theory and Statistical Physics-III:
		Interacting Particle Systems and Random Walks, A Festschrift for Charles M.
		Newman}, pages 138--165. Springer, 2019.
	
	\bibitem[CLG14]{CLG}
	Nicolas Curien and Jean-Fran{\c{c}}ois Le~Gall.
	\newblock The {Brownian} plane.
	\newblock {\em Journal of Theoretical Probability}, 27(4):1249--1291, 2014.
	
	\bibitem[CLG19]{CLG2}
	Nicolas Curien and Jean-Fran{\c{c}}ois Le~Gall.
	\newblock First-passage percolation and local modifications of distances in
	random triangulations.
	\newblock 52(3):631--701, 2019.
	
	\bibitem[CM15]{CM}
	Nicolas Curien and Grégory Miermont.
	\newblock Uniform infinite planar quadrangulations with a boundary.
	\newblock {\em Random Structures \& Algorithms}, 47(1):30--58, 2015.
	
	\bibitem[CT20]{CT}
	Linxiao Chen and Joonas Turunen.
	\newblock Critical ising model on random triangulations of the disk:
	enumeration and local limits.
	\newblock {\em Communications in Mathematical Physics}, 374(3):1577--1643,
	2020.
	
	\bibitem[Cur19]{Cu}
	Nicolas Curien.
	\newblock Peeling random planar maps.
	\newblock {\em Saint-Flour lecture notes}, 2019.
	
	\bibitem[DKRV16]{DKRV}
	Fran{\c{c}}ois David, Antti Kupiainen, R{\'e}mi Rhodes, and Vincent Vargas.
	\newblock Liouville quantum gravity on the riemann sphere.
	\newblock {\em Communications in Mathematical Physics}, 342(3):869--907, 2016.
	
	\bibitem[DMS21]{DMS}
	Bertrand Duplantier, Jason Miller, and Scott Sheffield.
	\newblock Liouville quantum gravity as a mating of trees.
	\newblock 427, 2021.
	
	\bibitem[Gal13]{LeGall}
	Jean-Fran{\c{c}}ois~Le Gall.
	\newblock {Uniqueness and universality of the Brownian map}.
	\newblock {\em The Annals of Probability}, 41(4):2880 -- 2960, 2013.
	
	\bibitem[GGN13]{GN}
	Ori Gurel-Gurevich and Asaf Nachmias.
	\newblock Recurrence of planar graph limits.
	\newblock {\em Ann. Math. (2)}, 177(2):761--781, 2013.
	
	\bibitem[GKRV24]{GKRV}
	Colin Guillarmou, Antti Kupiainen, R{\'e}mi Rhodes, and Vincent Vargas.
	\newblock {Conformal bootstrap in Liouville theory}.
	\newblock {\em Acta Mathematica}, 233(1):33--194, 2024.
	
	\bibitem[GR23]{LGR1}
	Jean-Fran{\c{c}}ois~Le Gall and Armand Riera.
	\newblock Spatial markov property in {Brownian} disks.
	\newblock {\em arXiv preprint arXiv:2302.01138}, 2023.
	
	\bibitem[Kal01]{kallenberg2001foundations}
	Olav Kallenberg.
	\newblock Foundations of modern probability, springer.
	\newblock {\em Berlin etc}, 2001.
	
	\bibitem[Kam23]{Kam}
	Emmanuel Kammerer.
	\newblock {Distances on the CLE{$_4$}, critical Liouville quantum gravity and
		$3/2$-stable maps}.
	\newblock {\em arXiv preprint arXiv:2311.08571}, 2023.
	
	\bibitem[Kri05]{krikun}
	Maxim Krikun.
	\newblock Local structure of random quadrangulations.
	\newblock {\em arXiv preprint math/0512304}, 2005.
	
	\bibitem[KRV20]{KRV}
	Antti Kupiainen, R{\'e}mi Rhodes, and Vincent Vargas.
	\newblock {Integrability of Liouville theory: proof of the DOZZ formula}.
	\newblock {\em Annals of Mathematics}, 191(1):81--166, 2020.
	
	\bibitem[LG07]{LeGall2}
	Jean-Fran{\c{c}}ois Le~Gall.
	\newblock The topological structure of scaling limits of large planar maps.
	\newblock {\em Inventiones mathematicae}, 169:621--670, 2007.
	
	\bibitem[LG10]{LeGall3}
	Jean-Fran{\c{c}}ois Le~Gall.
	\newblock Geodesics in large planar maps and in the {B}rownian map.
	\newblock {\em Acta Math.}, 205(2):287--360, 2010.
	
	\bibitem[LGR24]{LGR2}
	Jean-Fran{\c{c}}ois Le~Gall and Armand Riera.
	\newblock Peeling the {Brownian} half-plane.
	\newblock {\em arXiv preprint arXiv:2404.18489}, 2024.
	
	\bibitem[Lup16]{Lupu}
	Titus Lupu.
	\newblock {From loop clusters and random interlacements to the free field}.
	\newblock {\em The Annals of Probability}, 44(3):2117 -- 2146, 2016.
	
	\bibitem[LW16]{LW2}
	Titus Lupu and Wendelin Werner.
	\newblock A note on {I}sing random currents, {I}sing-{FK}, loop-soups and the
	{G}aussian free field.
	\newblock {\em Electron. Commun. Probab.}, 21:Paper No. 13, 7, 2016.
	
	\bibitem[LW18]{LW}
	Titus Lupu and Wendelin Werner.
	\newblock The random pseudo-metric on a graph defined via the zero-set of the
	{Gaussian} free field on its metric graph.
	\newblock {\em Probability Theory and Related Fields}, 171:775--818, 2018.
	
	\bibitem[Mie13]{Miermont}
	Gr\'egory Miermont.
	\newblock The {B}rownian map is the scaling limit of uniform random plane
	quadrangulations.
	\newblock {\em Acta Math.}, 210(2):319--401, 2013.
	
	\bibitem[MS16]{MS1}
	Jason Miller and Scott Sheffield.
	\newblock Imaginary geometry {I: interacting SLEs}.
	\newblock {\em Probability Theory and Related Fields}, 164:553--705, 2016.
	
	\bibitem[New74]{New}
	Charles~M Newman.
	\newblock Zeros of the partition function for generalized ising systems.
	\newblock {\em Communications on Pure and Applied Mathematics}, 27(2):143--159,
	1974.
	
	\bibitem[NQSZ23]{NQSZ}
	Pierre Nolin, Wei Qian, Xin Sun, and Zijie Zhuang.
	\newblock Backbone exponent for two-dimensional percolation.
	\newblock {\em arXiv preprint arXiv:2309.05050}, 2023.
	
	\bibitem[Sch98]{Schaeffer}
	Gilles Schaeffer.
	\newblock {\em Conjugaison d'arbres et cartes combinatoires al{\'e}atoires}.
	\newblock PhD thesis, Bordeaux 1, 1998.
	
	\bibitem[SS09]{SS}
	Oded Schramm and Scott Sheffield.
	\newblock Contour lines of the two-dimensional discrete {G}aussian free field.
	\newblock {\em Acta Math.}, 202(1):21--137, 2009.
	
	\bibitem[SS13]{SchS}
	Oded Schramm and Scott Sheffield.
	\newblock A contour line of the continuum {Gaussian} free field.
	\newblock {\em Probability Theory and Related Fields}, 157(1):47--80, 2013.
	
	\bibitem[Tut63]{tutte}
	William~Thomas Tutte.
	\newblock A census of planar maps.
	\newblock {\em Canadian Journal of Mathematics}, 15:249--271, 1963.
	
\end{thebibliography}
 \newcommand{\noop}[1]{}

\appendix

\end{document}